\def\scfactor{0.25}
\def\scfactorx{0.25}
\theoremstyle{plain}
\newtheorem{theorem}{Theorem}
\newtheorem{lemma}[theorem]{Lemma}
\newtheorem{corollary}[theorem]{Corollary}
\newtheorem{proposition}[theorem]{Proposition}
\theoremstyle{definition}
\newtheorem{definition}{Definition}
\newtheorem{example}{Example}
\def\x{\mathbf{q}}
\def\X{\mathbf{Q}}
\def\special{\mathbf{s}}
\def\0{\mathbf{0}}
\newcommand{\noop}[1]{}
\DeclareMathOperator{\altan}{\mathfrak{a}}
\DeclareMathOperator{\spn}{span}
\title{On the Nullity of Altans and Iterated Altans}
\author[1,2,3,$*$]{Nino Ba{\v s}i{\'c}}
\author[4,$*$]{Patrick~W.~Fowler}
\affil[1]{FAMNIT, University of Primorska, Koper, Slovenia}
\affil[2]{IAM, University of Primorska, Koper, Slovenia}
\affil[3]{Institute of Mathematics, Physics and Mechanics, Ljubljana, Slovenia}
\affil[4]{Department of Chemistry, University of Sheffield, Sheffield S3 7HF, UK}
\date{March 14, 2022}
\begin{document}

\maketitle

\begin{abstract}
Altanisation (formation of the {\it altan} of a parent structure) 
originated
in the chemical literature as a formal device for constructing 
generalised coronenes from smaller structures.  
The altan of graph $G$, denoted $\altan(G, H)$, depends on the choice of \emph{attachment set} $H$
(a cyclic $h$-tuple of vertices of $G$). From a given pair $(G, H)$, the altan construction
produces a pair $(G', H')$, where $H'$ is called the induced attachment set. Repetition of the
construction, using at each stage the attachment set induced in the previous step, 
defines the \emph{iterated altan}. 
Here, we prove sharp bounds  for the
nullity 
of {\it altan} and {\it iterated altan} 
graphs based on a general parent graph: for any attachment set with odd $h$,
nullities of altan and parent are equal;
for any $h$ and all $k \geq 1$, the $k$-th altan has the same nullity
as the first;
for any attachment set with even $h$, the nullity of the altan  
exceeds the nullity of the parent graph by one of the three values $\{0, 1, 2\}$.
The case of excess nullity $2$ has not been noticed before; for benzenoids with the 
\emph{natural} attachment set consisting of the CH sites, it occurs first for a parent structure with  $5$ hexagons. 
On the basis of extensive computation, it is conjectured that in fact no
altan of a \emph{convex} benzenoid has excess nullity~$2$.

\vspace{\baselineskip}
\noindent
\textbf{Keywords:} Altan, attachment set, nullity, iterated altan, chemical graph, benzenoid, patch,
non-bonding orbital.

\vspace{\baselineskip}
\noindent
\textbf{Math.\ Subj.\ Class.\ (2020):} 
05C50, 
15A18, 
05C92 
\end{abstract}

\section{Introduction}

Aromaticity has been an influential 
concept in chemistry 
for a century and a half, and despite difficulties with precise definition
 is still 
invoked routinely in qualitative explanation of stability, reactivity and magnetic 
response of conjugated 
systems \cite{Schleyer2001,Schleyer2005,Martin2015,Peeks2020,Peeks2021}. 
On the widely accepted magnetic criterion, aromaticity is defined 
by the ability of a system to sustain 
{\it ring currents} (circulations of ($\pi$) electrons) induced by a perpendicular external
magnetic field \cite{London1937,Pople1958,McWeeny1958,Schleyer1996}, where a
multi-centre  $\pi$ current with diatropic/paratropic sense implies aromaticity/anti-aromaticity
  and absence of current implies non-aromaticity. 
The prediction of 
patterns of ring current and design of 
carbon nanostructures with specific magnetic properties are subjects of 
active research \cite{Monaco2012,Stepien2018}. One
strategy for design of carbon nanostructures that should support concentric 
currents is based on the venerable 
annulene-within-an-annulene 
model \cite{Lawton1971,Ege1972,Diederich1978,Aihara2014}, 
which predicts combinations 
of inner and outer currents on concentric 
cycles connected by `spoke' bonds. Depending on the lengths of  inner and 
outer circuits, and the strength of the spoke
coupling between them, any of the four combinations of diatropic and paratropic 
current may appear \cite{Ege1972,Steiner2001,Monaco2006,Monaco2007}. 
As with many chemical models, this approach 
gives a rule of thumb rather 
than a rigorous prediction \cite{coronene}, but it can serve as a starting point for more 
detailed explanations.

A development from this earlier picture is the formal strategy of `altanisation’ in 
which a central carbon framework is surrounded 
by an {\bf an}nulene perimeter in which carbon centres with two and 
three carbon 
neighbours {\bf alt}ernate \cite{Monaco2012,Monaco2013a,Zanasi2016}. 
The {\it altan} concept has spawned a sizeable quantum chemical 
literature, in which the construction is used to 
generate systems (often hypothetical)  for which properties can be calculated 
with empirical, semi-empirical or {\it ab initio} methods \cite{Monaco2006,Monaco2013b,Dickens2014a,
Dickens2014b,Dickens2014c,Dickens2015a,
Dickens2015b,Dickens2018,Piccardo2020,Dickens2020,Dickens2021}. 
Interest has centred on creating systems with unusual 
paratropic perimeter currents \cite{Monaco2007} or with 
unusual dependence of outer currents on the total charge \cite{Dickens2021}. 

In qualitative theories of electronic structure 
in organic chemistry, there is a 
conceptual split between localised and delocalised 
descriptions, based respectively on Kekul{\'e} structures 
(perfect matchings) or the balance of bonding, non-bonding and 
anti-bonding 
molecular orbitals 
(eigenvectors corresponding to positive,
zero and negative eigenvalues of the adjacency matrix). 
One important quantity is the \emph{nullity} of the graph -- the 
multiplicity of the zero eigenvalue, or
in chemical terms, the number of non-bonding $\pi$ molecular orbitals
(NBMOs).
The altan construction has
inspired mathematical investigations of matchings and spectra
\cite{Gutman2014a,Gutman2014b,Basic2015,Basic2016}: 
{altans} may be Kekulean (have a perfect matching) or not, and 
their graphs may be singular (have at least one zero 
eigenvalue of the adjacency matrix) or not.  

In the H{\"u}ckel model of unsaturated carbon frameworks, 
the number of NBMOs \cite{Trinajstic1992}
has consequences for stability, reactivity and magnetic properties. Nullity is also
of interest in the context of \emph{graph energy} \cite{Gutman2021a,Gutman2021b,Triantafillou2016}.
For the general theory of graph spectra, the reader may consult standard
references \cite{Cvetkovic1995,Haemers2012,Cvetkovic1997,Cvetkovic2010,Chung1997}, 
and for a survey on nullity see \cite{Borovicanin2011}.

Nullity has been studied for altans 
using a mixture of formal theorems and empirical 
observations on small sets of examples, and a certain amount of 
confusion about the limits on altan nullity has arisen as a result. 
We aim to clarify the situation here.
Specifically, in the present paper, we build 
on established mathematical results \cite{Gutman2014a,Gutman2014b} to 
derive the precise relationship
between the nullity of 
the altan and the parent graph, and between nullities 
of first and subsequent iterated altans.  
Some previously unsuspected cases of a higher increase in nullity 
on first altanisation are detected. These occur 
for surprisingly small chemical examples.

\section{Preliminaries}
\subsection{The altan construction}

In this section we give a formal graph-theoretical definition of the altan of a graph $G$
and develop some useful concepts.
We follow Gutman's definitions \cite{Gutman2014b} with some variations in terminology and notation.
The primary object of our study is the
pair $(G, H)$ where $G$ is a (simple) graph and $H$ is what we will call, for simplicity, an \emph{attachment set}. 
The attachment `set' is a cyclic $h$-tuple of vertices, in which any given vertex is allowed to occur more than once.
Note that in the case of cyclic tuples we do not distinguish between the tuple $(v_1, v_2, \ldots, v_h)$
and its circular shifts $(v_{i+1}, v_{i+2},\ldots, v_h, v_1, v_2, \ldots, v_i)$ for $1 \leq i < h$. In \cite{Basic2015},
the attachment set was called the peripheral root. Both terms have their disadvantages: the attachment set is
not strictly a set, and its vertices may lie in the interior of the graph. 

\begin{definition}
\label{def:altan}
Let $G = (V(G), E(G))$ be a graph and  $H = (v_1, v_2, \ldots, v_h)$ be an attachment set, i.e.,
a cyclic $h$-tuple with $h \geq 2$ and $v_i \in V(G)$ for $1 \leq i \leq h$. Let $G'$ be the
graph with
\begin{align*}
V(G') & = V(G) \sqcup \{ x_1, x_2, \ldots, x_h \} \sqcup \{ y_1, y_2, \ldots, y_h \}, \\
E(G') & = E(G) \sqcup \{ v_ix_i, x_iy_i : 1 \leq i \leq h \} \sqcup \{ y_ix_{i+1} : 1 \leq i < h\} \sqcup \{ y_h x_1 \}. 
\end{align*}
and let $H' = (y_1, y_2, \ldots, y_h)$. The altan of $(G, H)$, denoted $\altan(G, H)$, is the pair $(G', H')$,
where this particular choice of
$H'$ will be called the \emph{induced attachment set}.
\end{definition}

\begin{figure}[!htb]
\centering
\begin{tikzpicture}[scale=1.2]
\tikzstyle{edge}=[draw,thick]
\tikzstyle{every node}=[draw, circle, fill=blue!50!white, inner sep=1.5pt]
\draw[fill=green!20!white] (0,0) ellipse (2cm and 1.2cm);
\node[label={[yshift=6pt]-90:$v_{h-1}$}] (vh-1) at (170:1.6) {};
\node[label={[yshift=2pt]-90:$v_h$}] (vh) at (140:1.2) {};
\node[label={[yshift=2pt]-90:$v_1$}] (v1) at (90:0.95) {};
\node[label={[yshift=2pt]-90:$v_2$}] (v2) at (40:1.2) {};
\node[label={[yshift=2pt]-90:$v_3$}] (v3) at (10:1.6) {};
\node[label={[yshift=0pt]180:$x_{h-1}$}] (xh-1) at (170:1.6*1.5) {};
\node[label={[yshift=0pt]120:$x_h$}] (xh) at (140:1.2*1.7) {};
\node[label={[yshift=-2pt]90:$x_1$}] (x1) at (90:0.95*1.9) {};
\node[label={[yshift=0pt]60:$x_2$}] (x2) at (40:1.2*1.7) {};
\node[label={[yshift=0pt]0:$x_3$}] (x3) at (10:1.6*1.5) {};
\node[label={[yshift=0pt]180:$y_{h-2}$}] (yh-2) at (185:2.8) {};
\node[label={[yshift=0pt]180:$y_{h-1}$}] (yh-1) at ($ (xh-1)!0.5!(xh) + (150:0.5) $) {};
\node[label={[yshift=-2pt]90:$y_h$}] (yh) at ($ (xh)!0.5!(x1) + (105:0.4) $) {};
\node[label={[yshift=-2pt]90:$y_1$}] (y1) at ($ (x1)!0.5!(x2) + (75:0.4) $) {};
\node[label={[yshift=0pt]0:$y_2$}] (y2) at ($ (x2)!0.5!(x3) + (30:0.5) $) {};
\node[label={[yshift=0pt]0:$y_3$}] (y3) at (-5:2.8) {};
\path[edge] ($ (yh-2) + (-50:0.4) $) -- (yh-2) -- (xh-1) -- (yh-1) -- (xh) -- (yh) -- (x1) -- (y1) -- (x2) -- (y2) -- (x3) -- (y3) -- ($ (y3) + (230:0.4) $);
\path[edge] (vh-1) -- (xh-1);
\path[edge] (vh) -- (xh);
\path[edge] (v1) -- (x1);
\path[edge] (v2) -- (x2);
\path[edge] (v3) -- (x3);
\node[draw=none,fill=none] at (0, 0) {$G$};
\node[draw=none,fill=none] at (-20:1.2) {$\ldots$};
\node[draw=none,fill=none] at ($ (y3) + (230:0.8) $) {$\ldots$};
\end{tikzpicture}
\caption{The altan $\altan(G, H)$.
Attachment is via the set $\{ v_1, \ldots, v_h\}$, possibly
with multiple connections to some vertices, and the perimeter
is the cycle $\{x_1, y_1, \ldots , x_h, y_h\}$.}
\label{fig:altan}
\end{figure}
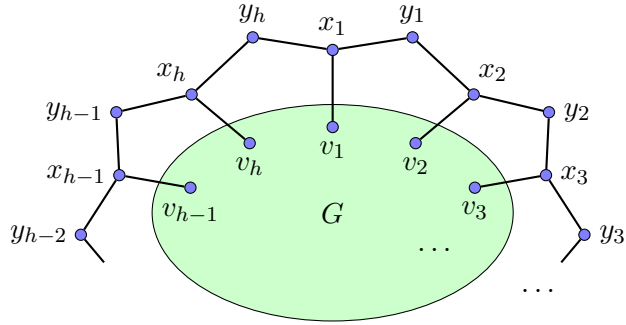

For an illustration of the definition see Figure~\ref{fig:altan}.
In the above definition,
the symbol $\sqcup$ denotes a disjoint union. Observe that $|V(G')| = |V(G)| + 2h$ and
$|E(G')| = |E(G)| + 3h$. The graph $G$ may be disconnected. The graph $G'$ is connected if and
only if the attachment set $H$ contains at least one vertex from each connected component of $G$.
Note that the induced attachment set $H'$ is only one possible choice of attachment set for a subsequent 
altan operation on $G'$; however,
this choice is conveniently used to define an \emph{iterated altan} construction:
\begin{align*}
\altan^k(G, H) = \begin{cases}
(G, H) & \text{if } k = 0, \\
\altan(G, H) & \text{if } k = 1, \\
\altan(\altan^{k - 1}(G, H)) & \text{if } k \geq 2.
\end{cases}
\end{align*}
This recursive definition implicitly defines the sequence of intermediate graphs and their respective 
attachment sets (see Figure~\ref{fig:iteraltan}). We call $\altan^k(G, H)$ the $k$-th altan of $(G, H)$.
\begin{figure}[!htb]
\centering
\begin{tikzpicture}[scale=0.8]
\tikzstyle{edge}=[draw,thick]
\tikzstyle{every node}=[draw, circle, fill=blue!50!white, inner sep=1.5pt]
\draw[fill=green!20!white] (0,0) ellipse (2cm and 1.2cm);
\node[fill=green!80!white] (vh) at (140:1.2) {};
\node[fill=green!80!white] (v1) at (90:0.95) {};
\node[fill=green!80!white] (v2) at (40:1.2) {};
\node[fill=green!80!white] (v3) at (10:1.6) {};
\node[fill=black] (xh) at (140:1.2*1.7) {};
\node[fill=black] (x1) at (90:0.95*1.9) {};
\node[fill=black] (x2) at (40:1.2*1.7) {};
\node[fill=black] (x3) at (10:1.6*1.5) {};
\node[fill=black] (yh) at ($ (xh)!0.5!(x1) + (105:0.4) $) {};
\node[fill=black] (y1) at ($ (x1)!0.5!(x2) + (75:0.4) $) {};
\node[fill=black] (y2) at ($ (x2)!0.5!(x3) + (30:0.5) $) {};
\node[fill=black] (y3) at (-5:2.8) {};
\path[edge] ($ (xh) + (210:0.4) $) -- (xh) -- (yh) -- (x1) -- (y1) -- (x2) -- (y2) -- (x3) -- (y3) -- ($ (y3) + (230:0.4) $);
\path[edge] (vh) -- (xh);
\path[edge] (v1) -- (x1);
\path[edge] (v2) -- (x2);
\path[edge] (v3) -- (x3);
\node[fill=magenta] (xxh) at ($ (yh) + (105:0.6) $) {};
\node[fill=magenta] (xx1) at ($ (y1) + (75:0.6) $) {};
\node[fill=magenta] (xx2) at ($ (y2) + (25:0.6) $) {};
\node[fill=magenta] (xx3) at ($ (y3) + (0:0.7) $) {};
\node[fill=magenta] (yyh) at (135:1.2*1.7*1.6) {};
\node[fill=magenta] (yy1) at (90:0.95*1.9*1.6) {};
\node[fill=magenta] (yy2) at (45:1.2*1.7*1.6) {};
\node[fill=magenta] (yy3) at (10:1.6*1.5*1.5) {};
\path[edge,color=magenta] ($ (yyh) + (230:0.4) $) -- (yyh) -- (xxh) -- (yy1) -- (xx1) -- (yy2) -- (xx2) -- (yy3) -- (xx3) -- ($ (xx3) + (-70:0.4) $);
\path[edge,color=magenta] (yh) -- (xxh);
\path[edge,color=magenta] (y1) -- (xx1);
\path[edge,color=magenta] (y2) -- (xx2);
\path[edge,color=magenta] (y3) -- (xx3);
\node[fill=blue!60!white] (xxxh) at (135:1.2*1.7*1.9) {};
\node[fill=blue!60!white] (xxx1) at (90:0.95*1.9*1.9) {};
\node[fill=blue!60!white] (xxx2) at (45:1.2*1.7*1.9) {};
\node[fill=blue!60!white] (xxx3) at (10:1.6*1.5*1.8) {};
\node[fill=blue!60!white] (yyyh) at ($ (xxxh)!0.5!(xxx1) + (105:0.45) $) {};
\node[fill=blue!60!white] (yyy1) at ($ (xxx1)!0.5!(xxx2) + (75:0.5) $) {};
\node[fill=blue!60!white] (yyy2) at ($ (xxx2)!0.5!(xxx3) + (30:0.5) $) {};
\node[fill=blue!60!white] (yyy3) at (-2:4.7) {};
\path[edge,color=blue!60!white] (yyh) -- (xxxh);
\path[edge,color=blue!60!white] (yy1) -- (xxx1);
\path[edge,color=blue!60!white] (yy2) -- (xxx2);
\path[edge,color=blue!60!white] (yy3) -- (xxx3);
\path[edge,color=blue!60!white] ($ (xxxh) + (180:0.4) $) -- (xxxh) -- (yyyh) -- (xxx1) -- (yyy1) -- (xxx2) -- (yyy2) -- (xxx3) -- (yyy3) -- ($ (yyy3) + (250:0.4) $);
\node[draw=none,fill=none] at (0, 0) {$G$};
\node[draw=none,fill=none] at (-20:1.2) {$\dots$};
\node[draw=none,fill=none] at ($ (yyh) + (230:0.8) $) {$\ldots$};
\node[draw=none,fill=none] at ($ (xx3) + (-90:0.7) $) {$\ldots$};
\node[draw=none,fill=none] at ($ (xx3) + (-90:0.7) $) {$\ldots$};
\end{tikzpicture}
\caption{The iterated altan $\altan^3(G, H)$. Colours indicate the new vertices and edges
that are added at each stage.}
\label{fig:iteraltan}
\end{figure}
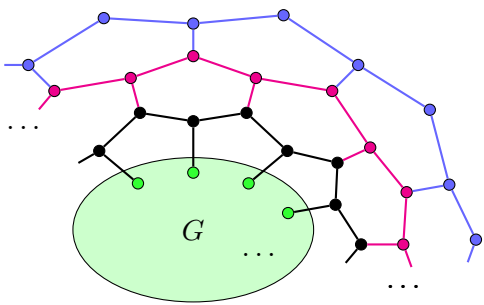

We recall some basic concepts from spectral graph theory. 
Let $A(G)$ denote the adjacency matrix of a graph $G$.
Graph $G$ is called \emph{singular} if $A(G)$ has a non-trivial kernel.
The nullity of $G$, denoted $\eta(G)$, is the algebraic multiplicity of the
number $0$ in the spectrum of $A(G)$, i.e.\ $\eta(G) = \dim \ker A(G)$.
An eigenvector $\x$ of $A(G)$ can be viewed as a weighting of vertices of $G$,
i.e., a mapping $\x\colon V(G) \to \mathbb{R}$. Let $\lambda$
be an eigenvalue of $A(G)$, let $v \in V(G)$ and let $N_G(v)$ denote
the neighbourhood of $v$. The equation
\begin{equation}
\label{eq:local}
\lambda \x(v) = \sum_{u \in N_G(v)} \x(u)
\end{equation}
is sometimes called the \emph{local condition} 
(with the pivot $v$). The notation $u \in N_G(v)$ will
be abbreviated $u \sim v$ for convenience. We will be using the local condition 
for the eigenvalue $\lambda = 0$, in particular.

Application of H{\"u}ckel theory in organic chemistry is usually limited to
systems where the  molecular graph is a  {\it chemical graph}, i.e. a graph 
that is simple, connected and with maximum degree at most three.
Altanisation of a chemical graph with an arbitrary 
attachment set
may lead to a non-chemical result, 
but this can be avoided by choosing a `natural' attachment set based on the implied hydrogen positions 
in the chemical formula for the parent chemical graph.
One interpretation of this chemically motivated construction is that it represents the formal replacement 
of all hydrogen atoms of an unsaturated hydrocarbon by vinyl groups, which then cyclise in a particular way \cite{Gutman2014b}.  
\begin{example}
Let $P_3$ be the path on three vertices and let $V(P_3) = \{u_1, u_2, u_3\}$. It is easy to
check that $\eta(P_3) = 1$. We will consider the following four attachment sets:
\begin{align*}
H_1 & = \{u_1, u_1, u_2, u_3, u_3\}, &
H_2 & = \{u_1, u_3\}, &
H_3 & = \{u_1, u_2, u_3, u_2\}, &
H_4 & = \{u_1, u_1, u_3, u_3\}.
\end{align*}
It is easy to verify that
\begin{align*}
\eta(\altan(P_3, H_1)) = \eta(\altan(P_3, H_2)) & = 1, &
\eta(\altan(P_3, H_3)) & = 2, &
\eta(\altan(P_3, H_4)) & = 3.
\end{align*}
Kernel eigenvectors of the parent graph $P_3$ and the
four altans are shown in Figure~\ref{fig:example}.

The altan $\altan(P_3, H_3)$ includes a vertex of degree four and is thus not chemical,
but it does illustrate the case where $\eta(\altan(P_3, H_3)) = \eta(P_3) + 1$.
The present example shows that even a small graph can produce
a range of nullities;
in fact, it turns out that this example demonstrates all the possibilities, 
as the theorems below will demonstrate.
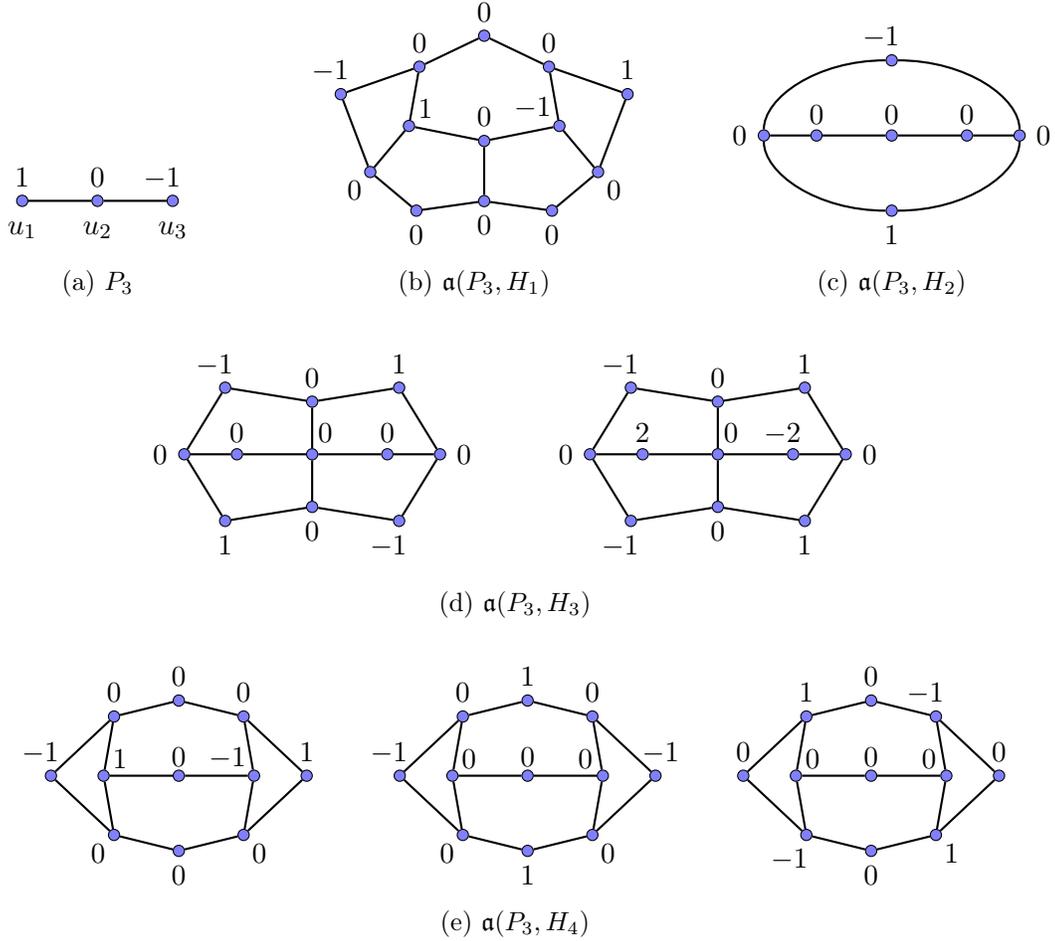
\begin{figure}[!htbp]
\centering
\begin{subfigure}[b]{0.28\textwidth}
\centering
\begin{tikzpicture}
\tikzstyle{edge}=[draw,thick]
\tikzstyle{every node}=[draw, circle, fill=blue!50!white, inner sep=1.5pt]
\node[label=-90:$u_1$,label={90:$1$}] (u1) at (-1, 0) {};
\node[label=-90:$u_2$,label={90:$0$}] (u2) at (-0, 0) {};
\node[label=-90:$u_3$,label={[xshift=-4pt,yshift=-4pt]90:$-1$}] (u3) at (1, 0) {};
\path[edge] (u1) -- (u2) -- (u3);
\end{tikzpicture}
\caption{$P_3$}
\label{fig:ex1a}
\end{subfigure}
\begin{subfigure}[b]{0.35\textwidth}
\centering
\begin{tikzpicture}
\tikzstyle{edge}=[draw,thick]
\tikzstyle{every node}=[draw, circle, fill=blue!50!white, inner sep=1.5pt]
\node[label={50:$1$}] (u1) at (-1, 0.2) {};
\node[label={90:$0$}] (u2) at (-0, 0) {};
\node[label={[xshift=-1pt,yshift=-2pt]140:$-1$}] (u3) at (1, 0.2) {};
\node[label={-120:$0$}] (x1) at ($ (u1) + (-130:0.8) $) {};
\node[label={90:$0$}] (x2) at ($ (u1) + (80:0.8) $) {};
\node[label={90:$0$}] (x3) at ($ (u3) + (100:0.8) $) {};
\node[label={-60:$0$}] (x4) at ($ (u3) + (-50:0.8) $) {};
\node[label={-90:$0$}] (x5) at ($ (u2) + (-90:0.8) $) {};
\node[label={[xshift=-4pt,yshift=-4pt]90:$-1$}] (y1) at ($ (u1) + (155:1.0) $) {};
\node[label={90:$0$}] (y2) at ($ (u2) + (90:1.4) $) {};
\node[label={90:$1$}] (y3) at ($ (u3) + (25:1.0) $) {};
\node[label={-90:$0$}] (y4) at ($ (x4) + (-140:0.8) $) {};
\node[label={-90:$0$}] (y5) at ($ (x1) + (-40:0.8) $) {};
\path[edge] (u1) -- (u2) -- (u3);
\path[edge] (x1) -- (u1) -- (x2);
\path[edge] (x3) -- (u3) -- (x4);
\path[edge] (x5) -- (u2);
\path[edge] (x1) -- (y1) -- (x2) -- (y2) -- (x3) -- (y3) -- (x4) -- (y4) -- (x5) -- (y5) -- (x1);
\end{tikzpicture}
\caption{$\altan(P_3, H_1)$}
\label{fig:ex1b}
\end{subfigure}
\begin{subfigure}[b]{0.35\textwidth}
\centering
\begin{tikzpicture}
\tikzstyle{edge}=[draw,thick]
\tikzstyle{every node}=[draw, circle, fill=blue!50!white, inner sep=1.5pt]
\node[label={[yshift=-1pt]90:$0$}] (u1) at (-1, 0.1) {};
\node[label={[yshift=-1pt]90:$0$}] (u2) at (-0, 0.1) {};
\node[label={[yshift=-1pt]90:$0$}] (u3) at (1, 0.1) {};
\node[label={-180:$0$}] (x1) at ($ (u1) + (180:0.7) $) {};
\node[label={0:$0$}] (x2) at ($ (u3) + (0:0.7) $) {};
\node[label={[xshift=-4pt,yshift=-4pt]90:$-1$}] (y1) at ($ (u2) + (90:1.0) $) {};
\node[label={-90:$1$}] (y2) at ($ (u2) + (-90:1.0) $) {};
\path[edge] (u1) -- (u2) -- (u3);
\path[edge] (x1) -- (u1);
\path[edge] (x2) -- (u3);
\path[edge] (x1) .. controls ($ (x1) + (90:0.5) $) and ($ (y1) + (180:1) $) .. (y1);
\path[edge] (x2) .. controls ($ (x2) + (90:0.5) $) and ($ (y1) + (0:1) $) .. (y1);
\path[edge] (x1) .. controls ($ (x1) + (-90:0.5) $) and ($ (y2) + (180:1) $) .. (y2);
\path[edge] (x2) .. controls ($ (x2) + (-90:0.5) $) and ($ (y2) + (0:1) $) .. (y2);
\end{tikzpicture}
\caption{$\altan(P_3, H_2)$}
\label{fig:ex1c}
\end{subfigure}

\vspace{\baselineskip}
\begin{subfigure}[b]{\textwidth}
\centering
\begin{tikzpicture}
\tikzstyle{edge}=[draw,thick]
\tikzstyle{every node}=[draw, circle, fill=blue!50!white, inner sep=1.5pt]
\node[label={[yshift=-1pt]90:$0$}] (u1) at (-1, 0.0) {};
\node[label={[yshift=-1pt,xshift=5pt]90:$0$}] (u2) at (-0, 0) {};
\node[label={[yshift=-1pt]90:$0$}] (u3) at (1, 0.0) {};
\node[label={-180:$0$}] (x1) at ($ (u1) + (180:0.7) $) {};
\node[label={90:$0$}] (x2) at ($ (u2) + (90:0.7) $) {};
\node[label={0:$0$}] (x3) at ($ (u3) + (0:0.7) $) {};
\node[label={-90:$0$}] (x4) at ($ (u2) + (-90:0.7) $) {};
\node[label={[xshift=-4pt,yshift=-4pt]90:$-1$}] (y1) at ($ (u1) + (100:0.9) $) {};
\node[label={90:$1$}] (y2) at ($ (u3) + (80:0.9) $) {};
\node[label={[xshift=-4pt,yshift=3pt]-90:$-1$}] (y3) at ($ (u3) + (-80:0.9) $) {};
\node[label={-90:$1$}] (y4) at ($ (u1) + (-100:0.9) $) {};
\path[edge] (u1) -- (u2) -- (u3);
\path[edge] (x1) -- (u1);
\path[edge] (x2) -- (u2) -- (x4);
\path[edge] (x3) -- (u3);
\path[edge] (x1) -- (y1) -- (x2) -- (y2) -- (x3) -- (y3) -- (x4) -- (y4) -- (x1);
\end{tikzpicture}
\qquad
\begin{tikzpicture}
\tikzstyle{edge}=[draw,thick]
\tikzstyle{every node}=[draw, circle, fill=blue!50!white, inner sep=1.5pt]
\node[label={[yshift=-1pt]90:$2$}] (u1) at (-1, 0.0) {};
\node[label={[yshift=-1pt,xshift=5pt]90:$0$}] (u2) at (-0, 0) {};
\node[label={[xshift=-4pt,yshift=-5pt]90:$-2$}] (u3) at (1, 0.0) {};
\node[label={-180:$0$}] (x1) at ($ (u1) + (180:0.7) $) {};
\node[label={90:$0$}] (x2) at ($ (u2) + (90:0.7) $) {};
\node[label={0:$0$}] (x3) at ($ (u3) + (0:0.7) $) {};
\node[label={-90:$0$}] (x4) at ($ (u2) + (-90:0.7) $) {};
\node[label={[xshift=-4pt,yshift=-4pt]90:$-1$}] (y1) at ($ (u1) + (100:0.9) $) {};
\node[label={90:$1$}] (y2) at ($ (u3) + (80:0.9) $) {};
\node[label={-90:$1$}] (y3) at ($ (u3) + (-80:0.9) $) {};
\node[label={[xshift=-4pt,yshift=3pt]-90:$-1$}] (y4) at ($ (u1) + (-100:0.9) $) {};
\path[edge] (u1) -- (u2) -- (u3);
\path[edge] (x1) -- (u1);
\path[edge] (x2) -- (u2) -- (x4);
\path[edge] (x3) -- (u3);
\path[edge] (x1) -- (y1) -- (x2) -- (y2) -- (x3) -- (y3) -- (x4) -- (y4) -- (x1);
\end{tikzpicture}
\caption{$\altan(P_3, H_3)$}
\label{fig:ex1d}
\end{subfigure}

\vspace{\baselineskip}
\begin{subfigure}[b]{\textwidth}
\centering
\begin{tikzpicture}
\tikzstyle{edge}=[draw,thick]
\tikzstyle{every node}=[draw, circle, fill=blue!50!white, inner sep=1.5pt]
\node[label={50:$1$}] (u1) at (-1, 0.0) {};
\node[label={[yshift=-2pt]90:$0$}] (u2) at (-0, 0) {};
\node[label={[xshift=-1pt,yshift=-2pt]140:$-1$}] (u3) at (1, 0.0) {};
\node[label={-120:$0$}] (x1) at ($ (u1) + (-80:0.8) $) {};
\node[label={90:$0$}] (x2) at ($ (u1) + (80:0.8) $) {};
\node[label={90:$0$}] (x3) at ($ (u3) + (100:0.8) $) {};
\node[label={-60:$0$}] (x4) at ($ (u3) + (-100:0.8) $) {};
\node[label={[xshift=-4pt,yshift=-4pt]90:$-1$}] (y1) at ($ (u1) + (180:0.7) $) {};
\node[label={90:$0$}] (y2) at ($ (u2) + (90:1.0) $) {};
\node[label={90:$1$}] (y3) at ($ (u3) + (0:0.7) $) {};
\node[label={-90:$0$}] (y4) at ($ (u2) + (-90:1.0) $) {};
\path[edge] (u1) -- (u2) -- (u3);
\path[edge] (x1) -- (u1) -- (x2);
\path[edge] (x3) -- (u3) -- (x4);
\path[edge] (x1) -- (y1) -- (x2) -- (y2) -- (x3) -- (y3) -- (x4) -- (y4) -- (x1);
\end{tikzpicture}
\quad
\begin{tikzpicture}
\tikzstyle{edge}=[draw,thick]
\tikzstyle{every node}=[draw, circle, fill=blue!50!white, inner sep=1.5pt]
\node[label={50:$0$}] (u1) at (-1, 0.0) {};
\node[label={[yshift=-2pt]90:$0$}] (u2) at (-0, 0) {};
\node[label={140:$0$}] (u3) at (1, 0.0) {};
\node[label={-120:$0$}] (x1) at ($ (u1) + (-80:0.8) $) {};
\node[label={90:$0$}] (x2) at ($ (u1) + (80:0.8) $) {};
\node[label={90:$0$}] (x3) at ($ (u3) + (100:0.8) $) {};
\node[label={-60:$0$}] (x4) at ($ (u3) + (-100:0.8) $) {};
\node[label={[xshift=-4pt,yshift=-4pt]90:$-1$}] (y1) at ($ (u1) + (180:0.7) $) {};
\node[label={90:$1$}] (y2) at ($ (u2) + (90:1.0) $) {};
\node[label={[xshift=2pt,yshift=-4pt]90:$-1$}] (y3) at ($ (u3) + (0:0.7) $) {};
\node[label={-90:$1$}] (y4) at ($ (u2) + (-90:1.0) $) {};
\path[edge] (u1) -- (u2) -- (u3);
\path[edge] (x1) -- (u1) -- (x2);
\path[edge] (x3) -- (u3) -- (x4);
\path[edge] (x1) -- (y1) -- (x2) -- (y2) -- (x3) -- (y3) -- (x4) -- (y4) -- (x1);
\end{tikzpicture}
\quad
\begin{tikzpicture}
\tikzstyle{edge}=[draw,thick]
\tikzstyle{every node}=[draw, circle, fill=blue!50!white, inner sep=1.5pt]
\node[label={50:$0$}] (u1) at (-1, 0.0) {};
\node[label={[yshift=-2pt]90:$0$}] (u2) at (-0, 0) {};
\node[label={140:$0$}] (u3) at (1, 0.0) {};
\node[label={[xshift=2pt]-120:$-1$}] (x1) at ($ (u1) + (-80:0.8) $) {};
\node[label={90:$1$}] (x2) at ($ (u1) + (80:0.8) $) {};
\node[label={[xshift=-4pt,yshift=-4pt]90:$-1$}] (x3) at ($ (u3) + (100:0.8) $) {};
\node[label={-60:$1$}] (x4) at ($ (u3) + (-100:0.8) $) {};
\node[label={90:$0$}] (y1) at ($ (u1) + (180:0.7) $) {};
\node[label={90:$0$}] (y2) at ($ (u2) + (90:1.0) $) {};
\node[label={90:$0$}] (y3) at ($ (u3) + (0:0.7) $) {};
\node[label={-90:$0$}] (y4) at ($ (u2) + (-90:1.0) $) {};
\path[edge] (u1) -- (u2) -- (u3);
\path[edge] (x1) -- (u1) -- (x2);
\path[edge] (x3) -- (u3) -- (x4);
\path[edge] (x1) -- (y1) -- (x2) -- (y2) -- (x3) -- (y3) -- (x4) -- (y4) -- (x1);
\end{tikzpicture}
\caption{$\altan(P_3, H_4)$}
\label{fig:ex1e}
\end{subfigure}
\caption{Kernel eigenvectors of (a)  the path graph $P_3$ and (b)\,--\,(e) altans for different
choices of attachment set.
The vectors are chosen to be orthogonal, with integer entries and  distinct 
symmetries in the point group of the altan graph.
In (b), the altan has a vertical line of reflection and the kernel eigenvector is
$(-)$, i.e.\ anti-symmetric.
In (c) to (e), the altan has horizontal and vertical lines of reflection, and the
various kernel eigenvectors have symmetries as follows:
(c) $(-, +)$; (d) $(-, -)$ and $(+, -)$; (e) $(+, -)$, $(+, +)$ and $(-, -)$,
where the $+/-$ signs indicate symmetry or anti-symmetry with respect to horizontal and 
vertical lines of reflection, respectively.}
\label{fig:example}
\end{figure}
\end{example}

\subsection{Patches}
Most applications of the altan construction in the chemical literature 
are expansions of a particular kind of polycyclic chemical graph. We
call this a \emph{patch}.

\begin{definition}
A \emph{patch} $\Pi$ is a sub-cubic $2$-connected plane graph that has every
degree-$2$ vertex incident with the distinguished outer face.
The \emph{natural attachment set} of $\Pi$
is the $h$-tuple $H$ that contains all the degree-$2$ vertices of $\Pi$ in the order
induced by clockwise traversal of the perimeter (i.e.\ the outer face).
When $H$ is the natural attachment set of a patch $\Pi$, we can simplify
the notation $\altan(\Pi, H)$ to $\altan(\Pi)$.
\end{definition}

\begin{definition}
A patch that contains only hexagonal interior
faces is called a \emph{fusene}. 
A fusene patch that is a simply connected subgraph of the hexagonal tessellation of the plane is a 
{\it benzenoid}.  Every benzenoid is a fusene, but not every
fusene is a benzenoid.
\end{definition}

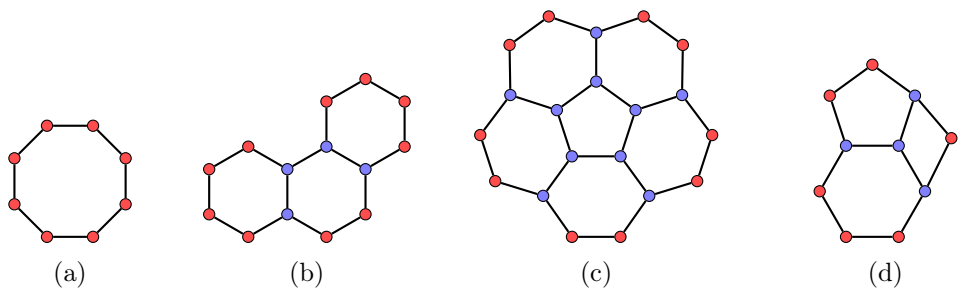
\begin{figure}[!htbp]
\centering
\begin{subfigure}[b]{0.15\textwidth}
\centering
\begin{tikzpicture}
\tikzstyle{edge}=[draw,thick]
\tikzstyle{every node}=[draw, circle, fill=blue!50!white, inner sep=1.5pt]
\foreach \k in {0,1,...,7} {
\node[fill=red!70!white] (u\k) at ({22.5 + 45 * \k}:0.8) {};
}
\path[edge] (u0) -- (u1) -- (u2) -- (u3) -- (u4) -- (u5) -- (u6) -- (u7) -- (u0);
\end{tikzpicture}
\caption{}
\end{subfigure}
\begin{subfigure}[b]{0.24\textwidth}
\centering
\begin{tikzpicture}
\tikzstyle{edge}=[draw,thick]
\tikzstyle{every node}=[draw, circle, fill=blue!50!white, inner sep=1.5pt]
\foreach \k in {0,1,2,3} {
\node[fill=red!70!white] (u\k) at ({90 + 60 * \k}:0.6) {};
}
\foreach \k in {4,5} {
\node (u\k) at ({90 + 60 * \k}:0.6) {};
}
\foreach \k in {3,4} {
\node[fill=red!70!white] (v\k) at ($ ({90 + 60 * \k}:0.6) + ({0.6 * sqrt(3)},0) $) {};
}
\foreach \k in {0,5} {
\node (v\k) at ($ ({90 + 60 * \k}:0.6) + ({0.6 * sqrt(3)},0) $) {};
}
\foreach \k in {0,1,4,5} {
\node[fill=red!70!white] (w\k) at ($ ({90 + 60 * \k}:0.6) + ({0.6 * sqrt(3)},0) + (60:{0.6 * sqrt(3)}) $) {};
}
\path[edge] (u0) -- (u1) -- (u2) -- (u3) -- (u4) -- (u5) -- (u0);
\path[edge] (u4) -- (v3) -- (v4) -- (v5) -- (v0) -- (u5);
\path[edge] (v5) -- (w4) -- (w5) -- (w0) -- (w1) -- (v0);
\end{tikzpicture}
\caption{}
\end{subfigure}
\begin{subfigure}[b]{0.24\textwidth}
\centering
\begin{tikzpicture}
\tikzstyle{edge}=[draw,thick]
\tikzstyle{every node}=[draw, circle, fill=blue!50!white, inner sep=1.5pt]
\foreach \k in {0,1,2,3,4} {
\node (u\k) at ({90 + 72 * \k}:0.55) {};
\node (v\k) at ({90 + 72 * \k}:1.2) {};
}
\foreach \k in {0,1,2,3,4} {
\node[fill=red!70!white] (a\k) at ({90 + 36 - 12 + 72 * \k}:1.55) {};
\node[fill=red!70!white] (b\k) at ({90 + 36 + 12 + 72 * \k}:1.55) {};
}
\path[edge] (u0) -- (u1) -- (u2) -- (u3) -- (u4) -- (u0);
\path[edge] (u0) -- (v0) -- (a0) -- (b0) -- (v1) -- (u1);
\path[edge] (v1) -- (a1) -- (b1) -- (v2) -- (u2);
\path[edge] (v2) -- (a2) -- (b2) -- (v3) -- (u3);
\path[edge] (v3) -- (a3) -- (b3) -- (v4) -- (u4);
\path[edge] (v4) -- (a4) -- (b4) -- (v0);
\end{tikzpicture}
\caption{}
\end{subfigure}
\begin{subfigure}[b]{0.24\textwidth}
\centering
\begin{tikzpicture}
\tikzstyle{edge}=[draw,thick]
\tikzstyle{every node}=[draw, circle, fill=blue!50!white, inner sep=1.5pt]
\node[fill=red!70!white] (u0) at (0, 0) {};
\node[fill=red!70!white] (u1) at ($ (u0) + (0:0.7) $) {};
\node (u2) at ($ (u1) + (60:0.7) $) {};
\node (u3) at ($ (u2) + (120:0.7) $) {};
\node (u4) at ($ (u2) + (120:0.7) + (180:0.7) $) {};
\node[fill=red!70!white] (u5) at ($ (u4) + (240:0.7) $) {};
\node[fill=red!70!white] (v0) at ($ (u3) + (0:0.7) + (90:0.1) $) {};
\node[fill=red!70!white] (w0) at ($ (u4) + (108:0.7) $) {};
\node[fill=red!70!white] (w1) at ($ (w0) + (36:0.7) $) {};
\node (w2) at ($ (w1) + (-36:0.7) $) {};
\path[edge] (u0) -- (u1) -- (u2) -- (v0) -- (w2) -- (w1) -- (w0) -- (u4) -- (u5) -- (u0);
\path[edge] (u2) -- (u3) -- (u4);
\path[edge] (u3) -- (w2);
\end{tikzpicture}
\caption{}
\end{subfigure}
\caption{Examples of patches. The vertices of the natural attachment set are coloured red.
Patch (b) is a fusene (and a benzenoid).  Patches (b) and (c) are fullerene patches  in the sense that they can appear as induced subgraphs in fullerenes \cite{Graver2010,Graver2014}.}
\label{fig:patches}
\end{figure}

Figure~\ref{fig:patches} contains several examples of patches. A patch is a chemical graph
and it corresponds to an unsaturated hydrocarbon of chemical formula $\mathrm{C}_n \mathrm{H}_{3n - 2m}$
where $n$ is the number of vertices and $m$ is the number of edges in the patch.
When  the altan construction is performed on a patch (which is, by definition, embedded in the plane), the
new perimeter cycle can be drawn in the outer face of the patch and connected to the natural attachment
set in such a way that no crossings are introduced. The graph
obtained is evidently plane.
It is easy to see that the altan of a patch is a patch.

\begin{example}
\label{ex:ex2}
The molecular graph of pentalene is an example of a small patch, consisting in this case of two fused pentagons. 
Pentalene has a unique kernel eigenvector/NBMO which is illustrated in Figure~\ref{fig:example2}(\subref{fig:example2a}).
Altan-pentalene has nullity $2$, and a pair of eigenvectors/NBMOs spanning the kernel is shown in Figure~\ref{fig:example2}(\subref{fig:example2b}).
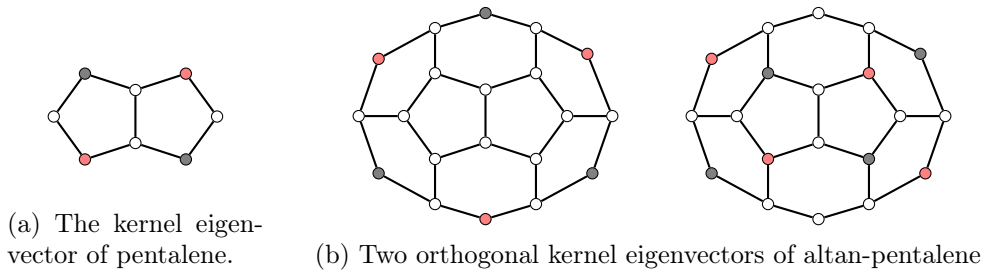
\begin{figure}[!b]
\centering
\begin{subfigure}[b]{0.22\linewidth}
\centering
\begin{tikzpicture}
\tikzstyle{edge}=[draw,thick]
\tikzstyle{every node}=[draw, circle, fill=white, inner sep=1.5pt]
\tikzstyle{pozit}=[fill=black!50!white]
\tikzstyle{negat}=[fill=red!50!white]
\node[] (u0) at ({36 + 72 * 0}:0.6) {};
\node[pozit] (u1) at ({36 + 72 * 1}:0.6) {};
\node[] (u2) at ({36 + 72 * 2}:0.6) {};
\node[negat] (u3) at ({36 + 72 * 3}:0.6) {};
\node[] (u4) at ({36 + 72 * 4}:0.6) {};
\node[draw=none,fill=none] at (-90:0.95) {}; 
\node[negat] (v1) at ($ ({0.6 * cos(36) + 0.6 * sin(36) / tan(36)}, 0) + ({180 - 36 - 72}: {0.6 * sin(36) / sin(36)}) $) {};
\node[] (v2) at ($ ({0.6 * cos(36) + 0.6 * sin(36) / tan(36)}, 0) + ({180 - 36 - 2 * 72}: {0.6 * sin(36) / sin(36)}) $) {};
\node[pozit] (v3) at ($ ({0.6 * cos(36) + 0.6 * sin(36) / tan(36)}, 0) + ({180 - 36 - 3 * 72}: {0.6 * sin(36) / sin(36)}) $) {};
\path[edge] (u0) -- (u1) -- (u2) -- (u3) -- (u4) -- (u0);
\path[edge] (u0) -- (v1) -- (v2) -- (v3) -- (u4);
\end{tikzpicture}
\caption{The kernel eigenvector of pentalene.}
\label{fig:example2a}
\end{subfigure}
\begin{subfigure}[b]{0.65\linewidth}
\centering
\begin{tikzpicture}
\tikzstyle{edge}=[draw,thick]
\tikzstyle{every node}=[draw, circle, fill=white, inner sep=1.5pt]
\tikzstyle{pozit}=[fill=black!50!white]
\tikzstyle{negat}=[fill=red!50!white]
\node[] (u0) at ({36 + 72 * 0}:0.6) {};
\node[] (u1) at ({36 + 72 * 1}:0.6) {};
\node[] (u2) at ({36 + 72 * 2}:0.6) {};
\node[] (u3) at ({36 + 72 * 3}:0.6) {};
\node[] (u4) at ({36 + 72 * 4}:0.6) {};
\node[] (v1) at ($ ({0.6 * cos(36) + 0.6 * sin(36) / tan(36)}, 0) + ({180 - 36 - 72}: {0.6 * sin(36) / sin(36)}) $) {};
\node[] (v2) at ($ ({0.6 * cos(36) + 0.6 * sin(36) / tan(36)}, 0) + ({180 - 36 - 2 * 72}: {0.6 * sin(36) / sin(36)}) $) {};
\node[] (v3) at ($ ({0.6 * cos(36) + 0.6 * sin(36) / tan(36)}, 0) + ({180 - 36 - 3 * 72}: {0.6 * sin(36) / sin(36)}) $) {};
\node[] (uu1) at ($ (u1) + ({90}:0.6)$) {};
\node[] (uu2) at ($ (u2) + (180:0.6)$) {};
\node[] (uu3) at ($ (u3) + ({-90}:0.6)$) {};
\node[] (vv1) at ($ (v1) + ({90}:0.6)$) {};
\node[] (vv2) at ($ (v2) + (0:0.6)$) {};
\node[] (vv3) at ($ (v3) + ({-90}:0.6)$) {};
\node[negat] (aa1) at ($ (uu1)!0.5!(uu2) + (144:0.3) $) {};
\node[pozit] (aa2) at ($ (uu2)!0.5!(uu3) + (216:0.3) $) {};
\node[pozit] (bb1) at ($ (uu1)!0.5!(vv1) + (90:0.2) $) {};
\node[negat] (bb2) at ($ (vv1)!0.5!(vv2) + (54:0.3) $) {};
\node[pozit] (bb3) at ($ (vv2)!0.5!(vv3) + (-36:0.3) $) {};
\node[negat] (bb4) at ($ (uu3)!0.5!(vv3) + (-90:0.2) $) {};
\path[edge] (u0) -- (u1) -- (u2) -- (u3) -- (u4) -- (u0);
\path[edge] (u0) -- (v1) -- (v2) -- (v3) -- (u4);
\path[edge] (uu1) -- (aa1) -- (uu2) -- (aa2) -- (uu3) -- (bb4) -- (vv3) -- (bb3) -- (vv2) -- (bb2) --
(vv1) -- (bb1) -- (uu1);
\path[edge] (u1) -- (uu1);
\path[edge] (u2) -- (uu2);
\path[edge] (u3) -- (uu3);
\path[edge] (v1) -- (vv1);
\path[edge] (v2) -- (vv2);
\path[edge] (v3) -- (vv3);
\end{tikzpicture}
\qquad
\begin{tikzpicture}
\tikzstyle{edge}=[draw,thick]
\tikzstyle{every node}=[draw, circle, fill=white, inner sep=1.5pt]
\tikzstyle{pozit}=[fill=black!50!white]
\tikzstyle{negat}=[fill=red!50!white]
\node[] (u0) at ({36 + 72 * 0}:0.6) {};
\node[pozit] (u1) at ({36 + 72 * 1}:0.6) {};
\node[] (u2) at ({36 + 72 * 2}:0.6) {};
\node[negat] (u3) at ({36 + 72 * 3}:0.6) {};
\node[] (u4) at ({36 + 72 * 4}:0.6) {};
\node[negat] (v1) at ($ ({0.6 * cos(36) + 0.6 * sin(36) / tan(36)}, 0) + ({180 - 36 - 72}: {0.6 * sin(36) / sin(36)}) $) {};
\node[] (v2) at ($ ({0.6 * cos(36) + 0.6 * sin(36) / tan(36)}, 0) + ({180 - 36 - 2 * 72}: {0.6 * sin(36) / sin(36)}) $) {};
\node[pozit] (v3) at ($ ({0.6 * cos(36) + 0.6 * sin(36) / tan(36)}, 0) + ({180 - 36 - 3 * 72}: {0.6 * sin(36) / sin(36)}) $) {};
\node[] (uu1) at ($ (u1) + ({90}:0.6)$) {};
\node[] (uu2) at ($ (u2) + (180:0.6)$) {};
\node[] (uu3) at ($ (u3) + ({-90}:0.6)$) {};
\node[] (vv1) at ($ (v1) + ({90}:0.6)$) {};
\node[] (vv2) at ($ (v2) + (0:0.6)$) {};
\node[] (vv3) at ($ (v3) + ({-90}:0.6)$) {};
\node[negat] (aa1) at ($ (uu1)!0.5!(uu2) + (144:0.3) $) {};
\node[pozit] (aa2) at ($ (uu2)!0.5!(uu3) + (216:0.3) $) {};
\node[] (bb1) at ($ (uu1)!0.5!(vv1) + (90:0.2) $) {};
\node[pozit] (bb2) at ($ (vv1)!0.5!(vv2) + (54:0.3) $) {};
\node[negat] (bb3) at ($ (vv2)!0.5!(vv3) + (-36:0.3) $) {};
\node[] (bb4) at ($ (uu3)!0.5!(vv3) + (-90:0.2) $) {};
\path[edge] (u0) -- (u1) -- (u2) -- (u3) -- (u4) -- (u0);
\path[edge] (u0) -- (v1) -- (v2) -- (v3) -- (u4);
\path[edge] (uu1) -- (aa1) -- (uu2) -- (aa2) -- (uu3) -- (bb4) -- (vv3) -- (bb3) -- (vv2) -- (bb2) --
(vv1) -- (bb1) -- (uu1);
\path[edge] (u1) -- (uu1);
\path[edge] (u2) -- (uu2);
\path[edge] (u3) -- (uu3);
\path[edge] (v1) -- (vv1);
\path[edge] (v2) -- (vv2);
\path[edge] (v3) -- (vv3);
\end{tikzpicture}
\caption{Two orthogonal kernel eigenvectors of altan-pentalene.}
\label{fig:example2b}
\end{subfigure}
\caption{The molecular graph of pentalene has nullity $1$, whilst the molecular graph of altan-pentalene has nullity $2$.
Eigenvector entries are colour-coded: white corresponds to value $0$, black to $1$ and red to $-1$ in the unnormalised vector.
In the normalised vectors, entries are multiplied by $\sqrt{1/4}, \sqrt{1/6}$ and $\sqrt{1/8}$, respectively.
}
\label{fig:example2}
\end{figure}
\end{example}

Straightforward considerations based on the Euler theorem give
relations that constrain the composition of  the altan of a patch in terms 
of the faces that it contains, and of 
the composition of the cyclic strip of faces that is added 
to the patch at each stage of altanisation.

\begin{proposition}
Let $\Pi$ be a patch. Let $f_r$ denote the number of faces of length $r$  in $\altan(\Pi)$, excluding the infinite (unbounded) face.
Then
\begin{equation}
\label{eq:eulereq1}
\sum_{r \geq 3} (6 - r) f_r = 3f_3 + 2f_4 + f_5 - f_7 -2f_8 - \cdots = 6.
\end{equation}
Let $n_k$ be the number of vertices of degree $k$ in  $\Pi$ and let $n^{b}_k$ be the
number of vertices of degree $k$ on the perimeter of $\Pi$.
Let $\widetilde{f}_r$ denote the number of new faces of length $r$ that are added to the patch by the altan
construction. Then
\begin{equation}
\label{eq:altanfaces}
\sum_{r \geq 3} (6 - r) \widetilde{f}_r = 3\widetilde{f}_3 + 2\widetilde{f}_4 + \widetilde{f}_5 - \widetilde{f}_7 -2\widetilde{f}_8 - \cdots = n_2 - n^{b}_3
\end{equation}
with $\widetilde{f}_3  = \widetilde{f}_4 = 0$.
\end{proposition}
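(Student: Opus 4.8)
The plan is to obtain both identities from one Euler-type relation that holds for an arbitrary patch, and then specialise. So the first step is: for any patch $\Lambda$, with $n$ vertices, $m$ edges, $n_2(\Lambda)$ vertices of degree $2$, $n_3(\Lambda)$ of degree $3$, $n_3^{b}(\Lambda)$ vertices of degree $3$ on the perimeter, and $f_r(\Lambda)$ bounded faces of length $r$, I would prove
\[
\sum_{r \geq 3}(6 - r)f_r(\Lambda) \;=\; 6 - n_2(\Lambda) + n_3^{b}(\Lambda).
\]
The ingredients are elementary. Since a patch is $2$-connected (and simple), its outer face is bounded by a simple cycle, and since every degree-$2$ vertex lies on the perimeter, that cycle has length $n_2(\Lambda)+n_3^{b}(\Lambda)$; summing lengths over all faces gives $\sum_{r\geq3} r\,f_r(\Lambda) = 2m - n_2(\Lambda) - n_3^{b}(\Lambda)$; Euler's formula gives $\sum_{r\geq3} f_r(\Lambda) = 1 + m - n$; and sub-cubicity with the handshake lemma gives $2m = 2n_2(\Lambda)+3n_3(\Lambda)$, $n = n_2(\Lambda)+n_3(\Lambda)$. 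Substituting into $\sum(6-r)f_r = 6\sum f_r - \sum r f_r$ and simplifying yields the displayed relation.

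Next I would read off \eqref{eq:eulereq1}. The altan of a patch is a patch, so the relation applies to $\Lambda = \altan(\Pi)$. By Definition~\ref{def:altan} the perimeter of $\altan(\Pi)$ is the cycle $x_1 y_1 x_2 y_2 \cdots x_h y_h$: each $x_i$ has degree $3$, each $y_i$ has degree $2$, and every vertex of $\Pi$ has degree $3$ in $\altan(\Pi)$ — a degree-$3$ vertex of $\Pi$ gains no new edge, and each degree-$2$ vertex of $\Pi$, being an entry of the natural attachment set and occurring exactly once because the perimeter of $\Pi$ is a simple cycle, gains exactly one new edge. Hence $n_2(\altan(\Pi)) = n_3^{b}(\altan(\Pi)) = h$, and the relation collapses to $\sum_{r\geq3}(6-r)f_r = 6$, which is \eqref{eq:eulereq1}.

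For \eqref{eq:altanfaces} I would analyse the new faces directly. Drawing the new perimeter and the edges of Definition~\ref{def:altan} in the outer face of $\Pi$ leaves every bounded face of $\Pi$ intact and cuts the former outer face into exactly $h$ new bounded faces; the $h$ entries of the attachment set split the perimeter cycle of $\Pi$ into $h$ arcs (of total length $n_2 + n_3^{b}$, the quantities now referring to $\Pi$), and the new face lying between the spokes $v_i x_i$ and $v_{i+1}x_{i+1}$ is bounded by one such arc together with the four edges $v_i x_i$, $x_i y_i$, $y_i x_{i+1}$, $x_{i+1} v_{i+1}$. In particular each new face has length at least $5$, so $\widetilde{f}_3 = \widetilde{f}_4 = 0$; and since $f_r(\altan(\Pi)) = f_r(\Pi) + \widetilde{f}_r$ for every $r$, subtracting the first-step relation for $\Pi$ from \eqref{eq:eulereq1} gives $\sum_{r\geq3}(6-r)\widetilde{f}_r = 6 - (6 - n_2 + n_3^{b}) = n_2 - n_3^{b}$. (Equivalently, from the face description $\sum_r r\,\widetilde{f}_r = (n_2 + n_3^{b}) + 4h = 5n_2 + n_3^{b}$ and $\sum_r \widetilde{f}_r = h = n_2$, and the identity drops out.)

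I do not anticipate a genuine obstacle: the whole argument is bookkeeping. The one point deserving care is the justification that the new faces are precisely the $h$ inter-spoke regions with the stated boundaries — this rests on $2$-connectedness of both $\Pi$ and $\altan(\Pi)$ (so that all relevant face boundaries, the perimeters in particular, are cycles) and on the natural attachment set listing each degree-$2$ vertex of $\Pi$ exactly once. Once those facts are fixed, everything else is the routine Euler computation sketched above.
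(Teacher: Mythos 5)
Your proposal is correct and uses essentially the same ingredients as the paper's proof: Euler's formula, the handshaking lemma for faces (with the outer face contributing the perimeter length $n_2+n_3^{b}$), and the degree counts $n=n_2+n_3$, $2m=2n_2+3n_3$. The only difference is organisational — you package the computation once as the general identity $\sum_{r\geq 3}(6-r)f_r(\Lambda)=6-n_2(\Lambda)+n_3^{b}(\Lambda)$ and specialise it to $\altan(\Pi)$ and to $\Pi$, whereas the paper runs the Euler/handshake computation separately for the altan and the parent and then combines; both yield \eqref{eq:eulereq1} and \eqref{eq:altanfaces}, and your observation that each new face uses four new edges plus a nonempty perimeter arc correctly gives $\widetilde{f}_3=\widetilde{f}_4=0$.
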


\begin{proof}
Let $f$ be the number of faces of $\altan(\Pi)$, excluding the infinite face. Then $f = f_3 + f_4 + f_5 + \cdots = \sum_{r \geq 3} f_r$.
Let $h$ be the size of the natural attachment set. Note that $h = n^b_2 = n_2$.

By applying the Euler formula on the altan   $\altan(\Pi)$ we obtain
\begin{equation}
\label{eq:proofef}
(n_2 + n_3 + 2h) - (n_2 + \tfrac{3}{2}n_3 + 3h) + (\sum_{r \geq 3} f_r + 1) = 2,
\end{equation}
and the Handshaking Lemma for faces gives
\begin{equation}
\label{eq:proofhlf}
\sum_{r \geq 3} r f_r + 2h = 2n_2 + 3n_3 + 6h.
\end{equation}
Combining equations \eqref{eq:proofef} and \eqref{eq:proofhlf} gives  \eqref{eq:eulereq1}.

By applying the Euler formula on the patch $\Pi$ we obtain
\begin{equation}
\label{eq:proofef2}
(n_2 + n_3) - (n_2 + \tfrac{3}{2}n_3) + (\sum_{r \geq 3} (f_r - \widetilde{f}_r) + 1) = 2,
\end{equation}
while the Handshaking Lemma for faces gives 
\begin{equation}
\label{eq:proofhlf2}
\sum_{r \geq 3} r (f_r - \widetilde{f}_r) + (n^{b}_3 + n^{b}_2) = 2n_2 + 3n_3.
\end{equation}
Combining equations \eqref{eq:eulereq1}, \eqref{eq:proofef2} and \eqref{eq:proofhlf2} gives \eqref{eq:altanfaces}, and by definition of the altan, it is clear that the length of each new face is at least $5$, hence $\widetilde{f}_3  = \widetilde{f}_4 = 0$.
\end{proof}

If $n^{b}_3 = 0$ then $\widetilde{f}_5 = n_2$ and $\widetilde{f}_7 = \widetilde{f}_8 = \cdots = 0$.
Note that in the iterated altan, the boundary code is $(23)^h$ for all altanisations after the first, and
$\widetilde{f}_6 = h$ while $\widetilde{f}_7 = \widetilde{f}_8 = \widetilde{f}_9 = \cdots = 0$.
Iteration will lead to a graphitic tube corresponding to a capped nanotube (a nanocone with $6$ disclinations in the nomenclature of Klein and Balaban \cite{Klein2006}).

Reasoning of this type is often used to describe the composition of various types of generalised patch\cite{Brinkmann2005,Gutman2014a}. 
In \cite{Gutman2014a} it is used to specify the composition of the altans of benzenoids. 
In the familiar geographical analogy where the perimeter of a benzenoid is likened to the coastline of an island continent, 
regions of increasing concavity are described as fissures, bays, coves and fjords \cite{Gutman1989}, corresponding respectively 
to degree sequences on the boundary of $232$, $2332$, $23332$, and $233332$. 
The counts of the four types of feature are denoted $b_1$, $b_2$, $b_3$, and $b_4$, respectively. 
The \emph{bay number} of the benzenoid is then $b = b_2 +2b_3 + 3b_4$, 
and the number of adjacencies of type $22$ on the perimeter of the benzenoid is 
$n_{22}=6+b$ \cite{Gutman1989}, 
which is also the number of pentagons added to the benzenoid by altanisation. 
Each fissure, bay, cove or fjord adds respectively a hexagon, heptagon, octagon or 
nonagon in the altan \cite{Gutman2014a}. 
Hence, when $G$ is a benzenoid, in our notation we have
\begin{equation}
\widetilde{f}_5 -\widetilde{f}_7 - 2\widetilde{f}_8 -3\widetilde{f}_9 = 6,
\end{equation}
which is the special case of \eqref{eq:altanfaces} for a benzenoid parent.
Notice also that \emph{convex} benzenoids \cite{gutman2012} have $b =0$, so that the strip of faces added by the 
altan operation to a convex benzenoid consists of $6$ pentagons and $b_1$ hexagons. 

Simple counting considerations can also be used to derive a useful relation between nullity and size of 
the attachment set in the case where the patch is bipartite:

\begin{theorem}
\label{thm:bipartitepatchnullity}
Let $\Pi$ be a bipartite patch and let $H$ be the natural attachment set. Then
$$\eta(\Pi) \equiv |H| \pmod{2}.$$
\end{theorem}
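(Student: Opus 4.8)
The plan is to reduce the statement to two elementary facts and then combine them.

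The first is purely spectral: for \emph{any} bipartite graph $G$ on $n$ vertices, $\eta(G) \equiv n \pmod 2$. Indeed, the spectrum of a bipartite graph is symmetric about $0$, so the nonzero eigenvalues of $A(G)$ occur in pairs $\{\lambda, -\lambda\}$ of equal multiplicity; their total number is therefore even, i.e.\ $n - \eta(G)$ is even. (Equivalently, ordering the vertices by colour class gives $A(G) = \left(\begin{smallmatrix} 0 & B \\ B^{T} & 0 \end{smallmatrix}\right)$, whose two block-rows occupy disjoint sets of columns, so $\rk A(G) = 2\rk B$ and $\eta(G) = n - 2\rk B$.) Bipartiteness is essential here, as the failure of the statement for odd cycles shows.

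The second fact is combinatorial: for a patch $\Pi$ one has $|V(\Pi)| \equiv |H| \pmod 2$. A patch is sub-cubic and $2$-connected, so every vertex has degree $2$ or $3$; writing $n_k$ for the number of degree-$k$ vertices, $|V(\Pi)| = n_2 + n_3$. By the definition of a patch, every degree-$2$ vertex lies on the perimeter, so the natural attachment set $H$ lists precisely the $n_2$ degree-$2$ vertices, giving $|H| = n_2$. Hence $|V(\Pi)| - |H| = n_3$, and $n_3$ is even because the handshaking lemma gives $2\,|E(\Pi)| = 2n_2 + 3n_3$.

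Combining the two, $\eta(\Pi) \equiv |V(\Pi)| = n_2 + n_3 \equiv n_2 = |H| \pmod 2$, as required. I do not expect a genuine obstacle: the only thing to keep straight is that the parity is forced by bipartiteness on the spectral side and by the handshaking lemma on the combinatorial side, the remaining patch axioms serving merely to identify $|H|$ with $n_2$.
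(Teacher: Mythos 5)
Your proof is correct and follows essentially the same route as the paper's: both arguments reduce the claim to the Pairing Theorem ($\eta \equiv n \pmod 2$ for a bipartite graph) combined with the handshaking-lemma observation that $n_3$ is even, whence $n = n_2 + n_3 \equiv n_2 = |H| \pmod 2$. The only difference is that you omit the paper's intermediate step showing that the perimeter length $n_2 + n_3^{b}$ is even (being the length of a cycle in a bipartite graph); that step is logically redundant for the theorem itself and appears in the paper only because the resulting congruence $n_2 \equiv n_3^{b} \pmod 2$ is reused in the remark following Corollary~\ref{cor:corollary3}.
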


\begin{proof}
Let $V(\Pi)$ and $E(\Pi)$ be the vertex and edge sets of the patch $\Pi$, respectively.
Let $n_2$ and $n_3$ denote the numbers of degree-$2$ and degree-$3$ vertices of the patch.
Moreover, let $n_3^{i}$ and $n_3^{b}$ denote the numbers of internal and boundary degree-$3$ vertices of
the patch, respectively. Note that $n_3 = n_3^{i} + n_3^{b}$ and $n = |V(\Pi)| = n_2 + n_3$.
Furthermore, let $p$ denote the length of the perimeter. Note that $p = n_2 + n_3^{b}$.
Observe that $h = |H| = n_2$.

Since $\Pi$ is a bipartite graph, all its cycles are of even length and therefore $p$ is even.
From $p = n_2 + n_3^{b} \equiv 0 \pmod{2}$ it follows that $n_2 \equiv n_3^{b} \pmod{2}$.
From $n = n_2 + n_3^{b} + n_3^{i}$ it follows that $n \equiv n_3^{i} \pmod{2}$.
By the Handshaking Lemma, $2|E(\Pi)| = 3n_3 + 2n_2$ and therefore $n_3 \equiv 0 \pmod{2}$. 
Hence, $n_3^{b} \equiv n_3^{i} \pmod{2}$. 
By the Pairing Theorem, $n \equiv \eta(\Pi) \pmod{2}$.
Summarising the above observations we obtain
\[
 h \equiv n_3^{i} \equiv n_3^{b} \equiv n \equiv \eta(\Pi) \pmod{2}. \qedhere
\]
\end{proof}

\begin{corollary}
\label{cor:corollary3}
Let $\mathcal{B}$ be a benzenoid on $n$ vertices, let $\eta = \eta(\mathcal{B})$ be the nullity of $\mathcal{B}$ and let $h$ be the size of the natural attachment set.
Then $$h \equiv n \equiv \eta \pmod {2}.$$
\end{corollary}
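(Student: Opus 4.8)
The plan is to obtain this as an essentially immediate specialisation of Theorem~\ref{thm:bipartitepatchnullity}. First I would note that, by the definitions given above, every benzenoid $\mathcal{B}$ is a fusene and hence a patch, so it has a well-defined natural attachment set $H$ with $|H| = h$. Next I would observe that $\mathcal{B}$ is bipartite: the hexagonal tessellation of the plane is properly $2$-colourable, and $\mathcal{B}$, being by definition a (simply connected) subgraph of that tessellation, inherits this $2$-colouring. Thus $\mathcal{B}$ is a bipartite patch, and applying Theorem~\ref{thm:bipartitepatchnullity} gives at once $\eta \equiv h \pmod 2$.

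It then remains only to add $n \equiv \eta \pmod 2$, and this is already contained in the proof of Theorem~\ref{thm:bipartitepatchnullity}: since $\mathcal{B}$ is bipartite, the Pairing (Coulson--Rushbrooke) Theorem shows the non-zero eigenvalues of $A(\mathcal{B})$ come in $\pm$ pairs, so $n - \eta$ is even. Combining the two congruences yields $h \equiv n \equiv \eta \pmod 2$. Equivalently, and perhaps more economically, one can simply quote the displayed chain $h \equiv n_3^{i} \equiv n_3^{b} \equiv n \equiv \eta(\Pi) \pmod 2$ from the proof of that theorem, every step of which applies verbatim with $\Pi = \mathcal{B}$.

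I do not anticipate any real obstacle here: the corollary is literally the benzenoid case of the theorem, and the only thing requiring a (one-line) justification is the standard fact that a benzenoid is a bipartite patch, which follows directly from its definition. The write-up should therefore be just a few sentences — identify $\mathcal{B}$ as a bipartite patch, invoke Theorem~\ref{thm:bipartitepatchnullity}, and read off the extra congruence $n\equiv\eta$ from (the proof of) that same theorem.
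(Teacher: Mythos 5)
Your proposal is correct and matches the paper's intended derivation: the corollary is stated without a separate proof precisely because it is the benzenoid specialisation of Theorem~\ref{thm:bipartitepatchnullity}, whose proof already contains the full chain $h \equiv n_3^{i} \equiv n_3^{b} \equiv n \equiv \eta \pmod{2}$. Your one-line justification that a benzenoid is a bipartite patch (inheriting the $2$-colouring of the hexagonal tessellation) is exactly the missing observation needed.
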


In the proof of Theorem~\ref{thm:bipartitepatchnullity} we have seen that $n_2 \equiv n_3^{b} \pmod{2}$ for a bipartite patch.
This implies that the altan of a bipartite patch has a constraint on the numbers of faces of odd lengths. In this case,
 the RHS of Equation~\eqref{eq:altanfaces} is even and 
$\widetilde{f}_5 - \widetilde{f}_7 - 3\widetilde{f}_9 - 5\widetilde{f}_{11} - \cdots \equiv 0 \pmod{2}.$

Although we have concentrated here on altanisation 
of patched as models for simply
connected polycyclic aromatic hydrocarbon (PAH) nanostructures,
the chemical literature also includes examples of altans of structures with holes, 
such as kekulene~\cite{Monaco2015}, where altanisation proceeds on the outer perimeter only. 
Altans (inner and outer) of general coronoid structures such as kekulene have been considered in \cite{Basic2016}
under a comprehensive scheme for altanisation of perforated patches.

\section{Main results: Theorems for nullities of altans}
\label{sec:mainsec}

We can now state the main  results
of our mathematical investigation of the nullity of altans.

\begin{definition}
\label{def:special}
Let $(G', H')$ and vertex labeling be as in Definition~\ref{def:altan} (see also Figure~\ref{fig:altan}). The vector $\special$, defined as
\begin{equation}
\label{eq:ubiq}
\special(u) = \begin{cases}
\phantom{-}1 & \text{if }u = y_i \text{ and } i \text{ even}, \\
-1 & \text{if }u = y_i \text{ and } i \text{ odd}, \\
\phantom{-}0 & \text{otherwise},
\end{cases}
\end{equation}
will be called the \emph{special vector} of $\altan(G, H)$, or sometimes, for short, the special one.
\end{definition}

An example of this vector has already been encountered in Example~\ref{ex:ex2} (see the left panel of Figure~\ref{fig:example2}(\subref{fig:example2b})).

\begin{lemma}
\label{lem:special}
Let $(G', H')$ be as in Definition~\ref{def:special}.
If $h = |H|$ is even, then the special vector
$\special$ is a kernel eigenvector of $\altan(G, H)$.
\end{lemma}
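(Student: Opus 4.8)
The statement to prove is that when $h=|H|$ is even, the special vector $\special$ lies in $\ker A(G')$. The plan is to verify the local condition \eqref{eq:local} for the eigenvalue $\lambda = 0$ at every vertex $v$ of $G'$, i.e.\ to check that $\sum_{u \sim v} \special(u) = 0$ for each $v$. Since $\special$ is supported entirely on the perimeter vertices $y_1, \dots, y_h$, the only vertices $v$ at which the sum could be nonzero are those adjacent to at least one $y_i$; everywhere else the sum is vacuously $0$.

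First I would partition $V(G')$ into the four relevant classes: (i) vertices of $G$ (the original parent), (ii) the vertices $x_i$, (iii) the vertices $y_i$, and handle each class. For class (i): a vertex $v \in V(G)$ has neighbours only in $V(G) \cup \{x_1,\dots,x_h\}$, on all of which $\special$ vanishes, so the sum is $0$. For class (iii): by Definition~\ref{def:altan}, $N_{G'}(y_i) = \{x_i, x_{i+1}\}$ (indices cyclic), and $\special$ vanishes on all $x_j$, so the sum is again $0$. The only substantive case is class (ii): $N_{G'}(x_i) = \{v_i, y_{i-1}, y_i\}$ (with $y_0 := y_h$), so $\sum_{u \sim x_i} \special(u) = \special(v_i) + \special(y_{i-1}) + \special(y_i) = 0 + \special(y_{i-1}) + \special(y_i)$. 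By definition \eqref{eq:ubiq}, $\special(y_{i-1})$ and $\special(y_i)$ have opposite signs whenever $i-1$ and $i$ have opposite parities, which holds for every $i$ in the range $1 < i \le h$; hence those two terms cancel.

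The one place where care is needed — and the only real obstacle — is the wrap-around at $x_1$, whose neighbours are $v_1, y_h, y_1$. Here we need $\special(y_h) + \special(y_1) = 0$, i.e.\ we need $h$ and $1$ to have opposite parities, which is exactly the requirement that $h$ be \emph{even}. This is the single point where the hypothesis is used, and it also explains why the lemma fails for odd $h$: then $\special(y_h) = \special(y_1) = -1$ and the local condition at $x_1$ reads $-2 \ne 0$. So the proof amounts to: check the three easy vertex classes, then observe that the cyclic indexing combined with parity of $h$ forces the cancellation $\special(y_{i-1}) + \special(y_i) = 0$ uniformly at every $x_i$, including $x_1$. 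I would also note at the end that $\special \ne \0$ (it has entries $\pm 1$), so it is genuinely a kernel eigenvector and not the trivial vector; combined with $A(G')\special = \0$ this completes the proof.
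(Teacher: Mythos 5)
Your proposal is correct and follows essentially the same route as the paper: verify the local condition $\sum_{u\sim v}\special(u)=0$ at every vertex of $G'$, observing that only the $x_i$ have neighbours on which $\special$ is nonzero and that there the values $\special(y_{i-1})$ and $\special(y_i)$ cancel by parity. Your version is simply more explicit than the paper's one-line argument, in particular in isolating the wrap-around at $x_1$ as the unique place where evenness of $h$ is needed, which is a worthwhile clarification but not a different proof.
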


\begin{proof}
Apply the local condition \eqref{eq:local} with $\lambda = 0$. If $h$ is even then $\{ \special(u) : u \in N(x_i) \} = \{-1, 0, 1\}$
for all $1 \leq i \leq h$. For all other vertices $w \neq x_i$ we have $\{ \special(u) : u \in N(w) \} = \{0\}$.
\end{proof}

The following theorem was already proved by Gutman in \cite{Gutman2014b}
using the Sachs Theorem. Here, we give an elementary proof.

\begin{theorem}[Gutman \cite{Gutman2014b}]
\label{thm:minigutman}
Let $G$ be a graph and let $H$ be an even attachment set.
Then $\eta(\altan(G, H)) \geq 1$.
\end{theorem}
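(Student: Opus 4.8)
The plan is to exhibit a nonzero vector in $\ker A(\altan(G,H))$ explicitly; by Lemma~\ref{lem:special}, the natural candidate is the special vector $\special$. Since $h = |H|$ is even, $\special$ is well-defined (the alternation $+1, -1, +1, \ldots$ around an even cycle is consistent) and Lemma~\ref{lem:special} already tells us that $\special \in \ker A(\altan(G,H))$. The only remaining point is that $\special \neq \0$: indeed $\special(y_1) = -1 \neq 0$, so $\special$ is a nontrivial kernel vector, whence $\eta(\altan(G,H)) = \dim \ker A(\altan(G,H)) \geq 1$.

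The verification underlying Lemma~\ref{lem:special} is the substantive content, so let me sketch it in case one wants a self-contained argument. One checks the local condition \eqref{eq:local} with $\lambda = 0$ at every vertex of $G'$, i.e.\ that the neighbour-sum of $\special$ vanishes everywhere. For a vertex $w$ of the original graph $G$, all its neighbours in $G'$ lie in $V(G) \cup \{x_i\}$, and $\special$ is zero on all of these, so the neighbour-sum is $0$. For a vertex $y_i$, its neighbours are $x_i$ and $x_{i+1}$ (indices mod $h$), where $\special$ is again zero. The one interesting case is a vertex $x_i$, whose neighbours are $v_i$ (in $G$, value $0$), $y_{i-1}$, and $y_i$; here the neighbour-sum is $\special(y_{i-1}) + \special(y_i)$, and since $i-1$ and $i$ have opposite parities this is $(+1) + (-1) = 0$ or $(-1) + (+1) = 0$. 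Crucially this cancellation uses that $h$ is even, so that the cyclic indexing $y_h \leftrightarrow y_1$ also respects the parity alternation — this is exactly where the hypothesis enters.

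There is no real obstacle here: the theorem is an immediate corollary of Lemma~\ref{lem:special} together with the trivial observation that $\special$ is nonzero. If anything, the only thing to be careful about is the parity bookkeeping in the wraparound between index $h$ and index $1$, which is precisely what fails when $h$ is odd (then the alternation is inconsistent and no such $\special$ exists, consistent with the theorem's hypothesis and with the earlier claim that for odd $h$ the nullities of altan and parent coincide).
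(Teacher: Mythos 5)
Your proof is correct and is essentially identical to the paper's: the theorem is deduced immediately from Lemma~\ref{lem:special} by observing that the special vector $\special$ is nonzero, and your sketch of the local-condition check at the $x_i$ vertices (including the parity of the wraparound from $y_h$ to $x_1$, which is where evenness of $h$ is used) matches the paper's own proof of that lemma.
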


\begin{proof}
Follows directly from Lemma~\ref{lem:special}.
\end{proof}

\begin{theorem}
\label{thm:armes1}
Let $G$ be a graph and let $H$ be an attachment set. Then 
\begin{equation}
\eta(\altan(G, H)) \geq \eta(G).
\end{equation}
\end{theorem}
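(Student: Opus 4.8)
The plan is to show that every kernel eigenvector of $G$ extends to a kernel eigenvector of $G' = \altan(G,H)$, so that $\dim\ker A(G') \geq \dim\ker A(G)$. Let $\x \colon V(G) \to \RR$ be an arbitrary element of $\ker A(G)$; I want to define $\widetilde{\x}\colon V(G') \to \RR$ that restricts to $\x$ on $V(G)$, vanishes (or takes controlled values) on the new perimeter vertices $x_1,\ldots,x_h,y_1,\ldots,y_h$, and satisfies the local condition \eqref{eq:local} with $\lambda = 0$ at \emph{every} vertex of $G'$.

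First I would try the simplest extension: set $\widetilde{\x}(v) = \x(v)$ for $v \in V(G)$ and $\widetilde{\x}(x_i) = \widetilde{\x}(y_i) = 0$ for all $i$. Then check the local condition vertex by vertex. At an original vertex $v \in V(G)$, the neighbourhood in $G'$ is $N_G(v)$ together with one copy of $x_i$ for each occurrence of $v$ in $H$; since each such $x_i$ carries weight $0$, the sum over $N_{G'}(v)$ equals $\sum_{u \sim_G v}\x(u) = 0$, as needed. At $y_i$, the neighbours are $x_i$ and $x_{i+1}$ (indices cyclic), both of weight $0$, so the condition holds. The only problematic pivots are the $x_i$: the neighbours of $x_i$ are $v_i$, $y_{i-1}$, and $y_i$, so the local condition there reads $0 = \x(v_i) + 0 + 0 = \x(v_i)$, which fails in general. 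So the naive extension works only when $\x$ vanishes on the whole attachment set.

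To fix the $x_i$ conditions I would push nonzero weight onto the $y_j$'s. Concretely, seek $\widetilde{\x}(y_j) = c_j$ with $\widetilde{\x}(x_i) = 0$ still, so that the $y_i$ condition becomes $0 = \widetilde{\x}(x_i) + \widetilde{\x}(x_{i+1}) = 0$ (automatic), and the $x_i$ condition becomes $0 = \x(v_i) + c_{i-1} + c_i$. This is a linear system over the cyclic index set: $c_{i-1} + c_i = -\x(v_i)$ for $i = 1,\ldots,h$. Summing all equations, the left side telescopes to $2\sum_j c_j$, so a solution with $\widetilde{\x}(x_i)=0$ exists iff $2\sum_j c_j = -\sum_i \x(v_i)$, and more delicately, the alternating sum of the equations around the cycle gives a compatibility condition whose resolvability depends on the parity of $h$. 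When $h$ is odd the circulant system $c_{i-1}+c_i = -\x(v_i)$ is always solvable (the matrix $I + P$, with $P$ the cyclic shift, is invertible since $-1$ is not an $h$-th root of unity for odd $h$), so every kernel vector extends and we are done immediately. When $h$ is even there is an obstruction — the alternating sum $\sum_i (-1)^i \x(v_i)$ must vanish — and here is where the special vector $\special$ from Definition~\ref{def:special} and Lemma~\ref{lem:special} earns its keep: if the obstruction is nonzero I would instead allow $\widetilde{\x}(x_i) = d_i$ not all zero, pick the $d_i$ to absorb the alternating defect (using that for even $h$ one has the extra freedom witnessed by $\special$, which lies in the kernel and is supported on the $y_j$), and re-solve. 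The key point is that the map $\x \mapsto \widetilde{\x}$ so constructed is linear and injective (it restricts to the identity on $V(G)$), giving $\eta(\altan(G,H)) \geq \eta(G)$.

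The main obstacle is the even-$h$ case: one must argue that the obstruction $\sum_i (-1)^i \x(v_i)$ can always be neutralized by a suitable choice of weights on the $x_i$ and $y_j$ without destroying the local conditions already secured elsewhere, i.e. that the enlarged linear system on the $2h$ perimeter unknowns (subject to the $3h$ perimeter local conditions, with the $h$ interior conditions $\x(v_i)$ as inhomogeneous data) is always consistent. I expect this reduces to showing that a certain $2h \times 2h$ circulant-type matrix has the all-ones-alternating vector in its image, which is exactly what $\special \in \ker A(G')$ provides; alternatively one can treat the odd and even cases uniformly by noting the perimeter is a $2h$-cycle with pendant attachments and invoking the kernel structure of even cycles. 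Either route is a finite linear-algebra check once the right ansatz for $\widetilde{\x}$ is in place.
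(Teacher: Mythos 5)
Your setup and your odd-$h$ case match the paper's proof: extend $\x$ by zero on the $x_i$, solve the cyclic system $c_{i-1}+c_i=-\x(v_i)$ for the $y$-values, and note that for odd $h$ the circulant $I+P$ is invertible. You also correctly identify the even-$h$ obstruction $\mathcal{C}(\x)=\sum_i(-1)^i\x(v_i)=0$.

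The gap is in your proposed repair for even $h$. You suggest absorbing a nonzero alternating defect by allowing nonzero values $d_i$ on the $x_i$ (or by exploiting the freedom witnessed by $\special$). This cannot work: the local condition pivoted at $x_i$ reads $0=\x(v_i)+\widetilde{\x}(y_{i-1})+\widetilde{\x}(y_i)$ and does not involve $\widetilde{\x}(x_i)$ at all, so changing the $d_i$ only perturbs the conditions at the $v_i$ and $y_i$, not the obstructed ones. Worse, the obstruction is intrinsic: taking the alternating sum $\sum_i(-1)^i$ of the $x_i$-conditions telescopes the $y$-contributions to zero and leaves $\mathcal{C}(\x)=0$ as a \emph{necessary} condition for $\x$ to be the restriction of any kernel vector of $G'$ whatsoever, regardless of the values assigned on the perimeter. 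Adding a multiple of $\special$ is equally useless here, since $\special$ already satisfies every local condition and so leaves each $x_i$-condition unchanged. Consequently a kernel vector with $\mathcal{C}(\x)\neq 0$ genuinely does not extend, and your claimed injective linear map $\x\mapsto\widetilde{\x}$ does not exist on all of $\ker A(G)$.

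The paper closes this case by a dimension trade rather than a repair: since $\mathcal{C}$ is a linear functional on $\ker A(G)$, its kernel has codimension at most $1$ there, so one can change basis so that at most one basis vector, say $\x^{(1)}$, is non-extendable; the remaining $\eta(G)-1$ vectors extend, and the special vector $\special$ (which is linearly independent of these extensions, as seen from its $y$-entries) restores the count to $\eta(G)$. You should replace your ``absorb the defect'' step with this argument.
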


\begin{proof}
Let $(G', H') = \altan(G, H)$ and let $G'$ be labelled as in Definition~\ref{def:altan}. Let $\x \in \ker(G)$.
We will extend the vector $\x$ to  $\widetilde{\x} \in \ker(G')$.
Let $\widetilde{\x}(u) = \x(u)$ for $u \in V(G)$ and let $\widetilde{\x}(x_i) = 0$ for all $1 \leq i \leq h$.
Note that all vertices $v \in V(G)$ and all vertices $y_i$ for $1 \leq i \leq h$ satisfy the local condition
\eqref{eq:local} with $\lambda = 0$.

Now, let $\widetilde{\x}(y_h) = t \in \mathbb{R}$. By pivoting at vertices $x_1, x_2, \ldots, x_{h - 1}$, respectively, we obtain
\begin{align*}
\widetilde{\x}(y_1) & = \hspace{8.5pt} -t - \widetilde{\x}(v_1) \\
\widetilde{\x}(y_2) & = \hspace{8.5pt} \phantom{-}t + \widetilde{\x}(v_1) - \widetilde{\x}(v_2) \\
\widetilde{\x}(y_3) & = \hspace{8.5pt} -t - \widetilde{\x}(v_1) + \widetilde{\x}(v_2) - \widetilde{\x}(v_3)  \\
& \ldots \\
\widetilde{\x}(y_{h - 1}) & = \begin{cases}
-t - \widetilde{\x}(v_1) + \widetilde{\x}(v_2) - \widetilde{\x}(v_3) + \cdots - \widetilde{\x}(v_{h-1}) & \text{if } h \text{ even}, \\
\phantom{-}t + \widetilde{\x}(v_1) - \widetilde{\x}(v_2) + \widetilde{\x}(v_3) - \cdots - \widetilde{\x}(v_{h-1}) & \text{if } h \text{ odd}.
\end{cases}
\end{align*}
Let us define 
\begin{equation}
\mathcal{C}(\x) = \sum_{i=1}^{h} (-1)^{i} \x(v_i).
\end{equation}
It is easy to see that $\mathcal{C}$ is a linear functional. By pivoting at vertex $x_h$, we get the
following condition
\begin{equation}
\label{eq:extcond}
\begin{cases}
\mathcal{C}(\x) = 0 & \text{if } h \text{ even}, \\
t = \tfrac{1}{2}\mathcal{C}(\x) & \text{if }h \text{ odd}.
\end{cases}
\end{equation}

Now, let $\eta = \eta(G)$ and let $\{ \x^{(1)}, \x^{(2)}, \ldots, \x^{(\eta)} \}$ be a basis of $\ker (G)$.
In other words, $\ker(G) = \spn \{ \x^{(1)}, \x^{(2)}, \ldots, \x^{(\eta)} \}$ and vectors $\x^{(1)}, \x^{(2)}, \ldots, \x^{(\eta)}$
are linearly independent.

First, suppose that $h$ is odd. From \eqref{eq:extcond} it follows that each vector $\x^{(k)}$ can be extended to $\widetilde{\x}^{(k)}$
 by setting $t = \tfrac{1}{2}\mathcal{C}(\x^{(k)})$, as described above. By construction, $\{ \widetilde{\x}^{(1)}, \widetilde{\x}^{(2)}, \ldots, \widetilde{\x}^{(\eta)} \} \subseteq \ker(G')$.
It is easy to see that these vectors
are linearly independent. Therefore, $\eta(G') \geq \eta(G)$.

Suppose instead that $h$ is even. A kernel eigenvector $\x$ will be called \emph{extendable} if $\mathcal{C}(\x) = 0$, i.e., if it satisfies 
condition \eqref{eq:extcond}. We consider two cases:
\begin{enumerate}[label=(\roman*)]
\item\label{proof_case1} 
Suppose that vectors $\x^{(1)}, \x^{(2)}, \ldots, \x^{(\eta)}$ are all extendable. Then their extensions $\widetilde{\x}^{(1)}, \linebreak
\widetilde{\x}^{(2)}, \ldots, \widetilde{\x}^{(\eta)}$ are obtained as described above by choosing, say, $t = 0$.
It is easy to see that the vectors $\widetilde{\x}^{(1)}, 
\widetilde{\x}^{(2)}, \ldots, \widetilde{\x}^{(\eta)}$ are linearly independent. Moreover, the special vector $\special$ is yet another kernel eigenvector.
It is easy to check that $\special$ is linearly independent of all vectors $\widetilde{\x}^{(1)}, \widetilde{\x}^{(2)}, \ldots, \widetilde{\x}^{(\eta)}$ (by inspecting
the entry $y_h$). Hence, in this case we get $\eta(G') \geq \eta(G) + 1$.
\item 
Suppose that at least one of the vectors $\x^{(1)}, \x^{(2)}, \ldots, \x^{(\eta)}$ is \emph{not} extendable.
Without loss of generality, assume that $\x^{(1)}$ is not extendable. In other words, $\mathcal{C}(\x^{(1)}) \neq 0$.
We will replace the basis $\{\x^{(1)}, \x^{(2)}, \ldots, \x^{(\eta)}\}$ with an alternative basis
\[
\{ \x^{(1)}, \x^{(2)} + \lambda_2 \x^{(1)}, \x^{(3)} + \lambda_3 \x^{(1)}, \ldots, \x^{(\eta)} + \lambda_{\eta} \x^{(1)} \},
\]
where scalars $\lambda_k$ are chosen so that $\mathcal{C}(\x^{(k)} + \lambda_k \x^{(1)}) = 0$. Namely,
 $\lambda_k = -{\mathcal{C}(\x^{(k)})}/{\mathcal{C}(\x^{(1)})}$.

Let $\widetilde{\x}^{(2)}, \ldots, \widetilde{\x}^{(\eta)}$ be extensions of $\x^{(2)} + \lambda_2 \x^{(1)}, \ldots, \x^{(\eta)} + \lambda_{\eta} \x^{(1)}$,
respectively. It is easy to see that the vectors $\widetilde{\x}^{(2)}, \ldots, \widetilde{\x}^{(\eta)}$  are linearly independent. Moreover, the special vector $\special$ is independent
of them.  Again, we get $\eta(G') \geq \eta(G)$, as the special one compensates for the loss of $\x^{(1)}$. \qedhere
\end{enumerate}
\end{proof}

Note that the kernel eigenvector of pentalene (see Figure~\ref{fig:example2}(\subref{fig:example2a})) is extendable, and altan-pentalene is an example of 
case \ref{proof_case1} in the proof above.

\begin{theorem}
\label{thm:armes2}
Let $G$ be a graph and $H$  an attachment set. Then
\begin{equation}
\eta(\altan(G, H)) \leq \begin{cases}
\eta(G) + 2 & \text{if } h = |H| \text{ even}, \\
\eta(G) & \text{if } h = |H| \text{ odd}.
\end{cases}
\end{equation}
\end{theorem}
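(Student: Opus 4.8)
The plan is to prove the upper bound by a rank argument: instead of tracking kernel vectors, I will show that $A(\altan(G,H))$ has at least $|V(G')| - \eta(G) - 2$ (resp.\ $-\eta(G)$) linearly independent rows, or equivalently bound the nullity from above by exhibiting enough independent \emph{linear constraints} satisfied by any kernel vector $\widetilde{\x}$ of $G'$. The natural tool is the local condition \eqref{eq:local} with $\lambda = 0$ at the $2h$ new vertices $x_i, y_i$, which already featured in the proof of Theorem~\ref{thm:armes1}. Those pivots express $\widetilde{\x}(y_1), \ldots, \widetilde{\x}(y_{h-1})$ (and $\widetilde{\x}(x_i)$) in terms of the single free parameter $t = \widetilde{\x}(y_h)$ and the values $\widetilde{\x}(v_1), \ldots, \widetilde{\x}(v_h)$ on the parent, exactly as displayed before \eqref{eq:extcond}; pivoting at $x_h$ gives the closing relation \eqref{eq:extcond}.

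First I would set up the restriction map $\rho\colon \ker A(G') \to \mathbb{R}^{V(G)}$ sending $\widetilde{\x}$ to its restriction $\widetilde{\x}|_{V(G)}$. Two things need to be checked: (a) that $\rho(\widetilde{\x})$ lies in $\ker A(G)$, and (b) a bound on $\dim \ker \rho$. For (a): pivoting at each $v_i \in V(G)$ in $G'$ gives $0 = \sum_{u \sim_G v_i} \widetilde{\x}(u) + \widetilde{\x}(x_i)$, so $\rho(\widetilde{\x})$ is a kernel vector of $G$ \emph{only if} all $\widetilde{\x}(x_i) = 0$ — which is not automatic. So the cleaner route is to pivot at the $y_i$'s: the local condition at $y_i$ reads $0 = \widetilde{\x}(x_i) + \widetilde{\x}(x_{i+1})$ (indices mod $h$), forcing $\widetilde{\x}(x_{i+1}) = -\widetilde{\x}(x_i)$; when $h$ is odd this chain of sign flips forces all $\widetilde{\x}(x_i) = 0$, and then (a) follows and moreover $\widetilde{\x}$ is completely determined on the $y_i$'s by the remaining pivots, so $\rho$ is injective, giving $\eta(G') \le \eta(G)$. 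When $h$ is even, the chain of sign flips is consistent and leaves $\widetilde{\x}(x_i) = (-1)^i c$ for a single free scalar $c$; this extra degree of freedom is the source of the $+1$.

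For the even case I would then argue: the value $c$ plus one more free parameter (essentially $t$, subject to \eqref{eq:extcond}) account for at most $2$ extra dimensions beyond $\dim \rho(\ker A(G'))$, and $\rho(\ker A(G')) \subseteq \ker A(G)$ gives $\dim \rho(\ker A(G')) \le \eta(G)$; hence $\eta(G') \le \eta(G) + 2$. To make "at most $2$ extra dimensions" precise, observe that $\ker \rho$ consists of kernel vectors of $G'$ vanishing on all of $V(G)$: for such a vector the $v_i$-pivots give $\widetilde{\x}(x_i) = 0$ for all $i$, hence $c = 0$, and then $\widetilde{\x}(y_j)$ is determined by $t$ alone via the recursion, with \eqref{eq:extcond} ($\mathcal{C}(\0) = 0$) imposing no constraint — so $\ker \rho$ is at most one-dimensional (spanned by the special vector $\special$). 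Thus $\eta(G') = \dim\ker\rho + \dim\rho(\ker A(G')) \le 1 + \eta(G)$... which is actually the bound $\eta(G)+1$, \emph{stronger} than claimed. The discrepancy means I must be careful: $\rho(\ker A(G'))$ need not equal all extendable vectors, but it does inject the $c$-freedom into... no — re-examining, when $h$ is even the $v_i$-pivot gives $\widetilde{\x}(x_i) = -\sum_{u\sim_G v_i}\widetilde{\x}(u)$, so $c$ is \emph{determined} by $\rho(\widetilde{\x})$ and is not independent; the honest decomposition is $\eta(G') \le \dim\rho(\ker A(G')) + \dim\ker\rho \le \eta(G) + 1$, and one extra $+1$ must come from the fact that the constraint \eqref{eq:extcond} $\mathcal{C}(\x) = 0$ may \emph{fail} on $\ker A(G)$, so $\rho(\ker A(G'))$ can be a proper subspace — there is no upward slack there. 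Hence the correct accounting is: $\rho$ has image inside $\ker A(G)$ (dimension $\le \eta(G)$) and kernel of dimension $\le 1$ (the special vector), giving $\eta(G') \le \eta(G) + 1$ unconditionally for even $h$...

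\textbf{The main obstacle} is exactly this gap: the clean rank argument seems to give $\eta(G)+1$, not $\eta(G)+2$, yet Example~\ref{ex:ex2} (and the discussion of $H_4$ in the $P_3$ example, where $\eta$ jumps by $2$) shows $+2$ is attained. The resolution must be that $\rho$ is \emph{not} the right map when $h$ is even: a kernel vector of $G'$ restricted to $V(G)$ need not satisfy the $G$-kernel equations, because the pivot at $v_i$ involves $\widetilde{\x}(x_i)$, and $\widetilde{\x}(x_i) = (-1)^i c \ne 0$ in general. So $\rho(\widetilde{\x})$ satisfies $A(G)\,\rho(\widetilde{\x}) = -c\,\special'$ where $\special'$ is the signed indicator of the $v_i$ (with multiplicity) — i.e.\ $\rho(\widetilde{\x}) \in \ker A(G) + \langle$ one particular solution $\rangle$, a space of dimension $\le \eta(G) + 1$. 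Combined with $\dim\ker\rho \le 1$ this yields $\eta(G') \le \eta(G) + 2$, and the proof will hinge on correctly identifying this affine-shift vector and verifying the two "$\le 1$" and "$\le \eta(G)+1$" bounds rigorously (for odd $h$ the sign flips kill $c$, collapsing everything to $\eta(G') \le \eta(G)$). I would write the final proof around these two dimension counts, using the recursion already established in the proof of Theorem~\ref{thm:armes1} to avoid re-deriving the $y_i$-formulas.
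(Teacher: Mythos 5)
Your overall strategy---rank--nullity for the restriction map $\rho\colon\ker A(G')\to\RR^{V(G)}$---is viable and is essentially the linear algebra underlying the paper's own proof (the paper's ``contractible'' vectors and ``contractions'' play the role of your $\ker\rho$ and $\mathrm{im}\,\rho$). However, your final accounting for even $h$ rests on a false claim: that $\ker\rho$ is at most one-dimensional, spanned by the special vector. You justify this by saying that for a kernel vector of $G'$ vanishing on $V(G)$ the pivots at the $v_i$ force $\widetilde{\x}(x_i)=0$ for all $i$. That step fails when the attachment set contains repeated vertices, which the paper explicitly allows: the pivot at a vertex $v$ only yields $\sum_{j:\,v_j=v}\widetilde{\x}(x_j)=0$, and with the alternating pattern $\widetilde{\x}(x_j)=(-1)^j c$ this sum can vanish with $c\neq 0$. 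The paper's own example $\altan(P_3,H_4)$ with $H_4=(u_1,u_1,u_3,u_3)$ is a counterexample: the third kernel eigenvector in Figure~\ref{fig:ex1e} vanishes on $V(P_3)$ and on all $y_i$ yet has $\widetilde{\x}(x_i)=(-1)^i$, so there $\dim\ker\rho=2$. Once $\ker\rho$ is correctly bounded by $2$, your two estimates ($\dim\ker\rho\le 2$ and $\dim(\mathrm{im}\,\rho)\le\eta(G)+1$, the latter from $\mathrm{im}\,\rho$ lying in the preimage of the line spanned by your signed indicator $\mathbf{w}$) only give $\eta(G)+3$, which is not the theorem.

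The repair is to observe that the two potential ``$+1$''s both come from the single scalar $c$ and cannot occur simultaneously. Either some kernel vector of $G'$ vanishes on $V(G)$ with $c\neq 0$; then $\mathbf{0}=A(G)\cdot\mathbf{0}=-c\,\mathbf{w}$ forces $\mathbf{w}=\mathbf{0}$, hence $\mathrm{im}\,\rho\subseteq\ker A(G)$ and $\eta(G')\le 2+\eta(G)$. Or no such vector exists; then $\dim\ker\rho\le 1$ and $\dim(\mathrm{im}\,\rho)\le\eta(G)+1$ give the same bound. Equivalently, restrict $\rho$ to the codimension-at-most-one subspace where $c=0$: there the image lies in $\ker A(G)$ and the kernel is spanned by $\special$ (this is exactly Lemma~\ref{techlem:d}), which is what the paper does by splitting off at most one non-contractible basis vector. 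With that case analysis your argument closes; as written, it does not cover attachment sets with repeated vertices, and so does not prove the theorem in the stated generality.
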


\par\noindent
To prepare for the proof of the above theorem, we state several technical lemmas.

\begin{lemma}
\label{techlem:a}
Let $(G', H')$ be as in Definition~\ref{def:altan}.
Let $\widetilde{\x} \in \ker (G')$. If $h$ is odd then $\widetilde{\x}(x_i) = 0$  for all $1 \leq i \leq h$.
\end{lemma}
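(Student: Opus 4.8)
The statement says that for odd $h$, any kernel vector $\widetilde{\x}$ of $G'=\altan(G,H)$ must vanish on all the middle-layer vertices $x_1,\dots,x_h$. The plan is to exploit the local conditions \eqref{eq:local} with $\lambda=0$ along the perimeter cycle $x_1,y_1,x_2,y_2,\dots,x_h,y_h$, which forces a rigid chain of dependencies, and then close the cycle to get a constraint that, when $h$ is odd, is only satisfiable if the $x_i$ vanish.

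First I would write down the local condition pivoted at each $y_i$: since $N_{G'}(y_i)=\{x_i,x_{i+1}\}$ (indices mod $h$), we get $\widetilde{\x}(x_i)+\widetilde{\x}(x_{i+1})=0$ for every $i$, i.e.\ consecutive $x$-values are negatives of each other. Iterating around the cycle gives $\widetilde{\x}(x_{i+1})=-\widetilde{\x}(x_i)$, hence $\widetilde{\x}(x_i)=(-1)^{i-1}\widetilde{\x}(x_1)$ for all $i$, and after going once around, $\widetilde{\x}(x_1)=\widetilde{\x}(x_{h+1})=(-1)^{h}\widetilde{\x}(x_1)$. When $h$ is odd this reads $\widetilde{\x}(x_1)=-\widetilde{\x}(x_1)$, so $\widetilde{\x}(x_1)=0$, and therefore $\widetilde{\x}(x_i)=0$ for all $i$. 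That is the whole argument; the pivots at the $y_i$ are the only local conditions needed, and the parity of $h$ enters exactly at the cycle-closing step.

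I should be slightly careful about one bookkeeping point: the edges incident to the $x_i$ are $v_ix_i$, $x_iy_i$, and $y_{i-1}x_i$ (with $y_0:=y_h$), so $N_{G'}(y_i)=\{x_i,x_{i+1}\}$ is indeed exactly a two-element set for every $i$ (including $i=h$, where $N_{G'}(y_h)=\{x_h,x_1\}$ via the edge $y_hx_1$). This is what makes the pivot at $y_i$ give the clean two-term relation; no other vertices interfere because $y_i$ has degree $2$ in $G'$ regardless of the structure of $G$. So there is really no obstacle here — the only thing to get right is the indexing of the perimeter cycle and the fact that $h$ odd makes $(-1)^h=-1$. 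I would present it as: pivot at $y_i$ for each $i$ to obtain $\widetilde{\x}(x_{i+1})=-\widetilde{\x}(x_i)$, unwind to get $\widetilde{\x}(x_1)=(-1)^h\widetilde{\x}(x_1)=-\widetilde{\x}(x_1)$, conclude $\widetilde{\x}(x_1)=0$ and hence all $\widetilde{\x}(x_i)=0$.
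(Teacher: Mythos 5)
Your proof is correct and follows essentially the same route as the paper: pivot the local condition at the degree-$2$ vertices $y_i$ to get $\widetilde{\x}(x_i)+\widetilde{\x}(x_{i+1})=0$, propagate the alternating signs around the perimeter, and use the odd length of the cycle to force $\widetilde{\x}(x_1)=-\widetilde{\x}(x_1)=0$. The paper phrases this as pivoting at $y_1,\dots,y_{h-1}$ to fix the alternating pattern and then at $y_h$ to close the cycle, which is the same argument.
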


\begin{proof}
Let $\widetilde{\x}(x_1) = t \in \mathbb{R}$. By pivoting at vertices $y_1, y_2, \ldots, y_{h - 1}$, respectively, we obtain
\begin{align*}
\widetilde{\x}(x_1) & = \widetilde{\x}(x_3) = \widetilde{\x}(x_5) = \cdots = \widetilde{\x}(x_h) = t, \\
\widetilde{\x}(x_2) & = \widetilde{\x}(x_4) = \widetilde{\x}(x_4) = \cdots = \widetilde{\x}(x_{h-1}) = -t. 
\end{align*}
By pivoting at vertex $y_{h}$ we obtain $t = 0$.
\end{proof}

\begin{lemma}
\label{techlem:b}
Let $(G', H')$ be as in Definition~\ref{def:altan}.
Let $\widetilde{\x} \in \ker (G')$ such that $\widetilde{\x}(u) = 0$ for all $u \in V(G)$ and
$\widetilde{\x}(x_i) = 0$ for all $1 \leq i \leq h$.
If $h$ is odd then $\widetilde{\x}(y_i) = 0$ for all $1 \leq i \leq h$.
\end{lemma}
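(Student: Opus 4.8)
The plan is to exploit the local condition \eqref{eq:local} at the vertices $x_i$, which has not yet been used up to this point: since every $V(G)$-entry and every $x$-entry of $\widetilde{\x}$ is assumed to vanish, pivoting at $x_i$ collapses the local condition to a linear relation between two consecutive $y$-entries, and chasing that relation around the perimeter cycle produces an odd-cycle parity obstruction that forces all $y$-entries to be zero. This is the same mechanism that drives Lemma~\ref{techlem:a}, only applied one ``layer'' further out.

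Concretely, I would first record from Definition~\ref{def:altan} that the neighbourhood of $x_i$ in $G'$ is $N_{G'}(x_i) = \{ v_i, y_{i-1}, y_i \}$, where indices are read cyclically, so that $y_0 := y_h$ (the edge $y_h x_1$ supplying the wrap-around). Applying \eqref{eq:local} with $\lambda = 0$ and pivot $x_i$ gives $0 = \widetilde{\x}(v_i) + \widetilde{\x}(y_{i-1}) + \widetilde{\x}(y_i)$; by hypothesis $\widetilde{\x}(v_i) = 0$ since $v_i \in V(G)$, so $\widetilde{\x}(y_{i-1}) = -\widetilde{\x}(y_i)$ for every $1 \leq i \leq h$.

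Next I would iterate this relation around the cycle. Starting from $\widetilde{\x}(y_h)$ and applying the relation for $i = h, h-1, \ldots, 1$ in turn yields $\widetilde{\x}(y_{h-j}) = (-1)^j \widetilde{\x}(y_h)$ for all $j$. Taking $j = h$ and using $y_0 = y_h$ gives $\widetilde{\x}(y_h) = (-1)^h \widetilde{\x}(y_h) = -\widetilde{\x}(y_h)$, because $h$ is odd; hence $\widetilde{\x}(y_h) = 0$, and therefore $\widetilde{\x}(y_i) = 0$ for all $1 \leq i \leq h$.

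There is no real obstacle to this argument: the only points requiring any care are the cyclic bookkeeping of the indices (in particular the wrap-around edge $y_h x_1$) and the observation that the oddness of $h$ is precisely what makes the alternating walk around the perimeter cycle self-inconsistent unless every entry vanishes. If a reader prefers, the last step can equivalently be phrased as pivoting at $y_h$, exactly as in the proof of Lemma~\ref{techlem:a}.
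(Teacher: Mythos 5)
Your proof is correct and follows essentially the same route as the paper: pivot the local condition at each $x_i$ (whose neighbourhood is $\{v_i, y_{i-1}, y_i\}$), deduce that consecutive $y$-entries alternate in sign, and use the oddness of $h$ to force the common value to zero upon closing the cycle. The only cosmetic difference is that the paper parameterises by $t = \widetilde{\x}(y_1)$ and closes the loop by pivoting at $x_1$, whereas you start from $\widetilde{\x}(y_h)$ and track the sign $(-1)^j$ explicitly.
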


\begin{proof}
Let $\widetilde{\x}(y_1) = t \in \mathbb{R}$. By pivoting at vertices $x_2, x_3, \ldots, x_{h}$, respectively, we obtain
\begin{align*}
\widetilde{\x}(y_1) & = \widetilde{\x}(y_3) = \widetilde{\x}(y_5) = \cdots = \widetilde{\x}(y_h) = t, \\
\widetilde{\x}(y_2) & = \widetilde{\x}(y_4) = \widetilde{\x}(y_4) = \cdots = \widetilde{\x}(y_{h-1}) = -t.
\end{align*}
By pivoting at vertex $x_{1}$ we obtain $t = 0$.
\end{proof}

\begin{lemma}
\label{techlem:c}
Let $(G', H')$ be as in Definition~\ref{def:altan}.
Let $\widetilde{\x} \in \ker (G')$. If $h$ is even then $\widetilde{\x}(x_i) = \widetilde{\x}(x_1)$ for all odd $i$
and $\widetilde{\x}(x_i) = -\widetilde{\x}(x_1)$ for all even $i$.
\end{lemma}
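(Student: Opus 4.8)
The plan is to mimic the structure of Lemma~\ref{techlem:a}, but to notice that the parity of $h$ causes the final local condition to become vacuous rather than forcing a collapse to zero. First I would record the combinatorial fact read off from Definition~\ref{def:altan}: each perimeter vertex $y_i$ has degree exactly two, with $N_{G'}(y_i) = \{x_i, x_{i+1}\}$ for $1 \le i < h$ and $N_{G'}(y_h) = \{x_h, x_1\}$. So the perimeter is the $2h$-cycle $x_1, y_1, x_2, y_2, \ldots, x_h, y_h$, and the $y_i$ are precisely the degree-$2$ vertices of that cycle.

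Next I would apply the local condition \eqref{eq:local} with $\lambda = 0$ using each $y_i$ as pivot. Pivoting at $y_i$ for $1 \le i < h$ gives $0 = \widetilde{\x}(x_i) + \widetilde{\x}(x_{i+1})$, i.e.\ $\widetilde{\x}(x_{i+1}) = -\widetilde{\x}(x_i)$. A one-line induction on $i$ then yields $\widetilde{\x}(x_i) = (-1)^{i-1}\widetilde{\x}(x_1)$ for all $1 \le i \le h$, which is exactly the claimed pattern: $\widetilde{\x}(x_i) = \widetilde{\x}(x_1)$ for odd $i$ and $\widetilde{\x}(x_i) = -\widetilde{\x}(x_1)$ for even $i$.

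Finally I would verify that this is consistent, i.e.\ that the remaining pivot $y_h$ imposes no further constraint. Pivoting at $y_h$ gives $0 = \widetilde{\x}(x_h) + \widetilde{\x}(x_1)$, i.e.\ $\widetilde{\x}(x_h) = -\widetilde{\x}(x_1)$; but the formula already gives $\widetilde{\x}(x_h) = (-1)^{h-1}\widetilde{\x}(x_1) = -\widetilde{\x}(x_1)$ since $h$ is even, so the equation holds automatically. This is the one genuinely load-bearing point, and it is exactly where the even case diverges from Lemma~\ref{techlem:a}: there the last pivot closed the odd cycle of $x$-values and forced $\widetilde{\x}(x_1)=0$, whereas here $\widetilde{\x}(x_1)$ is left free — which is, of course, what makes the special vector of Lemma~\ref{lem:special} possible. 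I do not anticipate any real obstacle; the only thing to be careful about is the cyclic indexing at the $i = h$ wrap-around, which should be stated explicitly to avoid an off-by-one slip.
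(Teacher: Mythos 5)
Your proof is correct and follows exactly the route the paper intends: the paper leaves this lemma to the reader as ``similar to Lemma~\ref{techlem:a}'', and your argument is precisely that adaptation, pivoting at the degree-$2$ vertices $y_1,\ldots,y_{h-1}$ to get the alternating pattern and then checking that the final pivot at $y_h$ is vacuous when $h$ is even. Your explicit remark on why the last local condition imposes no constraint (in contrast to the odd case, where it forces $t=0$) is the right point to emphasise.
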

\begin{proof}
The proof is similar to that of Lemma~\ref{techlem:a} and is left to the reader.
\end{proof}

\begin{lemma}
\label{techlem:d}
Let $(G', H')$ be as in Definition~\ref{def:altan}.
Let $\widetilde{\x} \in \ker (G')$ such that $\widetilde{\x}(u) = 0$ for all $u \in V(G)$ and
$\widetilde{\x}(x_i) = 0$ for all $1 \leq i \leq h$.
If $h$ is even then $\widetilde{\x}(y_i) = \widetilde{\x}(y_1)$ for all odd $i$
and $\widetilde{\x}(y_i) = -\widetilde{\x}(y_1)$ for all even $i$.
\end{lemma}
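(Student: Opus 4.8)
The plan is to mirror the proof of Lemma~\ref{techlem:b}, exploiting the same chain of pivots along the perimeter cycle but observing that, for even $h$, the loop closes up consistently rather than forcing the free parameter to vanish.

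First I would set $t = \widetilde{\x}(y_1)$. Since $\widetilde{\x}(u) = 0$ for every $u \in V(G)$, in particular $\widetilde{\x}(v_i) = 0$ for all $i$. I then apply the local condition~\eqref{eq:local} with $\lambda = 0$ and pivot $x_i$, for $i = 2, 3, \ldots, h$ in turn: the neighbourhood of $x_i$ is $\{v_i, y_{i-1}, y_i\}$, so the condition reads $0 = \widetilde{\x}(v_i) + \widetilde{\x}(y_{i-1}) + \widetilde{\x}(y_i) = \widetilde{\x}(y_{i-1}) + \widetilde{\x}(y_i)$, i.e.\ $\widetilde{\x}(y_i) = -\widetilde{\x}(y_{i-1})$. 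A trivial induction then gives $\widetilde{\x}(y_i) = (-1)^{i-1} t$ for all $1 \leq i \leq h$, which is exactly the asserted alternation: $\widetilde{\x}(y_i) = \widetilde{\x}(y_1)$ for $i$ odd and $\widetilde{\x}(y_i) = -\widetilde{\x}(y_1)$ for $i$ even. (The hypothesis $\widetilde{\x}(x_i) = 0$ is not actually needed for this direction; it is retained only for symmetry with Lemma~\ref{techlem:b}, and it costs nothing to invoke it.)

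The only point that deserves a remark --- and the analogue of the step that, in Lemma~\ref{techlem:b}, killed the parameter --- is the pivot at $x_1$. Its neighbourhood is $\{v_1, y_h, y_1\}$, giving $0 = \widetilde{\x}(y_h) + \widetilde{\x}(y_1)$. Since $h$ is even we have just shown $\widetilde{\x}(y_h) = -t$, so this reads $-t + t = 0$, which holds identically and imposes no constraint on $t$. Thus there is no real obstacle: the parity of $h$ makes the perimeter relations self-consistent, leaving $t$ free, and this is precisely the mechanism that allows the extra kernel dimension spanned by the special vector $\special$ in the even case.
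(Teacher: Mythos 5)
Your proof is correct and is exactly the argument the paper intends: the paper declares this proof ``similar to that of Lemma~\ref{techlem:b} and left to the reader'', and your chain of pivots at $x_2,\dots,x_h$ followed by the now-vacuous closing condition at $x_1$ (where evenness of $h$ gives $-t+t=0$ instead of $2t=0$) is precisely that adaptation. Your parenthetical remark that the hypothesis $\widetilde{\x}(x_i)=0$ is not actually used is also accurate, since the local condition \eqref{eq:local} with $\lambda=0$ and pivot $x_i$ involves only the neighbours of $x_i$, namely $v_i$, $y_{i-1}$ and $y_i$, and not $\widetilde{\x}(x_i)$ itself.
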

\begin{proof}
The proof is similar to that of Lemma~\ref{techlem:b} and is left to the reader.
\end{proof}

\par\noindent
Now we can give the proof of Theorem~\ref{thm:armes2}.

\begin{proof}[Proof of Theorem~\ref{thm:armes2}]
Let $(G', H') = \altan(G, H)$ and let $G'$ be labelled as in Definition~\ref{def:altan}.

First, suppose that $h$ is odd.
Let $\xi = \eta(G')$ and let $\{ \widetilde{\x}^{(1)}, \widetilde{\x}^{(2)}, \ldots, \widetilde{\x}^{(\xi)} \}$ be a basis for $\ker(G')$.
A kernel eigenvector $\widetilde{\x}$ of $G'$ will be called \emph{contractible} if $\widetilde{\x}(x_i) = 0$ for all
$1 \leq i \leq h$. The \emph{contraction} of $\widetilde{\x}$ is the vector $\x$ of $G$ defined by $\x(u) = \widetilde{\x}(u)$ for $u \in V(G)$.
Note that $\x \in \ker(G)$ if $\widetilde{\x}$ is contractible.

By Lemma~\ref{techlem:a}, the vectors $\widetilde{\x}^{(1)}, \widetilde{\x}^{(2)}, \ldots, \widetilde{\x}^{(\xi)}$
are contractible.  Let $\x^{(1)}, \x^{(2)}, \ldots, \x^{(\xi)}$ be contractions of 
$\widetilde{\x}^{(1)}, \widetilde{\x}^{(2)}, \ldots, \widetilde{\x}^{(\xi)}$, respectively. We will prove that 
$\x^{(1)}, \x^{(2)}, \ldots, \x^{(\xi)}$ are linearly independent. For contradiction, suppose that they are not
linearly independent. Then there exist scalars $\mu_1, \mu_2, \ldots, \mu_\xi$ (at least one of which is non-zero) such that
\begin{equation}
\label{eq:linkombi}
\mu_1 \x^{(1)} + \mu_2 \x^{(2)} + \cdots + \mu_\xi \x^{(\xi)} = \mathbf{0}.
\end{equation}
Let us define 
\[
\X := \mu_1 \widetilde{\x}^{(1)} + \mu_2 \widetilde{\x}^{(2)} + \cdots + \mu_\xi \widetilde{\x}^{(\xi)}.
\]
Observe that $\X(u) = (\mu_1 \x^{(1)} + \mu_2 \x^{(2)} + \cdots + \mu_\xi \x^{(\xi)})(u)$ for all $u \in V(G)$. From 
\eqref{eq:linkombi} it follows that $\X(u) = 0$ for all $u \in V(G)$. By Lemma~\ref{techlem:b}, $\X = \mathbf{0}$. This
contradicts the fact that $\{ \widetilde{\x}^{(1)}, \widetilde{\x}^{(2)}, \ldots, \widetilde{\x}^{(\xi)}\}$ is a basis, so
$\x^{(1)}, \x^{(2)}, \ldots, \x^{(\xi)}$ are linearly independent and hence $\eta(G) \geq \eta(G')$.

Suppose instead that $h$ is even. Let $\xi = \eta(G')$. By Lemma~\ref{lem:special}, we have $\special \in \ker(G')$.
We can choose a basis that contains the special one, so let $\{ \special, \widetilde{\x}^{(2)}, \ldots, \widetilde{\x}^{(\xi)} \}$ 
be a basis for $\ker(G')$.

Note that $\special$ is contractible, but its contraction is the trivial vector $\mathbf{0}$.
Let us assume to begin with
that at least one of the vectors $\widetilde{\x}^{(2)}, \ldots, \widetilde{\x}^{(\xi)}$ is \emph{not} contractible;
without loss of generality, we take $\widetilde{\x}^{(2)}$ to be non-contractible. The vectors
$\widetilde{\x}^{(3)}, \ldots, \widetilde{\x}^{(\xi)}$ can be replaced by
\begin{equation}
\label{eq:altbasis}
\widetilde{\x}^{(3)} + \lambda_3 \widetilde{\x}^{(2)}, \widetilde{\x}^{(4)} + \lambda_4 \widetilde{\x}^{(2)}, \ldots, 
\widetilde{\x}^{(\xi)} + \lambda_\xi \widetilde{\x}^{(2)},
\end{equation}
where scalars $\lambda_3, \lambda_4, \ldots, \lambda_\xi$ can be chosen in such a way that the vectors specified in
\eqref{eq:altbasis} are
all contractible; namely, by setting $\lambda_k = - \widetilde{\x}^{(k)}(x_1) /  \widetilde{\x}^{(2)}(x_1)$. 
(Note that $\widetilde{\x}^{(2)}(x_1)$ is non-zero, because $\widetilde{\x}^{(2)}$ would otherwise be
contractible by Lemma~\ref{techlem:c}.)
The vector 
$\widetilde{\x}^{(k)} + \lambda_k \widetilde{\x}^{(2)}$ is then contractible by Lemma~\ref{techlem:c}.
Therefore, we can assume that all basis vectors, except possibly the vector $\widetilde{\x}^{(2)}$, are contractible.

First, assume that $\widetilde{\x}^{(2)}$ is non-contractible. Let $\x^{(3)}, \x^{(4)}, \ldots, \x^{(\xi)}$ be contractions
of vectors $\widetilde{\x}^{(3)}, \widetilde{\x}^{(4)}, \ldots, \widetilde{\x}^{(\xi)}$, respectively. We will prove that
$\x^{(3)}, \x^{(4)}, \ldots, \x^{(\xi)}$ are linearly independent.
For contradiction, suppose that they are not
linearly independent. Then there exist scalars $\mu_3, \mu_4, \ldots, \mu_\xi$ (at least one of which is non-zero) such that
\begin{equation}
\label{eq:linkombi2}
\mu_3 \x^{(3)} + \mu_4 \x^{(4)} + \cdots + \mu_\xi \x^{(\xi)} = \mathbf{0}.
\end{equation}
Let us define 
\[
\X := \mu_3 \widetilde{\x}^{(3)} + \mu_4 \widetilde{\x}^{(4)} + \cdots + \mu_\xi \widetilde{\x}^{(\xi)}.
\]
Observe that $\X(u) = (\mu_3 \x^{(3)} + \mu_4 \x^{(4)} + \cdots + \mu_\xi \x^{(\xi)})(u)$ for all $u \in V(G)$. From 
\eqref{eq:linkombi2} it follows that $\X(u) = 0$ for all $u \in V(G)$. As $\widetilde{\x}^{(3)}, \widetilde{\x}^{(4)}, \ldots, \widetilde{\x}^{(\xi)}$
are contractible, it also follows that $\X(x_i) = 0$ for all $1 \leq i \leq h$. By Lemma~\ref{techlem:d}, it follows that
$\X = \sigma \special$ for some scalar $\sigma$. Therefore,
\[
\mu_3 \widetilde{\x}^{(3)} + \mu_4 \widetilde{\x}^{(4)} + \cdots + \mu_\xi \widetilde{\x}^{(\xi)} - \sigma \special = \mathbf{0}.
\]
This
contradicts the fact that $\{ \special, \widetilde{\x}^{(2)}, \ldots, \widetilde{\x}^{(\xi)}\}$ is a basis, so
$\x^{(3)}, \x^{(4)}, \ldots, \x^{(\xi)}$ are linearly independent and hence $\eta(G) \geq \eta(G') - 2$.

The case where $\widetilde{\x}^{(2)}$ is contractible is analogous, except that we obtain $\eta(G) \geq \eta(G') - 1$.
Either way, $\eta(G') \leq \eta(G) + 2$, as desired.
\end{proof}

\par\noindent
Combining the bounds in Theorems \ref{thm:armes1} and \ref{thm:armes2}, we obtain our main result for the possible values of the nullity of an altan:

\begin{corollary}
\label{cor:10}
Let $G$ be a graph and $H$ an attachment set.
\begin{enumerate}[label=(\roman*)]
\item
If $h = |H|$ is even then $\eta(G) \leq \eta(\altan(G, H)) \leq \eta(G) + 2$.
\item
If $h = |H|$ is odd then $\eta(\altan(G, H)) = \eta(G)$.
\end{enumerate}
\end{corollary}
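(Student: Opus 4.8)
The plan is to combine the two preceding theorems mechanically; no new ideas are required, since all the real work has already been done in establishing Theorems~\ref{thm:armes1} and~\ref{thm:armes2}. First I would invoke Theorem~\ref{thm:armes1}, which gives the uniform lower bound $\eta(\altan(G,H)) \geq \eta(G)$ with no parity restriction on $h$. This single inequality supplies the left-hand side of both cases of the corollary.

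Next I would split on the parity of $h = |H|$ and apply the matching upper bound from Theorem~\ref{thm:armes2}. In the even case, Theorem~\ref{thm:armes2} yields $\eta(\altan(G,H)) \leq \eta(G) + 2$; stacking this on top of the lower bound immediately gives $\eta(G) \leq \eta(\altan(G,H)) \leq \eta(G)+2$, which is case~(i). In the odd case, Theorem~\ref{thm:armes2} gives the sharper bound $\eta(\altan(G,H)) \leq \eta(G)$; combined with $\eta(\altan(G,H)) \geq \eta(G)$ from Theorem~\ref{thm:armes1}, the two inequalities force $\eta(\altan(G,H)) = \eta(G)$, which is case~(ii).

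There is essentially no obstacle here: the corollary is a formal consequence of the two theorems and the only thing to be careful about is bookkeeping — namely, noting that the lower bound of Theorem~\ref{thm:armes1} holds for \emph{all} attachment sets (so it can be used in both the even and the odd case), whereas the upper bound of Theorem~\ref{thm:armes2} is parity-dependent and must be quoted in the appropriate form for each branch. If anything, the ``hard part'' is purely rhetorical: making clear that the sharpness claimed in the abstract (that all of $\{0,1,2\}$ actually occur as the excess nullity in the even case, and that equality is forced in the odd case) is witnessed separately, e.g.\ by the four attachment sets in the $P_3$ example and by altan-pentalene, rather than by this corollary itself, which only delimits the range. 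I would therefore keep the proof to two or three lines and, if desired, add a remark pointing back to Example~\ref{ex:ex2} and the $P_3$ example to confirm that every value permitted by the bounds is attained.
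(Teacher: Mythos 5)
Your proposal is correct and is exactly the paper's argument: the corollary is stated as an immediate combination of the lower bound of Theorem~\ref{thm:armes1} (valid for any attachment set) with the parity-dependent upper bound of Theorem~\ref{thm:armes2}, the odd case collapsing to an equality. Your additional remark that sharpness is witnessed by the $P_3$ and pentalene examples rather than by the corollary itself matches the paper's presentation as well.
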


\par\noindent
Cases where $\eta(\altan(G, H)) = \eta(G)$ or $\eta(\altan(G, H)) = \eta(G) + 1$ are well known,
but, apparently, the case $\eta(\altan(G, H)) = \eta(G) + 2$ had not been noticed before.
Exhaustive search shows that the smallest benzenoid that gains two NBMOs on altanisation
is a $5$-hexagon molecule (benzo[{\it a}]tetracene, a.k.a.\ benzo[{\it a}]naphthacene), as shown in Figure~\ref{fig:bendbenznegl}/Example~\ref{ex:surprise}. 

\begin{example}
\label{ex:surprise}
Figure \ref{fig:bendbenznegl}(\subref{subfig:bendbenznegla}) shows the molecular graph
of benzo[{\it a}]tetracene.
As a catafused benzenoid, this molecule has no NBMOs (nullity equals $0$).
Surprisingly, the molecular graph of its altan, on the other hand, has nullity $2$.
Figures \ref{fig:bendbenznegl}(\subref{subfig:bendbenzneglb}) and \ref{fig:bendbenznegl}(\subref{subfig:bendbenzneglc}) show a pair of
independent eigenvectors $\x^{(1)}$ and $\x^{(2)}$ spanning the nullspace. Note that the vector $\x^{(1)}$ is, in fact, the special one,
which arises whenever the attachment set is even.
Orthonormal molecular orbitals
can be found by taking combinations 
$(1/\sqrt{14}) \x^{(1)}$ and $(1/\sqrt{162526})(9 \x^{(1)} - 14  \x^{(2)})$.
This is the smallest benzenoid for which the excess nullity of the altan is $2$.

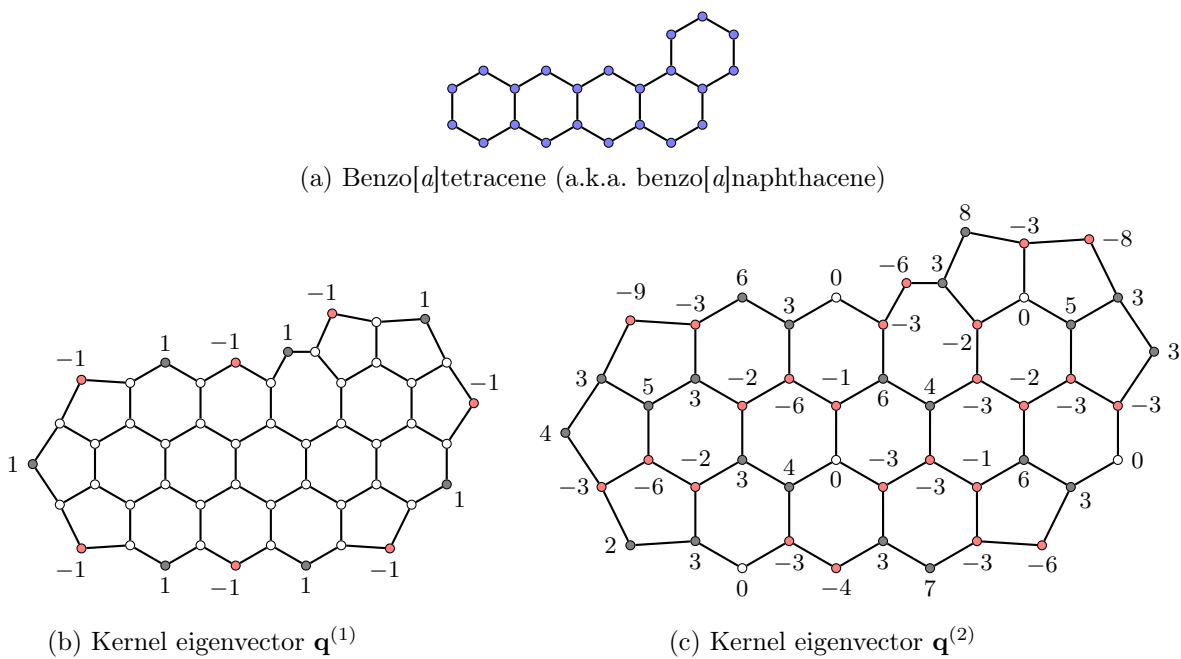
\begin{figure}[!htbp]
\centering
\begin{subfigure}[b]{0.55\linewidth}
\centering
\begin{tikzpicture}[scale=0.8]
\tikzstyle{edge}=[draw,thick]
\tikzstyle{every node}=[draw, circle, fill=blue!50!white, inner sep=1.2pt]
\node (u1) at (90:0.6) {};
\node (u2) at ({90 + 60}:0.6) {};
\node (u3) at ({90 + 2 * 60}:0.6) {};
\node (u4) at ({90 + 3 * 60}:0.6) {};
\node (u5) at ({90 + 4 * 60}:0.6) {};
\node (u6) at ({90 + 5 * 60}:0.6) {};
\node (v1) at ($ (u5) + (-30:0.6) $) {};
\node (v2) at ($ (v1) + (30:0.6) $) {};
\node (v3) at ($ (v2) + (90:0.6) $) {};
\node (v4) at ($ (v3) + (150:0.6) $) {};
\node (w1) at ($ (v2) + (-30:0.6) $) {};
\node (w2) at ($ (w1) + (30:0.6) $) {};
\node (w3) at ($ (w2) + (90:0.6) $) {};
\node (w4) at ($ (w3) + (150:0.6) $) {};
\node (x1) at ($ (w2) + (-30:0.6) $) {};
\node (x2) at ($ (x1) + (30:0.6) $) {};
\node (x3) at ($ (x2) + (90:0.6) $) {};
\node (x4) at ($ (x3) + (150:0.6) $) {};
\node (y1) at ($ (x3) + (30:0.6) $) {};
\node (y2) at ($ (y1) + (90:0.6) $) {};
\node (y3) at ($ (y2) + (150:0.6) $) {};
\node (y4) at ($ (y3) + (210:0.6) $) {};
\path[edge] (u1) -- (u2) -- (u3) -- (u4) -- (u5) -- (u6) -- (u1);
\path[edge] (u5) -- (v1) -- (v2) -- (v3) -- (v4) -- (u6);
\path[edge] (v2) -- (w1) -- (w2) -- (w3) -- (w4) -- (v3);
\path[edge] (w2) -- (x1) -- (x2) -- (x3) -- (x4) -- (w3);
\path[edge] (x3) -- (y1) -- (y2) -- (y3) -- (y4) -- (x4);
\end{tikzpicture}
\caption{Benzo[{\it a}]tetracene (a.k.a.\ benzo[{\it a}]naphthacene)}
\label{subfig:bendbenznegla}
\end{subfigure}

\hspace{-1.1cm}
\begin{subfigure}[b]{0.35\linewidth}
\centering
\begin{tikzpicture}[scale=1.2*0.75]
\tikzstyle{edge}=[draw,thick]
\tikzstyle{every node}=[draw, circle, fill=white, inner sep=1.2pt]
\tikzstyle{pozit}=[fill=black!50!white]
\tikzstyle{negat}=[fill=red!50!white]
\node (u1) at (90:0.6) {};
\node (u2) at ({90 + 60}:0.6) {};
\node (u3) at ({90 + 2 * 60}:0.6) {};
\node (u4) at ({90 + 3 * 60}:0.6) {};
\node (u5) at ({90 + 4 * 60}:0.6) {};
\node (u6) at ({90 + 5 * 60}:0.6) {};
\node (v1) at ($ (u5) + (-30:0.6) $) {};
\node (v2) at ($ (v1) + (30:0.6) $) {};
\node (v3) at ($ (v2) + (90:0.6) $) {};
\node (v4) at ($ (v3) + (150:0.6) $) {};
\node (w1) at ($ (v2) + (-30:0.6) $) {};
\node (w2) at ($ (w1) + (30:0.6) $) {};
\node (w3) at ($ (w2) + (90:0.6) $) {};
\node (w4) at ($ (w3) + (150:0.6) $) {};
\node (x1) at ($ (w2) + (-30:0.6) $) {};
\node (x2) at ($ (x1) + (30:0.6) $) {};
\node (x3) at ($ (x2) + (90:0.6) $) {};
\node (x4) at ($ (x3) + (150:0.6) $) {};
\node (y1) at ($ (x3) + (30:0.6) $) {};
\node (y2) at ($ (y1) + (90:0.6) $) {};
\node (y3) at ($ (y2) + (150:0.6) $) {};
\node (y4) at ($ (y3) + (210:0.6) $) {};
\node (a1) at ($ (u1) + (90:0.6) $) {}; \path[edge] (a1) -- (u1);
\node (a2) at ($ (v4) + (90:0.6) $) {}; \path[edge] (a2) -- (v4);
\node (a3) at ($ (w4) + (90:0.6) $) {}; \path[edge] (a3) -- (w4);
\node (a4) at ($ (y4) + (130:0.6) $) {}; \path[edge] (a4) -- (y4);
\node (a5) at ($ (y3) + (90:0.6) $) {}; \path[edge] (a5) -- (y3);
\node (a6) at ($ (y2) + (30:0.6) $) {}; \path[edge] (a6) -- (y2);
\node (a7) at ($ (y1) + (-30:0.6) $) {}; \path[edge] (a7) -- (y1);
\node (a8) at ($ (x2) + (-30:0.6) $) {}; \path[edge] (a8) -- (x2);
\node (a9) at ($ (x1) + (-90:0.6) $) {}; \path[edge] (a9) -- (x1);
\node (a10) at ($ (w1) + (-90:0.6) $) {}; \path[edge] (a10) -- (w1);
\node (a11) at ($ (v1) + (-90:0.6) $) {}; \path[edge] (a11) -- (v1);
\node (a12) at ($ (u4) + (-90:0.6) $) {}; \path[edge] (a12) -- (u4);
\node (a13) at ($ (u3) + (-150:0.6) $) {}; \path[edge] (a13) -- (u3);
\node (a14) at ($ (u2) + (150:0.6) $) {}; \path[edge] (a14) -- (u2);
\node[pozit,label=90:{\footnotesize $1$}] (b1) at ($ (a1)!0.5!(a2) + (90:0.3) $) {};
\node[negat,label={[yshift=-3pt,xshift=-4pt]90:{\footnotesize $-1$}}] (b2) at ($ (a2)!0.5!(a3) + (90:0.3) $) {};
\node[pozit,label=90:{\footnotesize $1$}] (b3) at ($ (a4) + (180:0.4) $) {};
\node[negat,label={[yshift=-3pt,xshift=-4pt]90:{\footnotesize $-1$}}] (b4) at ($ (a4)!0.5!(a5) + (120:0.4) $) {};
\node[pozit,label=90:{\footnotesize $1$}] (b5) at ($ (a5)!0.5!(a6) + (60:0.4) $) {};
\node[negat,label={[yshift=-3pt,xshift=4pt]90:{\footnotesize $-1$}}] (b6) at ($ (a6)!0.5!(a7) + (0:0.4) $) {};
\node[pozit,label=-60:{\footnotesize $1$}] (b7) at ($ (a7)!0.5!(a8) + (-30:0.3) $) {};
\node[negat,label={[yshift=2pt,xshift=-2pt]-90:{\footnotesize $-1$}}] (b8) at ($ (a8)!0.5!(a9) + (-60:0.4) $) {};
\node[pozit,label=-90:{\footnotesize $1$}] (b9) at ($ (a9)!0.5!(a10) + (-90:0.3) $) {};
\node[negat,label={[yshift=2pt,xshift=-4pt]-90:{\footnotesize $-1$}}] (b10) at ($ (a10)!0.5!(a11) + (-90:0.3) $) {};
\node[pozit,,label=-90:{\footnotesize $1$}] (b11) at ($ (a11)!0.5!(a12) + (-90:0.3) $) {};
\node[negat,label={[yshift=2pt,xshift=-4pt]-90:{\footnotesize $-1$}}] (b12) at ($ (a12)!0.5!(a13) + (-120:0.4) $) {};
\node[pozit,label=180:{\footnotesize $1$}] (b13) at ($ (a13)!0.5!(a14) + (-180:0.4) $) {};
\node[negat,label={[yshift=-3pt,xshift=-4pt]90:{\footnotesize $-1$}}] (b14) at ($ (a14)!0.5!(a1) + (120:0.4) $) {};
\path[edge] (u1) -- (u2) -- (u3) -- (u4) -- (u5) -- (u6) -- (u1);
\path[edge] (u5) -- (v1) -- (v2) -- (v3) -- (v4) -- (u6);
\path[edge] (v2) -- (w1) -- (w2) -- (w3) -- (w4) -- (v3);
\path[edge] (w2) -- (x1) -- (x2) -- (x3) -- (x4) -- (w3);
\path[edge] (x3) -- (y1) -- (y2) -- (y3) -- (y4) -- (x4);
\path[edge] (a1) -- (b1) -- (a2) -- (b2) -- (a3) -- (b3) -- (a4) -- (b4) -- (a5) -- (b5) -- (a6) -- (b6)
 -- (a7) -- (b7) -- (a8) -- (b8) -- (a9) -- (b9) -- (a10) -- (b10) -- (a11) -- (b11) -- (a12) -- (b12)
 -- (a13) -- (b13) -- (a14) -- (b14) -- (a1);
\end{tikzpicture}
\caption{Kernel eigenvector $\x^{(1)}$}
\label{subfig:bendbenzneglb}
\end{subfigure}
\qquad\qquad
\begin{subfigure}[b]{0.50\linewidth}
\centering
\begin{tikzpicture}[scale=1.2]
\tikzstyle{edge}=[draw,thick]
\tikzstyle{every node}=[draw, circle, fill=white, inner sep=1.2pt]
\tikzstyle{pozit}=[fill=black!50!white]
\tikzstyle{negat}=[fill=red!50!white]
\node[pozit,label={[yshift=0,xshift=0]-90:{\footnotesize $3$}}] (u1) at (90:0.6) {};
\node[pozit,label={[yshift=0,xshift=0]90:{\footnotesize $5$}}] (u2) at ({90 + 60}:0.6) {};
\node[negat,label={[yshift=0,xshift=0]-90:{\footnotesize $-6$}}] (u3) at ({90 + 2 * 60}:0.6) {};
\node[negat,label={[yshift=0,xshift=0]90:{\footnotesize $-2$}}] (u4) at ({90 + 3 * 60}:0.6) {};
\node[pozit,label={[yshift=0,xshift=0]-90:{\footnotesize $3$}}] (u5) at ({90 + 4 * 60}:0.6) {};
\node[negat,label={[yshift=0,xshift=0]90:{\footnotesize $-2$}}] (u6) at ({90 + 5 * 60}:0.6) {};
\node[pozit,label={[yshift=0,xshift=0]90:{\footnotesize $4$}}] (v1) at ($ (u5) + (-30:0.6) $) {};
\node[label={[yshift=0,xshift=0]-90:{\footnotesize $0$}}] (v2) at ($ (v1) + (30:0.6) $) {};
\node[negat,label={[yshift=0,xshift=0]90:{\footnotesize $-1$}}] (v3) at ($ (v2) + (90:0.6) $) {};
\node[negat,label={[yshift=0,xshift=0]-90:{\footnotesize $-6$}}] (v4) at ($ (v3) + (150:0.6) $) {};
\node[negat,label={[yshift=0,xshift=0]90:{\footnotesize $-3$}}] (w1) at ($ (v2) + (-30:0.6) $) {};
\node[negat,label={[yshift=0,xshift=0]-90:{\footnotesize $-3$}}] (w2) at ($ (w1) + (30:0.6) $) {};
\node[pozit,label={[yshift=0,xshift=0]90:{\footnotesize $4$}}] (w3) at ($ (w2) + (90:0.6) $) {};
\node[pozit,label={[yshift=0,xshift=0]-90:{\footnotesize $6$}}] (w4) at ($ (w3) + (150:0.6) $) {};
\node[negat,label={[yshift=0,xshift=0]90:{\footnotesize $-1$}}] (x1) at ($ (w2) + (-30:0.6) $) {};
\node[pozit,label={[yshift=0,xshift=0]-90:{\footnotesize $6$}}] (x2) at ($ (x1) + (30:0.6) $) {};
\node[negat,label={[yshift=0,xshift=0]90:{\footnotesize $-2$}}] (x3) at ($ (x2) + (90:0.6) $) {};
\node[negat,label={[yshift=0,xshift=0]-90:{\footnotesize $-3$}}] (x4) at ($ (x3) + (150:0.6) $) {};
\node[negat,label={[yshift=0,xshift=0]-90:{\footnotesize $-3$}}] (y1) at ($ (x3) + (30:0.6) $) {};
\node[pozit,label={[yshift=0,xshift=0]90:{\footnotesize $5$}}] (y2) at ($ (y1) + (90:0.6) $) {};
\node[label={[yshift=0,xshift=0]-90:{\footnotesize $0$}}] (y3) at ($ (y2) + (150:0.6) $) {};
\node[negat,label={[yshift=0,xshift=0]210:{\footnotesize $-2$}}] (y4) at ($ (y3) + (210:0.6) $) {};
\node[negat,label={[yshift=-3pt,xshift=-2pt]90:{\footnotesize $-3$}}] (a1) at ($ (u1) + (90:0.6) $) {}; \path[edge] (a1) -- (u1);
\node[pozit,label={[yshift=0,xshift=0]90:{\footnotesize $3$}}] (a2) at ($ (v4) + (90:0.6) $) {}; \path[edge] (a2) -- (v4);
\node[negat,label={[yshift=0pt,xshift=-2pt]0:{\footnotesize $-3$}}]  (a3) at ($ (w4) + (90:0.6) $) {}; \path[edge] (a3) -- (w4);
\node[pozit,label={[yshift=0,xshift=-2pt]90:{\footnotesize $3$}}] (a4) at ($ (y4) + (130:0.6) $) {}; \path[edge] (a4) -- (y4);
\node[negat,label={[yshift=-4pt,xshift=0]90:{\footnotesize $-3$}}]  (a5) at ($ (y3) + (90:0.6) $) {}; \path[edge] (a5) -- (y3);
\node[pozit,label={[yshift=0,xshift=0]0:{\footnotesize $3$}}] (a6) at ($ (y2) + (30:0.6) $) {}; \path[edge] (a6) -- (y2);
\node[negat,label={[yshift=0,xshift=0]0:{\footnotesize $-3$}}] (a7) at ($ (y1) + (-30:0.6) $) {}; \path[edge] (a7) -- (y1);
\node[pozit,label={[yshift=0,xshift=0]-30:{\footnotesize $3$}}] (a8) at ($ (x2) + (-30:0.6) $) {}; \path[edge] (a8) -- (x2);
\node[negat,label={[yshift=2pt,xshift=0]-90:{\footnotesize $-3$}}] (a9) at ($ (x1) + (-90:0.6) $) {}; \path[edge] (a9) -- (x1);
\node[pozit,label={[yshift=0,xshift=0]-90:{\footnotesize $3$}}]  (a10) at ($ (w1) + (-90:0.6) $) {}; \path[edge] (a10) -- (w1);
\node[negat,label={[yshift=2pt,xshift=0]-90:{\footnotesize $-3$}}] (a11) at ($ (v1) + (-90:0.6) $) {}; \path[edge] (a11) -- (v1);
\node[pozit,label={[yshift=0,xshift=0]-90:{\footnotesize $3$}}] (a12) at ($ (u4) + (-90:0.6) $) {}; \path[edge] (a12) -- (u4);
\node[negat,label={[yshift=0,xshift=0]180:{\footnotesize $-3$}}] (a13) at ($ (u3) + (-150:0.6) $) {}; \path[edge] (a13) -- (u3);
\node[pozit,label={[yshift=0,xshift=0]180:{\footnotesize $3$}}] (a14) at ($ (u2) + (150:0.6) $) {}; \path[edge] (a14) -- (u2);
\node[pozit,label={[yshift=0,xshift=0]90:{\footnotesize $6$}}] (b1) at ($ (a1)!0.5!(a2) + (90:0.3) $) {};
\node[label={[yshift=0,xshift=0]90:{\footnotesize $0$}}]  (b2) at ($ (a2)!0.5!(a3) + (90:0.3) $) {};
\node[negat,label={[yshift=-4pt,xshift=-5pt]90:{\footnotesize $-6$}}] (b3) at ($ (a4) + (180:0.4) $) {};
\node[pozit,label={[yshift=0,xshift=0]90:{\footnotesize $8$}}] (b4) at ($ (a4)!0.5!(a5) + (120:0.4) $) {};
\node[negat,label={[yshift=0,xshift=0]0:{\footnotesize $-8$}}] (b5) at ($ (a5)!0.5!(a6) + (60:0.4) $) {};
\node[pozit,label={[yshift=0,xshift=0]0:{\footnotesize $3$}}]  (b6) at ($ (a6)!0.5!(a7) + (0:0.4) $) {};
\node[label={[yshift=0,xshift=0]0:{\footnotesize $0$}}]  (b7) at ($ (a7)!0.5!(a8) + (-30:0.3) $) {};
\node[negat,label={[yshift=3pt,xshift=0]-90:{\footnotesize $-6$}}] (b8) at ($ (a8)!0.5!(a9) + (-60:0.4) $) {};
\node[pozit,label={[yshift=0,xshift=0]-90:{\footnotesize $7$}}] (b9) at ($ (a9)!0.5!(a10) + (-90:0.3) $) {};
\node[negat,label={[yshift=3pt,xshift=0]-90:{\footnotesize $-4$}}] (b10) at ($ (a10)!0.5!(a11) + (-90:0.3) $) {};
\node[label={[yshift=0,xshift=0]-90:{\footnotesize $0$}}] (b11) at ($ (a11)!0.5!(a12) + (-90:0.3) $) {};
\node[pozit,label={[yshift=0,xshift=0]180:{\footnotesize $2$}}]  (b12) at ($ (a12)!0.5!(a13) + (-120:0.4) $) {};
\node[pozit,label={[yshift=0,xshift=0]180:{\footnotesize $4$}}] (b13) at ($ (a13)!0.5!(a14) + (-180:0.4) $) {};
\node[negat,label={[yshift=0,xshift=0]90:{\footnotesize $-9$}}] (b14) at ($ (a14)!0.5!(a1) + (120:0.4) $) {};
\path[edge] (u1) -- (u2) -- (u3) -- (u4) -- (u5) -- (u6) -- (u1);
\path[edge] (u5) -- (v1) -- (v2) -- (v3) -- (v4) -- (u6);
\path[edge] (v2) -- (w1) -- (w2) -- (w3) -- (w4) -- (v3);
\path[edge] (w2) -- (x1) -- (x2) -- (x3) -- (x4) -- (w3);
\path[edge] (x3) -- (y1) -- (y2) -- (y3) -- (y4) -- (x4);
\path[edge] (a1) -- (b1) -- (a2) -- (b2) -- (a3) -- (b3) -- (a4) -- (b4) -- (a5) -- (b5) -- (a6) -- (b6)
 -- (a7) -- (b7) -- (a8) -- (b8) -- (a9) -- (b9) -- (a10) -- (b10) -- (a11) -- (b11) -- (a12) -- (b12)
 -- (a13) -- (b13) -- (a14) -- (b14) -- (a1);
\end{tikzpicture}
\caption{Kernel eigenvector $\x^{(2)}$}
\label{subfig:bendbenzneglc}
\end{subfigure}
\caption{The molecular graph of benzo[{\it a}]tetracene and 
two independent kernel eigenvectors
of the altan. In (b) and (c) all vertices represented by white-filled circles carry entry $0$.}
\label{fig:bendbenznegl}
\end{figure}
\end{example}

So far, the complete list of the possible cases for the difference in nullity between a parent graph and its altan 
was derived from general considerations of linear algebra 
concerning the possibilities for 
extension of kernel vectors from parent to altan, and contraction
from altan to parent.
These considerations can also be used to derive an interesting result for this difference in the case where the parent graph is itself an altan.
Again, a preliminary technical lemma is needed.
\begin{lemma}
\label{lem:everycontractible}
Let $(G', H')$ be as in Definition~\ref{def:altan}. Let $(G'', H'') = \altan(G', H')$ and let vertices of
the perimeter of $G''$ be labeled $x'_1, x'_2, \ldots, x'_h$ and $y'_1, y'_2, \ldots, y'_h$ so that
$N(x'_1) = \{ y'_h, y_1, y'_1 \}$,  $N(x'_2) = \{ y'_1, y_2, y'_2 \}$,  $N(x'_3) = \{ y'_2, y_3, y'_3 \}$ and so on (see Figure~\ref{fig:altan2nd}).
Let $\x \in \ker (G'')$ and let $\mathcal{D}(\x) = \sum_{i=1}^{h}(-1)^{i}\x(y_i)$. 
\begin{enumerate}[label=(\roman*)]
\item
If $h$ is even then $\x(x'_i) = 0$ for all $1 \leq i \leq h$. 
\item
If $h$ is even then $\mathcal{D}(\x) = 0$.
\end{enumerate}
\end{lemma}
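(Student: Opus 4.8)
The plan is to work entirely with the local condition~\eqref{eq:local} at $\lambda = 0$, exploiting the rigidity of the outermost perimeter of $G''$. Let $\x \in \ker(G'')$ and abbreviate $a_i = \x(x'_i)$, $b_i = \x(y'_i)$, $c_i = \x(y_i)$ and $d_i = \x(x_i)$, all subscripts read cyclically modulo $h$ (so $y'_0 = y'_h$, $x_{h+1} = x_1$, and so on). From the construction of $G''$ (see Figure~\ref{fig:altan2nd}) the pertinent neighbourhoods are $N(y'_i) = \{x'_i, x'_{i+1}\}$, $N(x'_i) = \{y'_{i-1}, y_i, y'_i\}$ and $N(y_i) = \{x_i, x_{i+1}, x'_i\}$, so pivoting at these three kinds of vertex yields the three families of relations $a_i + a_{i+1} = 0$, $b_{i-1} + c_i + b_i = 0$ and $d_i + d_{i+1} + a_i = 0$.

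For part~(i) I would first deduce $a_i = (-1)^{i-1}a_1$ from $a_i + a_{i+1} = 0$; when $h$ is even the cyclic closure $a_h + a_1 = 0$ is automatic and gives nothing new, so a second ingredient is needed. Rewriting the third family as $d_i + d_{i+1} = -a_i = (-1)^i a_1$ and forming the alternating sum $\sum_{i=1}^{h}(-1)^i(d_i + d_{i+1})$, the left-hand side telescopes to $0$ for even $h$ — each $d_j$ occurs with the two opposite coefficients $(-1)^j$ and $(-1)^{j-1}$, the wrap-around term $d_{h+1} = d_1$ entering with sign $(-1)^h = 1$ — whereas the right-hand side equals $\sum_{i=1}^h a_1 = h\,a_1$. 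Since $h \ge 2$, this forces $a_1 = 0$ and hence $a_i = 0$ for all $i$.

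Part~(ii) follows from the identical manoeuvre applied to the second family, rewritten as $b_{i-1} + b_i = -c_i$: the alternating sum $\sum_{i=1}^{h}(-1)^i(b_{i-1} + b_i)$ telescopes to $0$ for even $h$ (each interior $b_j$ cancels against its two neighbouring terms and the wrap-around $b_0 = b_h$ cancels because $(-1)^h = 1$), while it also equals $-\sum_{i=1}^{h}(-1)^i c_i = -\mathcal{D}(\x)$; hence $\mathcal{D}(\x) = 0$. Note that~(ii) is thereby obtained independently of~(i), though one could equally substitute $a_i = 0$ into the third family first.

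I do not expect a genuine obstacle here: the only thing that must be watched is the bookkeeping of the cyclic indices, i.e.\ checking that the wrap-around contributions ($d_{h+1} = d_1$ in~(i), $b_0 = b_h$ in~(ii)) enter the telescoping sums with the correct sign. This is exactly the point where the hypothesis that $h$ is even is indispensable — for odd $h$ the alternating sums fail to close, leaving a residual term of the form $\pm 2 d_1$ or $\pm 2 b_h$, in line with the failure of the analogous statements in that case (cf.\ Lemmas~\ref{techlem:a}--\ref{techlem:d}).
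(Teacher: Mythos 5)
Your proof is correct and takes essentially the same route as the paper's: both arguments rely only on the local condition at the vertices $y'_i$, $y_i$ and $x'_i$, first extracting the alternating structure of the $\x(x'_i)$ (which the paper gets by citing Lemma~\ref{techlem:c}) and then concluding by a summation identity over the cyclic relations. Your alternating-sum telescoping is merely a repackaging of the paper's device of comparing the sum of all rows with the sum of the odd rows in part~(i), and of solving the recursion around the outer ring and closing it at $x'_h$ in part~(ii).
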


\begin{figure}[!htb]
\centering
\begin{tikzpicture}[scale=1.2]
\tikzstyle{edge}=[draw,thick]
\tikzstyle{every node}=[draw, circle, fill=blue!50!white, inner sep=1.5pt]
\draw[fill=green!20!white] (0,0) ellipse (2cm and 1.2cm);
\node[label={[yshift=6pt]-90:$v_{h-1}$}] (vh-1) at (170:1.6) {};
\node[label={[yshift=2pt]-90:$v_h$}] (vh) at (140:1.2) {};
\node[label={[yshift=2pt]-90:$v_1$}] (v1) at (90:0.95) {};
\node[label={[yshift=2pt]-90:$v_2$}] (v2) at (40:1.2) {};
\node[label={[yshift=2pt]-90:$v_3$}] (v3) at (10:1.6) {};
\node[label={[yshift=0pt]180:$x_{h-1}$}] (xh-1) at (170:1.6*1.5) {};
\node[label={[yshift=0pt]120:$x_h$}] (xh) at (140:1.2*1.7) {};
\node[label={[yshift=-2pt]90:$x_1$}] (x1) at (90:0.95*1.9) {};
\node[label={[yshift=0pt]60:$x_2$}] (x2) at (40:1.2*1.7) {};
\node[label={[yshift=0pt]0:$x_3$}] (x3) at (10:1.6*1.5) {};
\node[label={[yshift=6pt,xshift=-6pt]-90:$y_{h-2}$}] (yh-2) at (185:2.8) {};
\node[label={[yshift=0pt]180:$y_{h-1}$}] (yh-1) at ($ (xh-1)!0.5!(xh) + (150:0.5) $) {};
\node[label={[yshift=-4pt,xshift=-10pt]90:$y_h$}] (yh) at ($ (xh)!0.5!(x1) + (105:0.4) $) {};
\node[label={[yshift=-4pt,xshift=11pt]90:$y_1$}] (y1) at ($ (x1)!0.5!(x2) + (75:0.4) $) {};
\node[label={[yshift=-2pt]0:$y_2$}] (y2) at ($ (x2)!0.5!(x3) + (30:0.5) $) {};
\node[label={[yshift=0pt,xshift=2pt]-90:$y_3$}] (y3) at (-5:2.8) {};
\path[edge] ($ (yh-2) + (-50:0.4) $) -- (yh-2) -- (xh-1) -- (yh-1) -- (xh) -- (yh) -- (x1) -- (y1) -- (x2) -- (y2) -- (x3) -- (y3) -- ($ (y3) + (230:0.4) $);
\path[edge] (vh-1) -- (xh-1);
\path[edge] (vh) -- (xh);
\path[edge] (v1) -- (x1);
\path[edge] (v2) -- (x2);
\path[edge] (v3) -- (x3);
\node[label={[yshift=-2pt]90:$x'_1$}] (xp1) at ($ (y1) + (80:0.9) $) {};
\node[label={[yshift=-2pt,xshift=-2pt]60:$x'_2$}] (xp2) at ($ (y2) + (40:0.9) $) {};
\node[label={[yshift=0pt,xshift=-2pt]0:$x'_3$}] (xp3) at ($ (y3) + (0:0.9) $) {};
\node[label={[yshift=-2pt]90:$x'_h$}] (xph) at ($ (yh) + (100:0.9) $) {};
\node[label={[yshift=-3pt,xshift=2pt]120:$x'_{h-1}$}] (xph-1) at ($ (yh-1) + (140:0.9) $) {};
\node[label={[xshift=2pt]180:$x'_{h-2}$}] (xph-2) at ($ (yh-2) + (180:0.9) $) {};
\path[edge] (y1) -- (xp1);
\path[edge] (y2) -- (xp2);
\path[edge] (y3) -- (xp3);
\path[edge] (yh) -- (xph);
\path[edge] (yh-1) -- (xph-1);
\path[edge] (yh-2) -- (xph-2);
\node[label={[yshift=0pt]60:$y'_1$}] (yp1) at ($ (xp1)!0.5!(xp2) + (60:0.3) $) {};
\node[label={[xshift=-2pt]0:$y'_2$}] (yp2) at ($ (xp2)!0.5!(xp3) + (10:0.3) $) {};
\node[label={[yshift=-2pt]90:$y'_h$}] (yph) at ($ (xp1)!0.5!(xph) + (90:0.3) $) {};
\node[label={[yshift=-2pt,xshift=6pt]120:$y'_{h-1}$}] (yph-1) at ($ (xph-1)!0.5!(xph) + (120:0.3) $) {};
\node[label={[xshift=2pt]180:$y'_{h-2}$}] (yph-2) at ($ (xph-2)!0.5!(xph-1) + (170:0.3) $) {};
\path[edge] ($ (xph-2) + (260:0.3) $) -- (xph-2) -- (yph-2) -- (xph-1) -- (yph-1) -- (xph) -- (yph) -- (xp1) -- (yp1) -- (xp2) -- (yp2) -- (xp3) -- ($ (xp3) + (-80:0.3) $);
\node[draw=none,fill=none] at (0, 0) {$G$};
\node[draw=none,fill=none] at (-20:1.2) {$\ldots$};
\node[draw=none,fill=none] at ($ (y3) + (230:0.8) $) {$\ldots$};
\end{tikzpicture}
\vspace{6pt}
\caption{The double altan $(G'', H'') = \altan(G', H') = \altan^2(G, H)$ as described in Lemma~\ref{lem:everycontractible}.}
\label{fig:altan2nd}
\end{figure}
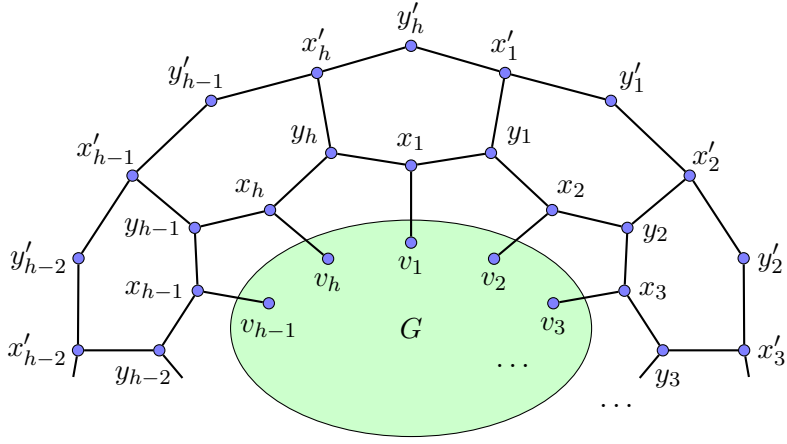

\begin{proof}
(i): By Lemma~\ref{techlem:c}, we can write $\x(x'_i) = a$ for all even $i$ and $\x(x'_i) = -a$ for all odd $i$. By pivoting at vertices
$y_1, y_2, \ldots, y_h$ we get the following conditions:
\begin{equation}
\label{eq:pivokonds}
\begin{aligned}
x_1 + x_2 \phantom{+ x_3 + x_4 + x_{h-1} + x_h} & = \phantom{-} a\\
\phantom{x_1 +} x_2 + x_3 \phantom{+ x_4 + x_{h-1} + x_h} & = -a \\
\phantom{x_1 + x_2 +} x_3 + x_4 \phantom{+ x_{h-1} + x_h} & = \phantom{-} a \\
& \ldots \\
\phantom{x_1 + x_2 + x_3 + x_4 +} x_{h-1} + x_h  & = \phantom{-} a\\
x_1 \phantom{+ x_2 + x_3 + x_4 + x_{h-1}} + x_h  & = - a 
\end{aligned}
\end{equation}
By summing all rows of \eqref{eq:pivokonds} we get $\sum_{i=1}^{h} x_i = 0$. By summing all odd rows of
\eqref{eq:pivokonds}, we get $\sum_{i=1}^{h} x_i = \frac{h}{2}a$. Hence, $a = 0$.

(ii): It is easy to see that $\mathcal{D}$ is a linear functional.
Now, let $\x(y'_h) = t \in \mathbb{R}$. By pivoting at vertices $x'_1, x'_2, \ldots, x'_{h-1}$ we obtain
\begin{align*}
\x(y'_1) & =  -t - \x(y_1) \\
\x(y'_2) & =  \phantom{-}t + \x(y_1) - \x(y_2) \\
\x(y'_3) & =  -t - \x(y_1) + \x(y_2) - \x(y_3)  \\
& \ldots \\
\x(y'_{h - 1}) & = -t - \x(y_1) + \x(y_2) - \x(y_3) + \cdots - \x(y_{h-1}).
\end{align*}
Finally, by pivoting at vertex $x'_h$ we obtain
$\x(y'_{h - 1}) + t + \x(y_{h}) = \mathcal{D}(\x) = 0$, as desired.
\end{proof}

\begin{theorem}
\label{thm:armes3}
Let $G$ be a graph and $H$ an attachment set. Then
\begin{equation}
\eta(\altan(G, H)) = \eta(\altan^2(G, H)).
\end{equation}
\end{theorem}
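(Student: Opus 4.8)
The plan is to split on the parity of $h = |H|$. If $h$ is odd, the statement is immediate: writing $(G', H') = \altan(G, H)$, the attachment set $H'$ again has odd size $h$, so Corollary~\ref{cor:10}(ii) applied to the pair $(G', H')$ gives $\eta(\altan(G', H')) = \eta(G')$, that is, $\eta(\altan^2(G,H)) = \eta(\altan(G,H))$. So assume from now on that $h$ is even, write $(G', H') = \altan(G, H)$ and $(G'', H'') = \altan(G', H')$, and label the perimeter of $G''$ by $x'_1, \dots, x'_h, y'_1, \dots, y'_h$ exactly as in Lemma~\ref{lem:everycontractible} and Figure~\ref{fig:altan2nd}. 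By Theorem~\ref{thm:armes1} applied to $(G', H')$ we already have $\eta(G'') \ge \eta(G')$, so the whole content is to prove $\eta(G'') \le \eta(G')$.

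The tool is the contraction map $\rho \colon \ker(G'') \to \RR^{V(G')}$, $\rho(\x) = \x|_{V(G')}$. First I would check that $\rho$ in fact lands in $\ker(G')$: the only vertices of $V(G')$ whose neighbourhood grows in $G'' = \altan(G', H')$ are $y_1, \dots, y_h$ (each $y_i$ gaining the single new neighbour $x'_i$), and by Lemma~\ref{lem:everycontractible}(i) every $\x \in \ker(G'')$ satisfies $\x(x'_i) = 0$, so the local condition at each $y_i$ survives restriction to $V(G')$. Next I would compute $\ker \rho$: if $\x \in \ker(G'')$ vanishes on all of $V(G')$, then it also vanishes on every $x'_i$ (again by Lemma~\ref{lem:everycontractible}(i)), hence by Lemma~\ref{techlem:d} applied to the altan $G'' = \altan(G', H')$ it is a scalar multiple of the special vector $\special$ of $G''$; conversely $\special \in \ker(G'')$ by Lemma~\ref{lem:special} and $\special$ vanishes on $V(G')$. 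Thus $\ker \rho = \spn\{\special\}$ is one-dimensional (note $\special \neq \0$ since $h \ge 2$), and rank--nullity gives $\eta(G'') = 1 + \dim(\operatorname{im}\rho)$.

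It remains to bound $\dim(\operatorname{im}\rho)$, and this is where part (ii) of Lemma~\ref{lem:everycontractible} enters: for $\x \in \ker(G'')$ we have $\mathcal{D}(\rho(\x)) = \mathcal{D}(\x) = 0$, where $\mathcal{D}(\x) = \sum_{i=1}^h (-1)^i \x(y_i)$ (which is precisely the extendability functional $\mathcal{C}$ of Theorem~\ref{thm:armes1} for the step $G' \to G''$). Hence $\operatorname{im}\rho \subseteq E := \{\x \in \ker(G') : \mathcal{D}(\x) = 0\}$. Finally I would observe that $\mathcal{D}$ is not identically zero on $\ker(G')$: the special vector $\special'$ of $\altan(G,H) = (G', H')$ lies in $\ker(G')$ by Lemma~\ref{lem:special} (here $h$ is even), and $\mathcal{D}(\special') = \sum_{i=1}^h (-1)^i \special'(y_i) = \sum_{i=1}^h (-1)^i(-1)^i = h \neq 0$. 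Therefore $\dim E = \eta(G') - 1$, and so $\eta(G'') = 1 + \dim(\operatorname{im}\rho) \le 1 + \dim E = \eta(G')$, which together with $\eta(G'') \ge \eta(G')$ gives the claimed equality.

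Essentially all the difficulty is already absorbed into Lemma~\ref{lem:everycontractible}; the theorem then follows from a one-line rank--nullity count. The only points that need care are: making sure $\rho$ genuinely lands in $\ker(G')$ (this is exactly what part (i) of the lemma buys), identifying $\mathcal{D}$ as the relevant extendability functional for the second altanisation, and exhibiting the special vector of the \emph{first} altan as the witness that $\mathcal{D}$ is nontrivial on $\ker(G')$ (using $h \ge 2$). I do not expect any obstacle beyond this bookkeeping.
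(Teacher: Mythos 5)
Your proof is correct and follows essentially the same route as the paper's: both reduce the even case to Lemma~\ref{lem:everycontractible} (contractibility of every kernel vector of $G''$ and vanishing of $\mathcal{D}$ on contractions), identify the special vector of $G''$ as the one kernel direction lost under restriction to $V(G')$, and use $\mathcal{D}(\special)=h\neq 0$ for the special vector of the first altan to recover that loss inside $\ker(G')$, finishing with Theorem~\ref{thm:armes1} for the reverse inequality. The only difference is presentational: you organise the linear algebra as a rank--nullity count for the restriction map $\rho$ where the paper manipulates an explicit basis, and your explicit check that $\rho$ lands in $\ker(G')$ makes precise a step the paper leaves implicit.
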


\begin{proof}
If $h = |H|$ is odd then the statement of the theorem follows from Corollary~\ref{cor:10}(ii).

Suppose that $h = |H|$ is even. Let $\xi = \eta(G'')$. Let $\widetilde{\special} \in \ker(G'')$ denote the special
vector of $G''$ and let $\special \in \ker(G')$ denote the special vector of $G'$. (The vector $\special$ has non-zero entries
on vertices $y_i$ only, whilst the vector $\widetilde{\special}$ has non-zero entries on vertices $y'_i$ only.)
Vectors $\special$ and $\widetilde{\special}$ both exist by Lemma~\ref{lem:special}.
We can choose a basis for $G''$ that contains the special one, so let $\{ \widetilde{\special}, \widetilde{\x}^{(2)}, \ldots, \widetilde{\x}^{(\xi)} \}$ 
be a basis for $\ker(G'')$.

By Lemma~\ref{lem:everycontractible}(i), the vectors $\widetilde{\x}^{(2)}, \widetilde{\x}^{(3)}, \ldots, \widetilde{\x}^{(\xi)}$
are contractible.  Let $\x^{(2)}, \x^{(3)}, \ldots, \x^{(\xi)}$ be contractions of 
$\widetilde{\x}^{(2)}, \widetilde{\x}^{(3)}, \ldots, \widetilde{\x}^{(\xi)}$, respectively. Note that $\x^{(2)}, \x^{(3)}, \ldots, \x^{(\xi)} \in \ker (G')$.

Since $\mathcal{D}$ (defined in Lemma~\ref{lem:everycontractible} for vectors $\widetilde{\x} \in \ker(G'')$) 
involves only the entries $y_1, y_2, \ldots,\linebreak
 y_h$, it remains well defined for vectors $\x \in \ker(G')$. 
Observe that $\mathcal{D}(\x) = \mathcal{D}(\widetilde{\x})$ if $\x$ is a contraction of $\widetilde{\x}$.
The vector $\special$ is linearly independent of $\x^{(2)}, \x^{(3)}, \ldots, \x^{(\xi)}$, since $\mathcal{D}(\x^{(i)}) = 0$ for all $2 \leq i \leq \xi$
by Lemma~\ref{lem:everycontractible}(ii) and $\mathcal{D}(\special) = h$.

It remains to be proved that the vectors $\x^{(2)}, \x^{(3)}, \ldots, \x^{(\xi)}$ are linearly independent of each other.
For contradiction, suppose that they are not
linearly independent. Then there exist scalars $\mu_2, \mu_3, \ldots, \mu_\xi$ (at least one of which is non-zero) such that
\begin{equation}
\label{eq:linkombi3}
\mu_2 \x^{(2)} + \mu_3 \x^{(3)} + \cdots + \mu_\xi \x^{(\xi)} = \mathbf{0}.
\end{equation}
Now define 
\[
\X := \mu_2 \widetilde{\x}^{(2)} + \mu_3 \widetilde{\x}^{(3)} + \cdots + \mu_\xi \widetilde{\x}^{(\xi)}.
\]
Observe that $\X(u) = (\mu_2 \x^{(2)} + \mu_3 \x^{(3)} + \cdots + \mu_\xi \x^{(\xi)})(u)$ for all $u \in V(G')$.
From \eqref{eq:linkombi3} it follows that $\X(u) = 0$ for all $u \in V(G')$. From Lemma~\ref{lem:everycontractible}(i)
it follows that $\X(x'_i) = 0$ for all $1 \leq i \leq h$.
By Lemma~\ref{techlem:d}, it follows that
$\X = \sigma \widetilde{\special}$ for some scalar $\sigma$. Therefore,
\[
\mu_2 \widetilde{\x}^{(2)} + \mu_3 \widetilde{\x}^{(3)} + \cdots + \mu_\xi \widetilde{\x}^{(\xi)} - \sigma \widetilde{\special} = \mathbf{0}.
\]
This
contradicts the fact that $\{ \widetilde{\special}, \widetilde{\x}^{(2)}, \ldots, \widetilde{\x}^{(\xi)}\}$ is a basis for $\ker(G'')$.

To summarise, vectors $\special, \x^{(2)}, \x^{(3)}, \ldots, \x^{(\xi)}$ are linearly independent and hence $\eta(G') \geq \eta(G'')$.
Theorem~\ref{thm:armes1} implies $\eta(G') \leq \eta(G'')$. It follows that $\eta(G') = \eta(G'')$, as claimed.
\end{proof}

Iteration of Theorem~\ref{thm:armes3} gives directly:

\begin{corollary}
Let $G$ be a graph and $H$ an attachment set. Then
\begin{equation}
\eta(\altan(G, H)) = \eta(\altan^n(G, H))
\end{equation}
for any $n \geq 1$.
\end{corollary}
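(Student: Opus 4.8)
The plan is a short induction on $n$ built entirely on Theorem~\ref{thm:armes3}. The point to exploit is that Theorem~\ref{thm:armes3} holds for \emph{every} pair consisting of a graph and an attachment set, and the iterated altan is obtained by repeatedly applying $\altan(\cdot)$ to such pairs, using at each stage the induced attachment set. Concretely, for any $k \geq 1$ put $(G^{(k-1)}, H^{(k-1)}) := \altan^{k-1}(G, H)$; the recursive definition of $\altan^{k}$ then gives $\altan^{k}(G, H) = \altan(G^{(k-1)}, H^{(k-1)})$ and $\altan^{k+1}(G, H) = \altan^{2}(G^{(k-1)}, H^{(k-1)})$. Applying Theorem~\ref{thm:armes3} to the pair $(G^{(k-1)}, H^{(k-1)})$ yields the ``chain-link'' identity
\[
  \eta\bigl(\altan^{k}(G, H)\bigr) = \eta\bigl(\altan^{k+1}(G, H)\bigr), \qquad k \geq 1.
\]

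First I would record this identity for every $k \geq 1$ as above. Then, taking $n = 1$ as the trivial base case, I would run the induction: assuming $\eta(\altan(G, H)) = \eta(\altan^{n}(G, H))$, the chain-link identity at $k = n$ gives $\eta(\altan^{n}(G, H)) = \eta(\altan^{n+1}(G, H))$, and combining the two equalities closes the step. Equivalently, one simply observes that all consecutive terms of the finite sequence $\eta(\altan^{1}(G, H)), \eta(\altan^{2}(G, H)), \ldots, \eta(\altan^{n}(G, H))$ coincide, so every $\eta(\altan^{n}(G, H))$ equals $\eta(\altan^{1}(G, H)) = \eta(\altan(G, H))$.

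I do not expect any real obstacle here: the entire content is already carried by Theorem~\ref{thm:armes3}. The only thing needing a moment of care is the elementary bookkeeping verifying that $\altan^{k+1}(G, H)$ is genuinely the \emph{second} altan of $\altan^{k-1}(G, H)$, so that Theorem~\ref{thm:armes3} (rather than merely Corollary~\ref{cor:10}) can be invoked; this is immediate from the recursion defining $\altan^{k}$ and from the fact that the induced attachment set is precisely the one used by $\altan^{k}$ at each step.
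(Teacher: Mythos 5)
Your induction is correct and is exactly what the paper intends: it states the corollary as following by "iteration of Theorem~\ref{thm:armes3}" with no further detail, and your chain-link identity applied to each intermediate pair $(G^{(k-1)}, H^{(k-1)})$ is precisely that iteration made explicit.
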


Hence, successive altanisations surround the original patch and its first-altan penumbra 
of faces with rings of $h$ hexagons, but do not change the nullity of the graph.
Given the restrictive range of bond lengths available to carbon nanostructures,
this implies that at some level of iterated altanisation the system will pop out
of the plane and form a tube-like structure, closed at one end by 
the patch and its first altan acting together as a generalised hemispherical cap. 
This change in
geometrical structure has implications for the chemistry of any such molecule (see
Section~\ref{sec:conclusion}).

\section{Computational results}

The preceding sections have presented mathematical theorems for the limits on excess nullity.
The excess can take values $0$, $1$ or $2$ only.
It is of interest to check how these cases are distributed for examples of chemical relevance.
We made a computational survey of small examples, to get some information about the relative 
frequencies of the different allowed values. The chosen families of molecular graphs are
 benzenoids (general, catafused and convex) and patches composed of various combinations of 5-, 6-
and 7-membered rings (solely pentagonal, solely heptagonal, pentagonal-and-hexagonal and pentagonal-and-heptagonal).

General and catacondensed benzenoids were generated using the \texttt{catacondensed} generator which is
included in the CaGe package \cite{CaGe}. For convex benzenoids we used in-house software that was developed
for stratified enumeration reported in \cite{Basic2018}. The \texttt{ngons} generator from the CaGe package was
used for the other families of patches.
To determine nullity of parent and altan graphs we developed a program in SageMath \cite{SageMath}
that uses the \texttt{rank} method for matrices over the integer ring (exact linear algebra computation).
Calculations are carried out for the natural attachment set only.
Calculations of the altan nullity are needed only for patches with (natural) attachment sets of even size, 
as odd attachment sets give trivially zero excess nullity (Corollary~\ref{cor:10}).

Table~\ref{table:benz} presents the results for benzenoids on up to $15$ hexagons. 
Table~\ref{table:catabenz} shows the results for the equivalent computation limited to the catafused benzenoids. 
The most striking conclusion 
from both tables is that by far the majority of 
altan benzenoids have minimum excess nullity (i.e.\ $1$ for Kekulean benzenoids and $0$ for non-Kekulean benzenoids).
Cases with the next allowed value soon start to appear, and of these the most interesting is for Kekulean (i.e.\ non-singular) benzenoid parents that 
give rise to altans with nullity $2$. 
As mentioned earlier, this possibility had not been noticed before. 
Here we see that it occurs for benzenoids with 5 and 7
hexagons, and then, apparently, for all numbers of hexagons from $9$ on.
Figure~\ref{fig:small_excess2_benz} shows the smallest 
examples of catafused and perifused benzenoids for which the nullity jumps by two on altanisation.
Figure~\ref{fig:nullity2_excess2_benz} shows the smallest examples for which 
the nullity jumps by two for non-Kekulean benzenoids.

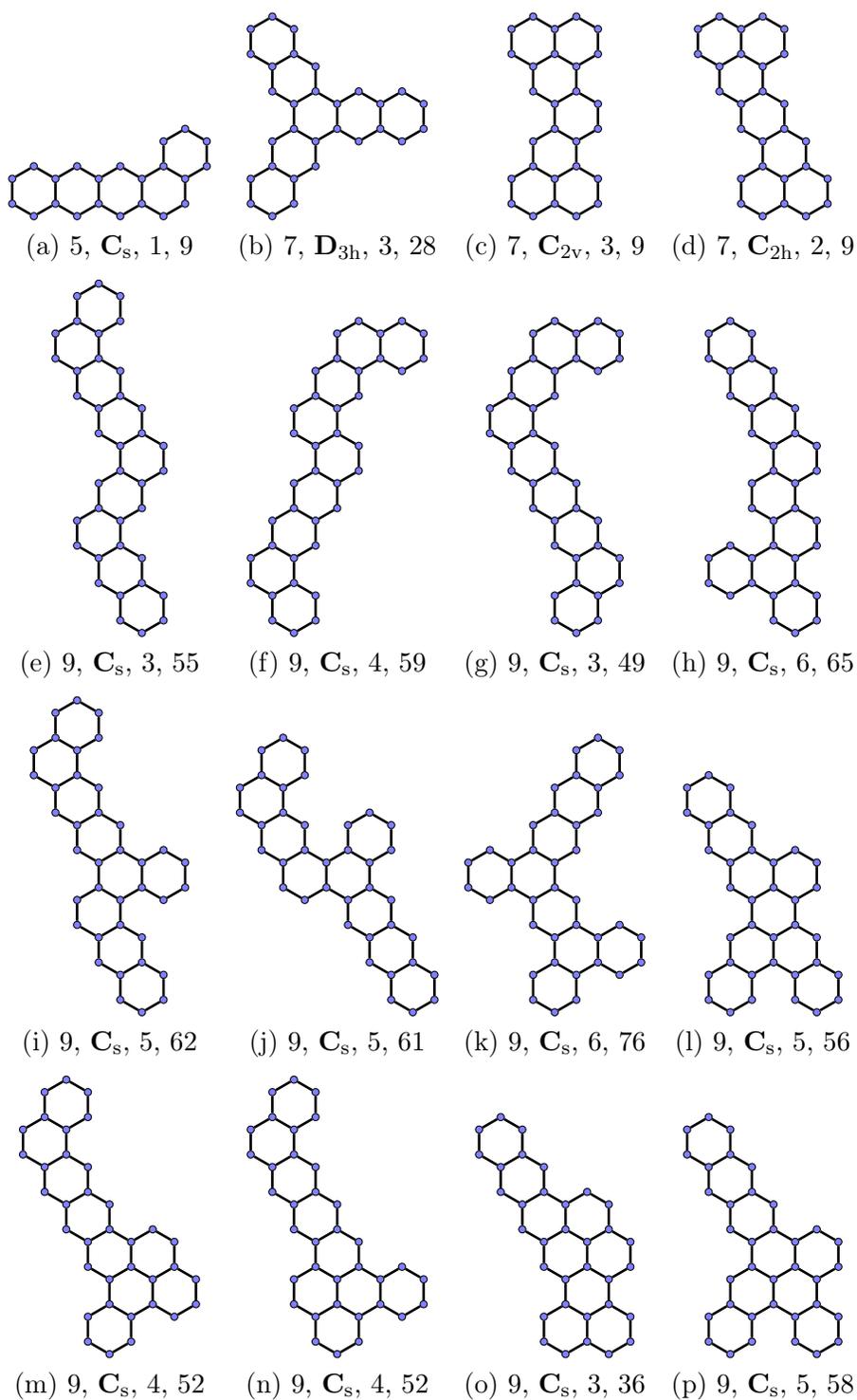
\begin{figure}[!htbp]
\centering
$\begin{array}{cccc}
\begin{tikzpicture}[scale=\scfactor,rotate=-120,xscale=-1]
\tikzstyle{every node} = [inner sep=1, draw, circle, fill=blue!50!white]
\tikzstyle{edge} = [draw, line width=1.0]
\tikzstyle{periedge} = [draw, line width=1.0]
\node (0_0_0) at (0.000000, 1.400000) {};
\node (0_0_1) at (1.212436, 0.700000) {};
\node (1_-1_0) at (1.212436, -0.700000) {};
\node (0_-1_1) at (0.000000, -1.400000) {};
\node (0_-1_0) at (-1.212436, -0.700000) {};
\node (-1_0_1) at (-1.212436, 0.700000) {};
\node (-1_-3_0) at (-6.062178, -4.900000) {};
\node (-1_-3_1) at (-4.849742, -5.600000) {};
\node (0_-4_0) at (-4.849742, -7.000000) {};
\node (-1_-4_1) at (-6.062178, -7.700000) {};
\node (-1_-4_0) at (-7.274613, -7.000000) {};
\node (-2_-3_1) at (-7.274613, -5.600000) {};
\node (-1_-2_0) at (-4.849742, -2.800000) {};
\node (-1_-2_1) at (-3.637307, -3.500000) {};
\node (0_-3_0) at (-3.637307, -4.900000) {};
\node (-2_-2_1) at (-6.062178, -3.500000) {};
\node (-1_-1_0) at (-3.637307, -0.700000) {};
\node (-1_-1_1) at (-2.424871, -1.400000) {};
\node (0_-2_0) at (-2.424871, -2.800000) {};
\node (-2_-1_1) at (-4.849742, -1.400000) {};
\node (-1_0_0) at (-2.424871, 1.400000) {};
\node (-2_0_1) at (-3.637307, 0.700000) {};
\draw[periedge] (-1_0_1) -- (0_0_0);
\draw[periedge] (0_0_0) -- (0_0_1);
\draw[periedge] (0_0_1) -- (1_-1_0);
\draw[periedge] (0_-1_1) -- (1_-1_0);
\draw[periedge] (0_-1_0) -- (0_-1_1);
\draw[edge] (-1_0_1) -- (0_-1_0);
\draw[periedge] (-2_-3_1) -- (-1_-3_0);
\draw[edge] (-1_-3_0) -- (-1_-3_1);
\draw[periedge] (-1_-3_1) -- (0_-4_0);
\draw[periedge] (-1_-4_1) -- (0_-4_0);
\draw[periedge] (-1_-4_0) -- (-1_-4_1);
\draw[periedge] (-2_-3_1) -- (-1_-4_0);
\draw[periedge] (-2_-2_1) -- (-1_-2_0);
\draw[edge] (-1_-2_0) -- (-1_-2_1);
\draw[periedge] (-1_-2_1) -- (0_-3_0);
\draw[periedge] (-1_-3_1) -- (0_-3_0);
\draw[periedge] (-2_-2_1) -- (-1_-3_0);
\draw[periedge] (-2_-1_1) -- (-1_-1_0);
\draw[edge] (-1_-1_0) -- (-1_-1_1);
\draw[periedge] (-1_-1_1) -- (0_-2_0);
\draw[periedge] (-1_-2_1) -- (0_-2_0);
\draw[periedge] (-2_-1_1) -- (-1_-2_0);
\draw[periedge] (-2_0_1) -- (-1_0_0);
\draw[periedge] (-1_0_0) -- (-1_0_1);
\draw[periedge] (-1_-1_1) -- (0_-1_0);
\draw[periedge] (-2_0_1) -- (-1_-1_0);
\end{tikzpicture} & 
\begin{tikzpicture}[scale=\scfactor]
\tikzstyle{every node} = [inner sep=1, draw, circle, fill=blue!50!white]
\tikzstyle{edge} = [draw, line width=1.0]
\tikzstyle{periedge} = [draw, line width=1.0]
\node (0_0_0) at (0.000000, 1.400000) {};
\node (0_0_1) at (1.212436, 0.700000) {};
\node (1_-1_0) at (1.212436, -0.700000) {};
\node (0_-1_1) at (0.000000, -1.400000) {};
\node (0_-1_0) at (-1.212436, -0.700000) {};
\node (-1_0_1) at (-1.212436, 0.700000) {};
\node (-1_0_0) at (-2.424871, 1.400000) {};
\node (-1_-1_1) at (-2.424871, -1.400000) {};
\node (-1_-1_0) at (-3.637307, -0.700000) {};
\node (-2_0_1) at (-3.637307, 0.700000) {};
\node (-2_-2_0) at (-7.274613, -2.800000) {};
\node (-2_-2_1) at (-6.062178, -3.500000) {};
\node (-1_-3_0) at (-6.062178, -4.900000) {};
\node (-2_-3_1) at (-7.274613, -5.600000) {};
\node (-2_-3_0) at (-8.487049, -4.900000) {};
\node (-3_-2_1) at (-8.487049, -3.500000) {};
\node (-2_-1_0) at (-6.062178, -0.700000) {};
\node (-2_-1_1) at (-4.849742, -1.400000) {};
\node (-1_-2_0) at (-4.849742, -2.800000) {};
\node (-3_-1_1) at (-7.274613, -1.400000) {};
\node (-2_0_0) at (-4.849742, 1.400000) {};
\node (-3_0_1) at (-6.062178, 0.700000) {};
\node (-3_1_0) at (-6.062178, 3.500000) {};
\node (-3_1_1) at (-4.849742, 2.800000) {};
\node (-3_0_0) at (-7.274613, 1.400000) {};
\node (-4_1_1) at (-7.274613, 2.800000) {};
\node (-4_2_0) at (-7.274613, 5.600000) {};
\node (-4_2_1) at (-6.062178, 4.900000) {};
\node (-4_1_0) at (-8.487049, 3.500000) {};
\node (-5_2_1) at (-8.487049, 4.900000) {};
\draw[periedge] (-1_0_1) -- (0_0_0);
\draw[periedge] (0_0_0) -- (0_0_1);
\draw[periedge] (0_0_1) -- (1_-1_0);
\draw[periedge] (0_-1_1) -- (1_-1_0);
\draw[periedge] (0_-1_0) -- (0_-1_1);
\draw[edge] (-1_0_1) -- (0_-1_0);
\draw[periedge] (-2_0_1) -- (-1_0_0);
\draw[periedge] (-1_0_0) -- (-1_0_1);
\draw[periedge] (-1_-1_1) -- (0_-1_0);
\draw[periedge] (-1_-1_0) -- (-1_-1_1);
\draw[edge] (-2_0_1) -- (-1_-1_0);
\draw[periedge] (-3_-2_1) -- (-2_-2_0);
\draw[edge] (-2_-2_0) -- (-2_-2_1);
\draw[periedge] (-2_-2_1) -- (-1_-3_0);
\draw[periedge] (-2_-3_1) -- (-1_-3_0);
\draw[periedge] (-2_-3_0) -- (-2_-3_1);
\draw[periedge] (-3_-2_1) -- (-2_-3_0);
\draw[periedge] (-3_-1_1) -- (-2_-1_0);
\draw[edge] (-2_-1_0) -- (-2_-1_1);
\draw[periedge] (-2_-1_1) -- (-1_-2_0);
\draw[periedge] (-2_-2_1) -- (-1_-2_0);
\draw[periedge] (-3_-1_1) -- (-2_-2_0);
\draw[edge] (-3_0_1) -- (-2_0_0);
\draw[periedge] (-2_0_0) -- (-2_0_1);
\draw[periedge] (-2_-1_1) -- (-1_-1_0);
\draw[periedge] (-3_0_1) -- (-2_-1_0);
\draw[edge] (-4_1_1) -- (-3_1_0);
\draw[periedge] (-3_1_0) -- (-3_1_1);
\draw[periedge] (-3_1_1) -- (-2_0_0);
\draw[periedge] (-3_0_0) -- (-3_0_1);
\draw[periedge] (-4_1_1) -- (-3_0_0);
\draw[periedge] (-5_2_1) -- (-4_2_0);
\draw[periedge] (-4_2_0) -- (-4_2_1);
\draw[periedge] (-4_2_1) -- (-3_1_0);
\draw[periedge] (-4_1_0) -- (-4_1_1);
\draw[periedge] (-5_2_1) -- (-4_1_0);
\end{tikzpicture} & 
\begin{tikzpicture}[scale=\scfactor,rotate=60]
\tikzstyle{every node} = [inner sep=1, draw, circle, fill=blue!50!white]
\tikzstyle{edge} = [draw, line width=1.0]
\tikzstyle{periedge} = [draw, line width=1.0]
\node (0_0_0) at (0.000000, 1.400000) {};
\node (0_0_1) at (1.212436, 0.700000) {};
\node (1_-1_0) at (1.212436, -0.700000) {};
\node (0_-1_1) at (0.000000, -1.400000) {};
\node (0_-1_0) at (-1.212436, -0.700000) {};
\node (-1_0_1) at (-1.212436, 0.700000) {};
\node (-1_-1_0) at (-3.637307, -0.700000) {};
\node (-1_-1_1) at (-2.424871, -1.400000) {};
\node (0_-2_0) at (-2.424871, -2.800000) {};
\node (-1_-2_1) at (-3.637307, -3.500000) {};
\node (-1_-2_0) at (-4.849742, -2.800000) {};
\node (-2_-1_1) at (-4.849742, -1.400000) {};
\node (-1_0_0) at (-2.424871, 1.400000) {};
\node (-2_0_1) at (-3.637307, 0.700000) {};
\node (-1_1_0) at (-1.212436, 3.500000) {};
\node (-1_1_1) at (0.000000, 2.800000) {};
\node (-2_1_1) at (-2.424871, 2.800000) {};
\node (-2_-2_0) at (-7.274613, -2.800000) {};
\node (-2_-2_1) at (-6.062178, -3.500000) {};
\node (-1_-3_0) at (-6.062178, -4.900000) {};
\node (-2_-3_1) at (-7.274613, -5.600000) {};
\node (-2_-3_0) at (-8.487049, -4.900000) {};
\node (-3_-2_1) at (-8.487049, -3.500000) {};
\node (-2_-1_0) at (-6.062178, -0.700000) {};
\node (-3_-1_1) at (-7.274613, -1.400000) {};
\node (-3_-1_0) at (-8.487049, -0.700000) {};
\node (-3_-2_0) at (-9.699485, -2.800000) {};
\node (-4_-1_1) at (-9.699485, -1.400000) {};
\draw[edge] (-1_0_1) -- (0_0_0);
\draw[periedge] (0_0_0) -- (0_0_1);
\draw[periedge] (0_0_1) -- (1_-1_0);
\draw[periedge] (0_-1_1) -- (1_-1_0);
\draw[periedge] (0_-1_0) -- (0_-1_1);
\draw[edge] (-1_0_1) -- (0_-1_0);
\draw[periedge] (-2_-1_1) -- (-1_-1_0);
\draw[edge] (-1_-1_0) -- (-1_-1_1);
\draw[periedge] (-1_-1_1) -- (0_-2_0);
\draw[periedge] (-1_-2_1) -- (0_-2_0);
\draw[periedge] (-1_-2_0) -- (-1_-2_1);
\draw[edge] (-2_-1_1) -- (-1_-2_0);
\draw[periedge] (-2_0_1) -- (-1_0_0);
\draw[edge] (-1_0_0) -- (-1_0_1);
\draw[periedge] (-1_-1_1) -- (0_-1_0);
\draw[periedge] (-2_0_1) -- (-1_-1_0);
\draw[periedge] (-2_1_1) -- (-1_1_0);
\draw[periedge] (-1_1_0) -- (-1_1_1);
\draw[periedge] (-1_1_1) -- (0_0_0);
\draw[periedge] (-2_1_1) -- (-1_0_0);
\draw[edge] (-3_-2_1) -- (-2_-2_0);
\draw[edge] (-2_-2_0) -- (-2_-2_1);
\draw[periedge] (-2_-2_1) -- (-1_-3_0);
\draw[periedge] (-2_-3_1) -- (-1_-3_0);
\draw[periedge] (-2_-3_0) -- (-2_-3_1);
\draw[periedge] (-3_-2_1) -- (-2_-3_0);
\draw[periedge] (-3_-1_1) -- (-2_-1_0);
\draw[periedge] (-2_-1_0) -- (-2_-1_1);
\draw[periedge] (-2_-2_1) -- (-1_-2_0);
\draw[edge] (-3_-1_1) -- (-2_-2_0);
\draw[periedge] (-4_-1_1) -- (-3_-1_0);
\draw[periedge] (-3_-1_0) -- (-3_-1_1);
\draw[periedge] (-3_-2_0) -- (-3_-2_1);
\draw[periedge] (-4_-1_1) -- (-3_-2_0);
\end{tikzpicture} & 
\begin{tikzpicture}[scale=\scfactor,rotate=60]
\tikzstyle{every node} = [inner sep=1, draw, circle, fill=blue!50!white]
\tikzstyle{edge} = [draw, line width=1.0]
\tikzstyle{periedge} = [draw, line width=1.0]
\node (0_0_0) at (0.000000, 1.400000) {};
\node (0_0_1) at (1.212436, 0.700000) {};
\node (1_-1_0) at (1.212436, -0.700000) {};
\node (0_-1_1) at (0.000000, -1.400000) {};
\node (0_-1_0) at (-1.212436, -0.700000) {};
\node (-1_0_1) at (-1.212436, 0.700000) {};
\node (-1_-3_0) at (-6.062178, -4.900000) {};
\node (-1_-3_1) at (-4.849742, -5.600000) {};
\node (0_-4_0) at (-4.849742, -7.000000) {};
\node (-1_-4_1) at (-6.062178, -7.700000) {};
\node (-1_-4_0) at (-7.274613, -7.000000) {};
\node (-2_-3_1) at (-7.274613, -5.600000) {};
\node (-1_-2_0) at (-4.849742, -2.800000) {};
\node (-1_-2_1) at (-3.637307, -3.500000) {};
\node (0_-3_0) at (-3.637307, -4.900000) {};
\node (-2_-2_1) at (-6.062178, -3.500000) {};
\node (-1_-1_0) at (-3.637307, -0.700000) {};
\node (-1_-1_1) at (-2.424871, -1.400000) {};
\node (0_-2_0) at (-2.424871, -2.800000) {};
\node (-2_-1_1) at (-4.849742, -1.400000) {};
\node (-1_0_0) at (-2.424871, 1.400000) {};
\node (-2_0_1) at (-3.637307, 0.700000) {};
\node (-1_1_0) at (-1.212436, 3.500000) {};
\node (-1_1_1) at (0.000000, 2.800000) {};
\node (-2_1_1) at (-2.424871, 2.800000) {};
\node (-2_-2_0) at (-7.274613, -2.800000) {};
\node (-2_-3_0) at (-8.487049, -4.900000) {};
\node (-3_-2_1) at (-8.487049, -3.500000) {};
\draw[edge] (-1_0_1) -- (0_0_0);
\draw[periedge] (0_0_0) -- (0_0_1);
\draw[periedge] (0_0_1) -- (1_-1_0);
\draw[periedge] (0_-1_1) -- (1_-1_0);
\draw[periedge] (0_-1_0) -- (0_-1_1);
\draw[edge] (-1_0_1) -- (0_-1_0);
\draw[edge] (-2_-3_1) -- (-1_-3_0);
\draw[edge] (-1_-3_0) -- (-1_-3_1);
\draw[periedge] (-1_-3_1) -- (0_-4_0);
\draw[periedge] (-1_-4_1) -- (0_-4_0);
\draw[periedge] (-1_-4_0) -- (-1_-4_1);
\draw[periedge] (-2_-3_1) -- (-1_-4_0);
\draw[periedge] (-2_-2_1) -- (-1_-2_0);
\draw[edge] (-1_-2_0) -- (-1_-2_1);
\draw[periedge] (-1_-2_1) -- (0_-3_0);
\draw[periedge] (-1_-3_1) -- (0_-3_0);
\draw[edge] (-2_-2_1) -- (-1_-3_0);
\draw[periedge] (-2_-1_1) -- (-1_-1_0);
\draw[edge] (-1_-1_0) -- (-1_-1_1);
\draw[periedge] (-1_-1_1) -- (0_-2_0);
\draw[periedge] (-1_-2_1) -- (0_-2_0);
\draw[periedge] (-2_-1_1) -- (-1_-2_0);
\draw[periedge] (-2_0_1) -- (-1_0_0);
\draw[edge] (-1_0_0) -- (-1_0_1);
\draw[periedge] (-1_-1_1) -- (0_-1_0);
\draw[periedge] (-2_0_1) -- (-1_-1_0);
\draw[periedge] (-2_1_1) -- (-1_1_0);
\draw[periedge] (-1_1_0) -- (-1_1_1);
\draw[periedge] (-1_1_1) -- (0_0_0);
\draw[periedge] (-2_1_1) -- (-1_0_0);
\draw[periedge] (-3_-2_1) -- (-2_-2_0);
\draw[periedge] (-2_-2_0) -- (-2_-2_1);
\draw[periedge] (-2_-3_0) -- (-2_-3_1);
\draw[periedge] (-3_-2_1) -- (-2_-3_0);
\end{tikzpicture} \\
\text{(a)}\ 5,\, \mathbf{C}_\mathrm{s},\, 1,\, 9 & \text{(b)}\ 7,\, \mathbf{D}_\mathrm{3h},\, 3,\, 28 & \text{(c)}\ 7,\, \mathbf{C}_\mathrm{2v},\, 3,\, 9 & \text{(d)}\ 7,\, \mathbf{C}_\mathrm{2h},\, 2,\, 9 \\[6pt]
\begin{tikzpicture}[scale=\scfactor,rotate=60]
\tikzstyle{every node} = [inner sep=1.0, draw, circle, fill=blue!50!white]
\tikzstyle{edge} = [draw, line width=1.0]
\tikzstyle{periedge} = [draw, line width=1.0]
\node (0_0_0) at (0.000000, 1.400000) {};
\node (0_0_1) at (1.212436, 0.700000) {};
\node (1_-1_0) at (1.212436, -0.700000) {};
\node (0_-1_1) at (0.000000, -1.400000) {};
\node (0_-1_0) at (-1.212436, -0.700000) {};
\node (-1_0_1) at (-1.212436, 0.700000) {};
\node (-1_-3_0) at (-6.062178, -4.900000) {};
\node (-1_-3_1) at (-4.849742, -5.600000) {};
\node (0_-4_0) at (-4.849742, -7.000000) {};
\node (-1_-4_1) at (-6.062178, -7.700000) {};
\node (-1_-4_0) at (-7.274613, -7.000000) {};
\node (-2_-3_1) at (-7.274613, -5.600000) {};
\node (-1_-2_0) at (-4.849742, -2.800000) {};
\node (-1_-2_1) at (-3.637307, -3.500000) {};
\node (0_-3_0) at (-3.637307, -4.900000) {};
\node (-2_-2_1) at (-6.062178, -3.500000) {};
\node (-1_-1_0) at (-3.637307, -0.700000) {};
\node (-1_-1_1) at (-2.424871, -1.400000) {};
\node (0_-2_0) at (-2.424871, -2.800000) {};
\node (-2_-1_1) at (-4.849742, -1.400000) {};
\node (-1_0_0) at (-2.424871, 1.400000) {};
\node (-2_0_1) at (-3.637307, 0.700000) {};
\node (-2_-3_0) at (-8.487049, -4.900000) {};
\node (-2_-4_1) at (-8.487049, -7.700000) {};
\node (-2_-4_0) at (-9.699485, -7.000000) {};
\node (-3_-3_1) at (-9.699485, -5.600000) {};
\node (-3_-5_0) at (-13.336791, -9.100000) {};
\node (-3_-5_1) at (-12.124356, -9.800000) {};
\node (-2_-6_0) at (-12.124356, -11.200000) {};
\node (-3_-6_1) at (-13.336791, -11.900000) {};
\node (-3_-6_0) at (-14.549227, -11.200000) {};
\node (-4_-5_1) at (-14.549227, -9.800000) {};
\node (-3_-4_0) at (-12.124356, -7.000000) {};
\node (-3_-4_1) at (-10.911920, -7.700000) {};
\node (-2_-5_0) at (-10.911920, -9.100000) {};
\node (-4_-4_1) at (-13.336791, -7.700000) {};
\node (-3_-3_0) at (-10.911920, -4.900000) {};
\node (-4_-3_1) at (-12.124356, -5.600000) {};
\draw[periedge] (-1_0_1) -- (0_0_0);
\draw[periedge] (0_0_0) -- (0_0_1);
\draw[periedge] (0_0_1) -- (1_-1_0);
\draw[periedge] (0_-1_1) -- (1_-1_0);
\draw[periedge] (0_-1_0) -- (0_-1_1);
\draw[edge] (-1_0_1) -- (0_-1_0);
\draw[periedge] (-2_-3_1) -- (-1_-3_0);
\draw[edge] (-1_-3_0) -- (-1_-3_1);
\draw[periedge] (-1_-3_1) -- (0_-4_0);
\draw[periedge] (-1_-4_1) -- (0_-4_0);
\draw[periedge] (-1_-4_0) -- (-1_-4_1);
\draw[edge] (-2_-3_1) -- (-1_-4_0);
\draw[periedge] (-2_-2_1) -- (-1_-2_0);
\draw[edge] (-1_-2_0) -- (-1_-2_1);
\draw[periedge] (-1_-2_1) -- (0_-3_0);
\draw[periedge] (-1_-3_1) -- (0_-3_0);
\draw[periedge] (-2_-2_1) -- (-1_-3_0);
\draw[periedge] (-2_-1_1) -- (-1_-1_0);
\draw[edge] (-1_-1_0) -- (-1_-1_1);
\draw[periedge] (-1_-1_1) -- (0_-2_0);
\draw[periedge] (-1_-2_1) -- (0_-2_0);
\draw[periedge] (-2_-1_1) -- (-1_-2_0);
\draw[periedge] (-2_0_1) -- (-1_0_0);
\draw[periedge] (-1_0_0) -- (-1_0_1);
\draw[periedge] (-1_-1_1) -- (0_-1_0);
\draw[periedge] (-2_0_1) -- (-1_-1_0);
\draw[periedge] (-3_-3_1) -- (-2_-3_0);
\draw[periedge] (-2_-3_0) -- (-2_-3_1);
\draw[periedge] (-2_-4_1) -- (-1_-4_0);
\draw[periedge] (-2_-4_0) -- (-2_-4_1);
\draw[edge] (-3_-3_1) -- (-2_-4_0);
\draw[periedge] (-4_-5_1) -- (-3_-5_0);
\draw[edge] (-3_-5_0) -- (-3_-5_1);
\draw[periedge] (-3_-5_1) -- (-2_-6_0);
\draw[periedge] (-3_-6_1) -- (-2_-6_0);
\draw[periedge] (-3_-6_0) -- (-3_-6_1);
\draw[periedge] (-4_-5_1) -- (-3_-6_0);
\draw[periedge] (-4_-4_1) -- (-3_-4_0);
\draw[edge] (-3_-4_0) -- (-3_-4_1);
\draw[periedge] (-3_-4_1) -- (-2_-5_0);
\draw[periedge] (-3_-5_1) -- (-2_-5_0);
\draw[periedge] (-4_-4_1) -- (-3_-5_0);
\draw[periedge] (-4_-3_1) -- (-3_-3_0);
\draw[periedge] (-3_-3_0) -- (-3_-3_1);
\draw[periedge] (-3_-4_1) -- (-2_-4_0);
\draw[periedge] (-4_-3_1) -- (-3_-4_0);
\end{tikzpicture} &
\begin{tikzpicture}[scale=\scfactor,rotate=0]
\tikzstyle{every node} = [inner sep=1, draw, circle, fill=blue!50!white]
\tikzstyle{edge} = [draw, line width=1.0]
\tikzstyle{periedge} = [draw, line width=1.0]
\node (0_-6_0) at (-7.274613, -11.200000) {};
\node (0_-6_1) at (-6.062178, -11.900000) {};
\node (1_-7_0) at (-6.062178, -13.300000) {};
\node (0_-7_1) at (-7.274613, -14.000000) {};
\node (0_-7_0) at (-8.487049, -13.300000) {};
\node (-1_-6_1) at (-8.487049, -11.900000) {};
\node (0_-5_0) at (-6.062178, -9.100000) {};
\node (0_-5_1) at (-4.849742, -9.800000) {};
\node (1_-6_0) at (-4.849742, -11.200000) {};
\node (-1_-5_1) at (-7.274613, -9.800000) {};
\node (0_-4_0) at (-4.849742, -7.000000) {};
\node (0_-4_1) at (-3.637307, -7.700000) {};
\node (1_-5_0) at (-3.637307, -9.100000) {};
\node (-1_-4_1) at (-6.062178, -7.700000) {};
\node (0_-3_0) at (-3.637307, -4.900000) {};
\node (0_-3_1) at (-2.424871, -5.600000) {};
\node (1_-4_0) at (-2.424871, -7.000000) {};
\node (-1_-3_1) at (-4.849742, -5.600000) {};
\node (0_0_0) at (0.000000, 1.400000) {};
\node (0_0_1) at (1.212436, 0.700000) {};
\node (1_-1_0) at (1.212436, -0.700000) {};
\node (0_-1_1) at (0.000000, -1.400000) {};
\node (0_-1_0) at (-1.212436, -0.700000) {};
\node (-1_0_1) at (-1.212436, 0.700000) {};
\node (-1_-2_0) at (-4.849742, -2.800000) {};
\node (-1_-2_1) at (-3.637307, -3.500000) {};
\node (-1_-3_0) at (-6.062178, -4.900000) {};
\node (-2_-2_1) at (-6.062178, -3.500000) {};
\node (-1_-1_0) at (-3.637307, -0.700000) {};
\node (-1_-1_1) at (-2.424871, -1.400000) {};
\node (0_-2_0) at (-2.424871, -2.800000) {};
\node (-2_-1_1) at (-4.849742, -1.400000) {};
\node (-1_0_0) at (-2.424871, 1.400000) {};
\node (-2_0_1) at (-3.637307, 0.700000) {};
\node (1_-7_1) at (-4.849742, -14.000000) {};
\node (2_-8_0) at (-4.849742, -15.400000) {};
\node (1_-8_1) at (-6.062178, -16.100000) {};
\node (1_-8_0) at (-7.274613, -15.400000) {};
\draw[periedge] (-1_-6_1) -- (0_-6_0);
\draw[edge] (0_-6_0) -- (0_-6_1);
\draw[periedge] (0_-6_1) -- (1_-7_0);
\draw[edge] (0_-7_1) -- (1_-7_0);
\draw[periedge] (0_-7_0) -- (0_-7_1);
\draw[periedge] (-1_-6_1) -- (0_-7_0);
\draw[periedge] (-1_-5_1) -- (0_-5_0);
\draw[edge] (0_-5_0) -- (0_-5_1);
\draw[periedge] (0_-5_1) -- (1_-6_0);
\draw[periedge] (0_-6_1) -- (1_-6_0);
\draw[periedge] (-1_-5_1) -- (0_-6_0);
\draw[periedge] (-1_-4_1) -- (0_-4_0);
\draw[edge] (0_-4_0) -- (0_-4_1);
\draw[periedge] (0_-4_1) -- (1_-5_0);
\draw[periedge] (0_-5_1) -- (1_-5_0);
\draw[periedge] (-1_-4_1) -- (0_-5_0);
\draw[edge] (-1_-3_1) -- (0_-3_0);
\draw[periedge] (0_-3_0) -- (0_-3_1);
\draw[periedge] (0_-3_1) -- (1_-4_0);
\draw[periedge] (0_-4_1) -- (1_-4_0);
\draw[periedge] (-1_-3_1) -- (0_-4_0);
\draw[periedge] (-1_0_1) -- (0_0_0);
\draw[periedge] (0_0_0) -- (0_0_1);
\draw[periedge] (0_0_1) -- (1_-1_0);
\draw[periedge] (0_-1_1) -- (1_-1_0);
\draw[periedge] (0_-1_0) -- (0_-1_1);
\draw[edge] (-1_0_1) -- (0_-1_0);
\draw[periedge] (-2_-2_1) -- (-1_-2_0);
\draw[edge] (-1_-2_0) -- (-1_-2_1);
\draw[periedge] (-1_-2_1) -- (0_-3_0);
\draw[periedge] (-1_-3_0) -- (-1_-3_1);
\draw[periedge] (-2_-2_1) -- (-1_-3_0);
\draw[periedge] (-2_-1_1) -- (-1_-1_0);
\draw[edge] (-1_-1_0) -- (-1_-1_1);
\draw[periedge] (-1_-1_1) -- (0_-2_0);
\draw[periedge] (-1_-2_1) -- (0_-2_0);
\draw[periedge] (-2_-1_1) -- (-1_-2_0);
\draw[periedge] (-2_0_1) -- (-1_0_0);
\draw[periedge] (-1_0_0) -- (-1_0_1);
\draw[periedge] (-1_-1_1) -- (0_-1_0);
\draw[periedge] (-2_0_1) -- (-1_-1_0);
\draw[periedge] (1_-7_0) -- (1_-7_1);
\draw[periedge] (1_-7_1) -- (2_-8_0);
\draw[periedge] (1_-8_1) -- (2_-8_0);
\draw[periedge] (1_-8_0) -- (1_-8_1);
\draw[periedge] (0_-7_1) -- (1_-8_0);
\end{tikzpicture} &
\begin{tikzpicture}[scale=\scfactor]
\tikzstyle{every node} = [inner sep=1, draw, circle, fill=blue!50!white]
\tikzstyle{edge} = [draw, line width=1.0]
\tikzstyle{periedge} = [draw, line width=1.0]
\node (0_-3_0) at (-3.637307, -4.900000) {};
\node (0_-3_1) at (-2.424871, -5.600000) {};
\node (1_-4_0) at (-2.424871, -7.000000) {};
\node (0_-4_1) at (-3.637307, -7.700000) {};
\node (0_-4_0) at (-4.849742, -7.000000) {};
\node (-1_-3_1) at (-4.849742, -5.600000) {};
\node (0_0_0) at (0.000000, 1.400000) {};
\node (0_0_1) at (1.212436, 0.700000) {};
\node (1_-1_0) at (1.212436, -0.700000) {};
\node (0_-1_1) at (0.000000, -1.400000) {};
\node (0_-1_0) at (-1.212436, -0.700000) {};
\node (-1_0_1) at (-1.212436, 0.700000) {};
\node (-1_-2_0) at (-4.849742, -2.800000) {};
\node (-1_-2_1) at (-3.637307, -3.500000) {};
\node (-1_-3_0) at (-6.062178, -4.900000) {};
\node (-2_-2_1) at (-6.062178, -3.500000) {};
\node (-1_-1_0) at (-3.637307, -0.700000) {};
\node (-1_-1_1) at (-2.424871, -1.400000) {};
\node (0_-2_0) at (-2.424871, -2.800000) {};
\node (-2_-1_1) at (-4.849742, -1.400000) {};
\node (-1_0_0) at (-2.424871, 1.400000) {};
\node (-2_0_1) at (-3.637307, 0.700000) {};
\node (1_-4_1) at (-1.212436, -7.700000) {};
\node (2_-5_0) at (-1.212436, -9.100000) {};
\node (1_-5_1) at (-2.424871, -9.800000) {};
\node (1_-5_0) at (-3.637307, -9.100000) {};
\node (2_-5_1) at (0.000000, -9.800000) {};
\node (3_-6_0) at (0.000000, -11.200000) {};
\node (2_-6_1) at (-1.212436, -11.900000) {};
\node (2_-6_0) at (-2.424871, -11.200000) {};
\node (3_-7_0) at (-1.212436, -13.300000) {};
\node (3_-7_1) at (0.000000, -14.000000) {};
\node (4_-8_0) at (0.000000, -15.400000) {};
\node (3_-8_1) at (-1.212436, -16.100000) {};
\node (3_-8_0) at (-2.424871, -15.400000) {};
\node (2_-7_1) at (-2.424871, -14.000000) {};
\node (3_-6_1) at (1.212436, -11.900000) {};
\node (4_-7_0) at (1.212436, -13.300000) {};
\draw[edge] (-1_-3_1) -- (0_-3_0);
\draw[periedge] (0_-3_0) -- (0_-3_1);
\draw[periedge] (0_-3_1) -- (1_-4_0);
\draw[edge] (0_-4_1) -- (1_-4_0);
\draw[periedge] (0_-4_0) -- (0_-4_1);
\draw[periedge] (-1_-3_1) -- (0_-4_0);
\draw[periedge] (-1_0_1) -- (0_0_0);
\draw[periedge] (0_0_0) -- (0_0_1);
\draw[periedge] (0_0_1) -- (1_-1_0);
\draw[periedge] (0_-1_1) -- (1_-1_0);
\draw[periedge] (0_-1_0) -- (0_-1_1);
\draw[edge] (-1_0_1) -- (0_-1_0);
\draw[periedge] (-2_-2_1) -- (-1_-2_0);
\draw[edge] (-1_-2_0) -- (-1_-2_1);
\draw[periedge] (-1_-2_1) -- (0_-3_0);
\draw[periedge] (-1_-3_0) -- (-1_-3_1);
\draw[periedge] (-2_-2_1) -- (-1_-3_0);
\draw[periedge] (-2_-1_1) -- (-1_-1_0);
\draw[edge] (-1_-1_0) -- (-1_-1_1);
\draw[periedge] (-1_-1_1) -- (0_-2_0);
\draw[periedge] (-1_-2_1) -- (0_-2_0);
\draw[periedge] (-2_-1_1) -- (-1_-2_0);
\draw[periedge] (-2_0_1) -- (-1_0_0);
\draw[periedge] (-1_0_0) -- (-1_0_1);
\draw[periedge] (-1_-1_1) -- (0_-1_0);
\draw[periedge] (-2_0_1) -- (-1_-1_0);
\draw[periedge] (1_-4_0) -- (1_-4_1);
\draw[periedge] (1_-4_1) -- (2_-5_0);
\draw[edge] (1_-5_1) -- (2_-5_0);
\draw[periedge] (1_-5_0) -- (1_-5_1);
\draw[periedge] (0_-4_1) -- (1_-5_0);
\draw[periedge] (2_-5_0) -- (2_-5_1);
\draw[periedge] (2_-5_1) -- (3_-6_0);
\draw[edge] (2_-6_1) -- (3_-6_0);
\draw[periedge] (2_-6_0) -- (2_-6_1);
\draw[periedge] (1_-5_1) -- (2_-6_0);
\draw[periedge] (2_-7_1) -- (3_-7_0);
\draw[edge] (3_-7_0) -- (3_-7_1);
\draw[periedge] (3_-7_1) -- (4_-8_0);
\draw[periedge] (3_-8_1) -- (4_-8_0);
\draw[periedge] (3_-8_0) -- (3_-8_1);
\draw[periedge] (2_-7_1) -- (3_-8_0);
\draw[periedge] (3_-6_0) -- (3_-6_1);
\draw[periedge] (3_-6_1) -- (4_-7_0);
\draw[periedge] (3_-7_1) -- (4_-7_0);
\draw[periedge] (2_-6_1) -- (3_-7_0);
\end{tikzpicture} & 
\begin{tikzpicture}[scale=\scfactor,rotate=120]
\tikzstyle{every node} = [inner sep=1, draw, circle, fill=blue!50!white]
\tikzstyle{edge} = [draw, line width=1.0]
\tikzstyle{periedge} = [draw, line width=1.0]
\node (0_0_0) at (0.000000, 1.400000) {};
\node (0_0_1) at (1.212436, 0.700000) {};
\node (1_-1_0) at (1.212436, -0.700000) {};
\node (0_-1_1) at (0.000000, -1.400000) {};
\node (0_-1_0) at (-1.212436, -0.700000) {};
\node (-1_0_1) at (-1.212436, 0.700000) {};
\node (-1_0_0) at (-2.424871, 1.400000) {};
\node (-1_-1_1) at (-2.424871, -1.400000) {};
\node (-1_-1_0) at (-3.637307, -0.700000) {};
\node (-2_0_1) at (-3.637307, 0.700000) {};
\node (-2_0_0) at (-4.849742, 1.400000) {};
\node (-2_-1_1) at (-4.849742, -1.400000) {};
\node (-2_-1_0) at (-6.062178, -0.700000) {};
\node (-3_0_1) at (-6.062178, 0.700000) {};
\node (-3_0_0) at (-7.274613, 1.400000) {};
\node (-3_-1_1) at (-7.274613, -1.400000) {};
\node (-3_-1_0) at (-8.487049, -0.700000) {};
\node (-4_0_1) at (-8.487049, 0.700000) {};
\node (-4_1_0) at (-8.487049, 3.500000) {};
\node (-4_1_1) at (-7.274613, 2.800000) {};
\node (-4_0_0) at (-9.699485, 1.400000) {};
\node (-5_1_1) at (-9.699485, 2.800000) {};
\node (-5_1_0) at (-10.911920, 3.500000) {};
\node (-5_0_1) at (-10.911920, 0.700000) {};
\node (-5_0_0) at (-12.124356, 1.400000) {};
\node (-6_1_1) at (-12.124356, 2.800000) {};
\node (-6_2_0) at (-12.124356, 5.600000) {};
\node (-6_2_1) at (-10.911920, 4.900000) {};
\node (-6_1_0) at (-13.336791, 3.500000) {};
\node (-7_2_1) at (-13.336791, 4.900000) {};
\node (-6_3_0) at (-10.911920, 7.700000) {};
\node (-6_3_1) at (-9.699485, 7.000000) {};
\node (-5_2_0) at (-9.699485, 5.600000) {};
\node (-7_3_1) at (-12.124356, 7.000000) {};
\node (-7_2_0) at (-14.549227, 5.600000) {};
\node (-7_1_1) at (-14.549227, 2.800000) {};
\node (-7_1_0) at (-15.761662, 3.500000) {};
\node (-8_2_1) at (-15.761662, 4.900000) {};
\draw[periedge] (-1_0_1) -- (0_0_0);
\draw[periedge] (0_0_0) -- (0_0_1);
\draw[periedge] (0_0_1) -- (1_-1_0);
\draw[periedge] (0_-1_1) -- (1_-1_0);
\draw[periedge] (0_-1_0) -- (0_-1_1);
\draw[edge] (-1_0_1) -- (0_-1_0);
\draw[periedge] (-2_0_1) -- (-1_0_0);
\draw[periedge] (-1_0_0) -- (-1_0_1);
\draw[periedge] (-1_-1_1) -- (0_-1_0);
\draw[periedge] (-1_-1_0) -- (-1_-1_1);
\draw[edge] (-2_0_1) -- (-1_-1_0);
\draw[periedge] (-3_0_1) -- (-2_0_0);
\draw[periedge] (-2_0_0) -- (-2_0_1);
\draw[periedge] (-2_-1_1) -- (-1_-1_0);
\draw[periedge] (-2_-1_0) -- (-2_-1_1);
\draw[edge] (-3_0_1) -- (-2_-1_0);
\draw[edge] (-4_0_1) -- (-3_0_0);
\draw[periedge] (-3_0_0) -- (-3_0_1);
\draw[periedge] (-3_-1_1) -- (-2_-1_0);
\draw[periedge] (-3_-1_0) -- (-3_-1_1);
\draw[periedge] (-4_0_1) -- (-3_-1_0);
\draw[periedge] (-5_1_1) -- (-4_1_0);
\draw[periedge] (-4_1_0) -- (-4_1_1);
\draw[periedge] (-4_1_1) -- (-3_0_0);
\draw[periedge] (-4_0_0) -- (-4_0_1);
\draw[edge] (-5_1_1) -- (-4_0_0);
\draw[edge] (-6_1_1) -- (-5_1_0);
\draw[periedge] (-5_1_0) -- (-5_1_1);
\draw[periedge] (-5_0_1) -- (-4_0_0);
\draw[periedge] (-5_0_0) -- (-5_0_1);
\draw[periedge] (-6_1_1) -- (-5_0_0);
\draw[periedge] (-7_2_1) -- (-6_2_0);
\draw[edge] (-6_2_0) -- (-6_2_1);
\draw[periedge] (-6_2_1) -- (-5_1_0);
\draw[periedge] (-6_1_0) -- (-6_1_1);
\draw[edge] (-7_2_1) -- (-6_1_0);
\draw[periedge] (-7_3_1) -- (-6_3_0);
\draw[periedge] (-6_3_0) -- (-6_3_1);
\draw[periedge] (-6_3_1) -- (-5_2_0);
\draw[periedge] (-6_2_1) -- (-5_2_0);
\draw[periedge] (-7_3_1) -- (-6_2_0);
\draw[periedge] (-8_2_1) -- (-7_2_0);
\draw[periedge] (-7_2_0) -- (-7_2_1);
\draw[periedge] (-7_1_1) -- (-6_1_0);
\draw[periedge] (-7_1_0) -- (-7_1_1);
\draw[periedge] (-8_2_1) -- (-7_1_0);
\end{tikzpicture} \\
\text{(e)}\ 9,\, \mathbf{C}_\mathrm{s},\, 3,\, 55 & \text{(f)}\ 9,\, \mathbf{C}_\mathrm{s},\, 4,\, 59 & \text{(g)}\ 9,\, \mathbf{C}_\mathrm{s},\, 3,\, 49 & \text{(h)}\ 9,\, \mathbf{C}_\mathrm{s},\, 6,\, 65 \\[6pt]
\begin{tikzpicture}[scale=\scfactor,rotate=60]
\tikzstyle{every node} = [inner sep=1, draw, circle, fill=blue!50!white]
\tikzstyle{edge} = [draw, line width=1.0]
\tikzstyle{periedge} = [draw, line width=1.0]
\node (0_-4_0) at (-4.849742, -7.000000) {};
\node (0_-4_1) at (-3.637307, -7.700000) {};
\node (1_-5_0) at (-3.637307, -9.100000) {};
\node (0_-5_1) at (-4.849742, -9.800000) {};
\node (0_-5_0) at (-6.062178, -9.100000) {};
\node (-1_-4_1) at (-6.062178, -7.700000) {};
\node (0_0_0) at (0.000000, 1.400000) {};
\node (0_0_1) at (1.212436, 0.700000) {};
\node (1_-1_0) at (1.212436, -0.700000) {};
\node (0_-1_1) at (0.000000, -1.400000) {};
\node (0_-1_0) at (-1.212436, -0.700000) {};
\node (-1_0_1) at (-1.212436, 0.700000) {};
\node (-1_-3_0) at (-6.062178, -4.900000) {};
\node (-1_-3_1) at (-4.849742, -5.600000) {};
\node (-1_-4_0) at (-7.274613, -7.000000) {};
\node (-2_-3_1) at (-7.274613, -5.600000) {};
\node (-1_-2_0) at (-4.849742, -2.800000) {};
\node (-1_-2_1) at (-3.637307, -3.500000) {};
\node (0_-3_0) at (-3.637307, -4.900000) {};
\node (-2_-2_1) at (-6.062178, -3.500000) {};
\node (-1_-1_0) at (-3.637307, -0.700000) {};
\node (-1_-1_1) at (-2.424871, -1.400000) {};
\node (0_-2_0) at (-2.424871, -2.800000) {};
\node (-2_-1_1) at (-4.849742, -1.400000) {};
\node (-1_0_0) at (-2.424871, 1.400000) {};
\node (-2_0_1) at (-3.637307, 0.700000) {};
\node (-2_-5_0) at (-10.911920, -9.100000) {};
\node (-2_-5_1) at (-9.699485, -9.800000) {};
\node (-1_-6_0) at (-9.699485, -11.200000) {};
\node (-2_-6_1) at (-10.911920, -11.900000) {};
\node (-2_-6_0) at (-12.124356, -11.200000) {};
\node (-3_-5_1) at (-12.124356, -9.800000) {};
\node (-2_-4_0) at (-9.699485, -7.000000) {};
\node (-2_-4_1) at (-8.487049, -7.700000) {};
\node (-1_-5_0) at (-8.487049, -9.100000) {};
\node (-3_-4_1) at (-10.911920, -7.700000) {};
\node (-2_-3_0) at (-8.487049, -4.900000) {};
\node (-3_-3_1) at (-9.699485, -5.600000) {};
\draw[edge] (-1_-4_1) -- (0_-4_0);
\draw[periedge] (0_-4_0) -- (0_-4_1);
\draw[periedge] (0_-4_1) -- (1_-5_0);
\draw[periedge] (0_-5_1) -- (1_-5_0);
\draw[periedge] (0_-5_0) -- (0_-5_1);
\draw[periedge] (-1_-4_1) -- (0_-5_0);
\draw[periedge] (-1_0_1) -- (0_0_0);
\draw[periedge] (0_0_0) -- (0_0_1);
\draw[periedge] (0_0_1) -- (1_-1_0);
\draw[periedge] (0_-1_1) -- (1_-1_0);
\draw[periedge] (0_-1_0) -- (0_-1_1);
\draw[edge] (-1_0_1) -- (0_-1_0);
\draw[periedge] (-2_-3_1) -- (-1_-3_0);
\draw[edge] (-1_-3_0) -- (-1_-3_1);
\draw[periedge] (-1_-3_1) -- (0_-4_0);
\draw[periedge] (-1_-4_0) -- (-1_-4_1);
\draw[edge] (-2_-3_1) -- (-1_-4_0);
\draw[periedge] (-2_-2_1) -- (-1_-2_0);
\draw[edge] (-1_-2_0) -- (-1_-2_1);
\draw[periedge] (-1_-2_1) -- (0_-3_0);
\draw[periedge] (-1_-3_1) -- (0_-3_0);
\draw[periedge] (-2_-2_1) -- (-1_-3_0);
\draw[periedge] (-2_-1_1) -- (-1_-1_0);
\draw[edge] (-1_-1_0) -- (-1_-1_1);
\draw[periedge] (-1_-1_1) -- (0_-2_0);
\draw[periedge] (-1_-2_1) -- (0_-2_0);
\draw[periedge] (-2_-1_1) -- (-1_-2_0);
\draw[periedge] (-2_0_1) -- (-1_0_0);
\draw[periedge] (-1_0_0) -- (-1_0_1);
\draw[periedge] (-1_-1_1) -- (0_-1_0);
\draw[periedge] (-2_0_1) -- (-1_-1_0);
\draw[periedge] (-3_-5_1) -- (-2_-5_0);
\draw[edge] (-2_-5_0) -- (-2_-5_1);
\draw[periedge] (-2_-5_1) -- (-1_-6_0);
\draw[periedge] (-2_-6_1) -- (-1_-6_0);
\draw[periedge] (-2_-6_0) -- (-2_-6_1);
\draw[periedge] (-3_-5_1) -- (-2_-6_0);
\draw[periedge] (-3_-4_1) -- (-2_-4_0);
\draw[edge] (-2_-4_0) -- (-2_-4_1);
\draw[periedge] (-2_-4_1) -- (-1_-5_0);
\draw[periedge] (-2_-5_1) -- (-1_-5_0);
\draw[periedge] (-3_-4_1) -- (-2_-5_0);
\draw[periedge] (-3_-3_1) -- (-2_-3_0);
\draw[periedge] (-2_-3_0) -- (-2_-3_1);
\draw[periedge] (-2_-4_1) -- (-1_-4_0);
\draw[periedge] (-3_-3_1) -- (-2_-4_0);
\end{tikzpicture} &
\begin{tikzpicture}[scale=\scfactor,rotate=60]
\tikzstyle{every node} = [inner sep=1, draw, circle, fill=blue!50!white]
\tikzstyle{edge} = [draw, line width=1.0]
\tikzstyle{periedge} = [draw, line width=1.0]
\node (0_-6_0) at (-7.274613, -11.200000) {};
\node (0_-6_1) at (-6.062178, -11.900000) {};
\node (1_-7_0) at (-6.062178, -13.300000) {};
\node (0_-7_1) at (-7.274613, -14.000000) {};
\node (0_-7_0) at (-8.487049, -13.300000) {};
\node (-1_-6_1) at (-8.487049, -11.900000) {};
\node (0_-5_0) at (-6.062178, -9.100000) {};
\node (0_-5_1) at (-4.849742, -9.800000) {};
\node (1_-6_0) at (-4.849742, -11.200000) {};
\node (-1_-5_1) at (-7.274613, -9.800000) {};
\node (0_-4_0) at (-4.849742, -7.000000) {};
\node (0_-4_1) at (-3.637307, -7.700000) {};
\node (1_-5_0) at (-3.637307, -9.100000) {};
\node (-1_-4_1) at (-6.062178, -7.700000) {};
\node (0_-3_0) at (-3.637307, -4.900000) {};
\node (0_-3_1) at (-2.424871, -5.600000) {};
\node (1_-4_0) at (-2.424871, -7.000000) {};
\node (-1_-3_1) at (-4.849742, -5.600000) {};
\node (0_0_0) at (0.000000, 1.400000) {};
\node (0_0_1) at (1.212436, 0.700000) {};
\node (1_-1_0) at (1.212436, -0.700000) {};
\node (0_-1_1) at (0.000000, -1.400000) {};
\node (0_-1_0) at (-1.212436, -0.700000) {};
\node (-1_0_1) at (-1.212436, 0.700000) {};
\node (-1_-2_0) at (-4.849742, -2.800000) {};
\node (-1_-2_1) at (-3.637307, -3.500000) {};
\node (-1_-3_0) at (-6.062178, -4.900000) {};
\node (-2_-2_1) at (-6.062178, -3.500000) {};
\node (-1_-1_0) at (-3.637307, -0.700000) {};
\node (-1_-1_1) at (-2.424871, -1.400000) {};
\node (0_-2_0) at (-2.424871, -2.800000) {};
\node (-2_-1_1) at (-4.849742, -1.400000) {};
\node (-1_0_0) at (-2.424871, 1.400000) {};
\node (-2_0_1) at (-3.637307, 0.700000) {};
\node (1_-3_0) at (-1.212436, -4.900000) {};
\node (1_-3_1) at (-0.000000, -5.600000) {};
\node (2_-4_0) at (0.000000, -7.000000) {};
\node (1_-4_1) at (-1.212436, -7.700000) {};
\draw[periedge] (-1_-6_1) -- (0_-6_0);
\draw[edge] (0_-6_0) -- (0_-6_1);
\draw[periedge] (0_-6_1) -- (1_-7_0);
\draw[periedge] (0_-7_1) -- (1_-7_0);
\draw[periedge] (0_-7_0) -- (0_-7_1);
\draw[periedge] (-1_-6_1) -- (0_-7_0);
\draw[periedge] (-1_-5_1) -- (0_-5_0);
\draw[edge] (0_-5_0) -- (0_-5_1);
\draw[periedge] (0_-5_1) -- (1_-6_0);
\draw[periedge] (0_-6_1) -- (1_-6_0);
\draw[periedge] (-1_-5_1) -- (0_-6_0);
\draw[periedge] (-1_-4_1) -- (0_-4_0);
\draw[edge] (0_-4_0) -- (0_-4_1);
\draw[periedge] (0_-4_1) -- (1_-5_0);
\draw[periedge] (0_-5_1) -- (1_-5_0);
\draw[periedge] (-1_-4_1) -- (0_-5_0);
\draw[edge] (-1_-3_1) -- (0_-3_0);
\draw[periedge] (0_-3_0) -- (0_-3_1);
\draw[edge] (0_-3_1) -- (1_-4_0);
\draw[periedge] (0_-4_1) -- (1_-4_0);
\draw[periedge] (-1_-3_1) -- (0_-4_0);
\draw[periedge] (-1_0_1) -- (0_0_0);
\draw[periedge] (0_0_0) -- (0_0_1);
\draw[periedge] (0_0_1) -- (1_-1_0);
\draw[periedge] (0_-1_1) -- (1_-1_0);
\draw[periedge] (0_-1_0) -- (0_-1_1);
\draw[edge] (-1_0_1) -- (0_-1_0);
\draw[periedge] (-2_-2_1) -- (-1_-2_0);
\draw[edge] (-1_-2_0) -- (-1_-2_1);
\draw[periedge] (-1_-2_1) -- (0_-3_0);
\draw[periedge] (-1_-3_0) -- (-1_-3_1);
\draw[periedge] (-2_-2_1) -- (-1_-3_0);
\draw[periedge] (-2_-1_1) -- (-1_-1_0);
\draw[edge] (-1_-1_0) -- (-1_-1_1);
\draw[periedge] (-1_-1_1) -- (0_-2_0);
\draw[periedge] (-1_-2_1) -- (0_-2_0);
\draw[periedge] (-2_-1_1) -- (-1_-2_0);
\draw[periedge] (-2_0_1) -- (-1_0_0);
\draw[periedge] (-1_0_0) -- (-1_0_1);
\draw[periedge] (-1_-1_1) -- (0_-1_0);
\draw[periedge] (-2_0_1) -- (-1_-1_0);
\draw[periedge] (0_-3_1) -- (1_-3_0);
\draw[periedge] (1_-3_0) -- (1_-3_1);
\draw[periedge] (1_-3_1) -- (2_-4_0);
\draw[periedge] (1_-4_1) -- (2_-4_0);
\draw[periedge] (1_-4_0) -- (1_-4_1);
\end{tikzpicture} &
\begin{tikzpicture}[scale=\scfactor,rotate=60]
\tikzstyle{every node} = [inner sep=1, draw, circle, fill=blue!50!white]
\tikzstyle{edge} = [draw, line width=1.0]
\tikzstyle{periedge} = [draw, line width=1.0]
\node (0_0_0) at (0.000000, 1.400000) {};
\node (0_0_1) at (1.212436, 0.700000) {};
\node (1_-1_0) at (1.212436, -0.700000) {};
\node (0_-1_1) at (0.000000, -1.400000) {};
\node (0_-1_0) at (-1.212436, -0.700000) {};
\node (-1_0_1) at (-1.212436, 0.700000) {};
\node (-1_0_0) at (-2.424871, 1.400000) {};
\node (-1_-1_1) at (-2.424871, -1.400000) {};
\node (-1_-1_0) at (-3.637307, -0.700000) {};
\node (-2_0_1) at (-3.637307, 0.700000) {};
\node (-2_-3_0) at (-8.487049, -4.900000) {};
\node (-2_-3_1) at (-7.274613, -5.600000) {};
\node (-1_-4_0) at (-7.274613, -7.000000) {};
\node (-2_-4_1) at (-8.487049, -7.700000) {};
\node (-2_-4_0) at (-9.699485, -7.000000) {};
\node (-3_-3_1) at (-9.699485, -5.600000) {};
\node (-2_0_0) at (-4.849742, 1.400000) {};
\node (-2_-1_1) at (-4.849742, -1.400000) {};
\node (-2_-1_0) at (-6.062178, -0.700000) {};
\node (-3_0_1) at (-6.062178, 0.700000) {};
\node (-3_-2_0) at (-9.699485, -2.800000) {};
\node (-3_-2_1) at (-8.487049, -3.500000) {};
\node (-3_-3_0) at (-10.911920, -4.900000) {};
\node (-4_-2_1) at (-10.911920, -3.500000) {};
\node (-3_-1_0) at (-8.487049, -0.700000) {};
\node (-3_-1_1) at (-7.274613, -1.400000) {};
\node (-2_-2_0) at (-7.274613, -2.800000) {};
\node (-4_-1_1) at (-9.699485, -1.400000) {};
\node (-3_0_0) at (-7.274613, 1.400000) {};
\node (-4_0_1) at (-8.487049, 0.700000) {};
\node (-4_-2_0) at (-12.124356, -2.800000) {};
\node (-4_-3_1) at (-12.124356, -5.600000) {};
\node (-4_-3_0) at (-13.336791, -4.900000) {};
\node (-5_-2_1) at (-13.336791, -3.500000) {};
\node (-4_1_0) at (-8.487049, 3.500000) {};
\node (-4_1_1) at (-7.274613, 2.800000) {};
\node (-4_0_0) at (-9.699485, 1.400000) {};
\node (-5_1_1) at (-9.699485, 2.800000) {};
\draw[periedge] (-1_0_1) -- (0_0_0);
\draw[periedge] (0_0_0) -- (0_0_1);
\draw[periedge] (0_0_1) -- (1_-1_0);
\draw[periedge] (0_-1_1) -- (1_-1_0);
\draw[periedge] (0_-1_0) -- (0_-1_1);
\draw[edge] (-1_0_1) -- (0_-1_0);
\draw[periedge] (-2_0_1) -- (-1_0_0);
\draw[periedge] (-1_0_0) -- (-1_0_1);
\draw[periedge] (-1_-1_1) -- (0_-1_0);
\draw[periedge] (-1_-1_0) -- (-1_-1_1);
\draw[edge] (-2_0_1) -- (-1_-1_0);
\draw[edge] (-3_-3_1) -- (-2_-3_0);
\draw[periedge] (-2_-3_0) -- (-2_-3_1);
\draw[periedge] (-2_-3_1) -- (-1_-4_0);
\draw[periedge] (-2_-4_1) -- (-1_-4_0);
\draw[periedge] (-2_-4_0) -- (-2_-4_1);
\draw[periedge] (-3_-3_1) -- (-2_-4_0);
\draw[periedge] (-3_0_1) -- (-2_0_0);
\draw[periedge] (-2_0_0) -- (-2_0_1);
\draw[periedge] (-2_-1_1) -- (-1_-1_0);
\draw[periedge] (-2_-1_0) -- (-2_-1_1);
\draw[edge] (-3_0_1) -- (-2_-1_0);
\draw[periedge] (-4_-2_1) -- (-3_-2_0);
\draw[edge] (-3_-2_0) -- (-3_-2_1);
\draw[periedge] (-3_-2_1) -- (-2_-3_0);
\draw[periedge] (-3_-3_0) -- (-3_-3_1);
\draw[edge] (-4_-2_1) -- (-3_-3_0);
\draw[periedge] (-4_-1_1) -- (-3_-1_0);
\draw[edge] (-3_-1_0) -- (-3_-1_1);
\draw[periedge] (-3_-1_1) -- (-2_-2_0);
\draw[periedge] (-3_-2_1) -- (-2_-2_0);
\draw[periedge] (-4_-1_1) -- (-3_-2_0);
\draw[edge] (-4_0_1) -- (-3_0_0);
\draw[periedge] (-3_0_0) -- (-3_0_1);
\draw[periedge] (-3_-1_1) -- (-2_-1_0);
\draw[periedge] (-4_0_1) -- (-3_-1_0);
\draw[periedge] (-5_-2_1) -- (-4_-2_0);
\draw[periedge] (-4_-2_0) -- (-4_-2_1);
\draw[periedge] (-4_-3_1) -- (-3_-3_0);
\draw[periedge] (-4_-3_0) -- (-4_-3_1);
\draw[periedge] (-5_-2_1) -- (-4_-3_0);
\draw[periedge] (-5_1_1) -- (-4_1_0);
\draw[periedge] (-4_1_0) -- (-4_1_1);
\draw[periedge] (-4_1_1) -- (-3_0_0);
\draw[periedge] (-4_0_0) -- (-4_0_1);
\draw[periedge] (-5_1_1) -- (-4_0_0);
\end{tikzpicture} &
\begin{tikzpicture}[scale=\scfactor,rotate=120]
\tikzstyle{every node} = [inner sep=1, draw, circle, fill=blue!50!white]
\tikzstyle{edge} = [draw, line width=1.0]
\tikzstyle{periedge} = [draw, line width=1.0]
\node (0_0_0) at (0.000000, 1.400000) {};
\node (0_0_1) at (1.212436, 0.700000) {};
\node (1_-1_0) at (1.212436, -0.700000) {};
\node (0_-1_1) at (0.000000, -1.400000) {};
\node (0_-1_0) at (-1.212436, -0.700000) {};
\node (-1_0_1) at (-1.212436, 0.700000) {};
\node (-1_0_0) at (-2.424871, 1.400000) {};
\node (-1_-1_1) at (-2.424871, -1.400000) {};
\node (-1_-1_0) at (-3.637307, -0.700000) {};
\node (-2_0_1) at (-3.637307, 0.700000) {};
\node (-2_-1_0) at (-6.062178, -0.700000) {};
\node (-2_-1_1) at (-4.849742, -1.400000) {};
\node (-1_-2_0) at (-4.849742, -2.800000) {};
\node (-2_-2_1) at (-6.062178, -3.500000) {};
\node (-2_-2_0) at (-7.274613, -2.800000) {};
\node (-3_-1_1) at (-7.274613, -1.400000) {};
\node (-2_0_0) at (-4.849742, 1.400000) {};
\node (-3_0_1) at (-6.062178, 0.700000) {};
\node (-3_0_0) at (-7.274613, 1.400000) {};
\node (-3_-1_0) at (-8.487049, -0.700000) {};
\node (-4_0_1) at (-8.487049, 0.700000) {};
\node (-4_0_0) at (-9.699485, 1.400000) {};
\node (-4_-1_1) at (-9.699485, -1.400000) {};
\node (-4_-1_0) at (-10.911920, -0.700000) {};
\node (-5_0_1) at (-10.911920, 0.700000) {};
\node (-4_1_0) at (-8.487049, 3.500000) {};
\node (-4_1_1) at (-7.274613, 2.800000) {};
\node (-5_1_1) at (-9.699485, 2.800000) {};
\node (-5_0_0) at (-12.124356, 1.400000) {};
\node (-5_-1_1) at (-12.124356, -1.400000) {};
\node (-5_-1_0) at (-13.336791, -0.700000) {};
\node (-6_0_1) at (-13.336791, 0.700000) {};
\node (-5_2_0) at (-9.699485, 5.600000) {};
\node (-5_2_1) at (-8.487049, 4.900000) {};
\node (-5_1_0) at (-10.911920, 3.500000) {};
\node (-6_2_1) at (-10.911920, 4.900000) {};
\draw[periedge] (-1_0_1) -- (0_0_0);
\draw[periedge] (0_0_0) -- (0_0_1);
\draw[periedge] (0_0_1) -- (1_-1_0);
\draw[periedge] (0_-1_1) -- (1_-1_0);
\draw[periedge] (0_-1_0) -- (0_-1_1);
\draw[edge] (-1_0_1) -- (0_-1_0);
\draw[periedge] (-2_0_1) -- (-1_0_0);
\draw[periedge] (-1_0_0) -- (-1_0_1);
\draw[periedge] (-1_-1_1) -- (0_-1_0);
\draw[periedge] (-1_-1_0) -- (-1_-1_1);
\draw[edge] (-2_0_1) -- (-1_-1_0);
\draw[edge] (-3_-1_1) -- (-2_-1_0);
\draw[edge] (-2_-1_0) -- (-2_-1_1);
\draw[periedge] (-2_-1_1) -- (-1_-2_0);
\draw[periedge] (-2_-2_1) -- (-1_-2_0);
\draw[periedge] (-2_-2_0) -- (-2_-2_1);
\draw[periedge] (-3_-1_1) -- (-2_-2_0);
\draw[periedge] (-3_0_1) -- (-2_0_0);
\draw[periedge] (-2_0_0) -- (-2_0_1);
\draw[periedge] (-2_-1_1) -- (-1_-1_0);
\draw[edge] (-3_0_1) -- (-2_-1_0);
\draw[edge] (-4_0_1) -- (-3_0_0);
\draw[periedge] (-3_0_0) -- (-3_0_1);
\draw[periedge] (-3_-1_0) -- (-3_-1_1);
\draw[edge] (-4_0_1) -- (-3_-1_0);
\draw[periedge] (-5_0_1) -- (-4_0_0);
\draw[edge] (-4_0_0) -- (-4_0_1);
\draw[periedge] (-4_-1_1) -- (-3_-1_0);
\draw[periedge] (-4_-1_0) -- (-4_-1_1);
\draw[edge] (-5_0_1) -- (-4_-1_0);
\draw[edge] (-5_1_1) -- (-4_1_0);
\draw[periedge] (-4_1_0) -- (-4_1_1);
\draw[periedge] (-4_1_1) -- (-3_0_0);
\draw[periedge] (-5_1_1) -- (-4_0_0);
\draw[periedge] (-6_0_1) -- (-5_0_0);
\draw[periedge] (-5_0_0) -- (-5_0_1);
\draw[periedge] (-5_-1_1) -- (-4_-1_0);
\draw[periedge] (-5_-1_0) -- (-5_-1_1);
\draw[periedge] (-6_0_1) -- (-5_-1_0);
\draw[periedge] (-6_2_1) -- (-5_2_0);
\draw[periedge] (-5_2_0) -- (-5_2_1);
\draw[periedge] (-5_2_1) -- (-4_1_0);
\draw[periedge] (-5_1_0) -- (-5_1_1);
\draw[periedge] (-6_2_1) -- (-5_1_0);
\end{tikzpicture} \\
\text{(i)}\ 9,\, \mathbf{C}_\mathrm{s},\, 5,\, 62 & \text{(j)}\ 9,\, \mathbf{C}_\mathrm{s},\, 5,\, 61 & \text{(k)}\ 9,\, \mathbf{C}_\mathrm{s},\, 6,\, 76 & \text{(l)}\ 9,\, \mathbf{C}_\mathrm{s},\, 5,\, 56 \\[6pt]
\begin{tikzpicture}[scale=\scfactor,rotate=60]
\tikzstyle{every node} = [inner sep=1, draw, circle, fill=blue!50!white]
\tikzstyle{edge} = [draw, line width=1.0]
\tikzstyle{periedge} = [draw, line width=1.0]
\node (0_-5_0) at (-6.062178, -9.100000) {};
\node (0_-5_1) at (-4.849742, -9.800000) {};
\node (1_-6_0) at (-4.849742, -11.200000) {};
\node (0_-6_1) at (-6.062178, -11.900000) {};
\node (0_-6_0) at (-7.274613, -11.200000) {};
\node (-1_-5_1) at (-7.274613, -9.800000) {};
\node (0_-4_0) at (-4.849742, -7.000000) {};
\node (0_-4_1) at (-3.637307, -7.700000) {};
\node (1_-5_0) at (-3.637307, -9.100000) {};
\node (-1_-4_1) at (-6.062178, -7.700000) {};
\node (0_0_0) at (0.000000, 1.400000) {};
\node (0_0_1) at (1.212436, 0.700000) {};
\node (1_-1_0) at (1.212436, -0.700000) {};
\node (0_-1_1) at (0.000000, -1.400000) {};
\node (0_-1_0) at (-1.212436, -0.700000) {};
\node (-1_0_1) at (-1.212436, 0.700000) {};
\node (-1_-4_0) at (-7.274613, -7.000000) {};
\node (-1_-5_0) at (-8.487049, -9.100000) {};
\node (-2_-4_1) at (-8.487049, -7.700000) {};
\node (-1_-3_0) at (-6.062178, -4.900000) {};
\node (-1_-3_1) at (-4.849742, -5.600000) {};
\node (-2_-3_1) at (-7.274613, -5.600000) {};
\node (-1_-2_0) at (-4.849742, -2.800000) {};
\node (-1_-2_1) at (-3.637307, -3.500000) {};
\node (0_-3_0) at (-3.637307, -4.900000) {};
\node (-2_-2_1) at (-6.062178, -3.500000) {};
\node (-1_-1_0) at (-3.637307, -0.700000) {};
\node (-1_-1_1) at (-2.424871, -1.400000) {};
\node (0_-2_0) at (-2.424871, -2.800000) {};
\node (-2_-1_1) at (-4.849742, -1.400000) {};
\node (-1_0_0) at (-2.424871, 1.400000) {};
\node (-2_0_1) at (-3.637307, 0.700000) {};
\node (-2_-4_0) at (-9.699485, -7.000000) {};
\node (-2_-5_1) at (-9.699485, -9.800000) {};
\node (-2_-5_0) at (-10.911920, -9.100000) {};
\node (-3_-4_1) at (-10.911920, -7.700000) {};
\draw[edge] (-1_-5_1) -- (0_-5_0);
\draw[edge] (0_-5_0) -- (0_-5_1);
\draw[periedge] (0_-5_1) -- (1_-6_0);
\draw[periedge] (0_-6_1) -- (1_-6_0);
\draw[periedge] (0_-6_0) -- (0_-6_1);
\draw[periedge] (-1_-5_1) -- (0_-6_0);
\draw[edge] (-1_-4_1) -- (0_-4_0);
\draw[periedge] (0_-4_0) -- (0_-4_1);
\draw[periedge] (0_-4_1) -- (1_-5_0);
\draw[periedge] (0_-5_1) -- (1_-5_0);
\draw[edge] (-1_-4_1) -- (0_-5_0);
\draw[periedge] (-1_0_1) -- (0_0_0);
\draw[periedge] (0_0_0) -- (0_0_1);
\draw[periedge] (0_0_1) -- (1_-1_0);
\draw[periedge] (0_-1_1) -- (1_-1_0);
\draw[periedge] (0_-1_0) -- (0_-1_1);
\draw[edge] (-1_0_1) -- (0_-1_0);
\draw[periedge] (-2_-4_1) -- (-1_-4_0);
\draw[edge] (-1_-4_0) -- (-1_-4_1);
\draw[periedge] (-1_-5_0) -- (-1_-5_1);
\draw[edge] (-2_-4_1) -- (-1_-5_0);
\draw[periedge] (-2_-3_1) -- (-1_-3_0);
\draw[edge] (-1_-3_0) -- (-1_-3_1);
\draw[periedge] (-1_-3_1) -- (0_-4_0);
\draw[periedge] (-2_-3_1) -- (-1_-4_0);
\draw[periedge] (-2_-2_1) -- (-1_-2_0);
\draw[edge] (-1_-2_0) -- (-1_-2_1);
\draw[periedge] (-1_-2_1) -- (0_-3_0);
\draw[periedge] (-1_-3_1) -- (0_-3_0);
\draw[periedge] (-2_-2_1) -- (-1_-3_0);
\draw[periedge] (-2_-1_1) -- (-1_-1_0);
\draw[edge] (-1_-1_0) -- (-1_-1_1);
\draw[periedge] (-1_-1_1) -- (0_-2_0);
\draw[periedge] (-1_-2_1) -- (0_-2_0);
\draw[periedge] (-2_-1_1) -- (-1_-2_0);
\draw[periedge] (-2_0_1) -- (-1_0_0);
\draw[periedge] (-1_0_0) -- (-1_0_1);
\draw[periedge] (-1_-1_1) -- (0_-1_0);
\draw[periedge] (-2_0_1) -- (-1_-1_0);
\draw[periedge] (-3_-4_1) -- (-2_-4_0);
\draw[periedge] (-2_-4_0) -- (-2_-4_1);
\draw[periedge] (-2_-5_1) -- (-1_-5_0);
\draw[periedge] (-2_-5_0) -- (-2_-5_1);
\draw[periedge] (-3_-4_1) -- (-2_-5_0);
\end{tikzpicture} &
\begin{tikzpicture}[scale=\scfactor,rotate=60]
\tikzstyle{every node} = [inner sep=1, draw, circle, fill=blue!50!white]
\tikzstyle{edge} = [draw, line width=1.0]
\tikzstyle{periedge} = [draw, line width=1.0]
\node (0_-5_0) at (-6.062178, -9.100000) {};
\node (0_-5_1) at (-4.849742, -9.800000) {};
\node (1_-6_0) at (-4.849742, -11.200000) {};
\node (0_-6_1) at (-6.062178, -11.900000) {};
\node (0_-6_0) at (-7.274613, -11.200000) {};
\node (-1_-5_1) at (-7.274613, -9.800000) {};
\node (0_0_0) at (0.000000, 1.400000) {};
\node (0_0_1) at (1.212436, 0.700000) {};
\node (1_-1_0) at (1.212436, -0.700000) {};
\node (0_-1_1) at (0.000000, -1.400000) {};
\node (0_-1_0) at (-1.212436, -0.700000) {};
\node (-1_0_1) at (-1.212436, 0.700000) {};
\node (-1_-4_0) at (-7.274613, -7.000000) {};
\node (-1_-4_1) at (-6.062178, -7.700000) {};
\node (-1_-5_0) at (-8.487049, -9.100000) {};
\node (-2_-4_1) at (-8.487049, -7.700000) {};
\node (-1_-3_0) at (-6.062178, -4.900000) {};
\node (-1_-3_1) at (-4.849742, -5.600000) {};
\node (0_-4_0) at (-4.849742, -7.000000) {};
\node (-2_-3_1) at (-7.274613, -5.600000) {};
\node (-1_-2_0) at (-4.849742, -2.800000) {};
\node (-1_-2_1) at (-3.637307, -3.500000) {};
\node (0_-3_0) at (-3.637307, -4.900000) {};
\node (-2_-2_1) at (-6.062178, -3.500000) {};
\node (-1_-1_0) at (-3.637307, -0.700000) {};
\node (-1_-1_1) at (-2.424871, -1.400000) {};
\node (0_-2_0) at (-2.424871, -2.800000) {};
\node (-2_-1_1) at (-4.849742, -1.400000) {};
\node (-1_0_0) at (-2.424871, 1.400000) {};
\node (-2_0_1) at (-3.637307, 0.700000) {};
\node (-2_-4_0) at (-9.699485, -7.000000) {};
\node (-2_-5_1) at (-9.699485, -9.800000) {};
\node (-2_-5_0) at (-10.911920, -9.100000) {};
\node (-3_-4_1) at (-10.911920, -7.700000) {};
\node (-2_-3_0) at (-8.487049, -4.900000) {};
\node (-3_-3_1) at (-9.699485, -5.600000) {};
\draw[edge] (-1_-5_1) -- (0_-5_0);
\draw[periedge] (0_-5_0) -- (0_-5_1);
\draw[periedge] (0_-5_1) -- (1_-6_0);
\draw[periedge] (0_-6_1) -- (1_-6_0);
\draw[periedge] (0_-6_0) -- (0_-6_1);
\draw[periedge] (-1_-5_1) -- (0_-6_0);
\draw[periedge] (-1_0_1) -- (0_0_0);
\draw[periedge] (0_0_0) -- (0_0_1);
\draw[periedge] (0_0_1) -- (1_-1_0);
\draw[periedge] (0_-1_1) -- (1_-1_0);
\draw[periedge] (0_-1_0) -- (0_-1_1);
\draw[edge] (-1_0_1) -- (0_-1_0);
\draw[edge] (-2_-4_1) -- (-1_-4_0);
\draw[edge] (-1_-4_0) -- (-1_-4_1);
\draw[periedge] (-1_-4_1) -- (0_-5_0);
\draw[periedge] (-1_-5_0) -- (-1_-5_1);
\draw[edge] (-2_-4_1) -- (-1_-5_0);
\draw[periedge] (-2_-3_1) -- (-1_-3_0);
\draw[edge] (-1_-3_0) -- (-1_-3_1);
\draw[periedge] (-1_-3_1) -- (0_-4_0);
\draw[periedge] (-1_-4_1) -- (0_-4_0);
\draw[edge] (-2_-3_1) -- (-1_-4_0);
\draw[periedge] (-2_-2_1) -- (-1_-2_0);
\draw[edge] (-1_-2_0) -- (-1_-2_1);
\draw[periedge] (-1_-2_1) -- (0_-3_0);
\draw[periedge] (-1_-3_1) -- (0_-3_0);
\draw[periedge] (-2_-2_1) -- (-1_-3_0);
\draw[periedge] (-2_-1_1) -- (-1_-1_0);
\draw[edge] (-1_-1_0) -- (-1_-1_1);
\draw[periedge] (-1_-1_1) -- (0_-2_0);
\draw[periedge] (-1_-2_1) -- (0_-2_0);
\draw[periedge] (-2_-1_1) -- (-1_-2_0);
\draw[periedge] (-2_0_1) -- (-1_0_0);
\draw[periedge] (-1_0_0) -- (-1_0_1);
\draw[periedge] (-1_-1_1) -- (0_-1_0);
\draw[periedge] (-2_0_1) -- (-1_-1_0);
\draw[periedge] (-3_-4_1) -- (-2_-4_0);
\draw[edge] (-2_-4_0) -- (-2_-4_1);
\draw[periedge] (-2_-5_1) -- (-1_-5_0);
\draw[periedge] (-2_-5_0) -- (-2_-5_1);
\draw[periedge] (-3_-4_1) -- (-2_-5_0);
\draw[periedge] (-3_-3_1) -- (-2_-3_0);
\draw[periedge] (-2_-3_0) -- (-2_-3_1);
\draw[periedge] (-3_-3_1) -- (-2_-4_0);
\end{tikzpicture} &
\begin{tikzpicture}[scale=\scfactor,rotate=120]
\tikzstyle{every node} = [inner sep=1, draw, circle, fill=blue!50!white]
\tikzstyle{edge} = [draw, line width=1.0]
\tikzstyle{periedge} = [draw, line width=1.0]
\node (0_0_0) at (0.000000, 1.400000) {};
\node (0_0_1) at (1.212436, 0.700000) {};
\node (1_-1_0) at (1.212436, -0.700000) {};
\node (0_-1_1) at (0.000000, -1.400000) {};
\node (0_-1_0) at (-1.212436, -0.700000) {};
\node (-1_0_1) at (-1.212436, 0.700000) {};
\node (-1_0_0) at (-2.424871, 1.400000) {};
\node (-1_-1_1) at (-2.424871, -1.400000) {};
\node (-1_-1_0) at (-3.637307, -0.700000) {};
\node (-2_0_1) at (-3.637307, 0.700000) {};
\node (-2_-1_0) at (-6.062178, -0.700000) {};
\node (-2_-1_1) at (-4.849742, -1.400000) {};
\node (-1_-2_0) at (-4.849742, -2.800000) {};
\node (-2_-2_1) at (-6.062178, -3.500000) {};
\node (-2_-2_0) at (-7.274613, -2.800000) {};
\node (-3_-1_1) at (-7.274613, -1.400000) {};
\node (-2_0_0) at (-4.849742, 1.400000) {};
\node (-3_0_1) at (-6.062178, 0.700000) {};
\node (-3_-1_0) at (-8.487049, -0.700000) {};
\node (-3_-2_1) at (-8.487049, -3.500000) {};
\node (-3_-2_0) at (-9.699485, -2.800000) {};
\node (-4_-1_1) at (-9.699485, -1.400000) {};
\node (-3_0_0) at (-7.274613, 1.400000) {};
\node (-4_0_1) at (-8.487049, 0.700000) {};
\node (-4_0_0) at (-9.699485, 1.400000) {};
\node (-4_-1_0) at (-10.911920, -0.700000) {};
\node (-5_0_1) at (-10.911920, 0.700000) {};
\node (-5_0_0) at (-12.124356, 1.400000) {};
\node (-5_-1_1) at (-12.124356, -1.400000) {};
\node (-5_-1_0) at (-13.336791, -0.700000) {};
\node (-6_0_1) at (-13.336791, 0.700000) {};
\node (-5_1_0) at (-10.911920, 3.500000) {};
\node (-5_1_1) at (-9.699485, 2.800000) {};
\node (-6_1_1) at (-12.124356, 2.800000) {};
\draw[periedge] (-1_0_1) -- (0_0_0);
\draw[periedge] (0_0_0) -- (0_0_1);
\draw[periedge] (0_0_1) -- (1_-1_0);
\draw[periedge] (0_-1_1) -- (1_-1_0);
\draw[periedge] (0_-1_0) -- (0_-1_1);
\draw[edge] (-1_0_1) -- (0_-1_0);
\draw[periedge] (-2_0_1) -- (-1_0_0);
\draw[periedge] (-1_0_0) -- (-1_0_1);
\draw[periedge] (-1_-1_1) -- (0_-1_0);
\draw[periedge] (-1_-1_0) -- (-1_-1_1);
\draw[edge] (-2_0_1) -- (-1_-1_0);
\draw[edge] (-3_-1_1) -- (-2_-1_0);
\draw[edge] (-2_-1_0) -- (-2_-1_1);
\draw[periedge] (-2_-1_1) -- (-1_-2_0);
\draw[periedge] (-2_-2_1) -- (-1_-2_0);
\draw[periedge] (-2_-2_0) -- (-2_-2_1);
\draw[edge] (-3_-1_1) -- (-2_-2_0);
\draw[periedge] (-3_0_1) -- (-2_0_0);
\draw[periedge] (-2_0_0) -- (-2_0_1);
\draw[periedge] (-2_-1_1) -- (-1_-1_0);
\draw[edge] (-3_0_1) -- (-2_-1_0);
\draw[edge] (-4_-1_1) -- (-3_-1_0);
\draw[edge] (-3_-1_0) -- (-3_-1_1);
\draw[periedge] (-3_-2_1) -- (-2_-2_0);
\draw[periedge] (-3_-2_0) -- (-3_-2_1);
\draw[periedge] (-4_-1_1) -- (-3_-2_0);
\draw[periedge] (-4_0_1) -- (-3_0_0);
\draw[periedge] (-3_0_0) -- (-3_0_1);
\draw[edge] (-4_0_1) -- (-3_-1_0);
\draw[edge] (-5_0_1) -- (-4_0_0);
\draw[periedge] (-4_0_0) -- (-4_0_1);
\draw[periedge] (-4_-1_0) -- (-4_-1_1);
\draw[edge] (-5_0_1) -- (-4_-1_0);
\draw[periedge] (-6_0_1) -- (-5_0_0);
\draw[edge] (-5_0_0) -- (-5_0_1);
\draw[periedge] (-5_-1_1) -- (-4_-1_0);
\draw[periedge] (-5_-1_0) -- (-5_-1_1);
\draw[periedge] (-6_0_1) -- (-5_-1_0);
\draw[periedge] (-6_1_1) -- (-5_1_0);
\draw[periedge] (-5_1_0) -- (-5_1_1);
\draw[periedge] (-5_1_1) -- (-4_0_0);
\draw[periedge] (-6_1_1) -- (-5_0_0);
\end{tikzpicture} &
\begin{tikzpicture}[scale=\scfactor,rotate=120]
\tikzstyle{every node} = [inner sep=1, draw, circle, fill=blue!50!white]
\tikzstyle{edge} = [draw, line width=1.0]
\tikzstyle{periedge} = [draw, line width=1.0]
\node (0_0_0) at (0.000000, 1.400000) {};
\node (0_0_1) at (1.212436, 0.700000) {};
\node (1_-1_0) at (1.212436, -0.700000) {};
\node (0_-1_1) at (0.000000, -1.400000) {};
\node (0_-1_0) at (-1.212436, -0.700000) {};
\node (-1_0_1) at (-1.212436, 0.700000) {};
\node (-1_0_0) at (-2.424871, 1.400000) {};
\node (-1_-1_1) at (-2.424871, -1.400000) {};
\node (-1_-1_0) at (-3.637307, -0.700000) {};
\node (-2_0_1) at (-3.637307, 0.700000) {};
\node (-2_0_0) at (-4.849742, 1.400000) {};
\node (-2_-1_1) at (-4.849742, -1.400000) {};
\node (-2_-1_0) at (-6.062178, -0.700000) {};
\node (-3_0_1) at (-6.062178, 0.700000) {};
\node (-3_-1_0) at (-8.487049, -0.700000) {};
\node (-3_-1_1) at (-7.274613, -1.400000) {};
\node (-2_-2_0) at (-7.274613, -2.800000) {};
\node (-3_-2_1) at (-8.487049, -3.500000) {};
\node (-3_-2_0) at (-9.699485, -2.800000) {};
\node (-4_-1_1) at (-9.699485, -1.400000) {};
\node (-3_0_0) at (-7.274613, 1.400000) {};
\node (-4_0_1) at (-8.487049, 0.700000) {};
\node (-4_0_0) at (-9.699485, 1.400000) {};
\node (-4_-1_0) at (-10.911920, -0.700000) {};
\node (-5_0_1) at (-10.911920, 0.700000) {};
\node (-4_1_0) at (-8.487049, 3.500000) {};
\node (-4_1_1) at (-7.274613, 2.800000) {};
\node (-5_1_1) at (-9.699485, 2.800000) {};
\node (-5_0_0) at (-12.124356, 1.400000) {};
\node (-5_-1_1) at (-12.124356, -1.400000) {};
\node (-5_-1_0) at (-13.336791, -0.700000) {};
\node (-6_0_1) at (-13.336791, 0.700000) {};
\node (-5_2_0) at (-9.699485, 5.600000) {};
\node (-5_2_1) at (-8.487049, 4.900000) {};
\node (-5_1_0) at (-10.911920, 3.500000) {};
\node (-6_2_1) at (-10.911920, 4.900000) {};
\draw[periedge] (-1_0_1) -- (0_0_0);
\draw[periedge] (0_0_0) -- (0_0_1);
\draw[periedge] (0_0_1) -- (1_-1_0);
\draw[periedge] (0_-1_1) -- (1_-1_0);
\draw[periedge] (0_-1_0) -- (0_-1_1);
\draw[edge] (-1_0_1) -- (0_-1_0);
\draw[periedge] (-2_0_1) -- (-1_0_0);
\draw[periedge] (-1_0_0) -- (-1_0_1);
\draw[periedge] (-1_-1_1) -- (0_-1_0);
\draw[periedge] (-1_-1_0) -- (-1_-1_1);
\draw[edge] (-2_0_1) -- (-1_-1_0);
\draw[periedge] (-3_0_1) -- (-2_0_0);
\draw[periedge] (-2_0_0) -- (-2_0_1);
\draw[periedge] (-2_-1_1) -- (-1_-1_0);
\draw[periedge] (-2_-1_0) -- (-2_-1_1);
\draw[edge] (-3_0_1) -- (-2_-1_0);
\draw[edge] (-4_-1_1) -- (-3_-1_0);
\draw[edge] (-3_-1_0) -- (-3_-1_1);
\draw[periedge] (-3_-1_1) -- (-2_-2_0);
\draw[periedge] (-3_-2_1) -- (-2_-2_0);
\draw[periedge] (-3_-2_0) -- (-3_-2_1);
\draw[periedge] (-4_-1_1) -- (-3_-2_0);
\draw[edge] (-4_0_1) -- (-3_0_0);
\draw[periedge] (-3_0_0) -- (-3_0_1);
\draw[periedge] (-3_-1_1) -- (-2_-1_0);
\draw[edge] (-4_0_1) -- (-3_-1_0);
\draw[periedge] (-5_0_1) -- (-4_0_0);
\draw[edge] (-4_0_0) -- (-4_0_1);
\draw[periedge] (-4_-1_0) -- (-4_-1_1);
\draw[edge] (-5_0_1) -- (-4_-1_0);
\draw[edge] (-5_1_1) -- (-4_1_0);
\draw[periedge] (-4_1_0) -- (-4_1_1);
\draw[periedge] (-4_1_1) -- (-3_0_0);
\draw[periedge] (-5_1_1) -- (-4_0_0);
\draw[periedge] (-6_0_1) -- (-5_0_0);
\draw[periedge] (-5_0_0) -- (-5_0_1);
\draw[periedge] (-5_-1_1) -- (-4_-1_0);
\draw[periedge] (-5_-1_0) -- (-5_-1_1);
\draw[periedge] (-6_0_1) -- (-5_-1_0);
\draw[periedge] (-6_2_1) -- (-5_2_0);
\draw[periedge] (-5_2_0) -- (-5_2_1);
\draw[periedge] (-5_2_1) -- (-4_1_0);
\draw[periedge] (-5_1_0) -- (-5_1_1);
\draw[periedge] (-6_2_1) -- (-5_1_0);
\end{tikzpicture} \\
\text{(m)}\ 9,\, \mathbf{C}_\mathrm{s},\, 4,\, 52 & \text{(n)}\ 9,\, \mathbf{C}_\mathrm{s},\, 4,\, 52 & \text{(o)}\ 9,\, \mathbf{C}_\mathrm{s},\, 3,\, 36 & \text{(p)}\ 9,\, \mathbf{C}_\mathrm{s},\, 5,\, 58 
\end{array}$
\caption{Catafused and perifused benzenoids on $\varepsilon \leq 9$ hexagons for which the nullity jumps from $0$ to $2$ on altanisation.
The subcaption for each benzenoid lists: the number of hexagons, the maximum point group, the bay number and the number of perfect matchings.
}
\label{fig:small_excess2_benz}
\end{figure}

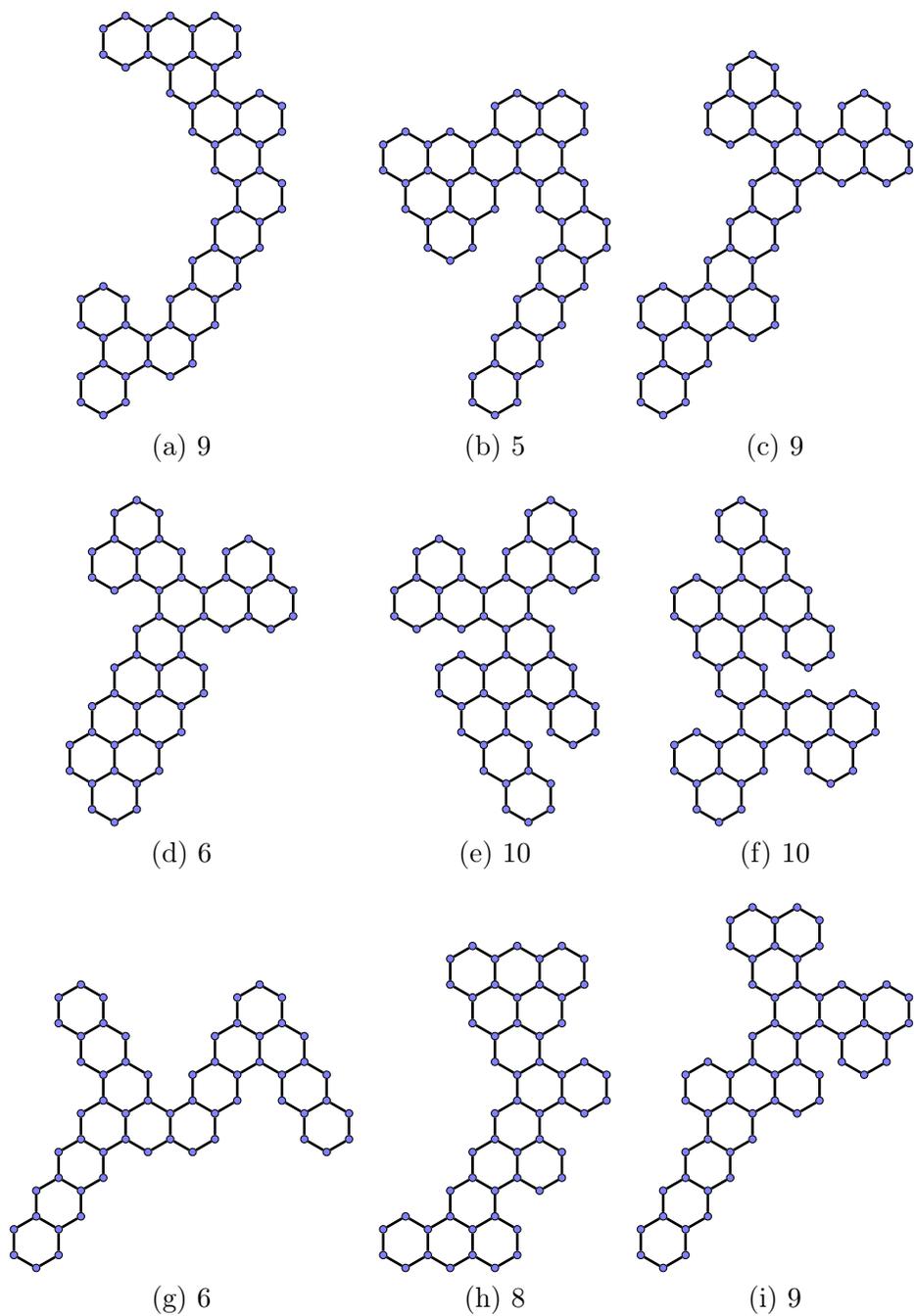
\begin{figure}[!htbp]
\centering
$\begin{array}{ccc}
\begin{tikzpicture}[scale=\scfactorx,rotate=60]
\tikzstyle{every node} = [inner sep=1, draw, circle, fill=blue!50!white]
\tikzstyle{edge} = [draw, line width=1.0]
\tikzstyle{periedge} = [draw, line width=1.0]
\node (0_0_0) at (0.000000, 1.400000) {};
\node (0_0_1) at (1.212436, 0.700000) {};
\node (1_-1_0) at (1.212436, -0.700000) {};
\node (0_-1_1) at (0.000000, -1.400000) {};
\node (0_-1_0) at (-1.212436, -0.700000) {};
\node (-1_0_1) at (-1.212436, 0.700000) {};
\node (0_1_0) at (1.212436, 3.500000) {};
\node (0_1_1) at (2.424871, 2.800000) {};
\node (1_0_0) at (2.424871, 1.400000) {};
\node (-1_1_1) at (0.000000, 2.800000) {};
\node (1_-1_1) at (2.424871, -1.400000) {};
\node (2_-2_0) at (2.424871, -2.800000) {};
\node (1_-2_1) at (1.212436, -3.500000) {};
\node (1_-2_0) at (0.000000, -2.800000) {};
\node (2_-1_0) at (3.637307, -0.700000) {};
\node (2_-1_1) at (4.849742, -1.400000) {};
\node (3_-2_0) at (4.849742, -2.800000) {};
\node (2_-2_1) at (3.637307, -3.500000) {};
\node (3_-1_0) at (6.062178, -0.700000) {};
\node (3_-1_1) at (7.274613, -1.400000) {};
\node (4_-2_0) at (7.274613, -2.800000) {};
\node (3_-2_1) at (6.062178, -3.500000) {};
\node (4_-1_0) at (8.487049, -0.700000) {};
\node (4_-1_1) at (9.699485, -1.400000) {};
\node (5_-2_0) at (9.699485, -2.800000) {};
\node (4_-2_1) at (8.487049, -3.500000) {};
\node (4_4_0) at (14.549227, 9.800000) {};
\node (4_4_1) at (15.761662, 9.100000) {};
\node (5_3_0) at (15.761662, 7.700000) {};
\node (4_3_1) at (14.549227, 7.000000) {};
\node (4_3_0) at (13.336791, 7.700000) {};
\node (3_4_1) at (13.336791, 9.100000) {};
\node (5_-1_0) at (10.911920, -0.700000) {};
\node (5_-1_1) at (12.124356, -1.400000) {};
\node (6_-2_0) at (12.124356, -2.800000) {};
\node (5_-2_1) at (10.911920, -3.500000) {};
\node (5_0_0) at (12.124356, 1.400000) {};
\node (5_0_1) at (13.336791, 0.700000) {};
\node (6_-1_0) at (13.336791, -0.700000) {};
\node (4_0_1) at (10.911920, 0.700000) {};
\node (5_1_0) at (13.336791, 3.500000) {};
\node (5_1_1) at (14.549227, 2.800000) {};
\node (6_0_0) at (14.549227, 1.400000) {};
\node (4_1_1) at (12.124356, 2.800000) {};
\node (5_2_0) at (14.549227, 5.600000) {};
\node (5_2_1) at (15.761662, 4.900000) {};
\node (6_1_0) at (15.761662, 3.500000) {};
\node (4_2_1) at (13.336791, 4.900000) {};
\node (5_3_1) at (16.974098, 7.000000) {};
\node (6_2_0) at (16.974098, 5.600000) {};
\node (6_0_1) at (15.761662, 0.700000) {};
\node (7_-1_0) at (15.761662, -0.700000) {};
\node (6_-1_1) at (14.549227, -1.400000) {};
\node (6_2_1) at (18.186533, 4.900000) {};
\node (7_1_0) at (18.186533, 3.500000) {};
\node (6_1_1) at (16.974098, 2.800000) {};
\node (-1_0_0) at (-2.424871, 1.400000) {};
\node (-1_-1_1) at (-2.424871, -1.400000) {};
\node (-1_-1_0) at (-3.637307, -0.700000) {};
\node (-2_0_1) at (-3.637307, 0.700000) {};
\draw[periedge] (-1_0_1) -- (0_0_0);
\draw[edge] (0_0_0) -- (0_0_1);
\draw[periedge] (0_0_1) -- (1_-1_0);
\draw[edge] (0_-1_1) -- (1_-1_0);
\draw[periedge] (0_-1_0) -- (0_-1_1);
\draw[edge] (-1_0_1) -- (0_-1_0);
\draw[periedge] (-1_1_1) -- (0_1_0);
\draw[periedge] (0_1_0) -- (0_1_1);
\draw[periedge] (0_1_1) -- (1_0_0);
\draw[periedge] (0_0_1) -- (1_0_0);
\draw[periedge] (-1_1_1) -- (0_0_0);
\draw[periedge] (1_-1_0) -- (1_-1_1);
\draw[edge] (1_-1_1) -- (2_-2_0);
\draw[periedge] (1_-2_1) -- (2_-2_0);
\draw[periedge] (1_-2_0) -- (1_-2_1);
\draw[periedge] (0_-1_1) -- (1_-2_0);
\draw[periedge] (1_-1_1) -- (2_-1_0);
\draw[periedge] (2_-1_0) -- (2_-1_1);
\draw[edge] (2_-1_1) -- (3_-2_0);
\draw[periedge] (2_-2_1) -- (3_-2_0);
\draw[periedge] (2_-2_0) -- (2_-2_1);
\draw[periedge] (2_-1_1) -- (3_-1_0);
\draw[periedge] (3_-1_0) -- (3_-1_1);
\draw[edge] (3_-1_1) -- (4_-2_0);
\draw[periedge] (3_-2_1) -- (4_-2_0);
\draw[periedge] (3_-2_0) -- (3_-2_1);
\draw[periedge] (3_-1_1) -- (4_-1_0);
\draw[periedge] (4_-1_0) -- (4_-1_1);
\draw[edge] (4_-1_1) -- (5_-2_0);
\draw[periedge] (4_-2_1) -- (5_-2_0);
\draw[periedge] (4_-2_0) -- (4_-2_1);
\draw[periedge] (3_4_1) -- (4_4_0);
\draw[periedge] (4_4_0) -- (4_4_1);
\draw[periedge] (4_4_1) -- (5_3_0);
\draw[edge] (4_3_1) -- (5_3_0);
\draw[periedge] (4_3_0) -- (4_3_1);
\draw[periedge] (3_4_1) -- (4_3_0);
\draw[periedge] (4_-1_1) -- (5_-1_0);
\draw[edge] (5_-1_0) -- (5_-1_1);
\draw[periedge] (5_-1_1) -- (6_-2_0);
\draw[periedge] (5_-2_1) -- (6_-2_0);
\draw[periedge] (5_-2_0) -- (5_-2_1);
\draw[periedge] (4_0_1) -- (5_0_0);
\draw[edge] (5_0_0) -- (5_0_1);
\draw[edge] (5_0_1) -- (6_-1_0);
\draw[periedge] (5_-1_1) -- (6_-1_0);
\draw[periedge] (4_0_1) -- (5_-1_0);
\draw[periedge] (4_1_1) -- (5_1_0);
\draw[edge] (5_1_0) -- (5_1_1);
\draw[periedge] (5_1_1) -- (6_0_0);
\draw[edge] (5_0_1) -- (6_0_0);
\draw[periedge] (4_1_1) -- (5_0_0);
\draw[periedge] (4_2_1) -- (5_2_0);
\draw[edge] (5_2_0) -- (5_2_1);
\draw[edge] (5_2_1) -- (6_1_0);
\draw[periedge] (5_1_1) -- (6_1_0);
\draw[periedge] (4_2_1) -- (5_1_0);
\draw[periedge] (5_3_0) -- (5_3_1);
\draw[periedge] (5_3_1) -- (6_2_0);
\draw[edge] (5_2_1) -- (6_2_0);
\draw[periedge] (4_3_1) -- (5_2_0);
\draw[periedge] (6_0_0) -- (6_0_1);
\draw[periedge] (6_0_1) -- (7_-1_0);
\draw[periedge] (6_-1_1) -- (7_-1_0);
\draw[periedge] (6_-1_0) -- (6_-1_1);
\draw[periedge] (6_2_0) -- (6_2_1);
\draw[periedge] (6_2_1) -- (7_1_0);
\draw[periedge] (6_1_1) -- (7_1_0);
\draw[periedge] (6_1_0) -- (6_1_1);
\draw[periedge] (-2_0_1) -- (-1_0_0);
\draw[periedge] (-1_0_0) -- (-1_0_1);
\draw[periedge] (-1_-1_1) -- (0_-1_0);
\draw[periedge] (-1_-1_0) -- (-1_-1_1);
\draw[periedge] (-2_0_1) -- (-1_-1_0);
\end{tikzpicture} & 
\begin{tikzpicture}[scale=\scfactorx,rotate=60]
\tikzstyle{every node} = [inner sep=1, draw, circle, fill=blue!50!white]
\tikzstyle{edge} = [draw, line width=1.0]
\tikzstyle{periedge} = [draw, line width=1.0]
\node (0_0_0) at (0.000000, 1.400000) {};
\node (0_0_1) at (1.212436, 0.700000) {};
\node (1_-1_0) at (1.212436, -0.700000) {};
\node (0_-1_1) at (0.000000, -1.400000) {};
\node (0_-1_0) at (-1.212436, -0.700000) {};
\node (-1_0_1) at (-1.212436, 0.700000) {};
\node (0_3_0) at (3.637307, 7.700000) {};
\node (0_3_1) at (4.849742, 7.000000) {};
\node (1_2_0) at (4.849742, 5.600000) {};
\node (0_2_1) at (3.637307, 4.900000) {};
\node (0_2_0) at (2.424871, 5.600000) {};
\node (-1_3_1) at (2.424871, 7.000000) {};
\node (0_4_0) at (4.849742, 9.800000) {};
\node (0_4_1) at (6.062178, 9.100000) {};
\node (1_3_0) at (6.062178, 7.700000) {};
\node (-1_4_1) at (3.637307, 9.100000) {};
\node (1_0_0) at (2.424871, 1.400000) {};
\node (1_0_1) at (3.637307, 0.700000) {};
\node (2_-1_0) at (3.637307, -0.700000) {};
\node (1_-1_1) at (2.424871, -1.400000) {};
\node (1_3_1) at (7.274613, 7.000000) {};
\node (2_2_0) at (7.274613, 5.600000) {};
\node (1_2_1) at (6.062178, 4.900000) {};
\node (2_0_0) at (4.849742, 1.400000) {};
\node (2_0_1) at (6.062178, 0.700000) {};
\node (3_-1_0) at (6.062178, -0.700000) {};
\node (2_-1_1) at (4.849742, -1.400000) {};
\node (2_1_0) at (6.062178, 3.500000) {};
\node (2_1_1) at (7.274613, 2.800000) {};
\node (3_0_0) at (7.274613, 1.400000) {};
\node (1_1_1) at (4.849742, 2.800000) {};
\node (2_2_1) at (8.487049, 4.900000) {};
\node (3_1_0) at (8.487049, 3.500000) {};
\node (2_3_0) at (8.487049, 7.700000) {};
\node (2_3_1) at (9.699485, 7.000000) {};
\node (3_2_0) at (9.699485, 5.600000) {};
\node (3_2_1) at (10.911920, 4.900000) {};
\node (4_1_0) at (10.911920, 3.500000) {};
\node (3_1_1) at (9.699485, 2.800000) {};
\node (-1_0_0) at (-2.424871, 1.400000) {};
\node (-1_-1_1) at (-2.424871, -1.400000) {};
\node (-1_-1_0) at (-3.637307, -0.700000) {};
\node (-2_0_1) at (-3.637307, 0.700000) {};
\node (-1_3_0) at (1.212436, 7.700000) {};
\node (-1_2_1) at (1.212436, 4.900000) {};
\node (-1_2_0) at (0.000000, 5.600000) {};
\node (-2_3_1) at (0.000000, 7.000000) {};
\node (-1_4_0) at (2.424871, 9.800000) {};
\node (-2_4_1) at (1.212436, 9.100000) {};
\node (-1_5_0) at (3.637307, 11.900000) {};
\node (-1_5_1) at (4.849742, 11.200000) {};
\node (-2_5_1) at (2.424871, 11.200000) {};
\node (-2_0_0) at (-4.849742, 1.400000) {};
\node (-2_-1_1) at (-4.849742, -1.400000) {};
\node (-2_-1_0) at (-6.062178, -0.700000) {};
\node (-3_0_1) at (-6.062178, 0.700000) {};
\draw[periedge] (-1_0_1) -- (0_0_0);
\draw[periedge] (0_0_0) -- (0_0_1);
\draw[edge] (0_0_1) -- (1_-1_0);
\draw[periedge] (0_-1_1) -- (1_-1_0);
\draw[periedge] (0_-1_0) -- (0_-1_1);
\draw[edge] (-1_0_1) -- (0_-1_0);
\draw[edge] (-1_3_1) -- (0_3_0);
\draw[edge] (0_3_0) -- (0_3_1);
\draw[edge] (0_3_1) -- (1_2_0);
\draw[periedge] (0_2_1) -- (1_2_0);
\draw[periedge] (0_2_0) -- (0_2_1);
\draw[edge] (-1_3_1) -- (0_2_0);
\draw[edge] (-1_4_1) -- (0_4_0);
\draw[periedge] (0_4_0) -- (0_4_1);
\draw[periedge] (0_4_1) -- (1_3_0);
\draw[edge] (0_3_1) -- (1_3_0);
\draw[edge] (-1_4_1) -- (0_3_0);
\draw[periedge] (0_0_1) -- (1_0_0);
\draw[periedge] (1_0_0) -- (1_0_1);
\draw[edge] (1_0_1) -- (2_-1_0);
\draw[periedge] (1_-1_1) -- (2_-1_0);
\draw[periedge] (1_-1_0) -- (1_-1_1);
\draw[periedge] (1_3_0) -- (1_3_1);
\draw[edge] (1_3_1) -- (2_2_0);
\draw[edge] (1_2_1) -- (2_2_0);
\draw[periedge] (1_2_0) -- (1_2_1);
\draw[periedge] (1_0_1) -- (2_0_0);
\draw[edge] (2_0_0) -- (2_0_1);
\draw[periedge] (2_0_1) -- (3_-1_0);
\draw[periedge] (2_-1_1) -- (3_-1_0);
\draw[periedge] (2_-1_0) -- (2_-1_1);
\draw[periedge] (1_1_1) -- (2_1_0);
\draw[edge] (2_1_0) -- (2_1_1);
\draw[periedge] (2_1_1) -- (3_0_0);
\draw[periedge] (2_0_1) -- (3_0_0);
\draw[periedge] (1_1_1) -- (2_0_0);
\draw[edge] (2_2_0) -- (2_2_1);
\draw[edge] (2_2_1) -- (3_1_0);
\draw[periedge] (2_1_1) -- (3_1_0);
\draw[periedge] (1_2_1) -- (2_1_0);
\draw[periedge] (1_3_1) -- (2_3_0);
\draw[periedge] (2_3_0) -- (2_3_1);
\draw[periedge] (2_3_1) -- (3_2_0);
\draw[edge] (2_2_1) -- (3_2_0);
\draw[periedge] (3_2_0) -- (3_2_1);
\draw[periedge] (3_2_1) -- (4_1_0);
\draw[periedge] (3_1_1) -- (4_1_0);
\draw[periedge] (3_1_0) -- (3_1_1);
\draw[periedge] (-2_0_1) -- (-1_0_0);
\draw[periedge] (-1_0_0) -- (-1_0_1);
\draw[periedge] (-1_-1_1) -- (0_-1_0);
\draw[periedge] (-1_-1_0) -- (-1_-1_1);
\draw[edge] (-2_0_1) -- (-1_-1_0);
\draw[periedge] (-2_3_1) -- (-1_3_0);
\draw[edge] (-1_3_0) -- (-1_3_1);
\draw[periedge] (-1_2_1) -- (0_2_0);
\draw[periedge] (-1_2_0) -- (-1_2_1);
\draw[periedge] (-2_3_1) -- (-1_2_0);
\draw[periedge] (-2_4_1) -- (-1_4_0);
\draw[edge] (-1_4_0) -- (-1_4_1);
\draw[periedge] (-2_4_1) -- (-1_3_0);
\draw[periedge] (-2_5_1) -- (-1_5_0);
\draw[periedge] (-1_5_0) -- (-1_5_1);
\draw[periedge] (-1_5_1) -- (0_4_0);
\draw[periedge] (-2_5_1) -- (-1_4_0);
\draw[periedge] (-3_0_1) -- (-2_0_0);
\draw[periedge] (-2_0_0) -- (-2_0_1);
\draw[periedge] (-2_-1_1) -- (-1_-1_0);
\draw[periedge] (-2_-1_0) -- (-2_-1_1);
\draw[periedge] (-3_0_1) -- (-2_-1_0);
\end{tikzpicture} &
\begin{tikzpicture}[scale=\scfactorx,rotate=60]
\tikzstyle{every node} = [inner sep=1, draw, circle, fill=blue!50!white]
\tikzstyle{edge} = [draw, line width=1.0]
\tikzstyle{periedge} = [draw, line width=1.0]
\node (0_0_0) at (0.000000, 1.400000) {};
\node (0_0_1) at (1.212436, 0.700000) {};
\node (1_-1_0) at (1.212436, -0.700000) {};
\node (0_-1_1) at (0.000000, -1.400000) {};
\node (0_-1_0) at (-1.212436, -0.700000) {};
\node (-1_0_1) at (-1.212436, 0.700000) {};
\node (1_-1_1) at (2.424871, -1.400000) {};
\node (2_-2_0) at (2.424871, -2.800000) {};
\node (1_-2_1) at (1.212436, -3.500000) {};
\node (1_-2_0) at (0.000000, -2.800000) {};
\node (1_0_0) at (2.424871, 1.400000) {};
\node (1_0_1) at (3.637307, 0.700000) {};
\node (2_-1_0) at (3.637307, -0.700000) {};
\node (2_0_0) at (4.849742, 1.400000) {};
\node (2_0_1) at (6.062178, 0.700000) {};
\node (3_-1_0) at (6.062178, -0.700000) {};
\node (2_-1_1) at (4.849742, -1.400000) {};
\node (3_0_0) at (7.274613, 1.400000) {};
\node (3_0_1) at (8.487049, 0.700000) {};
\node (4_-1_0) at (8.487049, -0.700000) {};
\node (3_-1_1) at (7.274613, -1.400000) {};
\node (3_2_0) at (9.699485, 5.600000) {};
\node (3_2_1) at (10.911920, 4.900000) {};
\node (4_1_0) at (10.911920, 3.500000) {};
\node (3_1_1) at (9.699485, 2.800000) {};
\node (3_1_0) at (8.487049, 3.500000) {};
\node (2_2_1) at (8.487049, 4.900000) {};
\node (4_0_0) at (9.699485, 1.400000) {};
\node (4_0_1) at (10.911920, 0.700000) {};
\node (5_-1_0) at (10.911920, -0.700000) {};
\node (4_-1_1) at (9.699485, -1.400000) {};
\node (4_1_1) at (12.124356, 2.800000) {};
\node (5_0_0) at (12.124356, 1.400000) {};
\node (4_2_0) at (12.124356, 5.600000) {};
\node (4_2_1) at (13.336791, 4.900000) {};
\node (5_1_0) at (13.336791, 3.500000) {};
\node (5_-1_1) at (12.124356, -1.400000) {};
\node (6_-2_0) at (12.124356, -2.800000) {};
\node (5_-2_1) at (10.911920, -3.500000) {};
\node (5_-2_0) at (9.699485, -2.800000) {};
\node (6_-2_1) at (13.336791, -3.500000) {};
\node (7_-3_0) at (13.336791, -4.900000) {};
\node (6_-3_1) at (12.124356, -5.600000) {};
\node (6_-3_0) at (10.911920, -4.900000) {};
\node (6_-1_0) at (13.336791, -0.700000) {};
\node (6_-1_1) at (14.549227, -1.400000) {};
\node (7_-2_0) at (14.549227, -2.800000) {};
\node (-1_0_0) at (-2.424871, 1.400000) {};
\node (-1_-1_1) at (-2.424871, -1.400000) {};
\node (-1_-1_0) at (-3.637307, -0.700000) {};
\node (-2_0_1) at (-3.637307, 0.700000) {};
\node (-1_1_0) at (-1.212436, 3.500000) {};
\node (-1_1_1) at (0.000000, 2.800000) {};
\node (-2_1_1) at (-2.424871, 2.800000) {};
\node (-2_0_0) at (-4.849742, 1.400000) {};
\node (-2_-1_1) at (-4.849742, -1.400000) {};
\node (-2_-1_0) at (-6.062178, -0.700000) {};
\node (-3_0_1) at (-6.062178, 0.700000) {};
\draw[edge] (-1_0_1) -- (0_0_0);
\draw[periedge] (0_0_0) -- (0_0_1);
\draw[edge] (0_0_1) -- (1_-1_0);
\draw[edge] (0_-1_1) -- (1_-1_0);
\draw[periedge] (0_-1_0) -- (0_-1_1);
\draw[edge] (-1_0_1) -- (0_-1_0);
\draw[edge] (1_-1_0) -- (1_-1_1);
\draw[periedge] (1_-1_1) -- (2_-2_0);
\draw[periedge] (1_-2_1) -- (2_-2_0);
\draw[periedge] (1_-2_0) -- (1_-2_1);
\draw[periedge] (0_-1_1) -- (1_-2_0);
\draw[periedge] (0_0_1) -- (1_0_0);
\draw[periedge] (1_0_0) -- (1_0_1);
\draw[edge] (1_0_1) -- (2_-1_0);
\draw[periedge] (1_-1_1) -- (2_-1_0);
\draw[periedge] (1_0_1) -- (2_0_0);
\draw[periedge] (2_0_0) -- (2_0_1);
\draw[edge] (2_0_1) -- (3_-1_0);
\draw[periedge] (2_-1_1) -- (3_-1_0);
\draw[periedge] (2_-1_0) -- (2_-1_1);
\draw[periedge] (2_0_1) -- (3_0_0);
\draw[periedge] (3_0_0) -- (3_0_1);
\draw[edge] (3_0_1) -- (4_-1_0);
\draw[periedge] (3_-1_1) -- (4_-1_0);
\draw[periedge] (3_-1_0) -- (3_-1_1);
\draw[periedge] (2_2_1) -- (3_2_0);
\draw[periedge] (3_2_0) -- (3_2_1);
\draw[edge] (3_2_1) -- (4_1_0);
\draw[edge] (3_1_1) -- (4_1_0);
\draw[periedge] (3_1_0) -- (3_1_1);
\draw[periedge] (2_2_1) -- (3_1_0);
\draw[periedge] (3_0_1) -- (4_0_0);
\draw[edge] (4_0_0) -- (4_0_1);
\draw[periedge] (4_0_1) -- (5_-1_0);
\draw[edge] (4_-1_1) -- (5_-1_0);
\draw[periedge] (4_-1_0) -- (4_-1_1);
\draw[edge] (4_1_0) -- (4_1_1);
\draw[periedge] (4_1_1) -- (5_0_0);
\draw[periedge] (4_0_1) -- (5_0_0);
\draw[periedge] (3_1_1) -- (4_0_0);
\draw[periedge] (3_2_1) -- (4_2_0);
\draw[periedge] (4_2_0) -- (4_2_1);
\draw[periedge] (4_2_1) -- (5_1_0);
\draw[periedge] (4_1_1) -- (5_1_0);
\draw[periedge] (5_-1_0) -- (5_-1_1);
\draw[edge] (5_-1_1) -- (6_-2_0);
\draw[edge] (5_-2_1) -- (6_-2_0);
\draw[periedge] (5_-2_0) -- (5_-2_1);
\draw[periedge] (4_-1_1) -- (5_-2_0);
\draw[edge] (6_-2_0) -- (6_-2_1);
\draw[periedge] (6_-2_1) -- (7_-3_0);
\draw[periedge] (6_-3_1) -- (7_-3_0);
\draw[periedge] (6_-3_0) -- (6_-3_1);
\draw[periedge] (5_-2_1) -- (6_-3_0);
\draw[periedge] (5_-1_1) -- (6_-1_0);
\draw[periedge] (6_-1_0) -- (6_-1_1);
\draw[periedge] (6_-1_1) -- (7_-2_0);
\draw[periedge] (6_-2_1) -- (7_-2_0);
\draw[periedge] (-2_0_1) -- (-1_0_0);
\draw[edge] (-1_0_0) -- (-1_0_1);
\draw[periedge] (-1_-1_1) -- (0_-1_0);
\draw[periedge] (-1_-1_0) -- (-1_-1_1);
\draw[edge] (-2_0_1) -- (-1_-1_0);
\draw[periedge] (-2_1_1) -- (-1_1_0);
\draw[periedge] (-1_1_0) -- (-1_1_1);
\draw[periedge] (-1_1_1) -- (0_0_0);
\draw[periedge] (-2_1_1) -- (-1_0_0);
\draw[periedge] (-3_0_1) -- (-2_0_0);
\draw[periedge] (-2_0_0) -- (-2_0_1);
\draw[periedge] (-2_-1_1) -- (-1_-1_0);
\draw[periedge] (-2_-1_0) -- (-2_-1_1);
\draw[periedge] (-3_0_1) -- (-2_-1_0);
\end{tikzpicture} \\    
\text{(a)}\ 9& \text{(b)}\ 5 & \text{(c)}\ 9 \\[12pt]
\begin{tikzpicture}[scale=\scfactorx,rotate=60]
\tikzstyle{every node} = [inner sep=1, draw, circle, fill=blue!50!white]
\tikzstyle{edge} = [draw, line width=1.0]
\tikzstyle{periedge} = [draw, line width=1.0]
\node (0_0_0) at (0.000000, 1.400000) {};
\node (0_0_1) at (1.212436, 0.700000) {};
\node (1_-1_0) at (1.212436, -0.700000) {};
\node (0_-1_1) at (0.000000, -1.400000) {};
\node (0_-1_0) at (-1.212436, -0.700000) {};
\node (-1_0_1) at (-1.212436, 0.700000) {};
\node (0_1_0) at (1.212436, 3.500000) {};
\node (0_1_1) at (2.424871, 2.800000) {};
\node (1_0_0) at (2.424871, 1.400000) {};
\node (-1_1_1) at (0.000000, 2.800000) {};
\node (1_0_1) at (3.637307, 0.700000) {};
\node (2_-1_0) at (3.637307, -0.700000) {};
\node (1_-1_1) at (2.424871, -1.400000) {};
\node (1_1_0) at (3.637307, 3.500000) {};
\node (1_1_1) at (4.849742, 2.800000) {};
\node (2_0_0) at (4.849742, 1.400000) {};
\node (1_3_0) at (6.062178, 7.700000) {};
\node (1_3_1) at (7.274613, 7.000000) {};
\node (2_2_0) at (7.274613, 5.600000) {};
\node (1_2_1) at (6.062178, 4.900000) {};
\node (1_2_0) at (4.849742, 5.600000) {};
\node (0_3_1) at (4.849742, 7.000000) {};
\node (2_1_0) at (6.062178, 3.500000) {};
\node (2_1_1) at (7.274613, 2.800000) {};
\node (3_0_0) at (7.274613, 1.400000) {};
\node (2_0_1) at (6.062178, 0.700000) {};
\node (2_2_1) at (8.487049, 4.900000) {};
\node (3_1_0) at (8.487049, 3.500000) {};
\node (2_3_0) at (8.487049, 7.700000) {};
\node (2_3_1) at (9.699485, 7.000000) {};
\node (3_2_0) at (9.699485, 5.600000) {};
\node (3_0_1) at (8.487049, 0.700000) {};
\node (4_-1_0) at (8.487049, -0.700000) {};
\node (3_-1_1) at (7.274613, -1.400000) {};
\node (3_-1_0) at (6.062178, -0.700000) {};
\node (4_-1_1) at (9.699485, -1.400000) {};
\node (5_-2_0) at (9.699485, -2.800000) {};
\node (4_-2_1) at (8.487049, -3.500000) {};
\node (4_-2_0) at (7.274613, -2.800000) {};
\node (4_0_0) at (9.699485, 1.400000) {};
\node (4_0_1) at (10.911920, 0.700000) {};
\node (5_-1_0) at (10.911920, -0.700000) {};
\node (-1_0_0) at (-2.424871, 1.400000) {};
\node (-1_-1_1) at (-2.424871, -1.400000) {};
\node (-1_-1_0) at (-3.637307, -0.700000) {};
\node (-2_0_1) at (-3.637307, 0.700000) {};
\node (-1_1_0) at (-1.212436, 3.500000) {};
\node (-2_1_1) at (-2.424871, 2.800000) {};
\node (-2_0_0) at (-4.849742, 1.400000) {};
\node (-2_-1_1) at (-4.849742, -1.400000) {};
\node (-2_-1_0) at (-6.062178, -0.700000) {};
\node (-3_0_1) at (-6.062178, 0.700000) {};
\node (-2_1_0) at (-3.637307, 3.500000) {};
\node (-3_1_1) at (-4.849742, 2.800000) {};
\draw[edge] (-1_0_1) -- (0_0_0);
\draw[edge] (0_0_0) -- (0_0_1);
\draw[edge] (0_0_1) -- (1_-1_0);
\draw[periedge] (0_-1_1) -- (1_-1_0);
\draw[periedge] (0_-1_0) -- (0_-1_1);
\draw[edge] (-1_0_1) -- (0_-1_0);
\draw[periedge] (-1_1_1) -- (0_1_0);
\draw[periedge] (0_1_0) -- (0_1_1);
\draw[edge] (0_1_1) -- (1_0_0);
\draw[edge] (0_0_1) -- (1_0_0);
\draw[edge] (-1_1_1) -- (0_0_0);
\draw[edge] (1_0_0) -- (1_0_1);
\draw[periedge] (1_0_1) -- (2_-1_0);
\draw[periedge] (1_-1_1) -- (2_-1_0);
\draw[periedge] (1_-1_0) -- (1_-1_1);
\draw[periedge] (0_1_1) -- (1_1_0);
\draw[periedge] (1_1_0) -- (1_1_1);
\draw[edge] (1_1_1) -- (2_0_0);
\draw[periedge] (1_0_1) -- (2_0_0);
\draw[periedge] (0_3_1) -- (1_3_0);
\draw[periedge] (1_3_0) -- (1_3_1);
\draw[edge] (1_3_1) -- (2_2_0);
\draw[edge] (1_2_1) -- (2_2_0);
\draw[periedge] (1_2_0) -- (1_2_1);
\draw[periedge] (0_3_1) -- (1_2_0);
\draw[periedge] (1_1_1) -- (2_1_0);
\draw[edge] (2_1_0) -- (2_1_1);
\draw[periedge] (2_1_1) -- (3_0_0);
\draw[edge] (2_0_1) -- (3_0_0);
\draw[periedge] (2_0_0) -- (2_0_1);
\draw[edge] (2_2_0) -- (2_2_1);
\draw[periedge] (2_2_1) -- (3_1_0);
\draw[periedge] (2_1_1) -- (3_1_0);
\draw[periedge] (1_2_1) -- (2_1_0);
\draw[periedge] (1_3_1) -- (2_3_0);
\draw[periedge] (2_3_0) -- (2_3_1);
\draw[periedge] (2_3_1) -- (3_2_0);
\draw[periedge] (2_2_1) -- (3_2_0);
\draw[periedge] (3_0_0) -- (3_0_1);
\draw[edge] (3_0_1) -- (4_-1_0);
\draw[edge] (3_-1_1) -- (4_-1_0);
\draw[periedge] (3_-1_0) -- (3_-1_1);
\draw[periedge] (2_0_1) -- (3_-1_0);
\draw[edge] (4_-1_0) -- (4_-1_1);
\draw[periedge] (4_-1_1) -- (5_-2_0);
\draw[periedge] (4_-2_1) -- (5_-2_0);
\draw[periedge] (4_-2_0) -- (4_-2_1);
\draw[periedge] (3_-1_1) -- (4_-2_0);
\draw[periedge] (3_0_1) -- (4_0_0);
\draw[periedge] (4_0_0) -- (4_0_1);
\draw[periedge] (4_0_1) -- (5_-1_0);
\draw[periedge] (4_-1_1) -- (5_-1_0);
\draw[edge] (-2_0_1) -- (-1_0_0);
\draw[edge] (-1_0_0) -- (-1_0_1);
\draw[periedge] (-1_-1_1) -- (0_-1_0);
\draw[periedge] (-1_-1_0) -- (-1_-1_1);
\draw[edge] (-2_0_1) -- (-1_-1_0);
\draw[periedge] (-2_1_1) -- (-1_1_0);
\draw[periedge] (-1_1_0) -- (-1_1_1);
\draw[edge] (-2_1_1) -- (-1_0_0);
\draw[periedge] (-3_0_1) -- (-2_0_0);
\draw[edge] (-2_0_0) -- (-2_0_1);
\draw[periedge] (-2_-1_1) -- (-1_-1_0);
\draw[periedge] (-2_-1_0) -- (-2_-1_1);
\draw[periedge] (-3_0_1) -- (-2_-1_0);
\draw[periedge] (-3_1_1) -- (-2_1_0);
\draw[periedge] (-2_1_0) -- (-2_1_1);
\draw[periedge] (-3_1_1) -- (-2_0_0);
\end{tikzpicture} &
\begin{tikzpicture}[scale=\scfactorx,rotate=120]
\tikzstyle{every node} = [inner sep=1, draw, circle, fill=blue!50!white]
\tikzstyle{edge} = [draw, line width=1.0]
\tikzstyle{periedge} = [draw, line width=1.0]
\node (0_0_0) at (0.000000, 1.400000) {};
\node (0_0_1) at (1.212436, 0.700000) {};
\node (1_-1_0) at (1.212436, -0.700000) {};
\node (0_-1_1) at (0.000000, -1.400000) {};
\node (0_-1_0) at (-1.212436, -0.700000) {};
\node (-1_0_1) at (-1.212436, 0.700000) {};
\node (1_0_0) at (2.424871, 1.400000) {};
\node (1_0_1) at (3.637307, 0.700000) {};
\node (2_-1_0) at (3.637307, -0.700000) {};
\node (1_-1_1) at (2.424871, -1.400000) {};
\node (1_1_0) at (3.637307, 3.500000) {};
\node (1_1_1) at (4.849742, 2.800000) {};
\node (2_0_0) at (4.849742, 1.400000) {};
\node (0_1_1) at (2.424871, 2.800000) {};
\node (1_2_0) at (4.849742, 5.600000) {};
\node (1_2_1) at (6.062178, 4.900000) {};
\node (2_1_0) at (6.062178, 3.500000) {};
\node (0_2_1) at (3.637307, 4.900000) {};
\node (2_-2_0) at (2.424871, -2.800000) {};
\node (2_-2_1) at (3.637307, -3.500000) {};
\node (3_-3_0) at (3.637307, -4.900000) {};
\node (2_-3_1) at (2.424871, -5.600000) {};
\node (2_-3_0) at (1.212436, -4.900000) {};
\node (1_-2_1) at (1.212436, -3.500000) {};
\node (2_-1_1) at (4.849742, -1.400000) {};
\node (3_-2_0) at (4.849742, -2.800000) {};
\node (2_1_1) at (7.274613, 2.800000) {};
\node (3_0_0) at (7.274613, 1.400000) {};
\node (2_0_1) at (6.062178, 0.700000) {};
\node (3_-2_1) at (6.062178, -3.500000) {};
\node (4_-3_0) at (6.062178, -4.900000) {};
\node (3_-3_1) at (4.849742, -5.600000) {};
\node (-1_0_0) at (-2.424871, 1.400000) {};
\node (-1_-1_1) at (-2.424871, -1.400000) {};
\node (-1_-1_0) at (-3.637307, -0.700000) {};
\node (-2_0_1) at (-3.637307, 0.700000) {};
\node (-1_1_0) at (-1.212436, 3.500000) {};
\node (-1_1_1) at (0.000000, 2.800000) {};
\node (-2_1_1) at (-2.424871, 2.800000) {};
\node (-1_2_0) at (0.000000, 5.600000) {};
\node (-1_2_1) at (1.212436, 4.900000) {};
\node (0_1_0) at (1.212436, 3.500000) {};
\node (-2_2_1) at (-1.212436, 4.900000) {};
\node (-2_0_0) at (-4.849742, 1.400000) {};
\node (-2_-1_1) at (-4.849742, -1.400000) {};
\node (-2_-1_0) at (-6.062178, -0.700000) {};
\node (-3_0_1) at (-6.062178, 0.700000) {};
\node (-2_2_0) at (-2.424871, 5.600000) {};
\node (-2_1_0) at (-3.637307, 3.500000) {};
\node (-3_2_1) at (-3.637307, 4.900000) {};
\node (-3_2_0) at (-4.849742, 5.600000) {};
\node (-3_1_1) at (-4.849742, 2.800000) {};
\node (-3_1_0) at (-6.062178, 3.500000) {};
\node (-4_2_1) at (-6.062178, 4.900000) {};
\node (-4_2_0) at (-7.274613, 5.600000) {};
\node (-4_1_1) at (-7.274613, 2.800000) {};
\node (-4_1_0) at (-8.487049, 3.500000) {};
\node (-5_2_1) at (-8.487049, 4.900000) {};
\draw[edge] (-1_0_1) -- (0_0_0);
\draw[periedge] (0_0_0) -- (0_0_1);
\draw[edge] (0_0_1) -- (1_-1_0);
\draw[periedge] (0_-1_1) -- (1_-1_0);
\draw[periedge] (0_-1_0) -- (0_-1_1);
\draw[edge] (-1_0_1) -- (0_-1_0);
\draw[periedge] (0_0_1) -- (1_0_0);
\draw[edge] (1_0_0) -- (1_0_1);
\draw[periedge] (1_0_1) -- (2_-1_0);
\draw[edge] (1_-1_1) -- (2_-1_0);
\draw[periedge] (1_-1_0) -- (1_-1_1);
\draw[periedge] (0_1_1) -- (1_1_0);
\draw[edge] (1_1_0) -- (1_1_1);
\draw[edge] (1_1_1) -- (2_0_0);
\draw[periedge] (1_0_1) -- (2_0_0);
\draw[periedge] (0_1_1) -- (1_0_0);
\draw[periedge] (0_2_1) -- (1_2_0);
\draw[periedge] (1_2_0) -- (1_2_1);
\draw[periedge] (1_2_1) -- (2_1_0);
\draw[edge] (1_1_1) -- (2_1_0);
\draw[periedge] (0_2_1) -- (1_1_0);
\draw[periedge] (1_-2_1) -- (2_-2_0);
\draw[edge] (2_-2_0) -- (2_-2_1);
\draw[edge] (2_-2_1) -- (3_-3_0);
\draw[periedge] (2_-3_1) -- (3_-3_0);
\draw[periedge] (2_-3_0) -- (2_-3_1);
\draw[periedge] (1_-2_1) -- (2_-3_0);
\draw[periedge] (2_-1_0) -- (2_-1_1);
\draw[periedge] (2_-1_1) -- (3_-2_0);
\draw[edge] (2_-2_1) -- (3_-2_0);
\draw[periedge] (1_-1_1) -- (2_-2_0);
\draw[periedge] (2_1_0) -- (2_1_1);
\draw[periedge] (2_1_1) -- (3_0_0);
\draw[periedge] (2_0_1) -- (3_0_0);
\draw[periedge] (2_0_0) -- (2_0_1);
\draw[periedge] (3_-2_0) -- (3_-2_1);
\draw[periedge] (3_-2_1) -- (4_-3_0);
\draw[periedge] (3_-3_1) -- (4_-3_0);
\draw[periedge] (3_-3_0) -- (3_-3_1);
\draw[periedge] (-2_0_1) -- (-1_0_0);
\draw[edge] (-1_0_0) -- (-1_0_1);
\draw[periedge] (-1_-1_1) -- (0_-1_0);
\draw[periedge] (-1_-1_0) -- (-1_-1_1);
\draw[edge] (-2_0_1) -- (-1_-1_0);
\draw[edge] (-2_1_1) -- (-1_1_0);
\draw[edge] (-1_1_0) -- (-1_1_1);
\draw[periedge] (-1_1_1) -- (0_0_0);
\draw[periedge] (-2_1_1) -- (-1_0_0);
\draw[periedge] (-2_2_1) -- (-1_2_0);
\draw[periedge] (-1_2_0) -- (-1_2_1);
\draw[periedge] (-1_2_1) -- (0_1_0);
\draw[periedge] (-1_1_1) -- (0_1_0);
\draw[edge] (-2_2_1) -- (-1_1_0);
\draw[periedge] (-3_0_1) -- (-2_0_0);
\draw[periedge] (-2_0_0) -- (-2_0_1);
\draw[periedge] (-2_-1_1) -- (-1_-1_0);
\draw[periedge] (-2_-1_0) -- (-2_-1_1);
\draw[periedge] (-3_0_1) -- (-2_-1_0);
\draw[periedge] (-3_2_1) -- (-2_2_0);
\draw[periedge] (-2_2_0) -- (-2_2_1);
\draw[periedge] (-2_1_0) -- (-2_1_1);
\draw[edge] (-3_2_1) -- (-2_1_0);
\draw[periedge] (-4_2_1) -- (-3_2_0);
\draw[periedge] (-3_2_0) -- (-3_2_1);
\draw[periedge] (-3_1_1) -- (-2_1_0);
\draw[periedge] (-3_1_0) -- (-3_1_1);
\draw[edge] (-4_2_1) -- (-3_1_0);
\draw[periedge] (-5_2_1) -- (-4_2_0);
\draw[periedge] (-4_2_0) -- (-4_2_1);
\draw[periedge] (-4_1_1) -- (-3_1_0);
\draw[periedge] (-4_1_0) -- (-4_1_1);
\draw[periedge] (-5_2_1) -- (-4_1_0);
\end{tikzpicture} &    
\begin{tikzpicture}[scale=\scfactorx,rotate=120]
\tikzstyle{every node} = [inner sep=1, draw, circle, fill=blue!50!white]
\tikzstyle{edge} = [draw, line width=1.0]
\tikzstyle{periedge} = [draw, line width=1.0]
\node (0_0_0) at (0.000000, 1.400000) {};
\node (0_0_1) at (1.212436, 0.700000) {};
\node (1_-1_0) at (1.212436, -0.700000) {};
\node (0_-1_1) at (0.000000, -1.400000) {};
\node (0_-1_0) at (-1.212436, -0.700000) {};
\node (-1_0_1) at (-1.212436, 0.700000) {};
\node (1_0_0) at (2.424871, 1.400000) {};
\node (1_0_1) at (3.637307, 0.700000) {};
\node (2_-1_0) at (3.637307, -0.700000) {};
\node (1_-1_1) at (2.424871, -1.400000) {};
\node (-1_0_0) at (-2.424871, 1.400000) {};
\node (-1_-1_1) at (-2.424871, -1.400000) {};
\node (-1_-1_0) at (-3.637307, -0.700000) {};
\node (-2_0_1) at (-3.637307, 0.700000) {};
\node (-1_1_0) at (-1.212436, 3.500000) {};
\node (-1_1_1) at (0.000000, 2.800000) {};
\node (-2_1_1) at (-2.424871, 2.800000) {};
\node (-1_2_0) at (0.000000, 5.600000) {};
\node (-1_2_1) at (1.212436, 4.900000) {};
\node (0_1_0) at (1.212436, 3.500000) {};
\node (-2_2_1) at (-1.212436, 4.900000) {};
\node (-2_0_0) at (-4.849742, 1.400000) {};
\node (-2_-1_1) at (-4.849742, -1.400000) {};
\node (-2_-1_0) at (-6.062178, -0.700000) {};
\node (-3_0_1) at (-6.062178, 0.700000) {};
\node (-2_2_0) at (-2.424871, 5.600000) {};
\node (-2_1_0) at (-3.637307, 3.500000) {};
\node (-3_2_1) at (-3.637307, 4.900000) {};
\node (-3_2_0) at (-4.849742, 5.600000) {};
\node (-3_1_1) at (-4.849742, 2.800000) {};
\node (-3_1_0) at (-6.062178, 3.500000) {};
\node (-4_2_1) at (-6.062178, 4.900000) {};
\node (-4_0_0) at (-9.699485, 1.400000) {};
\node (-4_0_1) at (-8.487049, 0.700000) {};
\node (-3_-1_0) at (-8.487049, -0.700000) {};
\node (-4_-1_1) at (-9.699485, -1.400000) {};
\node (-4_-1_0) at (-10.911920, -0.700000) {};
\node (-5_0_1) at (-10.911920, 0.700000) {};
\node (-4_1_0) at (-8.487049, 3.500000) {};
\node (-4_1_1) at (-7.274613, 2.800000) {};
\node (-3_0_0) at (-7.274613, 1.400000) {};
\node (-5_1_1) at (-9.699485, 2.800000) {};
\node (-4_2_0) at (-7.274613, 5.600000) {};
\node (-5_2_1) at (-8.487049, 4.900000) {};
\node (-5_1_0) at (-10.911920, 3.500000) {};
\node (-5_0_0) at (-12.124356, 1.400000) {};
\node (-6_1_1) at (-12.124356, 2.800000) {};
\node (-5_3_0) at (-8.487049, 7.700000) {};
\node (-5_3_1) at (-7.274613, 7.000000) {};
\node (-5_2_0) at (-9.699485, 5.600000) {};
\node (-6_3_1) at (-9.699485, 7.000000) {};
\node (-5_4_0) at (-7.274613, 9.800000) {};
\node (-5_4_1) at (-6.062178, 9.100000) {};
\node (-4_3_0) at (-6.062178, 7.700000) {};
\node (-6_4_1) at (-8.487049, 9.100000) {};
\node (-6_4_0) at (-9.699485, 9.800000) {};
\node (-6_3_0) at (-10.911920, 7.700000) {};
\node (-7_4_1) at (-10.911920, 9.100000) {};
\draw[edge] (-1_0_1) -- (0_0_0);
\draw[periedge] (0_0_0) -- (0_0_1);
\draw[edge] (0_0_1) -- (1_-1_0);
\draw[periedge] (0_-1_1) -- (1_-1_0);
\draw[periedge] (0_-1_0) -- (0_-1_1);
\draw[edge] (-1_0_1) -- (0_-1_0);
\draw[periedge] (0_0_1) -- (1_0_0);
\draw[periedge] (1_0_0) -- (1_0_1);
\draw[periedge] (1_0_1) -- (2_-1_0);
\draw[periedge] (1_-1_1) -- (2_-1_0);
\draw[periedge] (1_-1_0) -- (1_-1_1);
\draw[periedge] (-2_0_1) -- (-1_0_0);
\draw[edge] (-1_0_0) -- (-1_0_1);
\draw[periedge] (-1_-1_1) -- (0_-1_0);
\draw[periedge] (-1_-1_0) -- (-1_-1_1);
\draw[edge] (-2_0_1) -- (-1_-1_0);
\draw[edge] (-2_1_1) -- (-1_1_0);
\draw[edge] (-1_1_0) -- (-1_1_1);
\draw[periedge] (-1_1_1) -- (0_0_0);
\draw[periedge] (-2_1_1) -- (-1_0_0);
\draw[periedge] (-2_2_1) -- (-1_2_0);
\draw[periedge] (-1_2_0) -- (-1_2_1);
\draw[periedge] (-1_2_1) -- (0_1_0);
\draw[periedge] (-1_1_1) -- (0_1_0);
\draw[edge] (-2_2_1) -- (-1_1_0);
\draw[periedge] (-3_0_1) -- (-2_0_0);
\draw[periedge] (-2_0_0) -- (-2_0_1);
\draw[periedge] (-2_-1_1) -- (-1_-1_0);
\draw[periedge] (-2_-1_0) -- (-2_-1_1);
\draw[periedge] (-3_0_1) -- (-2_-1_0);
\draw[periedge] (-3_2_1) -- (-2_2_0);
\draw[periedge] (-2_2_0) -- (-2_2_1);
\draw[periedge] (-2_1_0) -- (-2_1_1);
\draw[edge] (-3_2_1) -- (-2_1_0);
\draw[periedge] (-4_2_1) -- (-3_2_0);
\draw[periedge] (-3_2_0) -- (-3_2_1);
\draw[periedge] (-3_1_1) -- (-2_1_0);
\draw[periedge] (-3_1_0) -- (-3_1_1);
\draw[edge] (-4_2_1) -- (-3_1_0);
\draw[edge] (-5_0_1) -- (-4_0_0);
\draw[edge] (-4_0_0) -- (-4_0_1);
\draw[periedge] (-4_0_1) -- (-3_-1_0);
\draw[periedge] (-4_-1_1) -- (-3_-1_0);
\draw[periedge] (-4_-1_0) -- (-4_-1_1);
\draw[periedge] (-5_0_1) -- (-4_-1_0);
\draw[periedge] (-5_1_1) -- (-4_1_0);
\draw[edge] (-4_1_0) -- (-4_1_1);
\draw[periedge] (-4_1_1) -- (-3_0_0);
\draw[periedge] (-4_0_1) -- (-3_0_0);
\draw[edge] (-5_1_1) -- (-4_0_0);
\draw[edge] (-5_2_1) -- (-4_2_0);
\draw[periedge] (-4_2_0) -- (-4_2_1);
\draw[periedge] (-4_1_1) -- (-3_1_0);
\draw[periedge] (-5_2_1) -- (-4_1_0);
\draw[periedge] (-6_1_1) -- (-5_1_0);
\draw[periedge] (-5_1_0) -- (-5_1_1);
\draw[periedge] (-5_0_0) -- (-5_0_1);
\draw[periedge] (-6_1_1) -- (-5_0_0);
\draw[edge] (-6_3_1) -- (-5_3_0);
\draw[edge] (-5_3_0) -- (-5_3_1);
\draw[periedge] (-5_3_1) -- (-4_2_0);
\draw[periedge] (-5_2_0) -- (-5_2_1);
\draw[periedge] (-6_3_1) -- (-5_2_0);
\draw[periedge] (-6_4_1) -- (-5_4_0);
\draw[periedge] (-5_4_0) -- (-5_4_1);
\draw[periedge] (-5_4_1) -- (-4_3_0);
\draw[periedge] (-5_3_1) -- (-4_3_0);
\draw[edge] (-6_4_1) -- (-5_3_0);
\draw[periedge] (-7_4_1) -- (-6_4_0);
\draw[periedge] (-6_4_0) -- (-6_4_1);
\draw[periedge] (-6_3_0) -- (-6_3_1);
\draw[periedge] (-7_4_1) -- (-6_3_0);
\end{tikzpicture} \\
\text{(d)}\ 6 & \text{(e)}\ 10 & \text{(f)}\ 10  \\[12pt]
\begin{tikzpicture}[scale=\scfactorx,rotate=60]
\tikzstyle{every node} = [inner sep=1, draw, circle, fill=blue!50!white]
\tikzstyle{edge} = [draw, line width=1.0]
\tikzstyle{periedge} = [draw, line width=1.0]
\node (0_0_0) at (0.000000, 1.400000) {};
\node (0_0_1) at (1.212436, 0.700000) {};
\node (1_-1_0) at (1.212436, -0.700000) {};
\node (0_-1_1) at (0.000000, -1.400000) {};
\node (0_-1_0) at (-1.212436, -0.700000) {};
\node (-1_0_1) at (-1.212436, 0.700000) {};
\node (1_0_0) at (2.424871, 1.400000) {};
\node (1_0_1) at (3.637307, 0.700000) {};
\node (2_-1_0) at (3.637307, -0.700000) {};
\node (1_-1_1) at (2.424871, -1.400000) {};
\node (2_-1_1) at (4.849742, -1.400000) {};
\node (3_-2_0) at (4.849742, -2.800000) {};
\node (2_-2_1) at (3.637307, -3.500000) {};
\node (2_-2_0) at (2.424871, -2.800000) {};
\node (2_0_0) at (4.849742, 1.400000) {};
\node (2_0_1) at (6.062178, 0.700000) {};
\node (3_-1_0) at (6.062178, -0.700000) {};
\node (2_1_0) at (6.062178, 3.500000) {};
\node (2_1_1) at (7.274613, 2.800000) {};
\node (3_0_0) at (7.274613, 1.400000) {};
\node (1_1_1) at (4.849742, 2.800000) {};
\node (2_2_0) at (7.274613, 5.600000) {};
\node (2_2_1) at (8.487049, 4.900000) {};
\node (3_1_0) at (8.487049, 3.500000) {};
\node (1_2_1) at (6.062178, 4.900000) {};
\node (3_-2_1) at (6.062178, -3.500000) {};
\node (4_-3_0) at (6.062178, -4.900000) {};
\node (3_-3_1) at (4.849742, -5.600000) {};
\node (3_-3_0) at (3.637307, -4.900000) {};
\node (4_-2_0) at (7.274613, -2.800000) {};
\node (4_-2_1) at (8.487049, -3.500000) {};
\node (5_-3_0) at (8.487049, -4.900000) {};
\node (4_-3_1) at (7.274613, -5.600000) {};
\node (5_-2_0) at (9.699485, -2.800000) {};
\node (5_-2_1) at (10.911920, -3.500000) {};
\node (6_-3_0) at (10.911920, -4.900000) {};
\node (5_-3_1) at (9.699485, -5.600000) {};
\node (6_-5_0) at (8.487049, -9.100000) {};
\node (6_-5_1) at (9.699485, -9.800000) {};
\node (7_-6_0) at (9.699485, -11.200000) {};
\node (6_-6_1) at (8.487049, -11.900000) {};
\node (6_-6_0) at (7.274613, -11.200000) {};
\node (5_-5_1) at (7.274613, -9.800000) {};
\node (6_-4_0) at (9.699485, -7.000000) {};
\node (6_-4_1) at (10.911920, -7.700000) {};
\node (7_-5_0) at (10.911920, -9.100000) {};
\node (5_-4_1) at (8.487049, -7.700000) {};
\node (6_-3_1) at (12.124356, -5.600000) {};
\node (7_-4_0) at (12.124356, -7.000000) {};
\node (6_-2_0) at (12.124356, -2.800000) {};
\node (6_-2_1) at (13.336791, -3.500000) {};
\node (7_-3_0) at (13.336791, -4.900000) {};
\node (-1_0_0) at (-2.424871, 1.400000) {};
\node (-1_-1_1) at (-2.424871, -1.400000) {};
\node (-1_-1_0) at (-3.637307, -0.700000) {};
\node (-2_0_1) at (-3.637307, 0.700000) {};
\node (-2_0_0) at (-4.849742, 1.400000) {};
\node (-2_-1_1) at (-4.849742, -1.400000) {};
\node (-2_-1_0) at (-6.062178, -0.700000) {};
\node (-3_0_1) at (-6.062178, 0.700000) {};
\draw[periedge] (-1_0_1) -- (0_0_0);
\draw[periedge] (0_0_0) -- (0_0_1);
\draw[edge] (0_0_1) -- (1_-1_0);
\draw[periedge] (0_-1_1) -- (1_-1_0);
\draw[periedge] (0_-1_0) -- (0_-1_1);
\draw[edge] (-1_0_1) -- (0_-1_0);
\draw[periedge] (0_0_1) -- (1_0_0);
\draw[periedge] (1_0_0) -- (1_0_1);
\draw[edge] (1_0_1) -- (2_-1_0);
\draw[edge] (1_-1_1) -- (2_-1_0);
\draw[periedge] (1_-1_0) -- (1_-1_1);
\draw[edge] (2_-1_0) -- (2_-1_1);
\draw[periedge] (2_-1_1) -- (3_-2_0);
\draw[edge] (2_-2_1) -- (3_-2_0);
\draw[periedge] (2_-2_0) -- (2_-2_1);
\draw[periedge] (1_-1_1) -- (2_-2_0);
\draw[periedge] (1_0_1) -- (2_0_0);
\draw[edge] (2_0_0) -- (2_0_1);
\draw[periedge] (2_0_1) -- (3_-1_0);
\draw[periedge] (2_-1_1) -- (3_-1_0);
\draw[periedge] (1_1_1) -- (2_1_0);
\draw[edge] (2_1_0) -- (2_1_1);
\draw[periedge] (2_1_1) -- (3_0_0);
\draw[periedge] (2_0_1) -- (3_0_0);
\draw[periedge] (1_1_1) -- (2_0_0);
\draw[periedge] (1_2_1) -- (2_2_0);
\draw[periedge] (2_2_0) -- (2_2_1);
\draw[periedge] (2_2_1) -- (3_1_0);
\draw[periedge] (2_1_1) -- (3_1_0);
\draw[periedge] (1_2_1) -- (2_1_0);
\draw[periedge] (3_-2_0) -- (3_-2_1);
\draw[edge] (3_-2_1) -- (4_-3_0);
\draw[periedge] (3_-3_1) -- (4_-3_0);
\draw[periedge] (3_-3_0) -- (3_-3_1);
\draw[periedge] (2_-2_1) -- (3_-3_0);
\draw[periedge] (3_-2_1) -- (4_-2_0);
\draw[periedge] (4_-2_0) -- (4_-2_1);
\draw[edge] (4_-2_1) -- (5_-3_0);
\draw[periedge] (4_-3_1) -- (5_-3_0);
\draw[periedge] (4_-3_0) -- (4_-3_1);
\draw[periedge] (4_-2_1) -- (5_-2_0);
\draw[periedge] (5_-2_0) -- (5_-2_1);
\draw[edge] (5_-2_1) -- (6_-3_0);
\draw[edge] (5_-3_1) -- (6_-3_0);
\draw[periedge] (5_-3_0) -- (5_-3_1);
\draw[periedge] (5_-5_1) -- (6_-5_0);
\draw[edge] (6_-5_0) -- (6_-5_1);
\draw[periedge] (6_-5_1) -- (7_-6_0);
\draw[periedge] (6_-6_1) -- (7_-6_0);
\draw[periedge] (6_-6_0) -- (6_-6_1);
\draw[periedge] (5_-5_1) -- (6_-6_0);
\draw[periedge] (5_-4_1) -- (6_-4_0);
\draw[edge] (6_-4_0) -- (6_-4_1);
\draw[periedge] (6_-4_1) -- (7_-5_0);
\draw[periedge] (6_-5_1) -- (7_-5_0);
\draw[periedge] (5_-4_1) -- (6_-5_0);
\draw[edge] (6_-3_0) -- (6_-3_1);
\draw[periedge] (6_-3_1) -- (7_-4_0);
\draw[periedge] (6_-4_1) -- (7_-4_0);
\draw[periedge] (5_-3_1) -- (6_-4_0);
\draw[periedge] (5_-2_1) -- (6_-2_0);
\draw[periedge] (6_-2_0) -- (6_-2_1);
\draw[periedge] (6_-2_1) -- (7_-3_0);
\draw[periedge] (6_-3_1) -- (7_-3_0);
\draw[periedge] (-2_0_1) -- (-1_0_0);
\draw[periedge] (-1_0_0) -- (-1_0_1);
\draw[periedge] (-1_-1_1) -- (0_-1_0);
\draw[periedge] (-1_-1_0) -- (-1_-1_1);
\draw[edge] (-2_0_1) -- (-1_-1_0);
\draw[periedge] (-3_0_1) -- (-2_0_0);
\draw[periedge] (-2_0_0) -- (-2_0_1);
\draw[periedge] (-2_-1_1) -- (-1_-1_0);
\draw[periedge] (-2_-1_0) -- (-2_-1_1);
\draw[periedge] (-3_0_1) -- (-2_-1_0);
\end{tikzpicture} &  
\begin{tikzpicture}[scale=\scfactorx]
\tikzstyle{every node} = [inner sep=1, draw, circle, fill=blue!50!white]
\tikzstyle{edge} = [draw, line width=1.0]
\tikzstyle{periedge} = [draw, line width=1.0]
\node (0_0_0) at (0.000000, 1.400000) {};
\node (0_0_1) at (1.212436, 0.700000) {};
\node (1_-1_0) at (1.212436, -0.700000) {};
\node (0_-1_1) at (0.000000, -1.400000) {};
\node (0_-1_0) at (-1.212436, -0.700000) {};
\node (-1_0_1) at (-1.212436, 0.700000) {};
\node (0_2_0) at (2.424871, 5.600000) {};
\node (0_2_1) at (3.637307, 4.900000) {};
\node (1_1_0) at (3.637307, 3.500000) {};
\node (0_1_1) at (2.424871, 2.800000) {};
\node (0_1_0) at (1.212436, 3.500000) {};
\node (-1_2_1) at (1.212436, 4.900000) {};
\node (0_4_0) at (4.849742, 9.800000) {};
\node (0_4_1) at (6.062178, 9.100000) {};
\node (1_3_0) at (6.062178, 7.700000) {};
\node (0_3_1) at (4.849742, 7.000000) {};
\node (0_3_0) at (3.637307, 7.700000) {};
\node (-1_4_1) at (3.637307, 9.100000) {};
\node (-1_0_0) at (-2.424871, 1.400000) {};
\node (-1_-1_1) at (-2.424871, -1.400000) {};
\node (-1_-1_0) at (-3.637307, -0.700000) {};
\node (-2_0_1) at (-3.637307, 0.700000) {};
\node (-1_1_0) at (-1.212436, 3.500000) {};
\node (-1_1_1) at (0.000000, 2.800000) {};
\node (-2_1_1) at (-2.424871, 2.800000) {};
\node (-1_2_0) at (0.000000, 5.600000) {};
\node (-2_2_1) at (-1.212436, 4.900000) {};
\node (-1_3_0) at (1.212436, 7.700000) {};
\node (-1_3_1) at (2.424871, 7.000000) {};
\node (-2_3_1) at (0.000000, 7.000000) {};
\node (-1_4_0) at (2.424871, 9.800000) {};
\node (-2_4_1) at (1.212436, 9.100000) {};
\node (-2_0_0) at (-4.849742, 1.400000) {};
\node (-2_-1_1) at (-4.849742, -1.400000) {};
\node (-2_-1_0) at (-6.062178, -0.700000) {};
\node (-3_0_1) at (-6.062178, 0.700000) {};
\node (-2_5_0) at (1.212436, 11.900000) {};
\node (-2_5_1) at (2.424871, 11.200000) {};
\node (-2_4_0) at (0.000000, 9.800000) {};
\node (-3_5_1) at (-0.000000, 11.200000) {};
\node (-2_6_0) at (2.424871, 14.000000) {};
\node (-2_6_1) at (3.637307, 13.300000) {};
\node (-1_5_0) at (3.637307, 11.900000) {};
\node (-3_6_1) at (1.212436, 13.300000) {};
\node (-2_7_0) at (3.637307, 16.100000) {};
\node (-2_7_1) at (4.849742, 15.400000) {};
\node (-1_6_0) at (4.849742, 14.000000) {};
\node (-3_7_1) at (2.424871, 15.400000) {};
\node (-3_6_0) at (0.000000, 14.000000) {};
\node (-3_5_0) at (-1.212436, 11.900000) {};
\node (-4_6_1) at (-1.212436, 13.300000) {};
\node (-3_7_0) at (1.212436, 16.100000) {};
\node (-4_7_1) at (-0.000000, 15.400000) {};
\node (-4_7_0) at (-1.212436, 16.100000) {};
\node (-4_6_0) at (-2.424871, 14.000000) {};
\node (-5_7_1) at (-2.424871, 15.400000) {};
\draw[edge] (-1_0_1) -- (0_0_0);
\draw[periedge] (0_0_0) -- (0_0_1);
\draw[periedge] (0_0_1) -- (1_-1_0);
\draw[periedge] (0_-1_1) -- (1_-1_0);
\draw[periedge] (0_-1_0) -- (0_-1_1);
\draw[edge] (-1_0_1) -- (0_-1_0);
\draw[edge] (-1_2_1) -- (0_2_0);
\draw[periedge] (0_2_0) -- (0_2_1);
\draw[periedge] (0_2_1) -- (1_1_0);
\draw[periedge] (0_1_1) -- (1_1_0);
\draw[periedge] (0_1_0) -- (0_1_1);
\draw[edge] (-1_2_1) -- (0_1_0);
\draw[periedge] (-1_4_1) -- (0_4_0);
\draw[periedge] (0_4_0) -- (0_4_1);
\draw[periedge] (0_4_1) -- (1_3_0);
\draw[periedge] (0_3_1) -- (1_3_0);
\draw[periedge] (0_3_0) -- (0_3_1);
\draw[edge] (-1_4_1) -- (0_3_0);
\draw[periedge] (-2_0_1) -- (-1_0_0);
\draw[edge] (-1_0_0) -- (-1_0_1);
\draw[periedge] (-1_-1_1) -- (0_-1_0);
\draw[periedge] (-1_-1_0) -- (-1_-1_1);
\draw[edge] (-2_0_1) -- (-1_-1_0);
\draw[periedge] (-2_1_1) -- (-1_1_0);
\draw[edge] (-1_1_0) -- (-1_1_1);
\draw[periedge] (-1_1_1) -- (0_0_0);
\draw[periedge] (-2_1_1) -- (-1_0_0);
\draw[periedge] (-2_2_1) -- (-1_2_0);
\draw[edge] (-1_2_0) -- (-1_2_1);
\draw[periedge] (-1_1_1) -- (0_1_0);
\draw[periedge] (-2_2_1) -- (-1_1_0);
\draw[periedge] (-2_3_1) -- (-1_3_0);
\draw[edge] (-1_3_0) -- (-1_3_1);
\draw[periedge] (-1_3_1) -- (0_2_0);
\draw[periedge] (-2_3_1) -- (-1_2_0);
\draw[edge] (-2_4_1) -- (-1_4_0);
\draw[periedge] (-1_4_0) -- (-1_4_1);
\draw[periedge] (-1_3_1) -- (0_3_0);
\draw[periedge] (-2_4_1) -- (-1_3_0);
\draw[periedge] (-3_0_1) -- (-2_0_0);
\draw[periedge] (-2_0_0) -- (-2_0_1);
\draw[periedge] (-2_-1_1) -- (-1_-1_0);
\draw[periedge] (-2_-1_0) -- (-2_-1_1);
\draw[periedge] (-3_0_1) -- (-2_-1_0);
\draw[edge] (-3_5_1) -- (-2_5_0);
\draw[edge] (-2_5_0) -- (-2_5_1);
\draw[periedge] (-2_5_1) -- (-1_4_0);
\draw[periedge] (-2_4_0) -- (-2_4_1);
\draw[periedge] (-3_5_1) -- (-2_4_0);
\draw[edge] (-3_6_1) -- (-2_6_0);
\draw[edge] (-2_6_0) -- (-2_6_1);
\draw[periedge] (-2_6_1) -- (-1_5_0);
\draw[periedge] (-2_5_1) -- (-1_5_0);
\draw[edge] (-3_6_1) -- (-2_5_0);
\draw[periedge] (-3_7_1) -- (-2_7_0);
\draw[periedge] (-2_7_0) -- (-2_7_1);
\draw[periedge] (-2_7_1) -- (-1_6_0);
\draw[periedge] (-2_6_1) -- (-1_6_0);
\draw[edge] (-3_7_1) -- (-2_6_0);
\draw[edge] (-4_6_1) -- (-3_6_0);
\draw[edge] (-3_6_0) -- (-3_6_1);
\draw[periedge] (-3_5_0) -- (-3_5_1);
\draw[periedge] (-4_6_1) -- (-3_5_0);
\draw[periedge] (-4_7_1) -- (-3_7_0);
\draw[periedge] (-3_7_0) -- (-3_7_1);
\draw[edge] (-4_7_1) -- (-3_6_0);
\draw[periedge] (-5_7_1) -- (-4_7_0);
\draw[periedge] (-4_7_0) -- (-4_7_1);
\draw[periedge] (-4_6_0) -- (-4_6_1);
\draw[periedge] (-5_7_1) -- (-4_6_0);
\end{tikzpicture} &
\begin{tikzpicture}[scale=\scfactorx,rotate=60]
\tikzstyle{every node} = [inner sep=1, draw, circle, fill=blue!50!white]
\tikzstyle{edge} = [draw, line width=1.0]
\tikzstyle{periedge} = [draw, line width=1.0]
\node (0_0_0) at (0.000000, 1.400000) {};
\node (0_0_1) at (1.212436, 0.700000) {};
\node (1_-1_0) at (1.212436, -0.700000) {};
\node (0_-1_1) at (0.000000, -1.400000) {};
\node (0_-1_0) at (-1.212436, -0.700000) {};
\node (-1_0_1) at (-1.212436, 0.700000) {};
\node (1_0_0) at (2.424871, 1.400000) {};
\node (1_0_1) at (3.637307, 0.700000) {};
\node (2_-1_0) at (3.637307, -0.700000) {};
\node (1_-1_1) at (2.424871, -1.400000) {};
\node (1_1_0) at (3.637307, 3.500000) {};
\node (1_1_1) at (4.849742, 2.800000) {};
\node (2_0_0) at (4.849742, 1.400000) {};
\node (0_1_1) at (2.424871, 2.800000) {};
\node (2_0_1) at (6.062178, 0.700000) {};
\node (3_-1_0) at (6.062178, -0.700000) {};
\node (2_-1_1) at (4.849742, -1.400000) {};
\node (3_-1_1) at (7.274613, -1.400000) {};
\node (4_-2_0) at (7.274613, -2.800000) {};
\node (3_-2_1) at (6.062178, -3.500000) {};
\node (3_-2_0) at (4.849742, -2.800000) {};
\node (3_0_0) at (7.274613, 1.400000) {};
\node (3_0_1) at (8.487049, 0.700000) {};
\node (4_-1_0) at (8.487049, -0.700000) {};
\node (4_0_0) at (9.699485, 1.400000) {};
\node (4_0_1) at (10.911920, 0.700000) {};
\node (5_-1_0) at (10.911920, -0.700000) {};
\node (4_-1_1) at (9.699485, -1.400000) {};
\node (4_1_0) at (10.911920, 3.500000) {};
\node (4_1_1) at (12.124356, 2.800000) {};
\node (5_0_0) at (12.124356, 1.400000) {};
\node (3_1_1) at (9.699485, 2.800000) {};
\node (4_2_0) at (12.124356, 5.600000) {};
\node (4_2_1) at (13.336791, 4.900000) {};
\node (5_1_0) at (13.336791, 3.500000) {};
\node (3_2_1) at (10.911920, 4.900000) {};
\node (5_-2_0) at (9.699485, -2.800000) {};
\node (5_-2_1) at (10.911920, -3.500000) {};
\node (6_-3_0) at (10.911920, -4.900000) {};
\node (5_-3_1) at (9.699485, -5.600000) {};
\node (5_-3_0) at (8.487049, -4.900000) {};
\node (4_-2_1) at (8.487049, -3.500000) {};
\node (5_-1_1) at (12.124356, -1.400000) {};
\node (6_-2_0) at (12.124356, -2.800000) {};
\node (5_1_1) at (14.549227, 2.800000) {};
\node (6_0_0) at (14.549227, 1.400000) {};
\node (5_0_1) at (13.336791, 0.700000) {};
\node (6_-2_1) at (13.336791, -3.500000) {};
\node (7_-3_0) at (13.336791, -4.900000) {};
\node (6_-3_1) at (12.124356, -5.600000) {};
\node (-1_0_0) at (-2.424871, 1.400000) {};
\node (-1_-1_1) at (-2.424871, -1.400000) {};
\node (-1_-1_0) at (-3.637307, -0.700000) {};
\node (-2_0_1) at (-3.637307, 0.700000) {};
\node (-2_0_0) at (-4.849742, 1.400000) {};
\node (-2_-1_1) at (-4.849742, -1.400000) {};
\node (-2_-1_0) at (-6.062178, -0.700000) {};
\node (-3_0_1) at (-6.062178, 0.700000) {};
\draw[periedge] (-1_0_1) -- (0_0_0);
\draw[periedge] (0_0_0) -- (0_0_1);
\draw[edge] (0_0_1) -- (1_-1_0);
\draw[periedge] (0_-1_1) -- (1_-1_0);
\draw[periedge] (0_-1_0) -- (0_-1_1);
\draw[edge] (-1_0_1) -- (0_-1_0);
\draw[periedge] (0_0_1) -- (1_0_0);
\draw[edge] (1_0_0) -- (1_0_1);
\draw[edge] (1_0_1) -- (2_-1_0);
\draw[periedge] (1_-1_1) -- (2_-1_0);
\draw[periedge] (1_-1_0) -- (1_-1_1);
\draw[periedge] (0_1_1) -- (1_1_0);
\draw[periedge] (1_1_0) -- (1_1_1);
\draw[periedge] (1_1_1) -- (2_0_0);
\draw[edge] (1_0_1) -- (2_0_0);
\draw[periedge] (0_1_1) -- (1_0_0);
\draw[periedge] (2_0_0) -- (2_0_1);
\draw[edge] (2_0_1) -- (3_-1_0);
\draw[edge] (2_-1_1) -- (3_-1_0);
\draw[periedge] (2_-1_0) -- (2_-1_1);
\draw[edge] (3_-1_0) -- (3_-1_1);
\draw[periedge] (3_-1_1) -- (4_-2_0);
\draw[periedge] (3_-2_1) -- (4_-2_0);
\draw[periedge] (3_-2_0) -- (3_-2_1);
\draw[periedge] (2_-1_1) -- (3_-2_0);
\draw[periedge] (2_0_1) -- (3_0_0);
\draw[periedge] (3_0_0) -- (3_0_1);
\draw[edge] (3_0_1) -- (4_-1_0);
\draw[periedge] (3_-1_1) -- (4_-1_0);
\draw[periedge] (3_0_1) -- (4_0_0);
\draw[edge] (4_0_0) -- (4_0_1);
\draw[periedge] (4_0_1) -- (5_-1_0);
\draw[edge] (4_-1_1) -- (5_-1_0);
\draw[periedge] (4_-1_0) -- (4_-1_1);
\draw[periedge] (3_1_1) -- (4_1_0);
\draw[edge] (4_1_0) -- (4_1_1);
\draw[edge] (4_1_1) -- (5_0_0);
\draw[periedge] (4_0_1) -- (5_0_0);
\draw[periedge] (3_1_1) -- (4_0_0);
\draw[periedge] (3_2_1) -- (4_2_0);
\draw[periedge] (4_2_0) -- (4_2_1);
\draw[periedge] (4_2_1) -- (5_1_0);
\draw[edge] (4_1_1) -- (5_1_0);
\draw[periedge] (3_2_1) -- (4_1_0);
\draw[periedge] (4_-2_1) -- (5_-2_0);
\draw[edge] (5_-2_0) -- (5_-2_1);
\draw[edge] (5_-2_1) -- (6_-3_0);
\draw[periedge] (5_-3_1) -- (6_-3_0);
\draw[periedge] (5_-3_0) -- (5_-3_1);
\draw[periedge] (4_-2_1) -- (5_-3_0);
\draw[periedge] (5_-1_0) -- (5_-1_1);
\draw[periedge] (5_-1_1) -- (6_-2_0);
\draw[edge] (5_-2_1) -- (6_-2_0);
\draw[periedge] (4_-1_1) -- (5_-2_0);
\draw[periedge] (5_1_0) -- (5_1_1);
\draw[periedge] (5_1_1) -- (6_0_0);
\draw[periedge] (5_0_1) -- (6_0_0);
\draw[periedge] (5_0_0) -- (5_0_1);
\draw[periedge] (6_-2_0) -- (6_-2_1);
\draw[periedge] (6_-2_1) -- (7_-3_0);
\draw[periedge] (6_-3_1) -- (7_-3_0);
\draw[periedge] (6_-3_0) -- (6_-3_1);
\draw[periedge] (-2_0_1) -- (-1_0_0);
\draw[periedge] (-1_0_0) -- (-1_0_1);
\draw[periedge] (-1_-1_1) -- (0_-1_0);
\draw[periedge] (-1_-1_0) -- (-1_-1_1);
\draw[edge] (-2_0_1) -- (-1_-1_0);
\draw[periedge] (-3_0_1) -- (-2_0_0);
\draw[periedge] (-2_0_0) -- (-2_0_1);
\draw[periedge] (-2_-1_1) -- (-1_-1_0);
\draw[periedge] (-2_-1_0) -- (-2_-1_1);
\draw[periedge] (-3_0_1) -- (-2_-1_0);
\end{tikzpicture} \\
\text{(g)}\ 6 & \text{(h)}\ 8 & \text{(i)}\ 9 
\end{array}$
\caption{The benzenoids on $\varepsilon = 15$ hexagons for which the nullity jumps from $2$ to $4$ on altanisation.
The subcaption for each benzenoid gives the bay number. All of them have the trivial maximum point group $\mathbf{C}_\mathrm{s}$.
}
\label{fig:nullity2_excess2_benz}
\end{figure}

If we limit consideration to the subclass of convex 
benzenoids \cite{gutman2012}, we can readily take the analysis much further, 
and Table~\ref{table:convex_benzenoids} shows the results for  cases with up to $3000$ hexagons. 
In all the $760511$ examples included in this survey, the excess nullity is either $0$ or $1$. 
The paractical chemical consequence is clear. This exhaustive computation rules out the possibility that an altan of a convex benzenoid 
(which, as noted above, contains faces of length $5$ and $6$ only) will be found in small-molecule synthetic chemistry.
It also makes it tempting to make the conjecture that {\it no} convex benzenoid has excess nullity $2$,
and in particular that no Kekulean convex benzenoid has an altan with nullity $2$.
We have developed a proof strategy for this conjecture, but for reasons of space we do not give it here. 
The proof will be reported in full elsewhere. 
We also note that within the range of computations, excess nullity of $1$ in Table~\ref{table:convex_benzenoids}
is in fact found for parent nullities of $0$ or $2$ only, even though parents with higher nullities occur.

Benzenoids, as bipartite patches, have a strong connection between nullity of the parent patch
and the size of its natural attachment set, as made explicit in Corollary~\ref{cor:corollary3}.
This connection is broken for more general patches in which some faces are odd, and in particular 
for the pentagonal, heptagonal and mixed pentagon/hexagon patches treated in Tables~\ref{tbl:purepenta}, \ref{tbl:purehepta} and \ref{tbl:penthexptch}.
In these cases, the results for small numbers of faces and natural attachment sets of even size again show the 
tendency for the altans with the lowest possible excess nullity to predominate, but the cases with excess nullity $2$ 
appear to be more common than for benzenoids. The pent-hex patches include examples that appear 
as induced subgraphs in fullerenes \cite{Graver2010,Graver2014,PentaCluster}, and the class
as a whole exhibits richer behaviour than the all-hexagon benzenoids.
Some examples indicating the richer behaviour of the non-benzenoid patches are illustrated in 
Figures~\ref{fig:purepatches} and~\ref{fig:mixedpatches}.

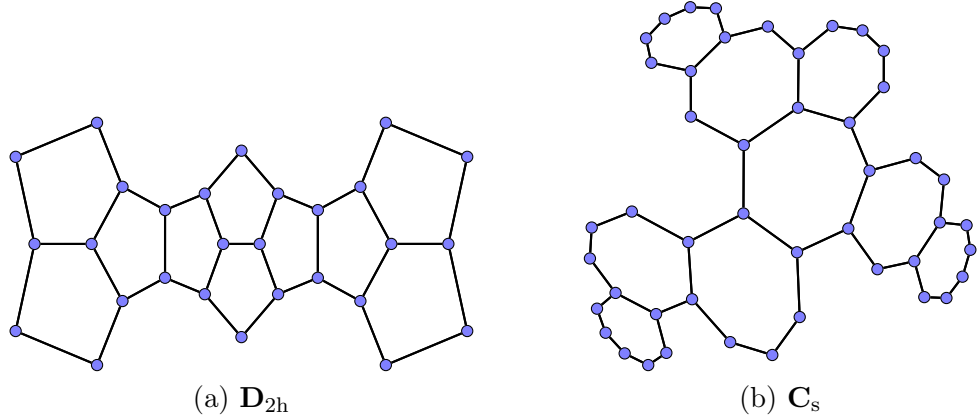
\begin{figure}[!htbp]
\centering
$\begin{array}{ccc}
\begin{tikzpicture}[scale=0.6,rotate=-22.5]
\tikzstyle{edge}=[draw, thick]
\tikzstyle{every node}=[draw, circle, fill=blue!50!white, inner sep=1.5pt]
\node (v1) at (4.236, 1.755) {};
\node (v2) at (3.068, 1.271) {};
\node (v3) at (1.951, 2.179) {};
\node (v4) at (2.92, -0.162) {};
\node (v5) at (1.277, 1.341) {};
\node (v6) at (1.851, -0.046) {};
\node (v7) at (0.324, 1.341) {};
\node (v8) at (1.177, -0.719) {};
\node (v9) at (0.374, 0.155) {};
\node (v10) at (-1.177, 0.72) {};
\node (v11) at (-0.323, -1.341) {};
\node (v12) at (-0.374, -0.155) {};
\node (v13) at (-1.851, 0.046) {};
\node (v14) at (-1.277, -1.341) {};
\node (v15) at (-2.92, 0.162) {};
\node (v16) at (-1.951, -2.179) {};
\node (v17) at (-3.068, -1.271) {};
\node (v18) at (-4.236, -1.755) {};
\node (v19) at (3.883, 3.699) {};
\node (v20) at (1.928, 3.706) {};
\node (v21) at (5.361, 0.13) {};
\node (v22) at (3.984, -1.257) {};
\node (v23) at (-0.791, 1.91) {};
\node (v24) at (0.791, -1.909) {};
\node (v25) at (-3.984, 1.257) {};
\node (v26) at (-5.361, -0.13) {};
\node (v27) at (-1.928, -3.706) {};
\node (v28) at (-3.883, -3.699) {};
\path[edge] (v1) -- (v2);
\path[edge] (v1) -- (v19);
\path[edge] (v1) -- (v21);
\path[edge] (v2) -- (v1);
\path[edge] (v2) -- (v4);
\path[edge] (v2) -- (v3);
\path[edge] (v3) -- (v2);
\path[edge] (v3) -- (v5);
\path[edge] (v3) -- (v20);
\path[edge] (v4) -- (v2);
\path[edge] (v4) -- (v22);
\path[edge] (v4) -- (v6);
\path[edge] (v5) -- (v6);
\path[edge] (v5) -- (v7);
\path[edge] (v5) -- (v3);
\path[edge] (v6) -- (v5);
\path[edge] (v6) -- (v4);
\path[edge] (v6) -- (v8);
\path[edge] (v7) -- (v9);
\path[edge] (v7) -- (v23);
\path[edge] (v7) -- (v5);
\path[edge] (v8) -- (v9);
\path[edge] (v8) -- (v6);
\path[edge] (v8) -- (v24);
\path[edge] (v9) -- (v7);
\path[edge] (v9) -- (v8);
\path[edge] (v9) -- (v12);
\path[edge] (v10) -- (v12);
\path[edge] (v10) -- (v13);
\path[edge] (v10) -- (v23);
\path[edge] (v11) -- (v12);
\path[edge] (v11) -- (v24);
\path[edge] (v11) -- (v14);
\path[edge] (v12) -- (v9);
\path[edge] (v12) -- (v11);
\path[edge] (v12) -- (v10);
\path[edge] (v13) -- (v14);
\path[edge] (v13) -- (v15);
\path[edge] (v13) -- (v10);
\path[edge] (v14) -- (v13);
\path[edge] (v14) -- (v11);
\path[edge] (v14) -- (v16);
\path[edge] (v15) -- (v17);
\path[edge] (v15) -- (v25);
\path[edge] (v15) -- (v13);
\path[edge] (v16) -- (v17);
\path[edge] (v16) -- (v14);
\path[edge] (v16) -- (v27);
\path[edge] (v17) -- (v15);
\path[edge] (v17) -- (v16);
\path[edge] (v17) -- (v18);
\path[edge] (v18) -- (v17);
\path[edge] (v18) -- (v28);
\path[edge] (v18) -- (v26);
\path[edge] (v19) -- (v1);
\path[edge] (v19) -- (v20);
\path[edge] (v20) -- (v19);
\path[edge] (v20) -- (v3);
\path[edge] (v21) -- (v1);
\path[edge] (v21) -- (v22);
\path[edge] (v22) -- (v21);
\path[edge] (v22) -- (v4);
\path[edge] (v23) -- (v7);
\path[edge] (v23) -- (v10);
\path[edge] (v24) -- (v8);
\path[edge] (v24) -- (v11);
\path[edge] (v25) -- (v15);
\path[edge] (v25) -- (v26);
\path[edge] (v26) -- (v25);
\path[edge] (v26) -- (v18);
\path[edge] (v27) -- (v16);
\path[edge] (v27) -- (v28);
\path[edge] (v28) -- (v27);
\path[edge] (v28) -- (v18);
\end{tikzpicture} & \qquad \qquad &
\begin{tikzpicture}[scale=0.4,rotate=85]
\tikzstyle{edge}=[draw, thick]
\tikzstyle{every node}=[draw, circle, fill=blue!50!white, inner sep=1.5pt]
\node (v1) at (5.031, 1.944) {};
\node (v2) at (3.815, 2.979) {};
\node (v3) at (4.714, -0.538) {};
\node (v4) at (1.518, 1.011) {};
\node (v5) at (2.912, -0.677) {};
\node (v6) at (2.566, -2.428) {};
\node (v7) at (-0.751, 0.824) {};
\node (v8) at (1.032, -3.227) {};
\node (v9) at (-1.881, -1.063) {};
\node (v10) at (-1.857, 2.568) {};
\node (v11) at (-0.945, -2.692) {};
\node (v12) at (-0.478, -5.707) {};
\node (v13) at (-3.74, 2.276) {};
\node (v14) at (-3.753, 4.828) {};
\node (v15) at (-1.838, -4.973) {};
\node (v16) at (-4.338, 3.426) {};
\node (v17) at (5.966, 2.33) {};
\node (v18) at (5.93, 3.181) {};
\node (v19) at (5.476, 3.997) {};
\node (v20) at (4.772, 4.555) {};
\node (v21) at (3.976, 4.31) {};
\node (v22) at (5.515, 0.558) {};
\node (v23) at (2.308, 2.845) {};
\node (v24) at (5.72, -1.581) {};
\node (v25) at (5.659, -2.588) {};
\node (v26) at (5.045, -3.352) {};
\node (v27) at (3.842, -3.456) {};
\node (v28) at (1.583, -4.72) {};
\node (v29) at (0.954, -5.719) {};
\node (v30) at (-4.019, -1.339) {};
\node (v31) at (-5.356, -0.542) {};
\node (v32) at (-5.052, 0.89) {};
\node (v33) at (-1.01, 4.524) {};
\node (v34) at (-1.656, 5.807) {};
\node (v35) at (-2.686, 5.776) {};
\node (v36) at (-2.209, -3.774) {};
\node (v37) at (-0.495, -6.597) {};
\node (v38) at (-1.297, -6.794) {};
\node (v39) at (-2.21, -6.613) {};
\node (v40) at (-2.963, -6.142) {};
\node (v41) at (-2.998, -5.407) {};
\node (v42) at (-4.329, 5.396) {};
\node (v43) at (-5.101, 5.041) {};
\node (v44) at (-5.731, 4.351) {};
\node (v45) at (-6.049, 3.537) {};
\node (v46) at (-5.592, 2.972) {};
\path[edge] (v1) -- (v2);
\path[edge] (v1) -- (v17);
\path[edge] (v1) -- (v22);
\path[edge] (v2) -- (v1);
\path[edge] (v2) -- (v23);
\path[edge] (v2) -- (v21);
\path[edge] (v3) -- (v5);
\path[edge] (v3) -- (v22);
\path[edge] (v3) -- (v24);
\path[edge] (v4) -- (v5);
\path[edge] (v4) -- (v7);
\path[edge] (v4) -- (v23);
\path[edge] (v5) -- (v3);
\path[edge] (v5) -- (v6);
\path[edge] (v5) -- (v4);
\path[edge] (v6) -- (v5);
\path[edge] (v6) -- (v27);
\path[edge] (v6) -- (v8);
\path[edge] (v7) -- (v9);
\path[edge] (v7) -- (v10);
\path[edge] (v7) -- (v4);
\path[edge] (v8) -- (v11);
\path[edge] (v8) -- (v6);
\path[edge] (v8) -- (v28);
\path[edge] (v9) -- (v7);
\path[edge] (v9) -- (v11);
\path[edge] (v9) -- (v30);
\path[edge] (v10) -- (v13);
\path[edge] (v10) -- (v33);
\path[edge] (v10) -- (v7);
\path[edge] (v11) -- (v8);
\path[edge] (v11) -- (v36);
\path[edge] (v11) -- (v9);
\path[edge] (v12) -- (v15);
\path[edge] (v12) -- (v29);
\path[edge] (v12) -- (v37);
\path[edge] (v13) -- (v10);
\path[edge] (v13) -- (v32);
\path[edge] (v13) -- (v16);
\path[edge] (v14) -- (v16);
\path[edge] (v14) -- (v42);
\path[edge] (v14) -- (v35);
\path[edge] (v15) -- (v12);
\path[edge] (v15) -- (v41);
\path[edge] (v15) -- (v36);
\path[edge] (v16) -- (v14);
\path[edge] (v16) -- (v13);
\path[edge] (v16) -- (v46);
\path[edge] (v17) -- (v1);
\path[edge] (v17) -- (v18);
\path[edge] (v18) -- (v17);
\path[edge] (v18) -- (v19);
\path[edge] (v19) -- (v18);
\path[edge] (v19) -- (v20);
\path[edge] (v20) -- (v19);
\path[edge] (v20) -- (v21);
\path[edge] (v21) -- (v20);
\path[edge] (v21) -- (v2);
\path[edge] (v22) -- (v1);
\path[edge] (v22) -- (v3);
\path[edge] (v23) -- (v2);
\path[edge] (v23) -- (v4);
\path[edge] (v24) -- (v3);
\path[edge] (v24) -- (v25);
\path[edge] (v25) -- (v24);
\path[edge] (v25) -- (v26);
\path[edge] (v26) -- (v25);
\path[edge] (v26) -- (v27);
\path[edge] (v27) -- (v26);
\path[edge] (v27) -- (v6);
\path[edge] (v28) -- (v8);
\path[edge] (v28) -- (v29);
\path[edge] (v29) -- (v28);
\path[edge] (v29) -- (v12);
\path[edge] (v30) -- (v9);
\path[edge] (v30) -- (v31);
\path[edge] (v31) -- (v30);
\path[edge] (v31) -- (v32);
\path[edge] (v32) -- (v31);
\path[edge] (v32) -- (v13);
\path[edge] (v33) -- (v10);
\path[edge] (v33) -- (v34);
\path[edge] (v34) -- (v33);
\path[edge] (v34) -- (v35);
\path[edge] (v35) -- (v34);
\path[edge] (v35) -- (v14);
\path[edge] (v36) -- (v11);
\path[edge] (v36) -- (v15);
\path[edge] (v37) -- (v12);
\path[edge] (v37) -- (v38);
\path[edge] (v38) -- (v37);
\path[edge] (v38) -- (v39);
\path[edge] (v39) -- (v38);
\path[edge] (v39) -- (v40);
\path[edge] (v40) -- (v39);
\path[edge] (v40) -- (v41);
\path[edge] (v41) -- (v40);
\path[edge] (v41) -- (v15);
\path[edge] (v42) -- (v14);
\path[edge] (v42) -- (v43);
\path[edge] (v43) -- (v42);
\path[edge] (v43) -- (v44);
\path[edge] (v44) -- (v43);
\path[edge] (v44) -- (v45);
\path[edge] (v45) -- (v44);
\path[edge] (v45) -- (v46);
\path[edge] (v46) -- (v45);
\path[edge] (v46) -- (v16);
\end{tikzpicture} \\
\text{(a)}\ \mathbf{D}_\mathrm{2h} & & \text{(b)}\  \mathbf{C}_\mathrm{s} 
\end{array}$
\caption{Smallest patches with nullity combination $\eta(\Pi) = 1$, $\eta(\altan(\Pi)) = 3$, which 
cannot occur for benzenoids: (a) a pure pentagonal patch with $10$ pentagons; (b) 
a pure heptagonal patch with $9$ heptagons.}
\label{fig:purepatches}
\end{figure}

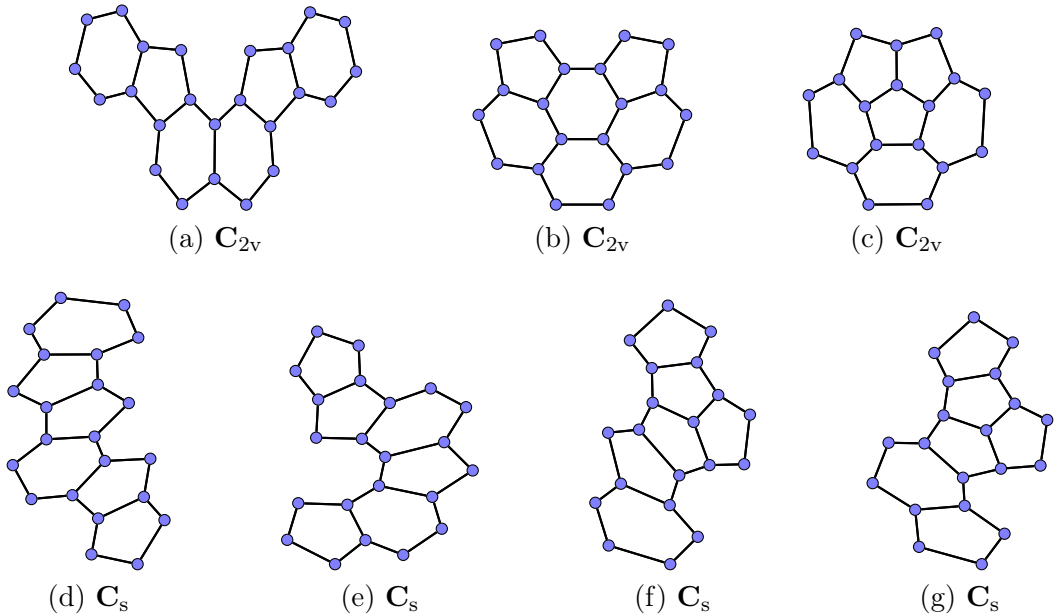
\begin{figure}[!htb]
\centering
$\begin{array}{ccccc}
\begin{tikzpicture}[scale=0.4,rotate=-52]
\tikzstyle{edge}=[draw, thick]
\tikzstyle{every node}=[draw, circle, fill=blue!50!white, inner sep=1.5pt]
\node (v1) at (1.654, 2.253) {};
\node (v2) at (0.253, 2.882) {};
\node (v3) at (1.937, 0.808) {};
\node (v4) at (0.633, 0.556) {};
\node (v5) at (2.172, -1.731) {};
\node (v6) at (0.752, -0.6) {};
\node (v7) at (-0.355, -2.067) {};
\node (v8) at (-0.402, -0.74) {};
\node (v9) at (-1.825, -2.113) {};
\node (v10) at (-2.755, -0.89) {};
\node (v11) at (2.524, 2.897) {};
\node (v12) at (2.254, 4.174) {};
\node (v13) at (0.73, 4.901) {};
\node (v14) at (-0.236, 4.184) {};
\node (v15) at (-0.449, 1.827) {};
\node (v16) at (3.173, -0.021) {};
\node (v17) at (3.495, -1.424) {};
\node (v18) at (2.169, -3.09) {};
\node (v19) at (0.729, -3.09) {};
\node (v20) at (-1.886, 0.031) {};
\node (v21) at (-2.258, -3.106) {};
\node (v22) at (-3.565, -3.128) {};
\node (v23) at (-4.612, -1.807) {};
\node (v24) at (-4.132, -0.706) {};
\path[edge] (v1) -- (v2);
\path[edge] (v1) -- (v11);
\path[edge] (v1) -- (v3);
\path[edge] (v2) -- (v1);
\path[edge] (v2) -- (v15);
\path[edge] (v2) -- (v14);
\path[edge] (v3) -- (v4);
\path[edge] (v3) -- (v1);
\path[edge] (v3) -- (v16);
\path[edge] (v4) -- (v3);
\path[edge] (v4) -- (v6);
\path[edge] (v4) -- (v15);
\path[edge] (v5) -- (v6);
\path[edge] (v5) -- (v17);
\path[edge] (v5) -- (v18);
\path[edge] (v6) -- (v5);
\path[edge] (v6) -- (v8);
\path[edge] (v6) -- (v4);
\path[edge] (v7) -- (v8);
\path[edge] (v7) -- (v19);
\path[edge] (v7) -- (v9);
\path[edge] (v8) -- (v7);
\path[edge] (v8) -- (v20);
\path[edge] (v8) -- (v6);
\path[edge] (v9) -- (v10);
\path[edge] (v9) -- (v7);
\path[edge] (v9) -- (v21);
\path[edge] (v10) -- (v9);
\path[edge] (v10) -- (v24);
\path[edge] (v10) -- (v20);
\path[edge] (v11) -- (v1);
\path[edge] (v11) -- (v12);
\path[edge] (v12) -- (v11);
\path[edge] (v12) -- (v13);
\path[edge] (v13) -- (v12);
\path[edge] (v13) -- (v14);
\path[edge] (v14) -- (v13);
\path[edge] (v14) -- (v2);
\path[edge] (v15) -- (v2);
\path[edge] (v15) -- (v4);
\path[edge] (v16) -- (v3);
\path[edge] (v16) -- (v17);
\path[edge] (v17) -- (v16);
\path[edge] (v17) -- (v5);
\path[edge] (v18) -- (v5);
\path[edge] (v18) -- (v19);
\path[edge] (v19) -- (v18);
\path[edge] (v19) -- (v7);
\path[edge] (v20) -- (v8);
\path[edge] (v20) -- (v10);
\path[edge] (v21) -- (v9);
\path[edge] (v21) -- (v22);
\path[edge] (v22) -- (v21);
\path[edge] (v22) -- (v23);
\path[edge] (v23) -- (v22);
\path[edge] (v23) -- (v24);
\path[edge] (v24) -- (v23);
\path[edge] (v24) -- (v10);
\end{tikzpicture} & \qquad\qquad &
\begin{tikzpicture}[scale=0.4]
\tikzstyle{edge}=[draw, thick]
\tikzstyle{every node}=[draw, circle, fill=blue!50!white, inner sep=1.5pt]
\node (v1) at (2.62, 0.776) {};
\node (v2) at (1.298, 0.32) {};
\node (v3) at (0.601, 1.483) {};
\node (v4) at (1.462, -1.852) {};
\node (v5) at (0.696, -0.856) {};
\node (v6) at (-0.602, 1.482) {};
\node (v7) at (-1.461, -1.852) {};
\node (v8) at (-0.697, -0.855) {};
\node (v9) at (-1.303, 0.321) {};
\node (v10) at (-2.621, 0.776) {};
\node (v11) at (2.864, 2.3) {};
\node (v12) at (1.404, 2.581) {};
\node (v13) at (3.447, -0.05) {};
\node (v14) at (2.829, -1.674) {};
\node (v15) at (0.88, -3.032) {};
\node (v16) at (-0.88, -3.032) {};
\node (v17) at (-1.401, 2.582) {};
\node (v18) at (-2.861, 2.302) {};
\node (v19) at (-2.828, -1.673) {};
\node (v20) at (-3.447, -0.048) {};
\path[edge] (v1) -- (v2);
\path[edge] (v1) -- (v11);
\path[edge] (v1) -- (v13);
\path[edge] (v2) -- (v1);
\path[edge] (v2) -- (v5);
\path[edge] (v2) -- (v3);
\path[edge] (v3) -- (v2);
\path[edge] (v3) -- (v6);
\path[edge] (v3) -- (v12);
\path[edge] (v4) -- (v5);
\path[edge] (v4) -- (v14);
\path[edge] (v4) -- (v15);
\path[edge] (v5) -- (v2);
\path[edge] (v5) -- (v4);
\path[edge] (v5) -- (v8);
\path[edge] (v6) -- (v9);
\path[edge] (v6) -- (v17);
\path[edge] (v6) -- (v3);
\path[edge] (v7) -- (v8);
\path[edge] (v7) -- (v16);
\path[edge] (v7) -- (v19);
\path[edge] (v8) -- (v5);
\path[edge] (v8) -- (v7);
\path[edge] (v8) -- (v9);
\path[edge] (v9) -- (v6);
\path[edge] (v9) -- (v8);
\path[edge] (v9) -- (v10);
\path[edge] (v10) -- (v9);
\path[edge] (v10) -- (v20);
\path[edge] (v10) -- (v18);
\path[edge] (v11) -- (v1);
\path[edge] (v11) -- (v12);
\path[edge] (v12) -- (v11);
\path[edge] (v12) -- (v3);
\path[edge] (v13) -- (v1);
\path[edge] (v13) -- (v14);
\path[edge] (v14) -- (v13);
\path[edge] (v14) -- (v4);
\path[edge] (v15) -- (v4);
\path[edge] (v15) -- (v16);
\path[edge] (v16) -- (v15);
\path[edge] (v16) -- (v7);
\path[edge] (v17) -- (v6);
\path[edge] (v17) -- (v18);
\path[edge] (v18) -- (v17);
\path[edge] (v18) -- (v10);
\path[edge] (v19) -- (v7);
\path[edge] (v19) -- (v20);
\path[edge] (v20) -- (v19);
\path[edge] (v20) -- (v10);
\end{tikzpicture} &  \qquad\qquad &
\begin{tikzpicture}[scale=0.4,rotate=68]
\tikzstyle{edge}=[draw, thick]
\tikzstyle{every node}=[draw, circle, fill=blue!50!white, inner sep=1.5pt]
\node (v1) at (2.177, 0.903) {};
\node (v2) at (0.96, 0.398) {};
\node (v3) at (0.421, 2.246) {};
\node (v4) at (1.886, -1.29) {};
\node (v5) at (0.724, -0.855) {};
\node (v6) at (-0.092, 1.117) {};
\node (v7) at (-2.153, 0.732) {};
\node (v8) at (-1.006, -2.043) {};
\node (v9) at (-0.6, -1.01) {};
\node (v10) at (-1.136, 0.288) {};
\node (v11) at (2.038, 2.287) {};
\node (v12) at (3.057, -0.175) {};
\node (v13) at (-0.447, 2.984) {};
\node (v14) at (-2.184, 2.133) {};
\node (v15) at (1.795, -2.424) {};
\node (v16) at (-0.036, -3.053) {};
\node (v17) at (-3.074, -0.218) {};
\node (v18) at (-2.329, -2.02) {};
\path[edge] (v1) -- (v2);
\path[edge] (v1) -- (v11);
\path[edge] (v1) -- (v12);
\path[edge] (v2) -- (v1);
\path[edge] (v2) -- (v5);
\path[edge] (v2) -- (v6);
\path[edge] (v3) -- (v6);
\path[edge] (v3) -- (v13);
\path[edge] (v3) -- (v11);
\path[edge] (v4) -- (v5);
\path[edge] (v4) -- (v12);
\path[edge] (v4) -- (v15);
\path[edge] (v5) -- (v2);
\path[edge] (v5) -- (v4);
\path[edge] (v5) -- (v9);
\path[edge] (v6) -- (v2);
\path[edge] (v6) -- (v10);
\path[edge] (v6) -- (v3);
\path[edge] (v7) -- (v10);
\path[edge] (v7) -- (v17);
\path[edge] (v7) -- (v14);
\path[edge] (v8) -- (v9);
\path[edge] (v8) -- (v16);
\path[edge] (v8) -- (v18);
\path[edge] (v9) -- (v5);
\path[edge] (v9) -- (v8);
\path[edge] (v9) -- (v10);
\path[edge] (v10) -- (v6);
\path[edge] (v10) -- (v9);
\path[edge] (v10) -- (v7);
\path[edge] (v11) -- (v1);
\path[edge] (v11) -- (v3);
\path[edge] (v12) -- (v1);
\path[edge] (v12) -- (v4);
\path[edge] (v13) -- (v3);
\path[edge] (v13) -- (v14);
\path[edge] (v14) -- (v13);
\path[edge] (v14) -- (v7);
\path[edge] (v15) -- (v4);
\path[edge] (v15) -- (v16);
\path[edge] (v16) -- (v15);
\path[edge] (v16) -- (v8);
\path[edge] (v17) -- (v7);
\path[edge] (v17) -- (v18);
\path[edge] (v18) -- (v17);
\path[edge] (v18) -- (v8);
\end{tikzpicture} \\
\text{(a)}\ \mathbf{C}_\mathrm{2v} && \text{(b)}\ \mathbf{C}_\mathrm{2v} && \text{(c)}\ \mathbf{C}_\mathrm{2v} \\[12pt]
\end{array}$

$\begin{array}{ccccccc}
\begin{tikzpicture}[scale=0.4]
\tikzstyle{edge}=[draw, thick]
\tikzstyle{every node}=[draw, circle, fill=blue!50!white, inner sep=1.5pt]
\node (v1) at (0.306, 2.536) {};
\node (v2) at (-1.451, 2.529) {};
\node (v3) at (0.338, 1.531) {};
\node (v4) at (-1.372, 0.778) {};
\node (v5) at (0.23, -0.199) {};
\node (v6) at (-1.371, -0.289) {};
\node (v7) at (0.571, -1.009) {};
\node (v8) at (-0.507, -2.148) {};
\node (v9) at (1.868, -2.205) {};
\node (v10) at (0.348, -2.921) {};
\node (v11) at (1.682, 3.095) {};
\node (v12) at (1.215, 4.173) {};
\node (v13) at (-0.893, 4.415) {};
\node (v14) at (-1.934, 3.38) {};
\node (v15) at (-2.464, 1.311) {};
\node (v16) at (1.368, 0.917) {};
\node (v17) at (-2.488, -1.148) {};
\node (v18) at (-1.869, -2.276) {};
\node (v19) at (2.083, -0.936) {};
\node (v20) at (2.554, -2.976) {};
\node (v21) at (1.66, -4.442) {};
\node (v22) at (0.127, -4.116) {};
\path[edge] (v1) -- (v2);
\path[edge] (v1) -- (v11);
\path[edge] (v1) -- (v3);
\path[edge] (v2) -- (v1);
\path[edge] (v2) -- (v15);
\path[edge] (v2) -- (v14);
\path[edge] (v3) -- (v4);
\path[edge] (v3) -- (v1);
\path[edge] (v3) -- (v16);
\path[edge] (v4) -- (v3);
\path[edge] (v4) -- (v6);
\path[edge] (v4) -- (v15);
\path[edge] (v5) -- (v6);
\path[edge] (v5) -- (v16);
\path[edge] (v5) -- (v7);
\path[edge] (v6) -- (v5);
\path[edge] (v6) -- (v17);
\path[edge] (v6) -- (v4);
\path[edge] (v7) -- (v8);
\path[edge] (v7) -- (v5);
\path[edge] (v7) -- (v19);
\path[edge] (v8) -- (v7);
\path[edge] (v8) -- (v10);
\path[edge] (v8) -- (v18);
\path[edge] (v9) -- (v10);
\path[edge] (v9) -- (v19);
\path[edge] (v9) -- (v20);
\path[edge] (v10) -- (v9);
\path[edge] (v10) -- (v22);
\path[edge] (v10) -- (v8);
\path[edge] (v11) -- (v1);
\path[edge] (v11) -- (v12);
\path[edge] (v12) -- (v11);
\path[edge] (v12) -- (v13);
\path[edge] (v13) -- (v12);
\path[edge] (v13) -- (v14);
\path[edge] (v14) -- (v13);
\path[edge] (v14) -- (v2);
\path[edge] (v15) -- (v2);
\path[edge] (v15) -- (v4);
\path[edge] (v16) -- (v3);
\path[edge] (v16) -- (v5);
\path[edge] (v17) -- (v6);
\path[edge] (v17) -- (v18);
\path[edge] (v18) -- (v17);
\path[edge] (v18) -- (v8);
\path[edge] (v19) -- (v7);
\path[edge] (v19) -- (v9);
\path[edge] (v20) -- (v9);
\path[edge] (v20) -- (v21);
\path[edge] (v21) -- (v20);
\path[edge] (v21) -- (v22);
\path[edge] (v22) -- (v21);
\path[edge] (v22) -- (v10);
\end{tikzpicture} &  \qquad\qquad &
\begin{tikzpicture}[scale=0.4,rotate=50]
\tikzstyle{edge}=[draw, thick]
\tikzstyle{every node}=[draw, circle, fill=blue!50!white, inner sep=1.5pt]
\node (v1) at (1.595, 1.892) {};
\node (v2) at (0.22, 2.564) {};
\node (v3) at (1.665, 0.667) {};
\node (v4) at (0.154, 0.611) {};
\node (v5) at (1.826, -1.555) {};
\node (v6) at (0.19, -0.369) {};
\node (v7) at (0.177, -2.419) {};
\node (v8) at (-0.678, -0.83) {};
\node (v9) at (-2.353, -1.647) {};
\node (v10) at (-1.838, -0.428) {};
\node (v11) at (2.459, 2.687) {};
\node (v12) at (1.931, 4.042) {};
\node (v13) at (0.466, 3.749) {};
\node (v14) at (-0.795, 1.799) {};
\node (v15) at (2.908, -0.064) {};
\node (v16) at (3.184, -1.363) {};
\node (v17) at (1.714, -2.886) {};
\node (v18) at (-0.43, -3.346) {};
\node (v19) at (-1.92, -2.923) {};
\node (v20) at (-3.614, -1.386) {};
\node (v21) at (-4.016, 0.353) {};
\node (v22) at (-2.847, 0.854) {};
\path[edge] (v1) -- (v2);
\path[edge] (v1) -- (v11);
\path[edge] (v1) -- (v3);
\path[edge] (v2) -- (v1);
\path[edge] (v2) -- (v14);
\path[edge] (v2) -- (v13);
\path[edge] (v3) -- (v4);
\path[edge] (v3) -- (v1);
\path[edge] (v3) -- (v15);
\path[edge] (v4) -- (v3);
\path[edge] (v4) -- (v6);
\path[edge] (v4) -- (v14);
\path[edge] (v5) -- (v6);
\path[edge] (v5) -- (v16);
\path[edge] (v5) -- (v17);
\path[edge] (v6) -- (v5);
\path[edge] (v6) -- (v8);
\path[edge] (v6) -- (v4);
\path[edge] (v7) -- (v8);
\path[edge] (v7) -- (v17);
\path[edge] (v7) -- (v18);
\path[edge] (v8) -- (v7);
\path[edge] (v8) -- (v10);
\path[edge] (v8) -- (v6);
\path[edge] (v9) -- (v10);
\path[edge] (v9) -- (v19);
\path[edge] (v9) -- (v20);
\path[edge] (v10) -- (v9);
\path[edge] (v10) -- (v22);
\path[edge] (v10) -- (v8);
\path[edge] (v11) -- (v1);
\path[edge] (v11) -- (v12);
\path[edge] (v12) -- (v11);
\path[edge] (v12) -- (v13);
\path[edge] (v13) -- (v12);
\path[edge] (v13) -- (v2);
\path[edge] (v14) -- (v2);
\path[edge] (v14) -- (v4);
\path[edge] (v15) -- (v3);
\path[edge] (v15) -- (v16);
\path[edge] (v16) -- (v15);
\path[edge] (v16) -- (v5);
\path[edge] (v17) -- (v5);
\path[edge] (v17) -- (v7);
\path[edge] (v18) -- (v7);
\path[edge] (v18) -- (v19);
\path[edge] (v19) -- (v18);
\path[edge] (v19) -- (v9);
\path[edge] (v20) -- (v9);
\path[edge] (v20) -- (v21);
\path[edge] (v21) -- (v20);
\path[edge] (v21) -- (v22);
\path[edge] (v22) -- (v21);
\path[edge] (v22) -- (v10);
\end{tikzpicture} & \qquad\qquad &
\begin{tikzpicture}[scale=0.4,rotate=20]
\tikzstyle{edge}=[draw, thick]
\tikzstyle{every node}=[draw, circle, fill=blue!50!white, inner sep=1.5pt]
\node (v1) at (1.672, 2.046) {};
\node (v2) at (0.198, 2.38) {};
\node (v3) at (1.965, 0.785) {};
\node (v4) at (-0.17, 1.288) {};
\node (v5) at (0.889, 0.223) {};
\node (v6) at (0.896, -1.308) {};
\node (v7) at (-0.9, 0.603) {};
\node (v8) at (-0.166, -1.284) {};
\node (v9) at (-2.083, -0.882) {};
\node (v10) at (-0.802, -2.106) {};
\node (v11) at (2.455, 2.85) {};
\node (v12) at (1.41, 4.151) {};
\node (v13) at (-0.171, 3.497) {};
\node (v14) at (2.732, -0.191) {};
\node (v15) at (1.988, -1.674) {};
\node (v16) at (-1.902, 0.857) {};
\node (v17) at (-3.097, -1.188) {};
\node (v18) at (-3.168, -2.674) {};
\node (v19) at (-1.457, -3.958) {};
\node (v20) at (-0.287, -3.415) {};
\path[edge] (v1) -- (v2);
\path[edge] (v1) -- (v11);
\path[edge] (v1) -- (v3);
\path[edge] (v2) -- (v1);
\path[edge] (v2) -- (v4);
\path[edge] (v2) -- (v13);
\path[edge] (v3) -- (v5);
\path[edge] (v3) -- (v1);
\path[edge] (v3) -- (v14);
\path[edge] (v4) -- (v5);
\path[edge] (v4) -- (v7);
\path[edge] (v4) -- (v2);
\path[edge] (v5) -- (v3);
\path[edge] (v5) -- (v6);
\path[edge] (v5) -- (v4);
\path[edge] (v6) -- (v5);
\path[edge] (v6) -- (v15);
\path[edge] (v6) -- (v8);
\path[edge] (v7) -- (v8);
\path[edge] (v7) -- (v16);
\path[edge] (v7) -- (v4);
\path[edge] (v8) -- (v7);
\path[edge] (v8) -- (v6);
\path[edge] (v8) -- (v10);
\path[edge] (v9) -- (v10);
\path[edge] (v9) -- (v17);
\path[edge] (v9) -- (v16);
\path[edge] (v10) -- (v9);
\path[edge] (v10) -- (v8);
\path[edge] (v10) -- (v20);
\path[edge] (v11) -- (v1);
\path[edge] (v11) -- (v12);
\path[edge] (v12) -- (v11);
\path[edge] (v12) -- (v13);
\path[edge] (v13) -- (v12);
\path[edge] (v13) -- (v2);
\path[edge] (v14) -- (v3);
\path[edge] (v14) -- (v15);
\path[edge] (v15) -- (v14);
\path[edge] (v15) -- (v6);
\path[edge] (v16) -- (v7);
\path[edge] (v16) -- (v9);
\path[edge] (v17) -- (v9);
\path[edge] (v17) -- (v18);
\path[edge] (v18) -- (v17);
\path[edge] (v18) -- (v19);
\path[edge] (v19) -- (v18);
\path[edge] (v19) -- (v20);
\path[edge] (v20) -- (v19);
\path[edge] (v20) -- (v10);
\end{tikzpicture} & \qquad\qquad &
\begin{tikzpicture}[scale=0.4,rotate=20]
\tikzstyle{edge}=[draw, thick]
\tikzstyle{every node}=[draw, circle, fill=blue!50!white, inner sep=1.5pt]
\node (v1) at (1.7, 1.871) {};
\node (v2) at (0.146, 2.16) {};
\node (v3) at (1.959, 0.711) {};
\node (v4) at (-0.396, 1.154) {};
\node (v5) at (0.769, 0.203) {};
\node (v6) at (0.791, -1.163) {};
\node (v7) at (-1.309, 0.496) {};
\node (v8) at (-0.496, -1.015) {};
\node (v9) at (-2.354, -1.474) {};
\node (v10) at (-0.764, -1.912) {};
\node (v11) at (2.597, 2.647) {};
\node (v12) at (1.668, 3.874) {};
\node (v13) at (0.029, 3.21) {};
\node (v14) at (2.791, -0.166) {};
\node (v15) at (2.061, -1.516) {};
\node (v16) at (-2.422, 0.924) {};
\node (v17) at (-3.38, -0.158) {};
\node (v18) at (-2.655, -2.676) {};
\node (v19) at (-0.895, -3.93) {};
\node (v20) at (0.163, -3.238) {};
\path[edge] (v1) -- (v2);
\path[edge] (v1) -- (v11);
\path[edge] (v1) -- (v3);
\path[edge] (v2) -- (v1);
\path[edge] (v2) -- (v4);
\path[edge] (v2) -- (v13);
\path[edge] (v3) -- (v5);
\path[edge] (v3) -- (v1);
\path[edge] (v3) -- (v14);
\path[edge] (v4) -- (v5);
\path[edge] (v4) -- (v7);
\path[edge] (v4) -- (v2);
\path[edge] (v5) -- (v3);
\path[edge] (v5) -- (v6);
\path[edge] (v5) -- (v4);
\path[edge] (v6) -- (v5);
\path[edge] (v6) -- (v15);
\path[edge] (v6) -- (v8);
\path[edge] (v7) -- (v8);
\path[edge] (v7) -- (v16);
\path[edge] (v7) -- (v4);
\path[edge] (v8) -- (v7);
\path[edge] (v8) -- (v6);
\path[edge] (v8) -- (v10);
\path[edge] (v9) -- (v10);
\path[edge] (v9) -- (v18);
\path[edge] (v9) -- (v17);
\path[edge] (v10) -- (v9);
\path[edge] (v10) -- (v8);
\path[edge] (v10) -- (v20);
\path[edge] (v11) -- (v1);
\path[edge] (v11) -- (v12);
\path[edge] (v12) -- (v11);
\path[edge] (v12) -- (v13);
\path[edge] (v13) -- (v12);
\path[edge] (v13) -- (v2);
\path[edge] (v14) -- (v3);
\path[edge] (v14) -- (v15);
\path[edge] (v15) -- (v14);
\path[edge] (v15) -- (v6);
\path[edge] (v16) -- (v7);
\path[edge] (v16) -- (v17);
\path[edge] (v17) -- (v16);
\path[edge] (v17) -- (v9);
\path[edge] (v18) -- (v9);
\path[edge] (v18) -- (v19);
\path[edge] (v19) -- (v18);
\path[edge] (v19) -- (v20);
\path[edge] (v20) -- (v19);
\path[edge] (v20) -- (v10);
\end{tikzpicture} \\
\text{(d)}\ \mathbf{C}_\mathrm{s}  & & \text{(e)}\ \mathbf{C}_\mathrm{s}  & & \text{(f)}\ \mathbf{C}_\mathrm{s}  & & \text{(g)}\ \mathbf{C}_\mathrm{s}  
\end{array}$
\caption{Smallest pent-hex patches with excess nullity $\eta(\altan(\Pi))-\eta(\Pi) = 2$. All patches have six faces and are shown in the order in which they occur in Table \ref{tbl:penthexptch}.}
\label{fig:mixedpatches}
\end{figure}

\begin{table}[!htbp]
\centering
\begin{tabular}{rr|rr|*{1}{rrr|}rr||rrr}
\multicolumn{2}{l|}{$\eta(B)$} & 0 & 0  & 2 & 2 & 2 & 4 & 4 & 1 & 3 & 5 \\
\multicolumn{2}{l|}{$\eta(\altan(B))$} & 1 & 2 & 2 & 3 & 4 & 4 & 5 & 1 & 3 & 5 \\
\hline 
\hline 
 \multirow{1}{*}{$\varepsilon$} & 1 & 1  & & & &  & & \\
& 2 & 1 & & &  & & & & \\
& 3 & 2 & & &  & &  & & 1 \\
& 4 & 6 & & &  & & & & 1 \\
& 5 & 14 & 1 & & &  & & & 7 \\
& 6 & 51 &  &  1 & 1 & & & & 28 \\
& 7 & 187 & 3 &  7 & & & & & 134 \\
& 8 & 764 & &  51 & 1 & & & & 619 \\
& 9 & 3211 & 12 &  318 & 4  & & & & 2957 & 3 \\
& 10 & 14073 & 34 & 1913 & 3  & &  & & 14024 & 39 \\
& 11 &62782 & 97 & 10916 & 14 & & & &  67046 & 374  \\
& 12 & 284552 & 366 &  60760 & 43 &  & 14 & & 320859 & 2990 \\
& 13 & 1303460 & 1401 & 331219 & 116 &  & 211 & &  1540174 &  21675 \\
& 14 &	6026901 & 4741 & 1778968 & 461 & & 2587 & 1 & 7408359 & 145559 \\
& 15 &	28066662 & 15891 & 9447084 & 1589 & 9 & 25084 & & 35721421 & 930122 & 48   
\end{tabular}
\caption{Excess nullity for altan-benzenoids based on natural attachment sets. Notation:
$\eta(B)$ is the
nullity of the parent, $\eta(\altan(B))$ is the nullity of its altan, and $\varepsilon$ is the number of hexagons
in the parent. The columns display the counts of cases with a given $(\eta(B), \eta(\altan(B)))$ pair. Double vertical lines separate cases with even and odd attachment sets.}
\label{table:benz}
\end{table}

\begin{table}[!htbp]
\centering
\begin{tabular}{rr|rr}
\multicolumn{2}{l|}{$\eta(B)$} & 0 & 0  \\
\multicolumn{2}{l|}{$\eta(\altan(B))$} & 1 & 2  \\
\hline 
\hline 
 \multirow{1}{*}{$\varepsilon$} & 1 & &  \\
& 2 & 1 & \\
& 3 & 2 & \\
& 4 & 5 & \\
& 5 & 11 & 1 \\
& 6 & 36 & \\
& 7 & 117 & 1 \\
& 8 & 411 & \\
& 9 & 1482 & 7 \\
& 10 & 5560 & 12 \\
& 11 & 21090 & 25 \\
& 12 & 81067 & 54 \\
& 13 & 313933 & 142 \\
& 14 & 1224021 & 507 \\
& 15 & 4798445 & 760 \\
 & 16 & 18894445 & 2536 
\end{tabular}
\caption{Excess nullity for altans of catafused benzenoids. Notation as in Table~\ref{table:benz}.}
\label{table:catabenz}
\end{table}

\begin{table}[!h]
\centering
\begin{tabular}{rr|rr||rr}
\multicolumn{2}{l|}{$\eta(B)$} & odd & even & 0 & 2 \\
\multicolumn{2}{l|}{$\eta(\altan(B))$} & $\eta(B)$ & $\eta(B)$ & 1 & 3  \\
\hline 
\hline 
 \multirow{1}{*}{$\varepsilon$}
 & 10 & 6 & 1 & 17 & 1 \\
 & 20 & 23 & 8 & 43 & 2 \\
 & 30 & 49 & 24 & 77 & 2 \\
 & 40 & 85 & 44 & 115 & 3 \\
 & 50 & 131 & 73 & 157 & 3 \\
 & 60 & 184 & 109 & 204 & 4 \\
 & 70 & 245 & 153 & 255 & 4 \\
 & 80 & 320 & 201 & 308 & 4 \\
 & 90 & 401 & 260 & 365 & 5 \\
 & 100 & 486 & 324 & 425 & 5 \\
 & 200 & 1842 & 1374 & 1150 & 7 \\
 & 300 & 4042 & 3160 & 2073 & 9 \\
 & 400 & 7039 & 5692 & 3152 & 11 \\
 & 500 & 10853 & 8968 & 4359 & 12 \\
 & 600 & 15485 & 13006 & 5697 & 13 \\
 & 700 & 20924 & 17767 & 7135 & 14 \\
 & 800 & 27146 & 23295 & 8675 & 15 \\
 & 900 & 34182 & 29599 & 10320 & 16 \\
 & 1000 & 41993 & 36603 & 12032 & 17 \\
 & 2000 & 163687 & 148394 & 33381 & 25 \\
 & 3000 & 364019 & 335729 & 60732 & 31
\end{tabular}
\caption{Excess nullity for altans of convex benzenoids. Notation as in Table~\ref{table:benz}.
The entries in each row display cumulative counts of convex benzenoids on up to $\varepsilon$ hexagons that have respectively
odd attachment set, even attachment set with excess nullity 0, and, after the double vertical lines, 
nullities $0$ and $2$ with excess
nullity $1$. }
\label{table:convex_benzenoids}
\end{table}

\begin{table}[!htbp]
\centering
\begin{tabular}{rr|rr|rrr|rrr|rrr|r}
\multicolumn{2}{l|}{$\eta(\Pi)$} & 0 & 0 & 1 & 1 & 1 & 2 & 2 & 2 & 3 & 3 & 3 & 4 \\
\multicolumn{2}{l|}{$\eta(\altan(\Pi))$} & 1 & 2 & 1 & 2 & 3 & 2 & 3 & 4 & 3 & 4 & 5 & 4 \\
\hline 
\hline 
 \multirow{1}{*}{$\pi$}
 & 2 & & & & 1 & & & & & & & & \\
 & 3 & 1 & & & & & & & & & & & \\
 & 4 & 3 & & & & & & & & & & & \\
 & 5 & 3 & & & & & & & & & & & \\
 & 6 & 7 & & 2 & 1 & & 1 & & & & & & \\
 & 7 & 13 & &  4 & & & & & & & & & \\
 & 8 & 36 & 4 & 5 & 1 & & 1  & & & & & & \\
 & 9 & 99 & 1 & 7 & & & & & & & & & \\
 & 10 & 241 & 3 & 13 & 5 & 1 & & 1 & & & & & \\
 & 11 & 636 & 2 & 38 &10 & & 1 & & & & & & 1 \\
 & 12 & 1614 & 16 & 90 & 35 &  1 & & & & & & &  \\
 & 13 & 4363 & 43 & 214 & 38 & 1 & 5 & & & & & & \\
 & 14 & 11796 & 78 & 483 & 86 & 3 & 9 & & & & & & \\
 & 15 & 31415 & 175 & 1269 & 176 & 1 & 6 & 1 & & 1 & & & \\
 & 16 & 85773 & 467 & 3682 & 532 & 6 & 55 & 2 & & 1 & & & \\
 & 17 & 230077 & 891 & 9816 & 1335 & 10 & 171 & 2 & & 2 & & & \\
 & 18 & 634433 & 2757 & 27844 & 3166 & 29 & 450 & 15 & 1 & 7 & & & \\
 & 19 & 1726528 & 7111 & 74818 & 8452 & 74 & 1019 & 13 & & 9 & & & \\
& 20 & 4775761 & 15562 & 206502 & 21357 & 184 & 3042 & 45 & 4 &  22 & & &
\end{tabular}
\caption{Excess nullity for altans of pure pentagonal patches. Notation:
$\eta(\Pi)$ is the
nullity of the parent, $\eta(\altan(\Pi))$ is the nullity of its altan, and $\pi$ is the number of pentagons
in the parent. The columns display the counts of cases with a given $(\eta(\Pi), \eta(\altan(\Pi)))$ pair. 
Only those patches with even $h$ are included in the table.}
\label{tbl:purepenta}
\end{table}

\begin{table}[!htbp]
\centering
\begin{tabular}{rr|rr|rrr|rrr|rrr}
\multicolumn{2}{l|}{$\eta(\Pi)$} & 0 & 0 & 1 & 1 & 1 & 2 & 2 & 2 & 3 & 3 & 3  \\
\multicolumn{2}{l|}{$\eta(\altan(\Pi))$} & 1 & 2 & 1 & 2 & 3 & 2 & 3 & 4 & 3 & 4 & 5  \\
\hline 
\hline 
 \multirow{1}{*}{$\sigma$}
 & 2 &   & & 1 & & & & & & & & \\
 & 3 & 1 & & & & & & & & & & \\
 & 4 & 8 & & & & & & & & & & \\
 & 5 & 14 & & & & & & & & & & \\
 & 6 & 131 & 3 & 20 & 7 & & 1 & & & & & \\
 & 7 & 520 & 2 & 76 & 11 & & & & & & & \\
 & 8 & 5140 & 24 & 480 & 77 & & 2 & 2 & & & & \\
 & 9 & 28326 & 114 & 2107 & 268 & 1 & 29  & & & & & \\
 & 10 & 235776 & 907 & 19021 &  2401 & 5 & 327 & 2 & & 5 & 3 & \\
 & 11 & 1506162 & 3912 & 121424 & 14523 & 46 &  2589 & 17 & & 11 & & \\
 & 12 & 12006702 & 26415 & 982165 & 112734 & 242 & 19050 & 285 & 3 & 263 & 2 & 
\end{tabular}
\caption{Excess nullity for altans of pure heptagonal patches. The number of heptagons is denoted $\sigma$
and all other notation is as in Table~\ref{tbl:purepenta}. Only those patches with even $h$ 
are included in the table.}
\label{tbl:purehepta}
\end{table}

\begin{table}[!htbp]
\centering
\begin{tabular}{rrr|rr|rrr|rrr}
\multicolumn{3}{l|}{$\eta(\Pi)$} & 0 & 0 & 1 & 1 & 1 & 2 & 2 & 2  \\
\multicolumn{3}{l|}{$\eta(\altan(\Pi))$} & 1 & 2 & 1 & 2 & 3 & 2 & 3 & 4  \\
\hline 
\hline 
 \multirow{1}{*}{$\pi$, $\varepsilon$}
 & 1 & 2 & 1 & & & & & & & \\
 & 2 & 1 & 2 & & & 1 & & & & \\
\cline{2-11}
 & 1 & 3 & 4 & & & & & & & \\
 & 2 & 2 & 12 & & 1 & 3 & & & & \\
 & 3 & 1 & 3 & & & & & & & \\
\cline{2-11}
 & 1 & 4 & 25 & & & & & & & \\
 & 2 & 3 & 67 & & 2 & 3 & & & & \\
 & 3 & 2 & 32 & & 2 & & & & & \\
 & 4 & 1 & 21 & & & 1 & & & & \\
\cline{2-11}
 & 1 & 5 & 132 & & & & & & & \\
 & 2 & 4 & 378 & 2 & 11 & 2 & 1 & & &  \\
 & 3 & 3 & 257 & 1 & 8 & 1 & & & & \\
 & 4 & 2 & 206 & 2 & 6 & 3 & & & & \\
 & 5 & 1 & 43 & 2 & 1 & & & & & \\
\cline{2-11}
& 1 & 6 & 722 & & & & & & &  \\
 & 2 & 5 & 2092 & 1 & 45 & 8 & & & & \\
 & 3 & 4 & 1987 & 3 & 64 & 5 & & & & \\
 & 4 & 3 & 1765 & 8 & 40 & 7 & 1 & & & \\
 & 5 & 2 & 643 & 3 & 11 & 2 & & & & \\
 & 6 & 1 & 158 & 1 & 12 & 1 & 1 & & & \\
\cline{2-11}
 & 1 & 7 & 3865 & 11 & 4 & 1 & & & & \\
 & 2 & 6 & 11987 & 14 & 234 & 17 & & 9 & &  \\
 & 3 & 5 & 14290 & 22 & 386 & 27 & & & & \\
 & 4 & 4 & 14759 & 29 & 260 & 28 & 1 & & & \\
 & 5 & 3 & 7272 & 9 & 109 & 7 & & & & \\
 & 6 & 2 & 2587 & 16 & 94 & 14 & 1 & 1 & & \\
 & 7 & 1 & 445 & 3 & 29 & 4 & & 1 & & \\
\cline{2-11}
 & 1 & 8 & 20867 & 43 & 66 & 9 & & 28 & &  \\
 & 2 & 7 & 68824 & 31 & 1289 & 76 & & 105 & 1 & \\
 & 3 & 6 & 99388 & 84 & 2447 & 127 & &  & &  \\
 & 4 & 5 & 115988 & 122 & 1719 & 129  & &   & &  \\
 & 5 & 4 & 72812 & 106 & 961 & 62 & & 3  & &  \\
 & 6 & 3 & 33541 & 74 & 651 & 59 & 1 & 2  & &  \\
 & 7 & 2 & 8917 & 18 & 340 & 29 & & 3  & &  \\
 & 8 & 1 & 1337 & 11 & 70 & 13  & &   & & 
\end{tabular}
\caption{
Excess nullity for altans of mixed pent-hex patches. The numbers of pentagons and hexagons are denoted $\pi$ and $\varepsilon$, respectively.
All other notation is as in Table~\ref{tbl:purepenta}. Only those patches with even $h$ 
are included in the table.}
\label{tbl:penthexptch}
\end{table}

\clearpage
\section{Relationship to previous work} 

The theorems in Section \ref{sec:mainsec} allow some comment on previous work on the systematics of altanisation.
Various observations made in the chemical literature on altanisation and the number of non-bonding orbitals were 
generalised and put into a formal mathematical  framework in two papers by Gutman \cite{Gutman2014a,Gutman2014b}
and have been taken up again in a recent paper by Dickens and Mallion on the predicted magnetic response properties
of altans \cite{Dickens2021}.

The paper \cite{Gutman2014a} concentrates on altans of benzenoids, and gives a number of rules for the face sizes, perimeter length,
Kekul\'e count, and numbers of NBMOs. Of these, one requires a change in the light of Theorem \ref{thm:armes2}. Rule 7 \cite{Gutman2014a}
states that ``Altan-benzenoid hydrocarbons have NBMOs. Kekulean altan-benzenoids have a unique NBMO.'' Unique NBMOs are
illustrated for two small cases in an accompanying figure. However, a Kekulean benzenoid has nullity zero, and therefore the present Theorem \ref{thm:armes2}
also allows the existence of altans of Kekulean benzenoids that have nullity two (but no more). 
The case illustrated in the present Figure \ref{fig:bendbenznegl} in fact exhibits nullity two, so the second part of Rule 7 should be amended to
``Kekulean altan-benzenoids can have either one
or two independent NBMOs.'' The {\lq unique\rq} NBMO referred to in \cite{Gutman2014a} is our {\lq special one\rq}, but as we have shown
there can be an additional NBMO in some cases.

The second paper \cite{Gutman2014b} generalises the theory to altans of general graphs, and gives key definitions that we have
followed here. Salient results  given as Theorems and Corollaries include a prior proof of Theorem~\ref{thm:minigutman}. 
Some confusion has arisen in later work \cite{Dickens2021} between proven results from \cite{Gutman2014b} and some cases that were being quoted there
merely as examples. In Corollary 11 of \cite{Gutman2014b} (which refers to cases with even $h = |H|$) it is noted that
``The case $\eta(G\sp\dagger) = \eta(G) +1$ is encountered if $G$ is non-singular.''
(In our notation, $G\sp\dagger$ would be $\altan(G, H)$.) The statement does not exclude 
the possibility of other values of 
$\eta(G\sp\dagger)$  being encountered, nor does it imply anything about singular graphs $G$.  
The reading that led to
the tentative proposal in \cite{Dickens2021} of a possible general rule of increase of nullity by $1$ for an altan of any $G$, 
is therefore not correct.
Indeed, that proposal runs into a contradiction when $G$ is itself an altan, as the authors themselves notice,
and has many exceptions even for non-altan $G$, as our tables show.

\section{Chemical implications}

To be concrete, our present understanding of the relations between the
numbers of NBMOs predicted by H{\"u}ckel  Theory for a parent hydrocarbon 
and its successive altans, assuming separate $\sigma$ and $\pi$ structures,
can be summarised in a short set of chemical rules. 
Let $\eta$ and $\eta(\altan)$ be the numbers of NBMOs
of the parent and altan respectively, and $h$ be the number of $sp\sp2$ CH sites on the perimeter of the 
parent molecule (the natural attachment set). Then:
\begin{enumerate}[label=(\roman*)]
\item 
If $h$ is odd, the number of $\pi$ NBMOs is the same for the parent and the first and all subsequent
iterated altans.

\item 
If $h$ is even, the number of $\pi$ NBMOs in the first altan has 
one of the three values, $\eta(\altan)=\eta$ or $\eta+1$ or $\eta+2$, 
subject to the proviso
 that if $\eta =0$, the altan has $\eta(\altan) = 1$ or  $2$.
All subsequent altans of the given parent have the same number of $\pi$ NBMOs as the first.

\item 
For the important specific case of a Kekulean benzenoid parent, for which 
the natural attachment set necessarily has even size and the nullity is $\eta = 0$, the rule is that   
$\eta(\altan)=1$ or $2$, and all iterated altans have 
$\eta(\altan)$ $\pi$ NBMOs.

\item 
Likewise, for a non-Kekulean benzenoid parent with odd nullity, 
the parent, first altan and all iterated altans have  the same number of $\pi$ NBMOs.
For a non-Kekulean benzenoid parent with even nullity, 
the nullity may increase (by at most $2$) 
on first altanisation, 
but then remains constant for all further iterated altans.

\item 
Finally, for the class of convex benzenoids, the conjecture, based on extensive computation, is that 
$\eta(\altan)$ in these cases is $\eta$ or $\eta +1$. For odd $h$, $\eta(\altan) = \eta$. 
The conjecture implies that the altan of a Kekulean convex benzenoid has $\eta =1$.
Notwithstanding any proof or disproof, exhaustive calculations show that there are no exceptions
		with fewer than 3000 hexagonal rings. 
\end{enumerate}
Two caveats about the applicability of these rules are in order. 
Chemists will want to bear in mind that the rules are derived within 
the H{\"u}ckel model for planar systems, as noted in the preamble.
The first is that when $h$ is even, they only give bounds on $\eta(\altan)$.
Those bounds are strict, but they still allow a small range of values.
As the proof strategies used in Section~\ref{sec:mainsec} show, the occurence of the more
unusual (i.e.\ higher) values of the difference $\eta(\altan)-\eta$ can be 
traced back to specific properties of the NBMOs of the parent graph.  
So far, we have not
attempted to give a complete characterisation of all such cases, though this would be an 
interesting direction for future research.

\section{Conclusion}
\label{sec:conclusion}

This paper has given some precise theorems
about the spectra of altan graphs. 
To assess their chemical significance, several factors must be considered.

The H{\"u}ckel  model itself is clearly a relatively crude model for 
planar or nearly planar $\pi$ systems.
Ordering of orbital energies may change with geometric factors, total charge, or
increase in the level of theory and use of
calculations based on more sophisticated quantum 
chemical methods.

For example,
the assumptions of H{\"u}ckel  theory in its simplest form, 
that resonance integrals for all  bonded pairs are equal
and that Coulomb integrals for all carbon centres are equal,
will become less appropriate for the geometrically non-planar structures
produced by iterated altanisation, and they may be 
expected to exhibit local differences in reactivity 
and changes in overall stability, as occur for example in fullerenes \cite{Atlas}. 

Even within the H{\"u}ckel  regime appropriate to geometrically
planar systems, there may be little practical difference between 
zero and near-zero graph eigenvalues (between non-bonding
and very weakly bonding or antibonding orbitals).
The crowding of the entire spectrum of a chemical graph into the open interval between 
$+3$ and $-3$ suggests that large graphs, such as those
produced by iterated altanisation may typically have several eigenvalues 
of very small magnitude that are not detected by the 
rules for counting strict zeroes but may have a similar impact on 
molecular properties.

Finally, the chemical significance of a non-bonding orbital depends
to an extent on the position of the zero eigenvalue within the 
the ordered graph spectrum.
H{\"u}ckel  Theory is acknowledged to perform at its best when applied to
neutral or near-neutral systems, where the numbers of positive
and negative eigenvalues are both close to $n/2$. If an NBMO occurs too early or too late
in the order, its properties are less likely to be relevant to 
physically realistic descriptions of the system.
Nevertheless, having precise rules for the number of such orbitals 
is a useful first step in building qualitative models of stability, reactivity and magnetic response. 

\section*{Acknowledgements}

The work of Nino Bašić is supported in part by the Slovenian Research Agency
(Research Programme P1-0294 and Research Projects J1-9187, J1-1691, N1-0140
and J1-2481).
PWF thanks the Leverhulme Trust for an
Emeritus Fellowship on the theme of
`Modelling molecular currents, conduction and aromaticity'.

\section*{ORCID iDs}
Nino Ba{\v s}i{\'c} \includegraphics[scale=0.05]{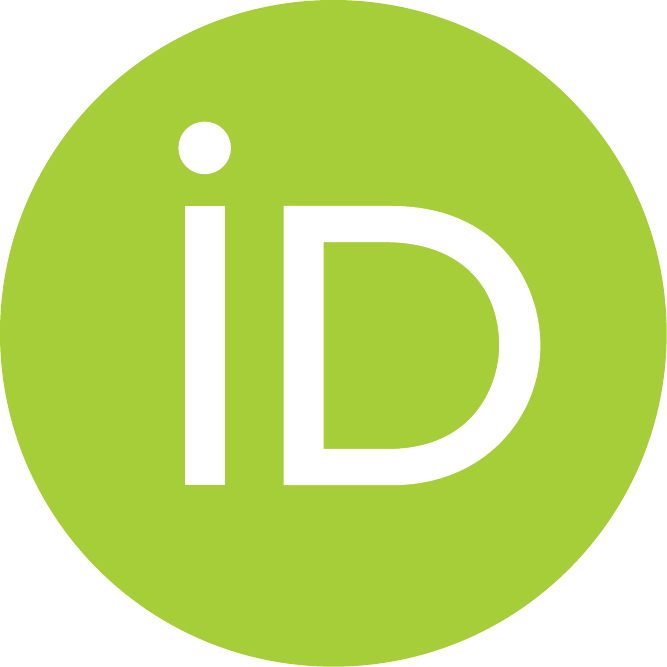} \href{https://orcid.org/0000-0002-6555-8668}{https://orcid.org/0000-0002-6555-8668}

\noindent
Patrick~W.~Fowler \includegraphics[scale=0.05]{ORCID_icon.pdf} \href{https://orcid.org/0000-0003-2106-1104}{https://orcid.org/0000-0003-2106-1104}

\bibliographystyle{amcunsrt}
\bibliography{references}

\end{document}